\documentclass{amsart}
\usepackage[utf8]{inputenc}
\usepackage{amsmath,amssymb,amsthm, graphics, comment,bm}
\usepackage{mathtools}
\usepackage{amsthm}
\usepackage{diagmac2}
\usepackage{textcomp}
\usepackage{color}
\usepackage[T1]{fontenc}
\usepackage[alphabetic]{amsrefs}
\usepackage[all,cmtip,color,matrix,arrow]{xy}
\usepackage{yfonts}
\usepackage{tikz}
\usepackage{tikz-cd}
\usepackage{stmaryrd}
\usepackage{mathrsfs}
\usepackage{enumerate}
\usepackage{hyperref}
\hypersetup{
	colorlinks=true,
	linkcolor=blue,
	filecolor=magenta,      
	urlcolor=cyan,
}
\usepackage[mathscr]{euscript}
\usepackage[all,cmtip]{xy}
\usepackage{cleveref}

\calclayout

\DeclareSymbolFont{script}{U}{eus}{m}{n}
\DeclareMathSymbol{\Wedge}{0}{script}{"5E}

\newcommand{\cA}{\mathscr{A}} 
 
\newcommand{\cG}{\mathcal{G}} 
\newcommand{\cI}{\mathcal{I}}

\renewcommand{\AA}{\mathbb{A}} 

\newcommand{\NN}{\mathbb{N}} 
 
\newcommand{\CC}{\mathbb{C}} 
\newcommand{\cC}{\mathcal{C}} 
 
\newcommand{\ZZ}{\mathbb{Z}} 
\newcommand{\cX}{\mathcal{X}} 
\newcommand{\cZ}{\mathcal{Z}} 
\newcommand{\cY}{\mathcal{Y}} 
\newcommand{\cE}{\mathcal{E}}

\newcommand{\GGm}{\mathbb{G}_m}
\newcommand{\GG}{\mathbb{G}}
 
\newcommand{\PP}{\mathbb{P}} 
 
\newcommand{\cL}{\mathcal{L}} 
 
\renewcommand{\AA}{\mathbb{A}} 
\newcommand{\QQ}{\mathbb{Q}}
\newcommand{\cF}{\mathcal{F}} 

\newcommand{\cU}{\mathcal{U}}  
\newcommand{\cP}{\mathcal{P}} 

\newcommand{\Proj}{\operatorname{Proj}}

\newcommand{\id}{\operatorname{id}}

\newcommand{\im}{\operatorname{im}}
\newcommand{\Bl}{\operatorname{Bl}}

\newcommand{\Aut}{\operatorname{Aut}}

\newcommand{\HH}{\operatorname{H}}

\newcommand{\cd}{\mathfrak{d}}

\newcommand{\Pic}{\operatorname{Pic}}

\newcommand{\Exc}{\mathrm{Exc}}
\newcommand{\pr}{\mathrm{pr}}
\newcommand{\Hom}{\mathrm{Hom}}

\newcommand{\Spec}{\mathrm{Spec}}

\newcommand{\sProj}{\mathscr{P}{\mathrm{roj}}}

\newcommand{\ext}{\mathrm{ext}}
\newcommand{\codim}{\mathrm{codim}}
\newcommand{\depth}{\mathrm{depth}}

\usepackage{etoolbox}

\newcounter{step}
\AtBeginEnvironment{proof}{\setcounter{step}{0}}

\newcommand{\step}{%
  \refstepcounter{step}%
  \par\smallskip
  \noindent\textbf{Step \thestep.}\quad
}

\newcounter{case}
\AtBeginEnvironment{proof}{\setcounter{case}{0}}

\newcommand{\case}{%
  \refstepcounter{case}%
  \par\smallskip
  \noindent\textbf{Case \thecase.}\quad
}

\let\bar\overline
\let\phi\varphi
\let\tilde\widetilde
\let\hat\widehat
\let\epsilon\varepsilon
\newtheorem{theorem}{Theorem}[section]

\newtheorem*{Teo*}{Theorem}
\newtheorem{Lemma}[theorem]{Lemma}

\newtheorem{Cor}[theorem]{Corollary}
\newtheorem{example}[theorem]{Example}

\newtheorem{Prop}[theorem]{Proposition}
\newtheorem{Question}[theorem]{Question}
\newtheorem*{Ques*}{Question}

\theoremstyle{definition}

\newtheorem*{Oss'}{Remark}
\newtheorem{EG}[theorem]{Example}
\newtheorem{Def}[theorem]{Definition}
\newtheorem*{Def*}{Definition}
\newtheorem{Notation}[theorem]{Notation}
\newtheorem{Remark}[theorem]{Remark}

\numberwithin{equation}{section}

\begin{document}
\title{Characterizing Contractions and Weighted Blowdowns}
\author[S.~Ghosh]{Soham Ghosh}
	\address{University of Washington, Seattle, Washington, USA}
	\email{soham13@uw.edu}
\author[T.~Klingner]{Tyson Klingner}
	\address{University of Washington, Seattle, Washington, USA}
	\email{tysonk4@uw.edu}

\begin{abstract}
This paper gives a partial answer to a question of Dan Abramovich:
consider a proper morphism $f : \cX \to \cZ$ with connected fibers, between smooth separated Deligne--Mumford stacks, which defines an isomorphism away from a smooth effective Cartier divisor $\cE \subseteq \cX$. Then, is $f$ a weighted blowup? We confirm that $f$ is an ordinary smooth blowup when $\cX$ and $\cZ$ are smooth separated schemes of finite type over $\CC$, and when $f :\cX \to \cZ$ is a representable morphism of smooth separated Deligne--Mumford stacks. Further, we show that $f$ is a weighted blowup when $\cX$ and $\cZ$ are smooth separated Deligne--Mumford surfaces, i.e., $\dim \cX = \dim \cZ = 2$. As an application we determine when a reduction morphism between Hassett moduli stacks of weighted stable curves is given by a blowup along a smooth center.
\end{abstract}  

\maketitle

 \section{Introduction and Statement of Result}

 Blowups are paramount to studying the birational geometry of schemes.
 Hironaka's resolution of singularities shows one can produce a desingularization of a given variety over a field of characteristic zero by a sequence of blowups along smooth centers \cite{MR199184}.
 Further, any birational map between smooth varieties $X \dashrightarrow Y$ can be factored into a sequence of blowups and blowdowns along smooth centers \cite{MR1896232}.
 The utility and applications of blowups motivate the following question: 
 given a variety $X$ can we realize it as a blowup of another variety $Z$? 
 Searching for such $Z$ motivates the minimal model program which has birthed a wealth of insight into the birational classification of varieties, see \cites{MR903730, KollarMori1998, MR1875410, MR2601039, MR2675555}. 
 Consequently, it is natural to ask the analogous question for Deligne--Mumford stacks. 
 When dealing with stacks there are two cases to consider: when a morphism is representable or non-representable. 
 Blowups along smooth centers are representable and many results regarding blowups of schemes are known to be true for stacks, e.g., \cite[Lem. 4.11]{kresch2023birationalgeometrydelignemumfordstacks}. 
 The generalization of a blowup to the non-representable case is a {\em weighted blowup} \cite{QR} (see Definition \ref{WB}). 
 Weighted blowups yield more efficient algorithms for resolving singularities than Hironaka's algorithm \cite{MR4856603}.
 Hence, we can ask a natural question for stacks:
 given a stack $\cX$ can we realize it as a weighted blowup of another stack $\cZ$?
 More precisely, this led Dan Abramovich to the following question.

 \begin{Question}
 \label{conje}      
    Suppose $\cX$ and $\cZ$ are smooth separated Deligne--Mumford stacks, and there is a contraction $f : \cX \to \cZ$ where the exceptional $\cE$ is a smooth irreducible Cartier divisor. 
    Moreover, suppose there is a smooth closed irreducible Deligne--Mumford substack $\cY \subseteq \cZ$ such that $(f^{-1}\cY) \simeq \cE$ set-theoretically, and $f : \cX \setminus \cE \xrightarrow{\sim} \cZ \setminus \cY.$    
    Then is $f$ a regular weighted blowup with reduced center $\cY$?
 \end{Question}

 In this article, we affirm {\em two} cases of the question. We give a complete proof of the case when $f : \cX \to \cZ$ is representable, showing that the morphism $f$ is an ordinary smooth blowup with reduced center $\cY.$ 
 As a corollary, we see that the question is true when $X$ and $Z$ are schemes where $f$ is the blowup along a smooth center $Y$, i.e., $X \simeq \Bl_Y Z$. 
 Finally, we prove the case when $\cX$ and $\cZ$ are Deligne--Mumford surfaces. 
  { 
 \begin{theorem}\label{rep theorem}
Suppose $\cX$ and $\cZ$ are smooth separated Deligne--Mumford stacks, and there is a representable contraction morphism $f : \cX \to \cZ$ whose exceptional $\cE$ is a smooth Cartier divisor. Moreover, suppose there is a smooth closed Deligne--Mumford substack $\cY \subseteq \cZ$, such that $f^{-1}(\cY) \simeq \cE$ set-theoretically, and $f : \cX \setminus \cE \xrightarrow{\sim} \cZ \setminus \cY$. Then $f$ is an ordinary blowup with reduced center $\cY$, in particular $f|_{\cE} : \cE \to \cY$ is a projective bundle, and $\mathcal{O}_{\cX}(\cE)|_{\cE} \simeq \mathcal{O}_{\cE}(-1) \otimes f^*\cL$ for some line bundle $\cL$ on $\cY$. 
 \end{theorem}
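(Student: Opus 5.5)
Since $f$ is representable and both stacks are smooth and separated, the statement is étale-local on $\cZ$. The plan is to first prove the scheme case and then descend. Choose an étale atlas $U \to \cZ$ by a smooth scheme and set $X_U := \cX \times_{\cZ} U$. Representability makes $X_U$ an algebraic space, and it is smooth. The base change $f_U : X_U \to U$ is proper with connected fibers. It is an isomorphism away from the smooth Cartier divisor $E_U$, with $f_U^{-1}(Y_U) = E_U$ set-theoretically, where $Y_U := \cY \times_{\cZ} U$ is smooth. If $f_U$ is identified with $\Bl_{Y_U} U \to U$, the identification is canonical: it is the unique $U$-morphism given by the universal property of blowing up, once $f_U^{-1}\mathcal{I}_{Y_U}\cdot\mathcal{O}_{X_U}$ is known to be invertible. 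Uniqueness gives agreement on overlaps $U\times_{\cZ}U$, and blowups commute with flat base change. So the isomorphism descends to $\cX \simeq \Bl_{\cY}\cZ$, and the claims about the projective bundle and the normal bundle follow from the standard description of the exceptional divisor of a smooth blowup.

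For the local (algebraic space) case, the key step is to show that the ideal sheaf $f^{-1}\mathcal{I}_Y\cdot \mathcal{O}_X$ equals $\mathcal{O}_X(-E)$, i.e.\ that it is invertible and reduced. This produces a morphism $g : X \to \Bl_Y Z$ over $Z$. Then $g$ is proper and birational between smooth spaces of the same dimension, and it is an isomorphism away from $E$. Because $Z$ is smooth, it is normal, so $f_*\mathcal{O}_X = \mathcal{O}_Z$.

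I would show $g$ is an isomorphism by ruling out contracted curves. If $g$ were not quasi-finite, the exceptional locus of $g$ would be a divisor, since $\Bl_Y Z$ is smooth and hence $\QQ$-factorial (van der Waerden purity). That divisor would have to be contained in $E$, which is irreducible on each connected component over $Y$, and it would map onto a proper subset of the exceptional divisor $P(N_{Y/Z})$. This contradicts the fact that $E \to Y$ and $P(N)\to Y$ have the same relative dimension, which I would check by comparing dimensions: $E$ is a divisor and the exceptional locus of a birational morphism has the right codimension. So $g$ is quasi-finite, proper and birational onto a normal target, hence an isomorphism by Zariski's Main Theorem.

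The main obstacle is proving that $f^{-1}\mathcal{I}_Y\cdot\mathcal{O}_X = \mathcal{O}_X(-E)$. This requires $\mathcal{I}_Y \mathcal{O}_X$ to be exactly $\mathcal{O}_X(-E)$, with no embedded components and with multiplicity one. A priori we only know $\mathcal{I}_Y\mathcal{O}_X = \mathcal{O}_X(-mE)\cdot \mathcal{J}$, where $\mathcal{J}$ has cosupport of codimension $\ge 2$.

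My first attempt is the discrepancy computation. Write $K_X = f^*K_Z + aE$ and compare with a general curve in a fiber. Let $c = \codim Y$. By taking a general smooth point $y$ of $Y$ and a transversal slice, I reduce to the case where $Y$ is a point, with $\dim Z = c$. There I would show that $E \to Y$ has fibers of dimension $c-1$, using the fact that $E$ is a divisor that maps onto $Y$ and has connected fibers.

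Next I use $-E$ being $f$-ample: it is $f$-anti-effective, and by the negativity lemma $-E|_E$ is positive on contracted curves. Combining this with smoothness of $E$ and the adjunction $K_E = (K_X+E)|_E = (a+1)E|_E$, each fiber $F$ is a smooth Fano variety of dimension $c-1$. From the discrepancy bound $a \le c-1$, with equality forcing the fiber to be $\PP^{c-1}$ with $\mathcal{O}(E)|_F = \mathcal{O}(-1)$ (Kobayashi–Ochiai), I would get $m = 1$ and $\mathcal{J}=\mathcal{O}_X$.

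If that route stalls, the fallback is Luo's and Ando's characterization of smooth blowups: for a contraction of an extremal ray whose exceptional locus is a smooth divisor, the morphism is a smooth blowup. I would apply it fiberwise over the étale chart.
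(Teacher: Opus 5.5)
\textbf{There is a genuine gap in the local (scheme) step.} Your overall architecture matches the paper's: work \'etale-locally on $\cZ$, prove $f^{-1}\cI_{\cY}\cdot\mathcal{O}_{\cX}=\mathcal{O}_{\cX}(-\cE)$ on charts, and descend by uniqueness. Your point-case toolkit ($f$-ampleness of $-E$, adjunction, Kobayashi--Ochiai) is also the paper's. The gap is the step ``I would get $m=1$ and $\mathcal{J}=\mathcal{O}_X$''.

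A transversal slice through a \emph{general} point of $Y$ only controls what happens over some dense open $U\subseteq Y$. There the fibers are $\PP^{c-1}$, $m=1$, and $V(\mathcal{J})\cap f^{-1}(U)=\emptyset$. Over $W:=Y\setminus U$ you have no control:
\begin{itemize}
\item Bertini gives no smoothness along the base locus $f^{-1}(w)$ of slices forced through a special point $w$.
\item The fibers over $W$ are not known to be smooth, or even $(c-1)$-dimensional; a priori you only know $\dim f^{-1}(w)\le n-2$. So ``each fiber $F$ is a smooth Fano variety of dimension $c-1$'' is unjustified there.
\item Since $V(\mathcal{J})$ has codimension $\ge 2$, it could lie entirely inside $f^{-1}(W)$.
\end{itemize}
Your Ando-type fallback does not avoid this: those characterizations need equidimensionality of $E\to Y$, which is exactly what is unknown over $W$.

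What is missing is a Hartogs-type extension across $W$, which is how the paper finishes the scheme case.
\begin{itemize}
\item Since $-E$ is $f$-ample, $X\simeq\Proj_Z\bigoplus_{m\ge0}f_*\mathcal{O}_X(-mE)$.
\item Your slice analysis must hold at every point of a suitable dense open $U$; the paper arranges this with an incidence-variety Bertini argument. Granting it, you get $X_V\simeq\Bl_U V$ over $V:=Z\setminus W$, so $f_*\mathcal{O}_X(-mE)|_V=\cI_{U/V}^m$.
\item Because $f^{-1}(U)$ is dense in the irreducible divisor $E$, $\codim_X f^{-1}(W)\ge 2$. Hence the invertible sheaf $\mathcal{O}_X(-mE)$ is the pushforward of its restriction to $X_V$.
\item $\cI^m_{Y/Z}$ has depth $\ge 2$ at points of $W$, because its associated graded pieces are free over the Cohen--Macaulay ring $\mathcal{O}_{Y,w}$ of dimension $\ge 1$. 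Hence $\cI^m_{Y/Z}=j_*\cI^m_{U/V}$.
\end{itemize}
Together these give $f_*\mathcal{O}_X(-mE)=\cI^m_{Y/Z}$ for all $m$, so $X\simeq\Bl_Y Z$, and the ideal identity you wanted follows.

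\textbf{Smaller problems in the local analysis.}
\begin{itemize}
\item The inequality that uses smoothness of $Z$ is a \emph{lower} bound. Writing $z_i=x_1^{a_i}u_i$ gives $a\ge\sum_i a_i-1\ge cm-1$. Only together with the index bound $a+1\le c$ (bend-and-break as in the paper, or Kobayashi--Ochiai) does this force $m=1$ and $a=c-1$. You state only ``$a\le c-1$'' and then appeal to equality.
\item Even in the point case, $(E,\mathcal{O}_X(-E)|_E)\simeq(\PP^{c-1},\mathcal{O}(1))$ with $m=1$ does not by itself give $\mathcal{J}=\mathcal{O}_X$. You must show that the classes of $z_1,\dots,z_c$ span $H^0(\mathcal{O}_{\PP^{c-1}}(1))$. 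For instance, show that no nonzero element of $\mathfrak{m}_p/\mathfrak{m}_p^2$ vanishes to order $\ge 2$ along $E$, again by the discrepancy count. The paper instead uses the weighted blowdown theorem, Zariski's main theorem and the bottom-up characterization.
\item $f$-ampleness of $-E$ comes from Koll\'ar--Mori's Lemma~2.62, not from the negativity lemma.
\item Bertini and Kobayashi--Ochiai need $X_U$ to be a scheme projective over $U$. This follows from representability plus projectivity of the coarse map (Lemma~\ref{projective represent}).
\end{itemize}
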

 }

 \begin{Cor}
 \label{main scheme}
    Suppose $X$ and $Z$ are smooth separated schemes, of finite type over $\mathbb{C}$, and there is a projective contraction $f : X \to Z$ morphism whose exceptional $E$ is {a smooth Cartier divisor}.
    Moreover, suppose there is a {smooth closed subscheme $Y \subseteq Z$}, such that $f^{-1}(Y) \simeq E$ set-theoretically, and $f : X \setminus E \xrightarrow{\sim} Z \setminus Y$.
    Then $f$ is a blowup along smooth center $Y$, i.e., $X \simeq \Bl_{Y} Z$.
    In particular, $f|_{E} : E \to Y$ is a projective bundle, and $\mathcal{O}_{X}(E)|_{E} \simeq \mathcal{O}_{E}(-1)$.
 \end{Cor}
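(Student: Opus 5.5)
The corollary should follow from Theorem \ref{rep theorem} by regarding $X$ and $Z$ as Deligne--Mumford stacks. A separated scheme of finite type over $\CC$ is a separated Deligne--Mumford stack with trivial stabilizers. Any morphism of schemes is representable, since the fiber product of schemes over a scheme is again a scheme. The hypotheses therefore transfer directly: $f$ is a representable contraction, $E$ is a smooth Cartier divisor, $Y$ is a smooth closed substack, $f^{-1}(Y)=E$ set-theoretically, and $f$ is an isomorphism off $E$. Theorem \ref{rep theorem} then says that $f$ is the ordinary blowup of $Z$ along the reduced center $Y$, that is, $X \simeq \Bl_Y Z$ as stacks over $Z$. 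Both sides are schemes, and the blowup of a stack along a closed substack is computed by the relative $\Proj$ of the Rees algebra, which for a scheme is the usual scheme-theoretic blowup. So the isomorphism is an isomorphism of schemes over $Z$.

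For the last assertion, Theorem \ref{rep theorem} gives that $f|_E\colon E\to Y$ is a projective bundle with $\mathcal{O}_X(E)|_E \simeq \mathcal{O}_E(-1)\otimes f^*\cL$ for some line bundle $\cL$ on $Y$. We need to remove the twist $\cL$. Once $X\simeq \Bl_Y Z$ is known, this is a direct consequence. On the blowup, the exceptional divisor is $E=\PP(N_{Y/Z})=\Proj_Y \bigoplus_n \cI^n/\cI^{n+1}$, where $\cI$ is the ideal sheaf of $Y$. We have $\mathcal{O}_X(-E)=\cI\cdot\mathcal{O}_X=\mathcal{O}_{\Bl}(1)$, and restricting to $E$ gives $\mathcal{O}_E(1)$ for this Proj presentation. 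Hence $\mathcal{O}_X(E)|_E\simeq\mathcal{O}_E(-1)$ with respect to the normalization $E=\PP(N_{Y/Z})$. The line bundle $\cL$ in the theorem only records a different choice of identification of $E$ with a projectivization, and $\cL$ can be absorbed by replacing the vector bundle $V$ with $V\otimes\cL^{\vee}$.

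The projectivity assumption on $f$ is not needed for this deduction, since it is already implied by the conclusion. The one point to check carefully is the compatibility of the two notions of blowup: the stack-theoretic ordinary blowup used in Theorem \ref{rep theorem} must agree with the scheme blowup when the base is a scheme. This holds because relative $\Proj$ commutes with flat base change, in particular with smooth atlases.
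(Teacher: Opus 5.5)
Your deduction is formally valid as a consequence of the \emph{statement} of Theorem~\ref{rep theorem}. Schemes are Deligne--Mumford stacks with trivial stabilizers, morphisms of schemes are representable, and the Rees-algebra blowup of a scheme is the usual one. Your removal of the twist is also right: $\mathcal{O}_X(-E)=\cI_{Y/Z}\cdot\mathcal{O}_X=\mathcal{O}_{\Bl_Y Z}(1)$ restricts to $\mathcal{O}_E(1)$ on $E=\Proj_Y\bigoplus_n\cI_{Y/Z}^n/\cI_{Y/Z}^{n+1}$.

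The problem is that the argument is circular. The introduction calls this statement a corollary, but the paper's logical order is the reverse (see the Strategy of Proof). Theorem~\ref{rep theorem} is proved as Theorem~\ref{main representable}, and its key step is Proposition~\ref{ideal equality}. There one takes an \'etale atlas $U\to\cZ$, uses Lemma~\ref{projective represent} to see that $\cX_U$ is a scheme, and shrinks to opens $U_y$ over which the center is irreducible. One then invokes Theorem~\ref{scheme case}, which is exactly the present statement for connected $X,Z$ and irreducible $E,Y$. When the target is already a scheme that \'etale reduction is vacuous (take $U=Z$). So your argument amounts to ``the corollary implies the corollary'' and does none of the work.

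What is missing is the direct proof for schemes, i.e.\ all of Section~\ref{Section:Scheme case}. After reducing to $X,Z$ connected and $E,Y$ irreducible, the paper first treats the case $Y=p$ (Proposition~\ref{contraction to a point case}):
\begin{enumerate}
\item $L=\mathcal{O}_X(-E)|_E$ is ample.
\item A local computation gives $K_X\sim aE$ with $a\ge n-1$, and adjunction gives $-K_E=(a+1)L$.
\item Bend-and-break yields a rational curve $C\subset E$ with $-K_E\cdot C\le n$, which forces $a=n-1$.
\item Kobayashi--Ochiai (Theorem~\ref{KOc}) then gives $(E,L)\simeq(\PP^{n-1},\mathcal{O}(1))$.
\item Theorem~\ref{existence theorem}, Zariski's main theorem and the bottom-up characterization identify the resulting blowdown with $Z$.
\end{enumerate}
For $r=\dim Y>0$ the paper proceeds as follows:
\begin{enumerate}
\item It slices by $r$ general sections (Proposition~\ref{Bertini prop}) to show that the general fiber is $(\PP^{c-1},\mathcal{O}(1))$.
\item Miracle flatness and triviality of the resulting Brauer--Severi scheme give $f^{-1}(Z\setminus W)\simeq\Bl_U(Z\setminus W)$ for a dense open $U\subseteq Y$, where $W=Y\setminus U$ (Proposition~\ref{general proj bundle}).
\item It extends over $W$ by writing $X\simeq\Proj_Z\bigoplus_m f_*\mathcal{O}_X(-mE)$ and proving $f_*\mathcal{O}_X(-mE)\simeq\cI_{Y/Z}^m$. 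This uses $\codim_X f^{-1}(W)\ge 2$ and $\depth(\cI_{Y/Z}^m)_w\ge 2$ (Lemma~\ref{depth comp}).
\end{enumerate}

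Separately, your remark that projectivity of $f$ is not needed is wrong. By the paper's conventions a contraction has projective coarse map, so you need it even to invoke Theorem~\ref{rep theorem}. The actual argument also uses it for the $f$-ampleness of $\mathcal{O}_X(-E)$ via \cite[Lem. 2.62]{KollarMori1998}. That the conclusion implies projectivity does not let you drop it as a hypothesis.
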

 The complex manifold analogue of Corollary~\ref{main scheme} appears in Russian in \cite{Moishezon66}, with an English translation in \cite{Moishezon67}. The proof realizes $\Bl_Y Z$ as the incidence variety parameterizing complex lines normal to the center $Y$ and identifies the space with the closure of the pullback of these normal lines along the contraction $f:X\rightarrow Z$, which in turn is proven to be isomorphic to $X$ by local analytic computations. Our proof is substantially different as we first identify the projective bundle structure of the exceptional $E\rightarrow Y$ and utilize techniques such as bend-and-break, which came after \cite{Moishezon66}. We believe our approach is useful to the reader interested in Moishezon spaces.
 Note that the assumptions in the statement are essential. 
 For instance, there are birational contraction morphisms between smooth separated schemes of finite type over $\CC$, which are not given by a sequence of blowups along smooth centers, e.g., \cite{406702}.

 The second case of Question~\ref{conje} which we prove is the following.

 {\begin{theorem}\label{mainv2}\label{main}
Suppose $\cX$ and $\cZ$ are smooth separated Deligne--Mumford surfaces, and there is a contraction $f : \cX \to \cZ$ whose exceptional $\cC$ is a smooth stacky curve (not necessarily irreducible). Moreover, suppose that there is a closed reduced substack $\cY\subseteq \cZ$ of dimension 0 such that $f(\cC)=\cY$ set theoretically and $f : \cX \setminus \cC \xrightarrow{\sim} \cZ \setminus \cY$ is an isomorphism. Then $f$ is a regular weighted blowup with reduced center $\cY$, in particular $f|_\cC : \cC \to \cY$ is a weighted projective bundle, and $\mathcal{O}_{\cX}(\cC)|_{\cC} \simeq \mathcal{O}_{\cC}(-1) \otimes f^*\cL$ for some line bundle $\cL$ on $\cY$. 
 \end{theorem}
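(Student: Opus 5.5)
The plan is to work étale locally on $\cZ$ around each point of $\cY$ and reduce to a statement about a single irreducible component of $\cC$ over a single point. Since $\cY$ is $0$-dimensional and reduced, étale locally near a point $y\in\cY$ I can write $\cZ=[U/G]$. Here $U$ is a smooth affine surface, $G$ is the stabilizer of $y$, and $y$ is the image of a $G$-fixed point $u\in U$. Because $f$ is proper with connected fibers and an isomorphism away from $\cC$, the preimage $f^{-1}(y)$ is a connected union of components of $\cC$. Since $\cC$ is smooth, distinct components are disjoint. So $f^{-1}(y)$ is a single smooth irreducible stacky curve $\cC_y$. Both being a regular weighted blowup and the form of $\mathcal{O}_\cX(\cC)|_\cC$ are étale local on $\cZ$, so I may assume $\cY=y$ and $\cC=\cC_y$ is irreducible.

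Next I will identify $\cC$ and its normal bundle. The coarse space $C$ of $\cC$ is a proper irreducible curve contracted to a point of a smooth surface. Hence $\cC^2<0$ by the negativity of contracted curves (Grauert/Mumford), computed on coarse spaces or on a scheme chart. I will then show that $C\cong\PP^1$, i.e.\ that $\cC$ has genus $0$. Pass to an equivariant scheme cover: let $X'$ be the normalization of $U$ in the function field of $\cX\times_\cZ U$. The contraction $X'\to U$ contracts a $G$-invariant configuration of curves to the smooth point $u$. By Castelnuovo-type reasoning each curve there is rational, with the dual graph a tree. Then $C$, which is a quotient of one such curve, is $\PP^1$.

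A smooth proper stacky curve with coarse space $\PP^1$ and at most two stacky points, carrying a line bundle of negative degree whose local structure matches a quotient of $\AA^2$, should be exactly $\PP(a,b)$, possibly with a generic gerbe by $\mu_d\subseteq G$. The key input is the local model $\cZ=[\AA^2/G]$. I first treat the case where $G$ acts diagonally through a torus $\mu_a\times\mu_b$. There the weighted blowup $\Bl_{(x^{1/a},y^{1/b})}$ has exceptional divisor $\cP(a,b)$ with $\mathcal{O}(\cC)|_\cC=\mathcal{O}(-1)$. I then prove that $\cX$ is this weighted blowup for suitable weights. To do so, I take the Rees algebra $\bigoplus_n f_*\mathcal{O}_\cX(-n\cC)$ on $\cZ$, whose relative $\Proj$ recovers $\cX$ because $-\cC$ is $f$-ample. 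It suffices to show that $f_*\mathcal{O}_\cX(-n\cC)$ is the weighted ideal $\cI_n=(x^iy^j: i/a+j/b\ge n)$ for coprime $a,b$. To get this, compute the order of vanishing along $\cC$ of the $G$-eigen-coordinates $x,y$, setting $a,b$ accordingly, and use that monomials generate.

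The main obstacle is twofold. First, I must show that the coordinates of $\AA^2$ can be chosen so that $\cC$-adic valuation is monomial: a divisorial valuation centered at a smooth surface point with a stacky-smooth exceptional divisor should be monomial in suitable coordinates. Second, I must handle a nonabelian $G$. For the first point, I will try to use smoothness of $\cX$ along $\cC$ and the structure of $\cC\simeq\cP(a,b)$. Its two stacky points give two branches whose images under $f$ serve as coordinate axes. The isotropy at a point of $\cC$ acting on the tangent space then forces the valuation to be monomial. The nonabelian case is where I am least sure. I expect to have to show that smoothness of $\cX$ forces the valuation to be $G$-invariant monomial only when $G$ essentially acts through a torus times a central part. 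Finally, $\mathcal{O}_\cX(\cC)|_\cC\simeq\mathcal{O}_\cC(-1)\otimes f^*\cL$ is read off from the weighted blowup, with $\cL$ absorbing the gerbe/character twist.
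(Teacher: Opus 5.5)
\textbf{There are two genuine gaps.} The first, and the decisive one, is the identification of $\cC$.

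Nothing in your argument bounds the number of stacky points of $\cC$ or forces their orders to be coprime. Rationality of the coarse curve is compatible with $\PP^1$ carrying any number of root points. Your resolution argument does give rationality, provided you use the coarse space of $\cX\times_\cZ U$; the normalization of $U$ in its own function field is just $U$.

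Worse, with the target kept as $[U/G]$ your claim is false. Take $G=Q_8\subset\mathrm{SL}_2(\CC)$. The ordinary blowup $[\Bl_0\AA^2/G]\to[\AA^2/G]$ satisfies every hypothesis. Yet $\cC=[\PP^1/G]$ is a $\mu_2$-gerbe over $[\PP^1/(\ZZ/2)^2]$, which has three stacky points. The same example refutes your expectation that smoothness forces a nonabelian $G$ to act through a torus, since weights $(1,1)$ allow any $G$.

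The $G$-problem disappears if you use locality on the target all the way, as the paper does. Being a weighted blowup is smooth-local on the target, so base change to an \'etale atlas and make the target a smooth affine surface $\Spec\, A$.

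Even then you are missing the paper's key idea, Theorem~\ref{contracted}: a smooth divisor contracted to a smooth scheme point is algebraically simply connected. The proof runs as follows:
\begin{enumerate}
\item Extend a finite \'etale cover of $\cC$ over the infinitesimal thickenings by topological invariance.
\item Extend it to $\cX\times_{\Spec\, A}\Spec\, \hat{A}$ by coherent completeness and Tannaka duality.
\item Trivialize it over $\Spec\, \hat{A}\setminus p$ by Zariski--Nagata purity.
\item Conclude by relative normalization.
\end{enumerate}
Combined with the classification of simply connected smooth stacky curves (orbifold van Kampen on the coarse $\PP^1$, plus the rigidification and gerbe analysis), this is what yields $\cC\simeq\cP(a,b)$.

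\textbf{The second gap is monomiality of the $\cC$-adic valuation.} This is the entire content of your route, and it is only gestured at. Divisorial valuations centered at a smooth surface point are in general not monomial in any formal coordinates. For example, blow up the point where the strict transform of $\{y^2=x^3\}$ meets the exceptional curve of the $(2,3)$-weighted blowup of $\AA^2$. The new divisor gives $v(x)=2$, $v(y)=3$ and $v(y^2-x^3)=7$, which is not monomial in any coordinates.

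So monomiality must be extracted from the smoothness of $\cX$ and $\cC$. Taking images of two transversal branches as axes does not do this. It does not show those images are smooth and transverse, it does not control $f_*\mathcal{O}_\cX(-n\cC)$ beyond the values on $x$ and $y$, and it presupposes $\cC\simeq\cP(a,b)$.

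Two smaller points. With $a=\mathrm{ord}_\cC(x)$ and $b=\mathrm{ord}_\cC(y)$, the ideals should be $(x^iy^j : ai+bj\ge n)$. And recovering $\cX$ as a $\sProj$ needs $\mathcal{O}_\cX(-\cC)$ to be uniformizing as well as $f$-ample (Proposition~\ref{Prop:amplephi}).

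The paper never computes these pushforwards. Once $\cC\simeq\cP(a,b)$ is known:
\begin{enumerate}
\item It shows $\cC^2<0$ on coarse spaces via the Hodge index theorem, so $\mathcal{O}_\cX(\cC)|_\cC\simeq\mathcal{O}_\cC(-m)$ with $m>0$.
\item It gets $m=1$ from Obinna's theorem: the $m$-th root stack of $\cX$ along $\cC$ is a weighted blowup of some $\cZ_m$ at $B\mu_m$. Zariski's main theorem plus the bottom-up characterization then force $\cZ_m\simeq Z$.
\item It applies Theorem~\ref{existence theorem} to obtain a weighted blowdown $\cX\to\cZ'$, and the same Zariski and bottom-up argument identifies $\cZ'$ with $Z$.
\end{enumerate}
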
} 
{
As an application of Theorem~\ref{rep theorem}, we prove when a reduction morphism $\rho_{\mathcal{B},\cA}:\overline{\mathcal{M}}_{g,\cA}\rightarrow \overline{\mathcal{M}}_{g,\mathcal{B}}$ between two Hassett spaces \cite{Hassett03} is given by an ordinary blowup along a smooth center. 
 \begin{theorem}\label{Theorem:reductionblowup}
    Let $\rho_{\mathcal{B},\cA}:\overline{\mathcal{M}}_{g,\cA}\rightarrow \overline{\mathcal{M}}_{g,\mathcal{B}}$ be the reduction morphism with respect to weight data $\cA, \mathcal{B}$ such that $E_g(\cA, \mathcal{B})$ is a pairwise intersecting antichain in the Boolean lattice of subsets of $[n]$. Then $\rho_{\mathcal{B}, \cA}$ is an ordinary blowup of $\overline{\mathcal{M}}_{g,\mathcal{B}}$ along a smooth center with exceptional divisor $\cE_g(\cA, \mathcal{B})$.
\end{theorem}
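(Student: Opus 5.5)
The plan is to deduce the statement from Theorem~\ref{rep theorem}, applied to $f=\rho_{\mathcal{B},\cA}$, $\cX=\overline{\mathcal{M}}_{g,\cA}$, $\cZ=\overline{\mathcal{M}}_{g,\mathcal{B}}$. So we must check each hypothesis of that theorem for the reduction morphism; the conclusion then says $\rho_{\mathcal{B},\cA}$ is an ordinary blowup along a smooth center, with exceptional divisor $\cE_g(\cA,\mathcal{B})$.

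\textbf{Easy hypotheses.} By \cite{Hassett03}, both Hassett spaces are smooth, proper, separated Deligne--Mumford stacks. The reduction morphism is a birational contraction: it is proper, and its fibers are connected because the target is normal and Zariski's main theorem applies. Representability holds because $\rho_{\mathcal{B},\cA}$ is injective on automorphism groups. Indeed, an automorphism of an $\cA$-stable curve that induces the identity on its $\mathcal{B}$-stabilization must fix each contracted rational tail pointwise, since each such tail carries at least three special points (marked points together with the attaching node). This rigidity argument is where I would be careful.

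\textbf{The exceptional locus.} By Hassett's description, the exceptional locus is the union of the boundary divisors $D_{I}$ with $I\in E_g(\cA,\mathcal{B})$. A general point of $D_I$ is a curve with a rational tail carrying exactly the points indexed by $I$, with $\sum_{i\in I}a_i>1\ge\sum_{i\in I}b_i$. Each $D_I$ is a smooth Cartier boundary divisor, because $\overline{\mathcal{M}}_{g,\cA}$ is smooth and its boundary is normal crossings. The key combinatorial step is to show that the union $\cE=\cE_g(\cA,\mathcal{B})$ is smooth, i.e.\ that no two distinct $D_I,D_J$ meet:
\begin{itemize}
\item If $D_I\cap D_J\neq\emptyset$, there is a curve with two rational tails, or nested tails, indexed by $I$ and $J$.
\item Disjoint tails would force $I\cap J=\emptyset$, which contradicts the pairwise-intersecting hypothesis.
\item Nested tails would force $I\subsetneq J$ or $J\subsetneq I$, which contradicts the antichain hypothesis.
\end{itemize}
Hence $\cE$ is a disjoint union of smooth divisors, and so it is smooth.

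\textbf{The center.} Set $\cY=\bigsqcup_I \cY_I$, where $\cY_I\subseteq\overline{\mathcal{M}}_{g,\mathcal{B}}$ is the locus where the points of $I$ coincide. I would identify $\cY_I$ with the image of a product $\overline{\mathcal{M}}_{g,\mathcal{B}'}$, in which the points of $I$ are replaced by a single point of weight $\sum_{i\in I}b_i$. This image is a closed embedding, hence smooth. The same antichain/intersecting argument shows the $\cY_I$ are disjoint. We must also check $f^{-1}(\cY)=\cE$ set-theoretically, and that $f$ is an isomorphism off $\cE$. Both follow from the modular description of $\rho$, which only collapses tails whose total weight drops below $1$.

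\textbf{Main obstacle.} I expect the hardest point to be smoothness of the center $\cY$, and in particular its behavior where the coincidence loci meet other boundary strata. I would handle this with a local deformation computation in Hassett's framework: the coincidence locus is cut out locally by the equations $x_i=x_j$ for $i,j\in I$, which form a regular sequence. Once this is established, all hypotheses of Theorem~\ref{rep theorem} hold, and the statement follows.
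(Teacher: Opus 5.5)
Your strategy is the paper's. You check the hypotheses of Theorem~\ref{rep theorem} for $\rho_{\mathcal{B},\cA}$ using representability, Hassett's description of the exceptional divisors and of $\rho_{\mathcal{B},\cA}^{-1}(\cY)$, and disjointness of the $D_I$ from the antichain/intersecting hypothesis. Smoothness of the center is not really the obstacle: the paper takes it from \cite[Thm.~4.8]{AlexeevGuy_08}, and your regular-sequence computation $x_i-x_{i_0}$, $i\in I\setminus\{i_0\}$, is an acceptable substitute.

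The genuine gap is in the combinatorial step you call key. Your dichotomy ``two disjoint rational tails or nested tails'' assumes that the $I$-side of the separating node $q_I$ is the \emph{unique} genus-$0$ side. That holds only for $g\ge 1$. For $g=0$ both sides of every separating node are rational. A point of $D_I\cap D_{I'}$ with $q_I\neq q_{I'}$ can then be a chain $C_1\cup C_2\cup C_3$ with markings $I_1,I_2,I_3$, where $q_I=C_1\cap C_2$, $q_{I'}=C_2\cap C_3$, $I=I_2\cup I_3$ and $I'=I_1\cup I_2$. Stability forces $I_1,I_2,I_3\neq\emptyset$, so $I\cap I'=I_2\neq\emptyset$ and neither set contains the other. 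Such a pair is compatible with $E_0(\cA,\mathcal{B})$ being a pairwise intersecting antichain. So your hypothesis alone does not exclude it, and smoothness of $\cE_g(\cA,\mathcal{B})$ is not yet justified when $g=0$, which the theorem covers.

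The missing ingredient is the $\mathcal{B}$-weights, as in the paper's Lemma~\ref{Lemma:Exceptionalintersect}. In this configuration $I\cup I'=[n]$, hence $b_I+b_{I'}=b_{[n]}+b_{I\cap I'}>2$, because $(0,\mathcal{B})$ being valid input data forces $b_{[n]}>2$. This contradicts $b_I,b_{I'}\le 1$, which is part of the definition of $\mathfrak{E}_0(\cA,\mathcal{B})$. For $g\ge 1$ the configuration cannot occur at all, since it forces $g_1=g_2=g_3=0$.

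With this case added, the $D_I$ are pairwise disjoint and the rest of your argument goes through. Disjointness of the centers $\cY_I$ then follows most cleanly from disjointness of the $D_I$, using $\rho_{\mathcal{B},\cA}^{-1}(\cY_I)=D_I$ set-theoretically and surjectivity of $\rho_{\mathcal{B},\cA}$. Repeating the ``same'' set-theoretic argument would again need $b_{[n]}>2$ in genus $0$.
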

}
\subsection{Strategy of Proof} 

{To prove Theorem~\ref{rep theorem} we first prove Corollary~\ref{main scheme}. 
Indeed,} we prove Corollary~\ref{main scheme} by first demonstrating the case when the center $Y$ of the contraction $f:X\rightarrow Z$ is a point, see Proposition~\ref{contraction to a point case}. This is done by showing that the line bundle $L:=\mathcal{O}_X(-E)|_E$ is ample on the exceptional $E$ and the canonical bundle of $E$ is $K_E=-nL$, using smoothness of $E$ and bend-and-break ($n=\dim X$). By Kobayashi-Ochiai's characterization of projective space \cite{KO73} we obtain $(E, H)\cong (\PP^{n-1}, \mathcal{O}_{\PP^{n-1}}(1))$, whereby the proof follows by Zariski's Main Theorem. 
To prove Corollary~\ref{main scheme} for the general center $Y$ of positive dimension $r$, we reduce to the point case using Bertini's theorem to cut down $Z$. Then to prove Theorem~\ref{rep theorem} we reduce to the case of Corollary~\ref{main scheme} by working \'etale-locally.

To prove Theorem~\ref{main} we show that the contracted smooth stacky curve $\cC$ in the statement of the theorem is necessarily algebraically simply-connected. We then prove that any algebraically simply-connected smooth stacky curve is $\cC \simeq \cP(a,b)$ for some $a,b>0.$ We conclude using the pushout properties of weighted-blowups \cite[Thm. 1.1]{SWB2024}.

 \subsection{Organization of Manuscript} In Section \ref{overview} we recall some preliminaries on the structure of smooth separated Deligne--Mumford stacks with generic trivial stabilizers and the theory of stacky proj and (regular) weighted blowups following \cite{MR3719470} and \cite{QR} respectively. We also note a criterion for smooth weighted blowdowns \cite{SWB2024} which leads to Question~\ref{conje}. In Section~\ref{Section:Scheme case} we prove Corollary~\ref{main scheme}. This is established by first proving the case when the center $\cY$ is a point, see Proposition~\ref{contraction to a point case}, and then reducing the general center $\cY$ to the point case by an involved Bertini argument in Section~\ref{Slicing Section}.

 In Section~\ref{section:representable} we prove Theorem~\ref{rep theorem} by working \'etale-locally and reducing to the case of Corollary~\ref{main scheme}. In Section~\ref{section:DMsurface} we prove Theorem~\ref{main}. First, in Section~\ref{contracted cartier divisor} we prove that the only smooth Cartier divisors within a smooth Deligne--Mumford stack (of any dimension) which can be contracted to a point in a smooth scheme are algebraically simply-connected, see Theorem~\ref{contracted}. 
 In Section~\ref{sc stacky curves}, we prove that the only smooth algebraically simply-connected stacky curves with projective coarse moduli space are weighted projective curves $\cP(a,b).$
 Next, in Section~\ref{neg} we first prove that the contracted smooth stacky curve $\cC\subseteq \cX$, when $\dim \cX=2$, is negative that is, $\mathcal{O}_{\cX}(\cC)|_{\cC} \simeq \mathcal{O}_{\cC}(-m)$ for some $m > 0$. We conclude by showing $m = 1$ and completing the proof of Theorem~\ref{main}.\
  Finally, in Section~\ref{section:applications} we give an application of Theorem~\ref{rep theorem}, demonstrating when a reduction morphism between two Hassett spaces is given by an ordinary blowup along a smooth center.

 \subsection*{Conventions} Throughout this article we are working over the field $\CC$ and all schemes and stacks are assumed to be noetherian and essentially of finite type over $\CC$. 
 A {\em smooth stacky curve} is a smooth, proper Deligne--Mumford stack of dimension 1 with a {\em projective} coarse moduli space. 
 A contraction morphism $f : \cX \to \cZ$ of smooth, separated Deligne--Mumford stacks is a surjective morphism satisfying $f_*\mathcal{O}_{\cX} \simeq \mathcal{O}_{\cZ}$ whose induced morphism on the coarse moduli spaces $f : X \to Z$ is projective.

 \subsection*{Acknowledgements} The authors thank Giovanni Inchiostro, S\'andor Kov\'acs, and Michele Pernice for insightful, thoughtful, and productive conversations regarding birational geometry, weighted blowups, and stack theory.
 Their patience, insight, and wealth of knowledge is indispensable in this project. The authors are also thankful to Dan Abramovich and Andrea Di Lorenzo for valuable comments and suggestions on an earlier draft. The authors also thank our fellow graduate students Ting Gong for enlightening conversations about gerbes, and Jay Reiter for explaining topological invariance of the fundamental group. TK is supported by a Fulbright Future Scholarship funded by the Kinghorn Foundation and supported in part by funds from NSF grants DMS-2502104 and DMS-2100389. SG is supported by a Simons Dissertation Fellowship SFI-MPS-SDF-00014771 and partially by NSF CAREER DMS-2044564 and NSF FRG DMS-2151718 grants.

 \section{Preliminaries}
 \label{overview}

 In this section, we briefly recall a convenient characterization of smooth separated Deligne--Mumford stacks and the theory of weighted blowups, which will be used in the proof of Theorem~\ref{main} in Section~\ref{section:DMsurface}. 
 We refer the reader to \cite{kresch2023birationalgeometrydelignemumfordstacks} for more on the  birational geometry of Deligne--Mumford stacks.

 \subsection{Root stacks and smooth separated Deligne--Mumford stacks with generically trivial stabilizers} Let $\cX$ be an algebraic stack equipped with a line bundle $\cL$ on $\cX$ and section $s \in \Gamma(\cX, \cL)$. To this data, one can associate a stack $\cX(\sqrt[\leftroot{-2}\uproot{2}r]{\cL,s})\rightarrow \cX$ called the $r^{th}$-root stack of $(\cL, s)$ on $\cX$ for all $r>0$, as follows.
 
 \begin{Def}[{\cite[Sec. 10.3]{MR3495343}}]
    The data $(\cL, s)$ defines a morphism $[\cL, s] : \cX \to [\AA^1/\GG_m]$, where $\GG_m$ acts on $\AA^1$ with weight 1.
   For any $r>0$, let $\mathbf{r} : [\AA^1/\GG_m] \to [\AA^1/\GG_m]$ be the morphism induced by $(M, t) \mapsto (M^{\otimes r}, t^{\otimes r})$ on line bundles and sections.
   Then, the {\em r-th root stack $\cX(\sqrt[\leftroot{-2}\uproot{2}r]{\cL,s})$ of $\cX$ along $(\cL,s)$} is the fiber product
   \[
   \begin{tikzcd}
   {\cX(\sqrt[\leftroot{-2}\uproot{2}r]{\cL,s})} \arrow[d] \arrow[r] & {[\AA^1/\GG_m]} \arrow[d, "\mathbf{r}"] \\
   \cX \arrow[r, "{[\cL, s]}"]                                     & {[\AA^1/\GG_m]}               
   \end{tikzcd}
   \]
 \end{Def} 
 
 \begin{Notation}
   If $D$ denotes the Cartier divisor associated to $s \in \Gamma(\cX, \cL)$ then we write $\cX(\sqrt[\leftroot{-2}\uproot{2}r]D) := \cX(\sqrt[\leftroot{-2}\uproot{2}r]{L,s})$.
 \end{Notation}  
 
 \begin{Remark}
 \begin{enumerate}
     \item Root stacks provide examples of algebraic stacks that are birational to the original stack, but have additional stacky structures. Consequently, root stacks are not related to the original stack by a sequence of classical blowups. 
     \item By \cite[Sec 10.3.9, p.~217]{MR3495343}, a morphism $T\rightarrow \cX(\sqrt[\leftroot{-2}\uproot{2}r]{\cL,s})$ is equivalent to the data of a morphism $f:T\to\cX$, a line bundle $\cG$ on $T$, a section $s_{\cG}\in\Gamma(T,\cG)$, and an isomorphism $\phi:\cG^{\otimes r}\xrightarrow{\sim}f^*\cL$ such that $\phi(s_{\cG}^{\otimes r})=f^*s$.
 \end{enumerate}
 \end{Remark}

 Suppose now that $\cX$ is a smooth separated Deligne--Mumford stack with coarse moduli space $\pi : \cX \to X$. If $\cX$ has generic trivial stabilizer, then one can remarkably recover the stack $\cX$ from $X$ and the ramification data.  
 This ``bottom up" characterization is due to the following result of Geraschenko and Satriano.

 \begin{theorem}[{\cite[Thm. 1]{MR3719470}}]\label{Theorem:bottomup}
   Suppose $\cX$ is a smooth separated Deligne--Mumford stack with coarse moduli space $\pi : \cX \to X$ and trivial generic stabilizer. 
   Then, we can recover the stack $\cX$ from $X$ as follows
   \[
   \cX \simeq X^{\mathrm{can}}(\sqrt{(D,\boldsymbol{e})})^{\mathrm{can}} \to X^{\mathrm{can}}(\sqrt{(D,\boldsymbol{e})}) \to X^{\mathrm{can}} \to X,
   \]
   where $(-)^{\mathrm{can}}$ denotes the canonical stack associated to a stack of finite quotient singularities (c.f. \cite[Prop. 2.8]{MR1005008}), and $(D, \boldsymbol{e})$ denotes the ramification data of $\cX \to X^{\mathrm{can}}$.
 \end{theorem}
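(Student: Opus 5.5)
We do not reprove this result in the paper; it is \cite[Thm. 1]{MR3719470}. The following is the line of argument we would follow to establish it. It is built on one universal property. For a normal variety $X$ with finite quotient singularities, the canonical stack $X^{\mathrm{can}}$ is the smooth Deligne--Mumford stack with trivial generic stabilizer and coarse space $X$ whose coarse map is an isomorphism over a locus with complement of codimension $\geq 2$. We use the following universal property (cf. \cite[Prop. 2.8]{MR1005008}): any dominant morphism $\cY \to X$ from a smooth Deligne--Mumford stack with trivial generic stabilizer, which is an isomorphism in codimension $1$, lifts uniquely to $\cY \to X^{\mathrm{can}}$. Since both $\cY$ and $X^{\mathrm{can}}$ are smooth with coarse space $X$ and the map is an isomorphism in codimension $1$, purity should force it to be an isomorphism. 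Equivalently, $X^{\mathrm{can}}$ is characterized up to unique isomorphism by these properties.

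\textbf{Step 1 (local structure).} We work \'etale-locally on $X$. Since $\cX$ is smooth, separated and tame over $\CC$, étale locally we can write $\cX \simeq [V/G]$ with $V$ an affine space and $G \subseteq GL(V)$ finite; $G$ acts faithfully because the generic stabilizer is trivial. Let $H \trianglelefteq G$ be the subgroup generated by pseudo-reflections. By Chevalley--Shephard--Todd, $V/H$ is again an affine space. The group $G/H$ acts on $V/H$ with no pseudo-reflections, so
\[
[(V/H)/(G/H)] \to V/G
\]
is an isomorphism in codimension $1$. By uniqueness, these local models glue to $X^{\mathrm{can}}$. The quotient $V \to V/H$ induces a morphism $\cX \to X^{\mathrm{can}}$ over $X$. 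This morphism is canonical, either by the universal property or directly because $H$ is characteristic in $G$. It therefore glues globally.

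\textbf{Step 2 (ramification data).} The morphism $\cX \to X^{\mathrm{can}}$ is an isomorphism away from a divisor $D=\sum D_i$. This is the image of the reflecting hyperplanes of $H$. Along the generic point of $D_i$, the stabilizer is cyclic of some order $e_i$, generated by a pseudo-reflection. In the local model $V \to V/H$, the pullback of $D_i$ to $\cX$ is $e_i$ times a reduced divisor $\cD_i$. By the functorial description of root stacks (Remark above, item (2)), the data $\bigl(\mathcal{O}(\cD_i), s_{\cD_i}\bigr)$, together with the isomorphism $\mathcal{O}(\cD_i)^{\otimes e_i} \simeq \mathcal{O}(D_i)|_{\cX}$, yield a factorization
\[
\cX \to X^{\mathrm{can}}\bigl(\sqrt{(D,\boldsymbol{e})}\bigr) \to X^{\mathrm{can}}.
\]

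\textbf{Step 3 (conclusion).} The root stack may be singular where the components $D_i$ meet non-transversally. However, it has finite quotient singularities and trivial generic stabilizer, so it has a canonical stack of its own. The morphism $\cX \to X^{\mathrm{can}}(\sqrt{(D,\boldsymbol{e})})$ is an isomorphism in codimension $1$. Away from $D$ both sides agree with $X^{\mathrm{can}}$. At the generic point of each $D_i$ both sides are the $e_i$-th root of a smooth divisor: on $\cX$ because the stabilizer is $\mu_{e_i}$ with $\cD_i$ reduced, and on the root stack by construction. Applying the universal property of the canonical stack to this morphism therefore gives
\[
\cX \simeq \bigl(X^{\mathrm{can}}(\sqrt{(D,\boldsymbol{e})})\bigr)^{\mathrm{can}}.
\]

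\textbf{Main obstacle.} We expect the main difficulty to be the universal property itself. This is the claim that a morphism of smooth Deligne--Mumford stacks with the same coarse space, which is an isomorphism in codimension $1$, is an isomorphism. We would try to prove it with Zariski--Nagata purity of the branch locus on the local étale covers $V \to [V/G]$. The other delicate point is the codimension-$1$ comparison at generic points of the $D_i$, which requires verifying that the stabilizers there are exactly $\mu_{e_i}$.
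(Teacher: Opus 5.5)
The paper gives no proof of this statement: it is quoted from \cite[Thm.~1]{MR3719470} and used as a black box, so there is no internal argument to compare yours with. Your outline is the natural route. You lift $\pi$ to $X^{\mathrm{can}}$, then lift further to the root stack because the pulled-back branch divisors are divisible by the $e_i$, then recognise $\cX$ as the canonical stack of the root stack by a codimension-one comparison plus purity. It can be completed, but two steps are not justified as written.

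\textbf{Step 1: existence and gluing of $\cX\to X^{\mathrm{can}}$.} The universal property you formulate only applies to maps that are already isomorphisms in codimension one. The map $\pi$ is not one, since it is ramified along the divisors over $D$, so that property neither produces nor glues the morphism. ``$H$ is characteristic in $G$'' is also not the relevant point. On overlaps you compare the chart at $0\in V$ with the chart at a nearby point $v$, which is $[V/G_v]$ with its own pseudo-reflection subgroup. Compatibility needs that subgroup to equal $H\cap G_v$, which is Steinberg's theorem on stabilizers in reflection groups.

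The cleaner repair is the uniqueness half of the Fantechi--Mann--Nironi lifting property. Suppose $a,b\colon\cX\to X^{\mathrm{can}}$ both lift $\pi$. They are canonically $2$-isomorphic over the dense open where $X^{\mathrm{can}}\to X$ is an isomorphism. Since $\underline{\mathrm{Isom}}(a,b)\to\cX$ is finite and unramified, normality of $\cX$ extends that section uniquely. So lifts are unique up to unique $2$-isomorphism, and your local lifts glue.

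\textbf{Step 3: quotient singularities of the root stack.} You assert that $\mathcal{R}:=X^{\mathrm{can}}(\sqrt{(D,\boldsymbol{e})})$ has finite quotient singularities. This is not automatic, because cyclic covers branched along singular divisors need not have quotient singularities. Also, $\mathcal{R}$ is singular not only where components meet non-transversally but over singular points of a single $D_i$. For example, for $S_3$ acting on its reflection representation, $D$ is the cuspidal discriminant and the cover $t^2=\Delta$ defining $\mathcal{R}$ has an $A_2$ singularity.

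Get the property from $\cX$ instead:
\begin{enumerate}
\item $\mathcal{R}$ is normal: the local covers $t_i^{e_i}=s_i$ are complete intersections that are regular in codimension one.
\item $\cX\to\mathcal{R}$ is proper, quasi-finite and birational. Hence its relative coarse space is finite and birational over the normal stack $\mathcal{R}$, and so equals $\mathcal{R}$.
\item Pulling back to an \'etale chart $U\to\mathcal{R}$ gives a smooth stack with coarse space $U$ whose coarse map is an isomorphism in codimension one.
\end{enumerate}
This shows at once that $U$ has quotient singularities and, by your purity characterization, that the pullback is $U^{\mathrm{can}}$. That is exactly $\cX\simeq\mathcal{R}^{\mathrm{can}}$.
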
  

\begin{Remark}
    More generally, a smooth separated Deligne--Mumford stack $\cX$ is uniquely determined by its coarse moduli space $X$ and an open smooth Deligne--Mumford substack $\cU \subseteq \cX$ containing all codimension one points, from the uniqueness of \cite[Thm. 3.1]{SWB2024}.
 \end{Remark}

 \begin{Remark}
   If $\cX$ is a smooth separated stacky curve, then the bottom up characterization is simpler. Indeed, the coarse moduli space of a smooth separated stacky curve is smooth, which eliminates the need for the canonical stack. Hence, the bottom up characterization for smooth separated stacky curves only needs root stacks, see Proposition \ref{Bottom up}.
 \end{Remark}

 \subsection{Normalization of Deligne--Mumford stacks}
Many standard constructions for schemes extend to algebraic stacks. 
 For example if $\cA$ is a quasi-coherent sheaf of $\mathcal{O}_{\cX}$-algebras on $\cX$ then there is a {\em relative spectrum} $\underline{\Spec}_{\cX} \, \cA$ \cite[Sec. 10.2.1]{MR3495343} which defines an algebraic stack.
 Using the relative spectrum we can define the {\em normalization} of a reduced Deligne--Mumford stack. In what follows, sheaves over $\cX$ are defined on the canonically associated small \'etale site $\cX_{\text{\'et}}$. 
 First, we require the following proposition

 \begin{Prop}[{\cite[\href{https://stacks.math.columbia.edu/tag/03GG}{Tag 03GG}]{stacks-project}}]
 \label{smooth norm}
Let $Z$ be a scheme, $\mathcal{B}$ a quasi-coherent $\mathcal{O}_Z$-algebra and $\tilde{\mathcal{B}}$ be the integral closure of $\mathcal{O}_Z$ in $\mathcal{B}.$ If $f : X \to Z$ is a smooth morphism, then $f^*\tilde{\mathcal{B}}$ is identified with the integral closure of $\mathcal{O}_X$ in $f^*\mathcal{B}.$
 \end{Prop}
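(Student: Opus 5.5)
Since the statement is local on $Z$ and integral closure commutes with localization, I would reduce to the affine case. The plan is to show that both sides of the claimed identification are quasi-coherent subalgebras of $f^*\mathcal{B}$, that one contains the other, and then to prove equality using smoothness.

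First, localize. The question is Zariski local on $X$ and $Z$. Integral closure of $\mathcal{O}_Z$ in $\mathcal{B}$ is computed on affines as the integral closure of the ring $A$ in the $A$-algebra $B$, and this construction commutes with localization. So we may take $Z=\Spec A$, $X=\Spec A'$ with $A\to A'$ smooth, and $\mathcal{B}=\widetilde{B}$. Writing $B'$ for the integral closure of $A$ in $B$, we must show that the integral closure of $A'$ in $B\otimes_A A'$ is exactly $B'\otimes_A A'$.

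Next, the easy inclusion. Smooth morphisms are flat, so $B'\otimes_A A'\to B\otimes_A A'$ is injective. Its image is integral over $A'$, because integrality is preserved by base change. Hence $B'\otimes_A A'$ lies inside the integral closure $C$ of $A'$ in $B\otimes_A A'$.

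For the reverse inclusion, I would use the local structure of smooth maps: locally, $A\to A'$ factors as an étale map $A[x_1,\dots,x_n]\to A'$.

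1. \emph{Polynomial step.} The integral closure of $A[x]$ in $B[x]$ is $B'[x]$. This is the classical lemma; one proves it by a Frobenius-free argument using monic polynomials, or equivalently by showing that the coefficients of an integral polynomial are integral, via the trick with $x^N$ and a product of monic polynomials. Induction then handles $n$ variables.

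2. \emph{Étale step.} Integral closure commutes with étale base change. This is the standard fact that normalization commutes with étale, and more generally ind-étale, maps. I would prove it by passing to a strictly henselian local situation, or by using the Stacks Project lemma that integral closure commutes with smooth/étale base change when flatness plus the geometric-regularity argument applies. For étale maps one can argue as follows. $C$ is the set of elements integral over $A'$. Since $A'$ is étale over $A'' := A[x_1,\dots,x_n]$, the algebra $A'\otimes_{A''}A'$ splits off the diagonal $A'$ as a direct factor. A descent argument along the faithfully flat cover then identifies $C$ with $(\text{integral closure over } A'')\otimes_{A''}A'$.

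The main obstacle is the étale step, specifically showing that an element of $B\otimes_A A'$ that is integral over $A'$ already comes from the base change of the integral closure. I would handle it via faithfully flat descent. Integral closure commutes with flat base change on the level of the condition that the closure is stable, and the diagonal idempotent of an étale algebra lets one descend integrality. Combining the two steps gives the identification.
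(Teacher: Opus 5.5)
The paper gives no argument of its own here; it only cites the Stacks Project, so your sketch has to stand on its own. The outer structure is fine. Flatness gives $B'\otimes_A A'\subseteq C$. Smooth maps are Zariski-locally étale over $A''=A[x_1,\dots,x_n]$. The polynomial case follows from the $x^N$ trick together with the fact that a monic divisor of a monic polynomial has integral coefficients.

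The gap is the étale step. It carries all the content, and what you wrote does not prove it. The principle you lean on is false: integral closure does not commute with flat base change. Take $A=\CC[t]$ and $B=\CC(t)$, so $B'=\CC[t]$, and the free $A$-algebra $A'=\CC[t,y]/(y^2-t^3)$. Then $y/t\in B\otimes_A A'$ satisfies $(y/t)^2=t$, so it is integral over $A'$, yet $y/t\notin B'\otimes_A A'=A'$.

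The descent you sketch is circular. To descend $C$ along $A''\to A'$, you would need its two pullbacks to $(B\otimes_A A'')\otimes_{A''}A'\otimes_{A''}A'$ to agree. That means knowing integral closure commutes with the étale maps $A'\rightrightarrows A'\otimes_{A''}A'$, which is the very statement at issue. The diagonal idempotent $A'\otimes_{A''}A'\cong A'\times W$ only handles the factor $A'$ and says nothing about $W$. Passing to strict henselizations is an ind-étale base change, so it is circular in the same way, and citing ``the Stacks Project lemma'' simply quotes the result.

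To close the gap you need an honest argument for an étale map $R\to R_1$, an $R$-algebra $S$, and $S'$ the integral closure of $R$ in $S$. Here is one.

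\begin{enumerate}
\item Since integral closure commutes with localization, you may assume $R_1=R[t]_g/(f)$ is standard étale: $f$ monic of degree $n$ and $f'(t)$ a unit in $R_1$.
\item Set $B_0=R[t]/(f)$, free on $1,t,\dots,t^{n-1}$, and write $f(T)=(T-t)\sum_{j<n}b_jT^j$ in $B_0[T]$.
\item Euler's identity says $\mathrm{Tr}_{B_0/R}(x)=\tau(f'(t)x)$, where $\tau$ takes the coefficient of $t^{n-1}$, and $\tau(t^ib_j)=\delta_{ij}$. Hence $f'(t)\,z=\sum_j \mathrm{Tr}(zb_j)\,t^j$ for every $z\in S\otimes_R B_0$, with traces taken over $S$.
\item Let $\alpha\in S\otimes_R R_1$ be integral over $R_1$. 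After multiplying by a power of $g$, $\alpha$ is the image of some $z\in S\otimes_R B_0$ that is integral over $B_0$, hence over $R$.
\item Each $zb_j$ is integral over $R$, and a trace of such an element lies in $S'$. To see this, note that modulo every prime of $S$ the trace is a sum of $n$ roots of an integral equation over $R$. So a fixed monic polynomial over $R$ sends it into the nilradical.
\item Therefore $f'(t)z\in S'\otimes_R B_0$, and inverting $f'(t)$ and $g$ gives $\alpha\in S'\otimes_R R_1$.
\end{enumerate}

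Combined with your polynomial step, this completes the proof.
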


\begin{Def}[Normalization]
Let $\cX$ be a reduced algebraic stack and let $\cA$ be the sheaf of $\mathcal{O}_{\cX}$-algebras defined by setting $\mathcal{A}(\Spec \, A \to \cX)$ to be the integral closure of $A$ in the total ring of fractions $Q(A)$ where $\Spec \, A \to \cX$ is an \'etale affine chart. Since normalization commutes with \'etale extensions (Proposition \ref{smooth norm}) $\cA$ is quasi-coherent, and we define the {\em normalization} $\nu : \cX^\nu \to \cX$ to be 
\[
\cX^\nu := \underline{\Spec}_{\cX} \, \cA.
\]
\end{Def}

\begin{Remark}
\label{reduced not int}
Here we state two remarks.
\begin{enumerate}
\item Similar to schemes, the normalization is unique up to unique isomorphism.

\item If $\cX$ is disconnected and every connected component is integral, then the normalization is precisely the disjoint union of the normalization of each component. That is, by writing $\cX = \cX_1 \cup \cdots \cup \cX_n$ 
\[
\cX^\nu = \bigsqcup_{i=1}^n \cX_i^\nu.
\]
\end{enumerate}
\end{Remark}

Similar to schemes there is also a notion of relative normalization

\begin{Def}[Relative normalization]
Let $f : \cY \to \cX$ be a quasi-compact and quasi-separated morphism of reduced Deligne--Mumford stacks so that $f_* \mathcal{O}_{\cY}$ is a quasi-coherent sheaf of $\mathcal{O}_{\cX}$-algebras. Let $\cA$ denote the normalization of $\mathcal{O}_{\cX}$ in $f_* \mathcal{O}_{\cY}$ (\cite[\href{https://stacks.math.columbia.edu/tag/035F}{Tag 035F}]{stacks-project}). We define the {\em normalization} of $\cX$ in {$f :\cY \to \cX$} to be
\[
\cX^\nu_f := \underline{\Spec}_{\cX} \, \cA.
\]
\end{Def}

\begin{Remark}
Here we state two remarks.
\label{rel norm}
\begin{enumerate}
\item The relative normalization comes equipped with a natural factorization of $f$ (\cite[\href{https://stacks.math.columbia.edu/tag/0BAK}{Tag 0BAK}]{stacks-project})
\[
\cY \xrightarrow{f'} \cX_f^\nu \xrightarrow{\nu} \cX,
\]
and the relative normalization is unique up to unique isomorphism.
\item By unwinding the definitions, we observe that if $\cY$ is disconnected with each connected component integral, and $\cX$ is integral, then the normalization of $\cX$ in $f : \cY \to \cX$ is precisely the disjoint union of the normalization of $\cX$ in each $f_i : \cY_i \to \cX$, i.e.,
\[
\cX^\nu_f = \bigsqcup_{i=1}^n X^\nu_{f_i}
\]
where $\cY = \cY_1 \cup \cdots \cup \cY_n$ are the components.
\end{enumerate}
\end{Remark}

We now state three results, which we will need in Section \ref{section:DMsurface}.
The first two are standard and are presented without proof.

\begin{Lemma}
\label{int closed lemma}
Let $A$ be a noetherian normal ring and suppose $V \subseteq \Spec \,A$ is an open dense subscheme.
Then, $A$ is integrally closed in $\Gamma(V, \mathcal{O}_V)$ under the structure morphism $A \to \Gamma(V, \mathcal{O}_V).$
\end{Lemma}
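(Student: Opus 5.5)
The plan is to reduce to a statement about elements of the total ring of fractions and then use the algebraic Hartogs property of normal noetherian rings. Since $V\subseteq \Spec A$ is open and dense, it contains every generic point of $\Spec A$. A noetherian normal ring is a finite product of normal noetherian domains, corresponding to the connected components of $\Spec A$; each component meets $V$. The restriction map $A \to \Gamma(V,\mathcal{O}_V)$ is therefore injective, and $\Gamma(V,\mathcal{O}_V)$ embeds into the total ring of fractions $Q(A)=\prod_i K_i$, because each component of $V$ is an integral open subscheme. This lets me treat the problem one factor at a time, so I will assume from now on that $A$ is a normal noetherian domain with fraction field $K$, and that
\[
A\subseteq \Gamma(V,\mathcal{O}_V)\subseteq K .
\]

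Next, take $s\in \Gamma(V,\mathcal{O}_V)$ integral over $A$, and view it as an element of $K$ through the inclusion above. Then $s$ is an element of $K$ satisfying a monic polynomial with coefficients in $A$. Since $A$ is integrally closed in its fraction field $K$, this forces $s\in A$. Hence $A$ is integrally closed in $\Gamma(V,\mathcal{O}_V)$.

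The step needing the most care is the identification of $\Gamma(V,\mathcal{O}_V)$ with a subring of $Q(A)$, especially in the non-domain case. Here I would use that $V=\bigcup D(g_j)$ for finitely many $g_j$, with each $g_j$ a nonzerodivisor on the relevant components. Sections over $V$ are compatible tuples in the rings $A_{g_j}\subseteq Q(A)$, and compatibility inside $Q(A)$ glues them to a single element. An alternative route proves more, namely $\Gamma(V,\mathcal{O}_V)=A$ when $V$ contains all height-one primes, via $A=\bigcap_{\mathrm{ht}\,\mathfrak p=1}A_{\mathfrak p}$. That stronger statement is not needed here, since density alone already gives the embedding into $Q(A)$.
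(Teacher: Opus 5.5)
Your proof is correct. The paper gives no proof of this lemma; it is stated as standard and "presented without proof." Your argument is the standard one that fills the gap:
\begin{enumerate}
\item split $A=\prod_i A_i$ into normal noetherian domains;
\item note that density forces $V$ to meet every $\Spec A_i$, so $\Gamma(V,\mathcal{O}_V)=\prod_i\Gamma(V_i,\mathcal{O}_{V_i})$ with $A_i\subseteq\Gamma(V_i,\mathcal{O}_{V_i})\subseteq K_i$;
\item project a monic equation onto each factor;
\item use that $A_i$ is integrally closed in $K_i$.
\end{enumerate}

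Two minor presentation points, neither of which is a gap. First, your opening sentence announces the algebraic Hartogs property, but the argument never uses it. Only integral closedness of $A_i$ in $K_i$ is needed, which is exactly why density alone suffices. Second, in your third paragraph $A_{g_j}$ is not literally a subring of $Q(A)$ when $g_j$ vanishes on some factor. That gluing discussion is superfluous anyway: the decomposition $V=\bigsqcup_i V_i$ into integral open subschemes, from your first paragraph, already gives the embedding $\Gamma(V,\mathcal{O}_V)\hookrightarrow\prod_i K_i$.
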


\begin{Prop}
\label{norm in open subscheme}
Suppose $\cX$ is a normal (potentially disconnected) Deligne--Mumford stack, and suppose $U \subseteq \cX$ is a dense open subscheme. Then, the normalization of $\cX$ in $i : U \hookrightarrow \cX$ is precisely $\cX$.
\end{Prop}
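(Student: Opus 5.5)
The plan is to reduce the statement to Lemma~\ref{int closed lemma} by working on an étale atlas, using that the relative normalization is defined étale-locally and commutes with smooth (in particular étale) base change by Proposition~\ref{smooth norm}. Write $i : U \hookrightarrow \cX$ for the open immersion and $\cA$ for the integral closure of $\mathcal{O}_{\cX}$ in $i_*\mathcal{O}_U$, so that $\cX^\nu_i = \underline{\Spec}_{\cX}\,\cA$. It suffices to show that the structure map $\mathcal{O}_{\cX} \to \cA$ is an isomorphism of sheaves of $\mathcal{O}_{\cX}$-algebras: then $\underline{\Spec}_{\cX}\,\cA = \underline{\Spec}_{\cX}\,\mathcal{O}_{\cX} = \cX$, compatibly with the factorization of Remark~\ref{rel norm}.

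First, $i$ is quasi-compact and quasi-separated because $\cX$ is noetherian, so $i_*\mathcal{O}_U$ is quasi-coherent and formation of $i_*\mathcal{O}_U$ commutes with flat, hence étale, base change. Second, checking that $\mathcal{O}_{\cX}\to\cA$ is an isomorphism can be done on an étale affine chart $\Spec A \to \cX$. On such a chart the pullback of $i_*\mathcal{O}_U$ is $\Gamma(V,\mathcal{O}_V)$, where $V = U\times_{\cX}\Spec A$ is an open subscheme of $\Spec A$. By Proposition~\ref{smooth norm}, the pullback of $\cA$ is the integral closure of $A$ in $\Gamma(V,\mathcal{O}_V)$.

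Third, the hypotheses must be verified on the chart. $A$ is noetherian by the standing conventions. It is normal because normality is étale-local and $\cX$ is normal. $V$ is dense in $\Spec A$ because the preimage of a dense open under an étale (hence open, flat) morphism is dense: every generic point of $\Spec A$ maps to a generic point of $\cX$, and these lie in $U$. Lemma~\ref{int closed lemma} then gives that $A$ is integrally closed in $\Gamma(V,\mathcal{O}_V)$, so the pullback of $\mathcal{O}_{\cX} \to \cA$ is an isomorphism on every chart.

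If $\cX$ is disconnected, the argument above is already local, so nothing changes; alternatively one can apply Remark~\ref{rel norm} componentwise. The only mildly delicate step is the density of $V$ together with the compatibility of $i_*$ and integral closure with étale base change. I would handle this by citing flat base change for $i_*$ and Proposition~\ref{smooth norm}, and by noting that étale morphisms are open and send generic points to generic points.
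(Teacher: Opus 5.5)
Your argument is correct. The paper states this proposition as standard and gives no proof, but your route is exactly the template it uses for Proposition~\ref{rel norm in open subscheme}: pass to an \'etale affine chart, use flat base change and Proposition~\ref{smooth norm} to identify the pulled-back algebra with the integral closure of $A$ in $\Gamma(V,\mathcal{O}_V)$, check that $V$ is dense, and invoke Lemma~\ref{int closed lemma}. One detail should be stated explicitly, as the paper does in that analogous proof: $A \to \Gamma(V,\mathcal{O}_V)$ is injective because $A$ is reduced and $V$ is dense, and this is what upgrades ``$A$ is integrally closed in $\Gamma(V,\mathcal{O}_V)$'' to ``$\mathcal{O}_{\cX}\to\cA$ is an isomorphism.''
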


\begin{Prop}
\label{rel norm in open subscheme}
Suppose $f : \cY \to \cX$ is a finite \'etale morphism of reduced Deligne--Mumford stacks where $\cY$ is normal and contains an open dense subscheme $U$. Then, the normalization of $\cX$ in $f|_U : U \to \cX$ is precisely $f :\cY \to \cX$, i.e., $\cX^\nu_{f|_U} \simeq \cY.$
\end{Prop}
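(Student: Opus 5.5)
The statement is Proposition~\ref{rel norm in open subscheme}: for $f:\cY\to\cX$ finite \'etale with $\cY$ normal containing a dense open subscheme $U$, we have $\cX^\nu_{f|_U}\simeq\cY$. The idea is to compare two quasi-coherent $\mathcal{O}_{\cX}$-algebras, namely $f_*\mathcal{O}_{\cY}$ and the integral closure $\cA$ of $\mathcal{O}_{\cX}$ in $(f|_U)_*\mathcal{O}_U$, and show they agree inside $(f|_U)_*\mathcal{O}_U$.

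\textbf{Reduction to $\cY$.} Since $f$ is finite, hence affine, $\cY\simeq\underline{\Spec}_{\cX}\, f_*\mathcal{O}_{\cY}$. Since $f|_U = f\circ i$ with $i:U\hookrightarrow\cY$, we get $(f|_U)_*\mathcal{O}_U = f_* i_*\mathcal{O}_U$, and the natural map $\mathcal{O}_{\cY}\to i_*\mathcal{O}_U$ pushes forward to $f_*\mathcal{O}_{\cY}\to (f|_U)_*\mathcal{O}_U$. I would first check that this map is injective. This holds because $U$ is dense and $\cY$ is reduced; the check is \'etale local on $\cY$, where it is the statement that a function on a reduced scheme vanishing on a dense open is zero. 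So $f_*\mathcal{O}_{\cY}$ is a sub-$\mathcal{O}_{\cX}$-algebra of $(f|_U)_*\mathcal{O}_U$. It is integral over $\mathcal{O}_{\cX}$ because $f$ is finite, so $f_*\mathcal{O}_{\cY}\subseteq\cA$.

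\textbf{Reverse inclusion.} It remains to show $\cA\subseteq f_*\mathcal{O}_{\cY}$, i.e.\ that $f_*\mathcal{O}_{\cY}$ is integrally closed in $(f|_U)_*\mathcal{O}_U$. This is \'etale local on $\cX$, since integral closure commutes with \'etale (indeed smooth) base change by Proposition~\ref{smooth norm}, and both pushforwards commute with flat base change. So take an \'etale affine chart $\Spec A\to\cX$. Then $\cY\times_{\cX}\Spec A=\Spec B$ with $B$ finite \'etale over $A$, and $B$ is normal because it is \'etale over the normal stack $\cY$. Its preimage $V$ of $U$ is a dense open of $\Spec B$, dense because $\Spec B\to\cY$ is \'etale, hence open. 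We must show $B$ is integrally closed in $\Gamma(V,\mathcal{O}_V)$. Any element of $\Gamma(V,\mathcal{O}_V)$ integral over $A$ is integral over $B$, so the claim is exactly Lemma~\ref{int closed lemma} applied to the noetherian normal ring $B$ and the dense open $V$.

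\textbf{Conclusion and main difficulty.} Once $\cA=f_*\mathcal{O}_{\cY}$, taking $\underline{\Spec}_{\cX}$ gives $\cX^\nu_{f|_U}\simeq\cY$ compatibly with the factorization in Remark~\ref{rel norm}. The only delicate point is bookkeeping with \'etale localization on stacks: verifying that formation of the relative normalization and of the pushforward from $U$ commutes with \'etale base change. Here $U\to\cX$ is quasi-compact and quasi-separated, since $\cX$ is noetherian by convention, so flat base change applies to $(f|_U)_*$. The \'etale \v{C}ech descent of equality of subsheaves is then routine.
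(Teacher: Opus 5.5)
Your proof is correct and follows essentially the same route as the paper. Both arguments reduce \'etale-locally to an affine chart $\Spec A \to \cX$ with $\cY \times_{\cX} \Spec A = \Spec B$ normal, observe that $B$ embeds in $\Gamma(V,\mathcal{O}_V)$ (by density of $V$ and reducedness) and is integral over $A$, and then apply Lemma~\ref{int closed lemma} to $B$ for the reverse inclusion. Your sheaf-level framing, $f_*\mathcal{O}_{\cY} = \cA$ inside $(f|_U)_*\mathcal{O}_U$, simply makes explicit the \'etale base-change compatibilities that the paper uses implicitly.
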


\begin{proof}
It suffices to work \'etale-locally.
Let $X \to \cX$ be an \'etale atlas where $X$ is reduced, and set $Y := \cY\times_{\cX} X$. Since finite \'etale is stable under basechange the induced morphism $f : Y \to X$ is finite \'etale. Similarly, by letting $V := U \times_{\cY} Y$ it follows that $V \subseteq Y$ is an open dense subscheme.
Now, let $\Spec \,A \subseteq X$ be a reduced affine scheme.
Since $f : Y \to X$ is finite and $Y$ is normal, $Y \times_{X} \Spec \, A \simeq \Spec \, B$ is a normal affine scheme where $A \to B$ is integral. Consider now $A \to B \hookrightarrow{\Gamma(W, \mathcal{O}_W)}$ where $W := V \cap \Spec \, B.$ Note that the second morphism is injective since $W \subseteq \Spec \, B$ is open dense, and $B$ is reduced.
Suppose that $f \in \Gamma(W, \mathcal{O}_W)$ is integral over $A$, then $f$ is integral over $B$ and by Lemma \ref{int closed lemma} it follows that $f \in B$.
On the other hand, if $f \in B$, then the image $f \in \Gamma(W, \mathcal{O}_W)$ is integral over $A$ since $A \to B$ is integral. Therefore, the integral closure of $A \to \Gamma(W, \mathcal{O}_W)$ is $B$, and hence, $\cX_{f|_U}^\nu \times_{\cX} X \simeq Y$. Thus, $\cX_{f|_U}^\nu \simeq \cY$ and the normalization of $\cX$ in $f|_U : U \to \cX$ is $f :\cY \to \cX.$
\end{proof}

 \subsection{Weighted Embeddings, Stacky Proj, and Weighted Blowups} In this section we briefly review weighted blowups and related theory, following \cite{QR}, and \cite[Sec. 2]{SWB2024}. 
 Recall that in the case of a smooth stack, an ordinary blowup replaces the center with a projective bundle that parameterizes the normal directions to the center in the ambient space. 
 The same holds true for weighted blowups with the added subtlety that each normal direction has a positive integer weight.
 Thus, an ordinary closed embedding is insufficient to define a weighted blowup since one needs to encode a positive integer weight for each normal direction. This necessitates the notion of a {\em weighted embedding}. Similarly the classical proj construction is inadequate and one needs a generalization called the {\em stacky proj}. We recall these notions below.

Let $X$ be a noetherian scheme and $R$ be a quasicoherent graded sheaf of $\mathcal{O}_X$-algebras. 
The grading on $R$ can be described by the coaction 
\[
\beta : R \to R \otimes_{\mathcal{O}_X} \mathcal{O}_X[t, t^{-1}] = R[t, t^{-1}]
\]
defined by  $R_n \ni r \mapsto rt^n \in R_n[t, t^{-1}]$.
Equivalently, the grading is described by an action
\begin{equation}\label{Equation:gradedaction}
\alpha : \GGm \times \underline{\Spec}_X(R) \to \underline{\Spec}_X(R).
\end{equation}
Let $R_+$ denote the ideal of $R$ generated by $\oplus_{n > 0} \,R_n$.
The action \eqref{Equation:gradedaction}  preserves $\underline{\Spec}_X(R)\setminus V(R_+)$, whereby the stack-theoretic Proj is defined as follows.

\begin{Def}[Stack-Theoretic Proj]
For $X$ and $R$ as above, the {\em stack-theoretic proj} of $R$ is defined as the quotient stack
\[
\sProj_X(R) := [(\underline{\Spec}_X(R)\setminus V(R_+))/\GG_m]
\]
\end{Def}

We can realize many standard stacks by the stack theoretic proj.
For instance, we can express a {\em weighted projective stack} in terms of a stack-theoretic proj.
Recall the definition of a weighted projective stack.

\begin{Def}[weighted projective stack and curve]
Consider the $(a_1, \ldots, a_n)$-weighted action of $\GG_m$ on $n$-dimension affine space $\AA^n$, i.e., \[t \cdot (x_1, \ldots, x_n) = (t^{a_1}x_1, \ldots, t^{a_n}x_n)\] where each $a_i > 0.$ 
Then, the {\em weighted projective stack} $\mathcal{P}(a_1, \ldots, a_n)$ is given by the stack quotient
\[
\cP(a_1, \ldots, a_n) := [(\AA^n \setminus 0)/\GG_m].
\]
When $n = 2$, we refer to the weighted projective stack $\mathcal{P}(a,b)$ as a {\em weighted projective curve}.
\end{Def}

The following lemma lists a few properties of the stacky proj.

\begin{Lemma}[{\cite[Proposition 1.6.1]{QR}}]
   Let $R$ be a graded $\mathcal{O}_X$-algebra. Then,
   \begin{enumerate}
      \item[(i)] $\sProj_X(R)$ has a finite diagonal relative to $X$. 
      In particular, $\sProj_X(R)$ is separated over $X$.
      \item[(ii)] If $R$ is finitely generated, then $\sProj_X(R)$ is proper over $X$.
      \item[(iii)] The coarse moduli space of $\sProj_X(R)$, relative to $X$, is the standard relative proj, i.e., $\Proj_X(R).$
   \end{enumerate}   
\end{Lemma}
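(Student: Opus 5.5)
The plan is to reduce to the affine case and then work chart by chart on $U := \underline{\Spec}_X(R)\setminus V(R_+)$. All three statements are local on $X$, and the formation of $\sProj_X(R)$ and of $\Proj_X(R)$ is compatible with restriction to open subschemes of $X$. So I will assume $X=\Spec A$ and $R=\bigoplus_{n\ge 0}R_n$ is a graded $A$-algebra. (If $R$ has negative-degree parts, the same argument goes through with $R_+$ as stated.) The open $U\subseteq \Spec R$ is covered by the $\GG_m$-stable opens $D(f)=\Spec R_f$, where $f$ ranges over homogeneous elements of positive degree $d$. So $\sProj_X(R)$ is covered by the opens $[\Spec R_f/\GG_m]$, and the whole argument rests on understanding these charts.

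\textbf{Step 1 (local charts and (iii)).} Fix $f$ homogeneous of degree $d>0$. Its image in $R_f$ is a unit of degree $d$. The closed subscheme $\Spec R_f/(f-1)$ is stable under $\mu_d\subseteq\GG_m$, and the natural map
\[
[\Spec (R_f/(f-1))/\mu_d]\to[\Spec R_f/\GG_m]
\]
is an isomorphism, by the standard slice argument (every $\GG_m$-orbit meets $f=1$ in a $\mu_d$-orbit). Since $\mu_d$ is linearly reductive over $\CC$, the coarse space of this quotient stack is $\Spec$ of its $\mu_d$-invariants. I will then check that
\[
\bigl(R_f/(f-1)\bigr)^{\mu_d}=\bigl(R_f/(f-1)\bigr)_{\deg\equiv 0 \bmod d}\cong (R_f)_0 .
\]
These identifications are compatible with localization $D(fg)\subseteq D(f)$, so the local coarse spaces $\Spec (R_f)_0$ glue exactly as in the classical construction of $\Proj_X(R)$. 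Because coarse moduli spaces are compatible with flat base change on the coarse space, this proves (iii).

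\textbf{Step 2 ((i)).} I will show that the action map
\[
\GG_m\times_X U\to U\times_X U,\qquad (t,u)\mapsto(u,t\cdot u),
\]
is finite. This suffices because the diagonal of $[U/\GG_m]$ over $X$ is obtained from this map by descent, and finiteness descends along fppf covers.

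Over the chart $D(f)\times D(g)$, the preimage is $\GG_m\times (D(f)\cap \alpha^{-1}D(g))$, and the map corresponds to a ring map
\[
R_f\otimes_A R_g\to R_f[t^{\pm1}]_{\beta(g)} .
\]
The key computation is $\beta(f)=t^d f$ for the coaction $\beta$. Here $f$ is a unit in the source and $\beta(f)$ comes from the second factor; after symmetrizing the cover to $D(f)\times D(f)$ and then intersecting, both $t^d$ and $t^{-d}$ lie in the image. Hence $t$ is integral over the image. The target is generated over the image by $t$ and $t^{-1}$, so it is finite type and integral, hence finite. The map is also closed, being finite. Separatedness of $\sProj_X(R)$ over $X$ follows because a finite diagonal is proper.

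\textbf{Step 3 ((ii)).} If $R$ is finitely generated, then $\Proj_X(R)$ is proper over $X$; this is classical, since it is projective after passing to a Veronese subring. The morphism from $\sProj_X(R)$ to its coarse moduli space is proper. This holds for any stack with finite inertia, as in Keel--Mori, and here it can also be seen directly from the $\mu_d$-quotient charts, using finite generation of each $R_f/(f-1)$ over its invariants. Composing the two proper maps gives (ii). Quasi-compactness needs a separate remark: finitely many $f$ suffice to cover $U$, namely homogeneous generators of $R_+$.

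\textbf{Main obstacle.} I expect the main obstacle to be Step 2, in particular the bookkeeping on the intersections $D(f)\cap\alpha^{-1}D(g)$ needed to get both $t^{d}$ and $t^{-d}$ into the image of the ring map. Once the diagonal is known to be finite, the remaining steps are the classical Proj arguments transported through the $\mu_d$-charts.
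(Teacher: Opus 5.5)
The paper gives no proof of this lemma; it is quoted from Quek--Rydh, so there is no in-paper argument to compare against. Your chart-by-chart route is the standard one: $[\Spec R_f/\GG_m]\simeq[\Spec(R_f/(f-1))/\mu_d]$, with coarse space $\Spec (R_f)_0$. Steps 1 and 2 are sound.

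In Step 2 you do not need to symmetrize. Since $D(g)$ is $\GG_m$-stable, the preimage of $D(f)\times_X D(g)$ is simply $\GG_m\times D(fg)$. The image of $R_f\otimes_A R_g\to R_{fg}[t^{\pm 1}]$ contains $t^{d}$ (from $f^{-1}\otimes f$) and $t^{-e}$ (from $g\otimes g^{-1}$). Moreover $g^{-1}=t^{e}\beta(g^{-1})$ lies in the algebra generated by the image and $t$. So the target is generated over the image by the integral elements $t^{\pm1}$. If you do want to use only charts $D(h)\times_X D(h)$, you must invoke graded prime avoidance to know they cover $U\times_X U$.

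The real problem is in Step 3. Your claim that $\Proj_X(R)\to X$ is proper whenever $R$ is finitely generated is false. Passing to a Veronese subring only makes $\Proj_X(R)$ projective over $\underline{\Spec}_X(R_0)$, not over $X$.

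\begin{itemize}
\item For $R=\mathcal{O}_X[y,x]$ with $\deg y=0$ and $\deg x=1$, one gets $\sProj_X(R)=\Proj_X(R)=\AA^1_X$, which is not proper over $X$.
\item Your parenthetical about negative degrees fails the same way. For $R=\mathcal{O}_X[x,s]$ with $\deg x=1$ and $\deg s=-1$, one gets $\sProj_X(R)\simeq\underline{\Spec}_X\,\mathcal{O}_X[xs]\simeq\AA^1_X$.
\end{itemize}

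So part (ii) needs an extra hypothesis that $R_0$ is finite over $\mathcal{O}_X$, for example $R_0=\mathcal{O}_X$. This is missing both from the statement as transcribed in the paper and from your proof. It is exactly the situation of the Rees algebras $I_\bullet$ and $I_\bullet^{\ext}$ to which the paper applies the lemma.

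With that hypothesis added, your Step 3 goes through as written. The coarse map is proper by Keel--Mori, using the finite inertia from Step 2, and it composes with the proper map to $X$. For $\ZZ$-graded $R$, your Step 1 actually identifies the coarse space as $\Proj_X(R_{\ge 0})$. You should then also note that $R_{\ge 0}$ is finitely generated over $R_0$, for instance as the $\GG_m$-invariants of $R[z]$ with $\deg z=-1$.
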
   

The next lemma describes the stabilizer group $G_x$ of a geometric point $x : \Spec \, \CC \to \underline{\Spec}_X(R)$ of the stacky proj. 
Recall that the stabilizer group $G_x$ is a closed group scheme of $\GG_m \times \Spec \, \CC$ defined by the following Cartesian diagram
\[
\begin{tikzcd}
G_x \arrow[r] \arrow[d]                                 & {\Spec \, \CC} \arrow[d, "{(x, x)}"] \\
\GGm \times \underline{\Spec}_X(R) \arrow[r, "{(\alpha, \pi_2)}"] & \underline{\Spec}_X(R) \times_X \underline{\Spec}_X(R)      
\end{tikzcd}
\]

\begin{Lemma}[{\cite[Lem. 1.1.2]{QR}}]
    \label{stack proj stab}
The Cartier dual of $G_x$ is $\ZZ/(d : x \not \in V(R_d)),$ that is,
\begin{itemize}
    \item[(i)] $G_x = \GGm$ if and only if $x \in V(R_+),$ and
    \item[(ii)] $\mu_d \subset G_x$ if and only if $x \in V(R_n)$ for all $n$ such that $d \not | \, n.$
\end{itemize}    
\end{Lemma}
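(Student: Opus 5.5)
The statement is the lemma on stabilizers of $\sProj_X(R)$ (Lemma~\ref{stack proj stab}). The plan is to compute $G_x$ directly from the Cartesian diagram defining it and then read off its Cartier dual. We may work over the point: base change along the image of $x$ in $X$ and restrict to a neighborhood. Then $x$ is a $\CC$-point of $\Spec R$ for a graded $\CC$-algebra $R=\bigoplus_n R_n$, that is, a graded ring homomorphism to $\CC$ forgetting the grading, $x:R\to\CC$, $r\mapsto r(x)$.

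\textbf{Step 1: compute $G_x$.} A $T$-point of $G_x$ is a unit $t\in\Gamma(T,\mathcal{O}_T)^\times$ with $\alpha(t,x)=x$. Since the coaction sends a homogeneous $r\in R_n$ to $rt^n$, this condition says $t^n r(x)=r(x)$ for every homogeneous $r$ of every degree $n$. Therefore $G_x\subseteq\GGm=\Spec\CC[t,t^{-1}]$ is the closed subscheme cut out by the ideal generated by $\{(t^n-1)r(x)\}$. For each degree $n$, either $r(x)=0$ for all $r\in R_n$, in which case $x\in V(R_n)$ and there is no condition, or some $r(x)\neq 0$ is a unit in $\CC$, in which case the condition is $t^n=1$. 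Hence $G_x=\Spec\CC[t,t^{-1}]/(t^n-1: x\notin V(R_n))$.

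\textbf{Step 2: identify the Cartier dual.} The set $S=\{n : x\notin V(R_n)\}$ generates a subgroup $d\ZZ\subseteq\ZZ$. Diagonalizable groups correspond anti-equivalently to finitely generated abelian groups, with $\CC[t^{\pm1}]/(t^n-1:n\in S)$ being the group algebra of $\ZZ/\langle S\rangle$. So the Cartier dual of $G_x$ is $\ZZ/(d: x\notin V(R_d))$, and $G_x\cong\mu_d$, with $d=0$ meaning $G_x=\GGm$. This is the small point to be careful about: the ideals $(t^n-1)$ over $n\in S$ sum to $(t^{\gcd S}-1)$, which follows from $\gcd$ and the Euclidean algorithm, since $t^a-1$ and $t^b-1$ generate $t^{\gcd(a,b)}-1$.

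\textbf{Step 3: read off (i) and (ii).} For (i), $G_x=\GGm$ iff $S\cap\ZZ_{>0}=\emptyset$, i.e., $x\in V(R_n)$ for all $n>0$, i.e., $x\in V(R_+)$. Negative degrees are absent in the intended nonnegatively graded setting, and $R_0$ does not vanish at $x$ since $1\in R_0$. For (ii), $\mu_d\subseteq G_x=\mu_{\gcd S}$ iff $d\mid n$ for all $n\in S$. Contrapositively, this holds iff every $n$ with $d\nmid n$ lies outside $S$, i.e., $x\in V(R_n)$.

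The only genuine subtlety is Step 1: justifying that the stabilizer's defining equations may be taken degree by degree on homogeneous elements, which uses that the coaction is diagonal in the grading. Everything else is formal.
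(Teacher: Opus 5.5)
Your computation is correct and is essentially the standard argument behind \cite[Lem.~1.1.2]{QR}, which the paper quotes without giving its own proof: $G_x\subseteq\GGm$ is cut out by $t^n=1$ for exactly those degrees $n$ in which some homogeneous element is nonzero at $x$, so its character group is $\ZZ$ modulo the subgroup these degrees generate. Your caveat that (i) as phrased requires $R$ to be nonnegatively graded is also well taken: for the $\ZZ$-graded extended Rees algebra, a point of $V(R_+)$ at which the degree $-1$ generator is nonzero has trivial stabilizer.
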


In particular, $V(R_+)$ precisely contains all fixed points of the $\GGm$-action.
Moreover, the $\GG_m$-action is free if $R$ is generated in degree $1$, since then  $(R_n : d \nmid n) = R_+$
for all $d > 1$. In this case the stacky proj $\sProj_X(R)$ is in fact isomorphic to the standard relative proj $\Proj_X(R)$. We now recall the notion of a weighted embedding.

 \begin{Def}{{\cite[Def. 3.1.1 and 4.1.1]{QR}}}
   A {\em weighted embedding} $Y_\bullet \hookrightarrow X$ is defined by a sequence of closed embeddings $\{Y_n = V(I_n) \hookrightarrow X\}_{n \ge 0}$ such that $\bigoplus_{i \ge 0} I_n t^n$ defines a \textit{Rees algebra} that is, a quasicoherent, finitely-generated, graded $\mathcal{O}_X$-subalgebra of $\mathcal{O}_X[t]$, satisfying
   \begin{enumerate}
      \item[(i)] $I_0 \supset I_1 \supset I_2 \supset \cdots$
      \item[(ii)] $I_nI_m \subset I_{n+m}$
      \item[(iii)] Locally in the smooth topology on $X$, there exists a sufficiently large positive integer $d$ such that for each $n \ge 1$,
      \[
       I_n = \left ( I_1^{l_1} I_2^{l_2} \cdots I_d^{l_d} \ : \ l_i \in \NN, \ \sum_{i=1}^d il_i = n \right )
      \]
      in which case we say $I_\bullet$ is generated in degrees $\le d.$ 
   \end{enumerate}  
 \end{Def} 
 
 \begin{Remark}
   \label{associated weighted embedding}
   A standard closed embedding $Y = V(I) \hookrightarrow X$ canonically defines a weighted embedding by setting $I_n := I^n$ for $n \ge 0.$
 \end{Remark} 
 
 \begin{Def}
   The {\em weight} of a local coordinate $x$ is defined as $\max\{n\mid x\in I_n\}$.
 \end{Def}  

 \begin{Def}[Weighted Blowup]
  \label{WB}
   Let $Y_\bullet \hookrightarrow X$ be a weighted embedding defined by ideal sheaves $I_n$ and let $I_\bullet = \bigoplus_{n \ge 0} I_nt^n \subset \mathcal{O}_X[t]$ be the associated Rees algebra.
   Let $I_+$ be the ideal generated by $t$.
   The {\em weighted blowup of $X$ along $Y = V(I_1)$} is defined by the morphism
   \[
   \Bl_{Y}X := \sProj_X(I_\bullet)\rightarrow X.
   \]
   We call $Y$ the {\em reduced center} of the weighted blowup.
 \end{Def} 

 \begin{Remark}
   If $Y \hookrightarrow X$ is a closed embedding and $Y_\bullet \hookrightarrow X$ is the associated canonical weighted embedding from Remark \ref{associated weighted embedding}, then the weighted blowup $\Bl_Y X$ is precisely the ordinary blowup by Lemma \ref{stack proj stab}.
 \end{Remark}  

 \begin{EG}
   \label{2,2}
   Consider the sequence of ideals
   \[
   \CC[x,y] \supset (x,y) \supset (x,y) \supset (x^2, y) \supset (x^2, xy, y^2) \supset (x^3, xy, y^2) \supset \cdots
   \]
   These ideals define a weighted embedding $I_\bullet$ of the origin in $\AA^2.$
   In this case, $x$ has weight $2$, $y$ has weight $3$, and this weighted embedding defines the $(2,3)$-weighted blowup of the origin in $\AA^2$.
 \end{EG}  

 \begin{Remark}[{\cite[Rmk. 3.1.4 and 3.2.4]{QR}}]
  \label{extended rees}
   Let $Y_\bullet \hookrightarrow X$ be a weighted embedding defined by ideal sheaves $I_n$. 
   We can extend the Rees algebra $I_\bullet$ to a $\ZZ$-grading by defining the $\mathcal{O}_X[t^{-1}]$-subalgebra $I_\bullet^{\ext} := \bigoplus_{n \in \ZZ} I_n^{\ext} t^n \subset \mathcal{O}_X[t, t^{-1}]$ where
   \[ I_n^{\ext} :=
   \begin{cases}
      I_n & n \ge 0 \\
      \mathcal{O}_X & n < 0.
   \end{cases}
   \]   
   Then, the weighted blowup of $X$ along $Y$ can equivalently be defined by $\sProj_X(I_\bullet^{\ext}) \to X$. 
 \end{Remark}  

 The extended Rees algebra description of a weighted blowup lends itself better to computing the coarse moduli space of weighted blowups. The following example illustrates this and also provides an explicit computation of a weighted blowup.

 \begin{EG}
  \label{2,3 weighted blowup}
   Consider the $(2,3)$-weighted embedding $I_\bullet$ of the origin $0$ in $\AA^2$ from Example \ref{2,2}. 
   Then, 
   \[
   \Bl_0^{(2,3)}\AA^2 := \sProj_{\AA^2}(I_\bullet^{\ext}) = [(\AA^3 \setminus V(I_+))/\GGm]
   \]
   where $\GG_m$ acts on $\AA^3$ with weight $(2,3,-1)$ corresponding to $(x,y,t)$ respectively. 
   Notice that $V(I_+) = \{(0,0,t) \in \AA^3\}$.
   Thus, $\Bl_0^{(2,3)}\AA^2$ defines an open substack of $[\AA^3 /\GG_m]$ given by
   \[
   \Bl_0^{(2,3)}\AA^2 \simeq [((\AA^2 \setminus 0) \times \AA^1) / \GG_m].
   \]
   Alternatively, unwinding the definition gives
   \[
   \Bl_0^{(2,3)} \AA^2 \simeq \sProj_{\AA^2} \left ( \frac{\CC[x,y][u,v,s]}{(x-s^2u, y-s^3v)}\right )
   \]
   where $u$ has degree $2$, $v$ has degree $3$, and $s$ has degree $-1.$
   In general, given the $(a_1, \ldots, a_n)$-weighted embedding of the origin $0$ in $\AA^n$, the weighted blowup $\Bl_0^{(a_1, \ldots, a_n)}\AA^n$ defines an open substack of $[\AA^n/\GG_m]$ under the $(a_1, \ldots, a_n)$-weighted action given by
   \[
   \Bl_0^{(a_1, \ldots, a_n)} \AA^n \simeq [((\AA^{n}\setminus 0) \times \AA^1 ) /\GG_m]
   \]
   which can be described by
   \[
   \Bl_0^{(a_1, \ldots, a_n)} \simeq \sProj_{\AA^n} \left ( \frac{\CC[x_1, \ldots, x_n][y_1, \ldots, y_n, s]}{(x_1 - s^{a_1}y_1, \ldots, x_n - s^{a_n}y_n)} \right )
   \]
   where $\deg(y_i) = a_i$ and $\deg(s) = -1.$
 \end{EG}  

 With a weighted blowup now defined, we can define the corresponding exceptional divisor in this context.

 \begin{Def}[Exceptional Divisor]
   Suppose $Y_\bullet \hookrightarrow X$ is a weighted embedding defined by $I_\bullet.$ 
   The inclusion $I_{\bullet +1} \subset I_\bullet$ induces an inclusion $\mathcal{O}_{\Bl_Y X}(1) \hookrightarrow \mathcal{O}_{\Bl_Y X}$ of invertible sheaves.
   The {\em exceptional divisor} $\cE$ is the Cartier divisor associated with this inclusion. 
   In particular, $\mathcal{O}_{\Bl_Y X}(-\mathcal{E}) \simeq \mathcal{O}_{\Bl_Y X}(1).$
 \end{Def}  

 The exceptional divisor $\mathcal{E}$ of a weighted blowup behaves similarly to that of an ordinary blowup, with the key difference that the following commutative square
 \[
\begin{tikzcd}
\mathcal{E} \arrow[r, hook] \arrow[d] & \Bl_Y X \arrow[d] \\
Y \arrow[r, hook]                     & X                
\end{tikzcd}
 \] 
 need not be Cartesian. In fact, the above square is Cartesian if and only if the weights of each local coordinate is $1$ and it is an ordinary blowup. While the exceptional divisor $\mathcal{E}$, in general, differs from the fiber product $Y \times_X \Bl_Y X$, they share the same underlying reduced structure. The following example illustrates the above subtleties.

 \begin{EG}
  Consider the $(2,3)$-weighted blowup $\pi : \Bl_0^{(2,3)}\AA^2 \to \AA^2$ of the origin in $\AA^2$ from Example \ref{2,3 weighted blowup}. 
  The exceptional divisor $\mathcal{E}$ precisely corresponds to $V(t)$, and thus,
  \[
  \mathcal{E} \simeq \sProj \, \CC[x,y] = \mathcal{P}(2,3).
  \]
  In particular, the coarse moduli space is $\mathbb{P}(2,3)$, which is reduced. 
  However, the fiber product $F := 0 \times_{\AA^2} \Bl_0 \AA^2$ in the category of algebraic stacks defines a closed substack of $\Bl_0\AA^2$ given by
  \[
  F \simeq \sProj_{\AA^2} \left ( \frac{\CC[u,v,s]}{(s^2u, s^3v)} \right ).
  \]
  The associated coarse moduli space is given by
  \[
  \Proj \left ( \frac{\CC[u,v,s]}{(s^2u, s^3v)} \right ),
  \]
  which is non-reduced, and hence, $F \not \simeq \mathcal{E}.$
  However, notice that $F_{\mathrm{red}} \simeq \mathcal{E}.$
 \end{EG}

 \subsection{Relative ampleness}
 In this section we briefly review the notion of relatively ample and uniformizing line bundles for morphisms of algebraic stacks, following \cite{QR}.

 Let $f: \cX\rightarrow \cZ$ be a morphism of quasi-compact algebraic stacks with finite diagonal. Let $p: \cX\rightarrow \cX_{\mathrm{cs}/\cZ}$ be the relative coarse space and $f_{\mathrm{cs}}: \cX_{\mathrm{cs}/\cZ}\rightarrow \cZ$ be separated. Let $\cL$ be a line bundle on $\cX$. Then for $k\gg0$ divisible enough, the line bundle $\cL^{\otimes k}$ descends to $\cX_{\mathrm{cs}/\cZ}$, that is $p_\ast \cL^{\otimes k}$ is a line bundle on $\cX_{\mathrm{cs}/\cZ}$.

\begin{Def}[{\cite[Def.1.7.3]{QR}}]\label{Def:relative ample}
    In the above situation, we define $\cL$ to be relatively ample with respect to $f$ (or $f$-ample), if $p_\ast\cL^{\otimes k}$ is relatively ample on $\cX_{\mathrm{cs}/\cZ}$ with respect to $f_{\mathrm{cs}}$.
\end{Def}

The next lemma shows that relative ampleness is \'etale local on target.

 \begin{Lemma}\label{Lem:etalelocaltarget}
   Let $f: \cX\rightarrow \cZ$ be a morphism of quasi-compact algebraic stacks with finite diagonal and $\cL$ be a line bundle on $\cX$. Then $\cL$ is $f$-ample if and only if for any surjective \'etale representable morphism $\cZ'\rightarrow \cZ$, the pullback $\cL|_{\cX'}$ is $f'$-ample for $f':\cX':=\cX\times_\cZ\cZ'\rightarrow\cZ'$.
 \end{Lemma}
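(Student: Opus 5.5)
The plan is to reduce both directions to the corresponding statement for ordinary relative ampleness of line bundles on algebraic spaces, using that the formation of the relative coarse space commutes with flat base change on the target.

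\textbf{Step 1: base change of the relative coarse space.} Let $g:\cZ'\to\cZ$ be surjective, étale and representable, and set $\cX'=\cX\times_\cZ\cZ'$. Since $\cX$ has finite diagonal, the relative coarse space of Keel--Mori type commutes with flat (in particular étale) base change on $\cZ$. This gives the cartesian identification
\[
\cX'_{\mathrm{cs}/\cZ'}\simeq \cX_{\mathrm{cs}/\cZ}\times_\cZ\cZ' ,
\]
with $p':\cX'\to\cX'_{\mathrm{cs}/\cZ'}$ the base change of $p$. The induced map $h:\cX'_{\mathrm{cs}/\cZ'}\to\cX_{\mathrm{cs}/\cZ}$ is then étale, surjective and representable.

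\textbf{Step 2: comparing the descended bundles.} Choose $k$ divisible enough that $p_*\cL^{\otimes k}$ is a line bundle. Since $h$ is flat, flat base change gives
\[
h^*p_*\cL^{\otimes k}\simeq p'_*(\cL|_{\cX'})^{\otimes k}.
\]
The same $k$ therefore works for $\cL|_{\cX'}$, and the descended bundle on $\cX'_{\mathrm{cs}/\cZ'}$ is the pullback of the descended bundle on $\cX_{\mathrm{cs}/\cZ}$.

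\textbf{Step 3: both directions.} Ordinary relative ampleness for the separated morphism $f_{\mathrm{cs}}$ is stable under arbitrary base change. This gives the direction ``$\cL$ is $f$-ample $\Rightarrow$ $\cL|_{\cX'}$ is $f'$-ample.'' For the converse, it suffices to use a single surjective étale $\cZ'\to\cZ$; taking $\cZ'$ to be a scheme atlas, or using the quantifier ``for any,'' covers this. Relative ampleness for quasi-compact morphisms is fpqc local on the target: ampleness over affines can be tested via the relative Proj of $\bigoplus_n f_{\mathrm{cs}*}M^{\otimes n}$ and the canonical map, and both the formation of the pushforward and the property that the map is an isomorphism or open immersion descend along faithfully flat maps. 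For stacks, relative ampleness over $\cZ$ is by definition tested after a smooth or étale atlas of $\cZ$, so composing atlases reduces everything to the scheme or algebraic space case of this descent statement.

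\textbf{Main obstacle.} The step I expect to be hardest is Step 1: knowing that the relative coarse space commutes with étale base change on the target in the finite-diagonal setting. I would cite the relative Keel--Mori theorem, in which the formation commutes with flat base change, rather than reprove it.
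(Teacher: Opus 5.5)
Your proposal is correct and follows essentially the same route as the paper. Both arguments identify $\cX'_{\mathrm{cs}/\cZ'}$ with $\cX_{\mathrm{cs}/\cZ}\times_\cZ\cZ'$ (the paper cites Abramovich--Olsson--Vistoli where you cite relative Keel--Mori), observe that $p_*\cL^{\otimes k}$ pulls back to $p'_*(\cL|_{\cX'})^{\otimes k}$, and conclude from the fact that relative ampleness for algebraic spaces is stable under base change and fpqc-local on the target. Your explicit flat base change in Step 2, and your remark that a single \'etale cover already suffices for the converse (which is how the lemma is used in Proposition~\ref{ideal equality}), spell out points the paper leaves implicit.
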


 \begin{proof}
     Let $f:\cX\xrightarrow{p}\cX_{\mathrm{cs}/\cZ}\xrightarrow{\overline{f}}\cZ$ be the relative coarse space factorization. Let $\cZ'\rightarrow \cZ$ be the surjective \'etale representable morphism. If $\cX':=\cX\times_\cZ \cZ'$, then $f':\cX'\xrightarrow{p'}\cX'_{\mathrm{cs}/\cZ'}:=\cX_{\mathrm{cs}/\cZ}\times_\cZ \cZ'\xrightarrow{\overline{f}'}\cZ'$ is the corresponding relative coarse space factorization since relative coarse space for morphisms of tame Artin stacks commutes with representable pullbacks by \cite[Prop.~3.4]{AOV11}. Now $\cL$ is $f$-ample if and only if for some $k\gg 0$ divisible enough $p_\ast\cL^{\otimes k}$ is $\overline{f}$-ample on $\cX_{\mathrm{cs}/\cZ}$. But that is equivalent to $p'_\ast(\cL|_{\cX'})^{\otimes k}\simeq (p_\ast\cL^{\otimes k})|_{\cX'_{\mathrm{cs}/\cZ'}}$ being $\overline{f}'$-ample by \cite[Lem.~74.13.1]{stacks-project}, since $\cX'_{\mathrm{cs}/\cZ'}\rightarrow \cX_{\mathrm{cs}/\cZ}$ is surjective \'etale. By definition, this is equivalent to the pullback $\cL|_{\cX'}$ being $f'$-ample for $f':\cX':=\cX\times_\cZ\cZ'\rightarrow\cZ'$.
 \end{proof}

\begin{Def}\label{Def:uniformizing}
    Let $f: \cX\rightarrow \cZ$ be a morphism of quasi-compact algebraic stacks with finite diagonal and a line bundle $\cL$ on $\cX$. The line bundle $\cL$ is uniformizing relative to $f$, if the induced map $(f, \cL): \cX\rightarrow \cZ\times \mathcal{B}\GG_m$ is representable.
\end{Def}

Let $f: \cX\rightarrow \cZ$ be a qcqs morphism of algebraic stacks and $\cL$ a line bundle on $\cX$. Then given a morphism $\bigoplus_{n\geq 0} f^*f_*\cL^{\otimes n}
  \longrightarrow
  \bigoplus_{n\geq 0} \cL^{\otimes n}$ one obtains an induced morphism
\begin{equation}\label{Eq:phi}
  \varphi_{\cL}: \cX\rightarrow
  \sProj_\cZ\!\left(
    \bigoplus_{n\geq 0} f_*\cL^{\otimes n}\right)
\end{equation}
via the universal property of stacky proj \cite[Prop.~1.5.1]{QR}, such that $\varphi_{\cL}^*\mathcal{O}(1)=\cL$. The following proposition relates relative ampleness and uniformizing to properties of the map $\varphi_\cL$ defined by \eqref{Eq:phi}.

\begin{Prop}[{\cite[Prop. 1.7.6]{QR}}]\label{Prop:amplephi}
    Let $f: \cX\rightarrow \cZ$ be a morphism of quasi-compact algebraic stacks with finite diagonal and $\cL$ be a line bundle on $\cX$.
    \begin{enumerate}
    \item[(i)] If $\cL$ is $f$-ample, then $f^*f_*\cL^{\otimes N}\rightarrow \cL^{\otimes N}$ is surjective for all sufficiently divisible $N$ and thus induces a morphism $\varphi_\cL$ as in \eqref{Eq:phi}.
  \item[(ii)] If $\cL$ is $f$-ample and uniformizing, the induced morphism $\varphi_{\cL}$ is quasi-compact, schematically dominant, open immersion. This morphism is an isomorphism if $f$ is proper.

  \item[(iii)] Assume there exists a positive integer $N$ such that $f^*f_*\cL^{\otimes N}\rightarrow\cL^{\otimes N}$ is surjective. If the induced morphism $\varphi_{\cL}$ is quasi-affine, then $\cL$ is ample and uniformizing.
\end{enumerate}
\end{Prop}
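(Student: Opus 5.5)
The plan is to reduce everything to the $\GG_m$-torsor attached to $\cL$ and to the relative coarse space, where the scheme-theoretic statements (Serre's theorem for relatively ample bundles, and the characterization of quasi-affine morphisms) apply.

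Throughout, write $f = \overline{f}\circ p$ with $p:\cX\to\cX_{\mathrm{cs}/\cZ}$ the relative coarse space. Set $T:=\underline{\Spec}_\cX\bigl(\bigoplus_{n\in\ZZ}\cL^{\otimes n}\bigr)$, the total space of the $\GG_m$-torsor of $\cL$. Then $[T/\GG_m]\simeq\cX$, and $T\to\cZ$ is representable if and only if $(f,\cL):\cX\to\cZ\times\mathcal{B}\GG_m$ is representable, i.e.\ if and only if $\cL$ is uniformizing. By Lemma~\ref{Lem:etalelocaltarget} and the fact that formation of $f_*$ and of $\sProj$ commutes with flat base change, all three statements may be checked after a surjective \'etale base change $\cZ'\to\cZ$. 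So I will freely assume $\cZ$ is an affine scheme where convenient.

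For (i), choose $k$ divisible enough that $\cL^{\otimes k}\simeq p^*\cM$ with $\cM$ an $\overline{f}$-ample line bundle on $\cX_{\mathrm{cs}/\cZ}$. By the relative Serre theorem for algebraic spaces, $\overline{f}^*\overline{f}_*\cM^{m}\to\cM^{m}$ is surjective for $m\gg0$. Since $p_*\mathcal{O}_\cX=\mathcal{O}$, the projection formula gives $f_*\cL^{\otimes km}=\overline{f}_*\cM^m$. Pulling the surjection back along $p$ then gives surjectivity of $f^*f_*\cL^{\otimes N}\to\cL^{\otimes N}$ for $N=km$, and this yields $\varphi_\cL$ by the universal property of $\sProj$.

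For (ii), the morphism $\varphi_\cL$ corresponds to the $\GG_m$-equivariant morphism
\[
T\to C:=\underline{\Spec}_\cZ\Bigl(\bigoplus_{n\ge0}f_*\cL^{\otimes n}\Bigr)\setminus V(R_+).
\]
I will first show that $T$ is quasi-affine over $\cZ$, using that $T$ is an algebraic space because $\cL$ is uniformizing. For $s\in\Gamma(\cX,\cL^{\otimes N})$ with $\cZ$ affine, the locus $\cX_s$ is the preimage of the affine locus $(\cX_{\mathrm{cs}})_{s}$ of the ample $\cM$. The restriction $T|_{\cX_s}$ is then affine: it is a representable $\GG_m$-torsor over a stack with affine coarse space, hence affine, with coordinate ring $\bigl(\bigoplus_n f_*\cL^n\bigr)[s^{-1}]$. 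These opens cover $T$, so $T$ is quasi-affine over $\cZ$, and $\Gamma(T,\mathcal{O})=\bigoplus_{n\in\ZZ}f_*\cL^{\otimes n}$. A quasi-affine morphism is a quasi-compact open immersion into the spectrum of its global sections, and it is schematically dominant there. Restricting to the non-negative part, I will check that $T\to C$ is an open immersion by comparing the affine charts $T|_{\cX_s}$ with $D(s)\subseteq C$, which have the same coordinate rings. Schematic dominance follows because $\bigoplus_{n\ge0}f_*\cL^n$ injects into $\Gamma(T,\mathcal{O})$. Passing to $\GG_m$-quotients gives the claim for $\varphi_\cL$. If $f$ is proper, then $\varphi_\cL$ is proper, because $\sProj$ is separated over $\cZ$. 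Hence $\varphi_\cL$ is a closed, dense open immersion, and therefore an isomorphism.

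For (iii), note that $\mathcal{O}(1)$ on the stack $\sProj$ is relatively ample and uniformizing, since its torsor is the open cone, which is a quasi-affine scheme over $\cZ$. If $\varphi_\cL$ is quasi-affine, then the torsor $T=\varphi_\cL^{-1}(\text{cone})$ is quasi-affine over the cone, hence over $\cZ$. Thus $T$ is representable, which gives uniformizing. Ampleness follows because the pullback of an ample bundle under a quasi-affine morphism is ample; I will verify this on coarse spaces after taking a power that descends.

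The main obstacle is the divisibility bookkeeping in (ii). One is tempted to replace $\cL$ by $\cL^{\otimes N}$, but $\sProj$ of a Veronese subring is \emph{not} the same stack: it forgets the stabilizers. So the argument has to be run with the full $\ZZ$-graded ring and the charts $D(s)$ for $s$ of varying degrees, and uniformizing must be used precisely to ensure that the charts $T|_{\cX_s}$ are schemes.
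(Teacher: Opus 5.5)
The paper gives no proof of this proposition; it is quoted from Quek--Rydh \cite[Prop.~1.7.6]{QR}. There is therefore no in-paper argument to compare with, and your outline has to stand on its own.

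Parts (i) and (ii) hold up. Descending $\cL^{\otimes k}$ to $\mathcal{M}$ and using $p_*\mathcal{O}_\cX\simeq\mathcal{O}$ gives (i). Identifying $T_s:=T|_{\cX_s}$ with $D(s)\subseteq C$ via $\Gamma(T_s,\mathcal{O})=R_s$, where $R:=\bigoplus_{n\ge 0}f_*\cL^{\otimes n}$, is a correct proof of (ii). Two small repairs are needed there:
\begin{enumerate}
\item $(\cX_{\mathrm{cs}})_s$ is affine only for a covering family of sections, not for every $s$. If $\cX_{\mathrm{cs}}=\AA^2\setminus 0$ and $\mathcal{M}=\mathcal{O}$, then $s=1$ fails. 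A covering family is all your chart comparison needs.
\item ``Hence affine'' should be justified explicitly. $T_s\to\cX_s$ is affine and $\cX_s$ is cohomologically affine (tame, characteristic $0$). So $T_s$ is a cohomologically affine qcqs algebraic space, hence affine by Serre's criterion.
\end{enumerate}

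The gap is the ampleness half of (iii). The uniformizing half is fine, since $(f,\cL)$ factors as $\varphi_\cL$ followed by the representable map $\sProj_\cZ(R)\to\cZ\times\mathcal{B}\GG_m$. Three problems remain:
\begin{enumerate}
\item You justify ampleness of $\mathcal{O}(1)$ on $\sProj_\cZ(R)$ by saying its torsor is quasi-affine. But ``quasi-affine torsor $\Rightarrow$ ample'' is exactly what (iii) asserts in the case $\varphi_\cL=\mathrm{id}$. If that implication were available, you could apply it directly to $T$, which you already showed is quasi-affine, so the argument is circular.
\item $R$ need not be finitely generated. So quasi-compactness of $\sProj_\cZ(R)$ and a single power of $\mathcal{O}(1)$ descending to its coarse space are not automatic. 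The paper's notion of relative ampleness only makes sense after restricting to a quasi-compact open such as $\bigcup_i D_+(s_i)$ containing the image of $\varphi_\cL$.
\item ``Verify on coarse spaces'' silently uses that the map of coarse spaces induced by a quasi-affine representable morphism is quasi-affine. That is not automatic and needs proof.
\end{enumerate}

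A direct argument avoids all three points. Take $\cZ$ affine and $s\in\Gamma(\cX,\cL^{\otimes N})$.
\begin{enumerate}
\item $\cX_s=\varphi_\cL^{-1}([D(s)/\GG_m])$ is quasi-affine over $[D(s)/\GG_m]$. So it is a quasi-compact open substack of a stack $\cW$ that is affine over $[D(s)/\GG_m]$.
\item $\cW$ has finite stabilizers and is cohomologically affine, so its coarse space is affine.
\item Coarse space maps are homeomorphisms compatible with open immersions. Hence the coarse space $X_s$ of $\cX_s$ is quasi-compact open in an affine scheme, i.e.\ quasi-affine.
\item By the surjectivity hypothesis the $\cX_s$ cover $\cX$. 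Take $d$ divisible by $N$ and by all stabilizer orders. Then $\cL^{\otimes d}$ descends to $\mathcal{M}$ on $X$, and $s^{d/N}$ descends to a section of $\mathcal{M}$ with non-vanishing locus $X_s$.
\item A line bundle on a quasi-compact separated scheme with sections whose non-vanishing loci are quasi-affine and cover the scheme is ample. This proves (iii).
\end{enumerate}
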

 
 \subsection{Existence of blowdowns}

Given a variety $X$, the problem of existence of a variety $Z$ such that $X$ is a blow up of $Z$ has been widely studied and in various contexts. For smooth surfaces, Castelnuovo’s well-known theorem states that $X$ is the blow-up of a smooth surface $Z$ if and only if it has a rational $-1$-curve. Similar results for higher dimensional varieties exist due to Kodaira \cite{KodairaNAS} and Grauert \cite{Grauert62} (for contracting $n$-dimensional projective spaces), Moishezon \cite{Moishezon66} (in the complex manifold case), Lascu \cite{Lascu69} (with weaker conditions on singularities), and Artin \cite{Artin70} in the context of algebraic spaces with cohomological hypotheses on formal completions. 

More recently, Arena--Di Lorenzo--Inchiostro--Mathur--Obinna--Pernice \cite{SWB2024} proved the existence of weighted blowdowns in the generality of smooth separated Deligne--Mumford stacks
where the exceptional $\cE$ is a {\em weighted projective bundle} (see \cite[Def. 2.15]{SWB2024} for the definition) with positive dimensional fibers over any smooth and separated Deligne--Mumford stack $\mathcal{Y}$.   

 \begin{theorem}[{\cite[Thm. 1.1]{SWB2024}}]
   \label{existence theorem}
   Let $\cX$ and $\cY$ be smooth, separated Deligne--Mumford stacks, and let $\pi : \cE \to \cY$ be a weighted projective bundle with positive dimensional fibers.
   Assume that there is a closed embedding $\cE \hookrightarrow \cX$ with $\cE$ a Cartier divisor in $\cX$, such that $\mathcal{O}_{\cX}(\cE)|_{\cE} \simeq \mathcal{O}_{\cE}(-1) \otimes \pi^* \cL$ for a line bundle $\cL$ on $\cY$.
   Then there is a smooth, separated Deligne--Mumford stack $\cZ$, with two maps $i : \cY \to \cZ$ and $p : \cX \to \cZ$ such that $i$ is a closed embedding and $p$ is a (regular) weighted blowup with reduced center $\cY$.
   Moreover, the resulting square is a pushout in algebraic stacks.
 \end{theorem}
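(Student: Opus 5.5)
The plan is to build $\cZ$ as the contraction of $\cE$ to $\cY$, and then to recognize $p$ as a weighted blowup through the stacky-proj criterion of Proposition~\ref{Prop:amplephi}.

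\textbf{Local model.} First I will work étale-locally on $\cY$. After an étale cover of $\cY$, the weighted projective bundle $\pi:\cE\to\cY$ becomes $\cP(a_1,\dots,a_n)\times Y$. The hypothesis $\mathcal{O}_{\cX}(\cE)|_{\cE}\simeq \mathcal{O}_{\cE}(-1)\otimes\pi^*\cL$ then identifies the normal bundle of $\cE$ with the tautological bundle, up to a twist from the base. So the total space of the normal bundle is, étale-locally, the model weighted blowup
\[
\Bl_0^{(a_1,\dots,a_n)}\AA^n\times Y
\]
of Example~\ref{2,3 weighted blowup}, whose blowdown $\AA^n\times Y$ is smooth. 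The key claim is that the formal neighbourhood of $\cE$ in $\cX$ is isomorphic to the formal neighbourhood of the zero section in this normal bundle. I would prove this step by step through the infinitesimal neighbourhoods $\cE_k=V(\mathcal{I}_\cE^{k+1})$. The obstructions and choices at each stage lie in relative cohomology groups $R^1\pi_*$ and $R^2\pi_*$ of sheaves of the form $\mathcal{O}_\cE(m)\otimes T$ with $m>0$. These vanish relative to $\cY$ because the fibres are weighted projective stacks, which have no higher cohomology in positive degree.

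\textbf{Constructing $\cZ$.} Next I will construct $\cZ$ itself. On coarse spaces, the negativity of the normal bundle together with the formal description above lets me apply Artin's contraction theorem \cite{Artin70}. This gives an algebraic space $Z$ with a proper map $X\to Z$ contracting $E$ onto the coarse space $Y$ of $\cY$ and an isomorphism elsewhere. The stack $\cZ$ is then obtained by gluing two pieces: $\cX\setminus\cE$, and the étale-local smooth blowdowns $\AA^n\times Y$ from the model, glued along the formal/étale identification. The uniqueness statement \cite[Thm.~3.1]{SWB2024} recalled in the Remark after Theorem~\ref{Theorem:bottomup} guarantees that these local pieces agree on overlaps. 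The resulting $\cZ$ is smooth and separated, and $\cY\hookrightarrow\cZ$ is a closed embedding.

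\textbf{Recognizing the weighted blowup.} Set $I_n:=p_*\mathcal{O}_\cX(-n\cE)$, with $p:\cX\to\cZ$ the contraction and $I_n=\mathcal{O}_\cZ$ for $n\le 0$. The sheaf $\mathcal{O}_\cX(-\cE)$ restricts to $\mathcal{O}_\cE(1)$ up to a twist pulled back from $\cY$. It is therefore $p$-ample, and it is uniformizing because $\mathcal{O}(1)$ is uniformizing on weighted projective stacks. Both properties are étale-local on the target by Lemma~\ref{Lem:etalelocaltarget}. Since $p$ is proper, Proposition~\ref{Prop:amplephi}(ii) gives
\[
\cX\simeq \sProj_\cZ\Big(\bigoplus_n I_n\Big).
\]
In the local model, $I_n$ is the ideal generated by monomials of weighted degree $\ge n$ in coordinates of weights $a_i$. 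Hence $I_\bullet$ is a regular weighted embedding with $V(I_1)=\cY$, and $p$ is a regular weighted blowup with reduced center $\cY$.

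The pushout property then follows by a standard argument. A map out of $\cX$ that is constant along the fibres of $\cE\to\cY$ factors through $\cZ$, because $p_*\mathcal{O}_\cX=\mathcal{O}_\cZ$ and $\cZ$ is a coarse-type quotient near $\cY$; this can be checked étale-locally in the model.

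\textbf{Main obstacle.} I expect the hardest step to be the formal-neighbourhood identification and its algebraization to an honest smooth Deligne--Mumford stack $\cZ$, which is where Artin-type contraction must be made to work stack-theoretically. My first attempt would be to algebraize via the coarse space using Artin, then equip $Z$ with its stack structure by the bottom-up and uniqueness results, reducing everything to the explicit model.
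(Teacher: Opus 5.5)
The paper does not prove this statement; it imports it verbatim from \cite[Thm.~1.1]{SWB2024}. So your sketch has to stand on its own, and its central step is missing.

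Two parts of your sketch are sound. First, the recognition of $p$ as a weighted blowup via $p$-ampleness, uniformization and Proposition~\ref{Prop:amplephi}(ii) works; it is the same mechanism as Lemma~\ref{lemma:stackyproj}. Second, the splitting of the formal neighbourhood holds \emph{locally over $\cY$}. Your vanishing is only relative, so the local splittings need not glue, and $\cE\to\cY$ need not even extend to the thickenings of $\cE$.

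The gap is the existence of $\cZ$ \emph{together with the morphism} $p:\cX\to\cZ$.
\begin{enumerate}
\item Gluing $\cX\setminus\cE$ to the models $\AA^n\times Y$ ``along the formal identification'' is not a construction. An isomorphism of completions gives no \'etale map between actual neighbourhoods, hence no gluing datum. The uniqueness of \cite[Thm.~3.1]{SWB2024} cannot supply existence.
\item Your fallback (Artin on the coarse space, then a stack structure on $Z$ via bottom-up and uniqueness) at best yields some smooth $\cZ$ with coarse space $Z$ and $\cZ\setminus\cY\simeq\cX\setminus\cE$. It never yields $p$, because $X\to Z$ does not lift to $\cZ$ automatically. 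This lift is exactly where $\mathcal{O}_{\cX}(\cE)|_{\cE}\simeq\mathcal{O}_{\cE}(-1)\otimes\pi^*\cL$ has to be used.
\end{enumerate}

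Here is an example where the lift fails. Take $\cX=\mathrm{Tot}(\mathcal{O}_{\PP^1}(-2))$. The coarse contraction is $Z=\AA^2/\mu_2$, and the unique smooth stack over $Z$ extending $\cX\setminus\cE$ is $[\AA^2/\mu_2]$. No morphism $\cX\to[\AA^2/\mu_2]$ over $Z$ exists: since $\cX$ is simply connected, such a map would lift the birational map $\cX\to Z$ through the degree-$2$ cover $\AA^2\to Z$. Only the root stack $\cX(\sqrt[2]{\cE})$ maps to it, as in Theorem~\ref{-m case}.

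So defining $I_n:=p_*\mathcal{O}_{\cX}(-n\cE)$ presupposes what has to be proved. You need an argument that actually produces $p$, for instance by algebraizing the local model. Separately, applying Artin on the coarse space requires checking his conormal vanishing criterion on a possibly singular $X$ in which $E$ is only $\QQ$-Cartier. ``Negativity of the normal bundle'' does not give this directly.

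The pushout assertion is also not a standard consequence of $p_*\mathcal{O}_{\cX}\simeq\mathcal{O}_{\cZ}$; that identity only controls maps to affine targets. For an arbitrary algebraic stack $\mathcal{W}$ you must descend objects, not functions. For example, a vector bundle or torsor on $\cX$ whose restriction to $\cE$ is identified with a pullback from $\cY$ must be shown to be compatibly pulled back from $\cZ$. That needs a genuine descent argument, for instance cohomology vanishing along the thickenings of $\cE$ combined with a Tannakian/coherent-completeness passage like the one in the proof of Theorem~\ref{contracted}. Your sketch contains no such argument, and ``$\cZ$ is a coarse-type quotient near $\cY$'' is not accurate, since $\cZ$ carries the stabilizers of $\cY$.
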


 \subsection{Reduction Steps}

 In this section, we refine and fix our assumptions on the objects present in Theorems~\ref{rep theorem} and \ref{mainv2} to simplify the proof for clarity of exposition.
Indeed, to prove Theorems~\ref{rep theorem} and \ref{mainv2}, it suffices to do so one connected component of $\cX$ at a time (since $f$ induces a bijection between the connected components of $\cX$ and $\cZ$). One can further restrict to the components of $\cX$ which intersect the exceptional $\cE$ non-trivially. Hence, in the rest of the paper we will assume $\cX$ and $\cZ$ are connected. Moreover, since $\cE$ and $\cY$ are smooth, the irreducible components are disconnected, i.e., $\cE=\bigsqcup_{i=1}^{m}\cE_i$ and $\cY=\bigsqcup_{i=1}^{m}\cY_i$.
Further, $f^{-1}(\cY_i)=\cE_i$ set-theoretically, since $f$ has connected fibers. 
Thus, by setting $\cX_i:=\cX\setminus\bigsqcup_{j\neq i}\cE_j$ and $\cZ_i:=\cZ\setminus\bigsqcup_{j\neq i}\cY_j$, we obtain contraction morphisms $f_i:=f|_{\cX_i}:\cX_i\rightarrow \cZ_i$, where $\cX_i$ and $\cZ_i$ are connected, and the exceptional $\cE_i$ is a smooth irreducible Cartier divisor, such that there is a smooth irreducible closed substack $\cY_i\subset \cZ_i$ with $f_i^{-1}(\cY_i)=\cE_i$ set-theoretically. 
Consequently, it suffices to prove the case $\cX$ and $\cZ$ are connected, and $\cE$ and $\cY$ are irreducible. In the case of Theorem~\ref{mainv2} this implies that $f(\cE)=p\in |\cZ|$ and $\cY=\mathcal{B}\Aut_\cZ(p)$.

 \section{General Smooth Scheme Case}\label{Section:Scheme case}

 In this section, we confirm Question~\ref{conje} for the general scheme case.

 \begin{theorem}
 \label{scheme case}
    Suppose $X$ and $Z$ are smooth connected separated schemes of finite type over $\mathbb{C}$ of dimension $n$, and there is a projective contraction morphism $f : X \to Z$ where the exceptional divisor $E$ is a smooth irreducible Cartier divisor. Moreover, suppose there is a smooth closed irreducible subscheme $Y \subseteq Z$ such that $E \simeq f^{-1}(Y)$ set-theoretically, and $f : X \setminus E \xrightarrow{\sim} Z \setminus Y$. 
    
    Then, $f$ is the blowup of $Z$ along smooth center $Y$, i.e., $X \simeq \Bl_Y Z$. In particular, there is a vector bundle $V \to Y$ of rank $r := \operatorname{codim}(Y, Z)$ such that $(E, \mathcal{O}_{E}(-E)) \simeq (\PP(V), \mathcal{O}_{\PP(V)}(1))$ where $\PP(V)$ is the projectivization of $V$.
 \end{theorem}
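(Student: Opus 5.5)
We follow the strategy announced in the introduction: first the case where $Y$ is a point, then reduce the general case to it by slicing.

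\textbf{Step 1: the point case.} Assume $Y=\{p\}$, so $E$ is a smooth projective irreducible divisor with $f(E)=p$. Set $L:=\mathcal{O}_X(-E)|_E$.

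First we show $L$ is ample on $E$. Since $f$ is projective, choose a relatively ample $H$ on $X$. For every curve $C\subseteq E$ we have $E\cdot C<0$; this follows from the negativity lemma, because $-E$ is $f$-exceptional and $f$-nef after shrinking $Z$ around $p$. Hence $L\cdot C>0$ for all curves in $E$. Ampleness then follows from Kleiman's criterion or Nakai--Moishezon, using the negativity lemma on subvarieties of $E$: $(-E)^{k}\cdot V>0$ for every $k$-dimensional $V\subseteq E$.

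Next, since $Z$ is smooth and $f$ contracts exactly $E$, we have $K_X=f^*K_Z+aE$ with $a\ge 1$. Adjunction gives $K_E=(K_X+E)|_E=-(a+1)L$. So $E$ is Fano with $-K_E=(a+1)L$.

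To pin down $a+1$ we bound from both sides.
\begin{itemize}
\item \emph{$a\le n-1$.} Because $Z$ is smooth, blowing up $p$ gives discrepancy $n-1$. Comparing $X$ with $\Bl_pZ$, using that $E$ is the only exceptional divisor over $p$ and $f$ is an isomorphism elsewhere, gives $a\le n-1$; in fact $a$ equals the discrepancy of the divisorial valuation $\operatorname{ord}_E$, which is $\geq$... Better: the center of $\operatorname{ord}_E$ on $\Bl_p Z$ lies in the exceptional $\PP^{n-1}$.
\item \emph{$a\ge n-1$.} Use bend-and-break / Mori: through a general point of $E$ there is a rational curve $C$ with $-K_E\cdot C\le n$, where $\dim E=n-1$. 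Together with divisibility this forces $a+1\le n$. Equivalently, take a minimal rational curve and run the normal bundle computation in $X$: $-K_X\cdot C=aE\cdot C$ must be at least $\dim$ of the family minus $\dim$ of the locus. Since the deformations of $C$ stay in $E$ (they are contracted) and $-K_X\cdot C\ge \dim E+1-\dim X+\ldots$, the deformation-theoretic lower bound gives $-K_E\cdot C\ge n$ for the curves covering $E$.
\end{itemize}
Combining, $-K_E=nL$ with $\dim E=n-1$.

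Kobayashi--Ochiai then gives $(E,L)\cong(\PP^{n-1},\mathcal{O}(1))$. Since $\mathcal{O}_X(-E)$ restricts to $\mathcal{O}(1)$, we get $f^{-1}\mathfrak{m}_p\cdot\mathcal{O}_X=\mathcal{O}_X(-E)$ via formal functions: $H^1(E,\mathcal{O}(k))=0$ lets sections lift, so the ideal is invertible. The universal property of the blowup then gives $X\to\Bl_pZ$. This map is birational, finite (no curves contracted, since $\mathcal{O}(-E)$ is ample on the fibre), and $\Bl_pZ$ is normal, so Zariski's Main Theorem makes it an isomorphism.

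\textbf{Step 2: general $Y$ of dimension $r'$.} Work locally on $Z$, which we may take affine. Cut by $r'$ general hypersurface sections $D_1,\dots,D_{r'}$ through a point $y\in Y$, transverse to $Y$ (Bertini), obtaining a smooth $Z'$ with $Z'\cap Y=\{y\}$ locally. Then $X':=f^{-1}(Z')$ is smooth by Bertini applied to the base-point-free system $f^*|D|$ away from $E$ and a transversality check along the fibre $E_y$, and $E\cap X'=E_y$. Step 1 gives $E_y\cong\PP^{r-1}$ with $\mathcal{O}(-E)|_{E_y}=\mathcal{O}(1)$.

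Hence $E\to Y$ is flat with all fibres $\PP^{r-1}$ (flatness by miracle flatness, since $E$ and $Y$ are smooth and the fibres are equidimensional). So $V:=(f_*\mathcal{O}_E(-E))^\vee$ is a vector bundle of rank $r$ by cohomology and base change, and $E\cong\PP(V)$. As in Step 1, $I_Y\mathcal{O}_X=\mathcal{O}_X(-E)$: the inclusion $\supseteq$ holds generically, and equality can be checked fibrewise using the point case. ZMT again yields $X\cong\Bl_YZ$.

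\textbf{Main obstacle.} The slicing step is the hardest part: ensuring $f^{-1}(Z')$ is smooth along $E_y$ and that $E_y$ is exactly the exceptional divisor of the sliced contraction. The fallback is to check smoothness via the tangent map $T_X\to f^*T_Z$ along $E$. In Step 1, the lower bound $a\ge n-1$ via bend-and-break also needs care.
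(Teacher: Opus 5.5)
There is a genuine gap in Step 2, at the special points of $Y$. You slice through an \emph{arbitrary} $y\in Y$ and invoke Step 1 to conclude that \emph{every} fibre $E_y$ is $\PP^{r-1}$. However, the linear system of pulled-back hypersurfaces through $y$ has $E_y$ in its base locus, so Bertini says nothing about $X'=f^{-1}(Z')$ along $E_y$.

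Your tangent-map fallback does not close this. Applying Step 1 requires $X'$ smooth along $E_y$ and $E\cap X'=E_y$ a smooth divisor of dimension $r-1$. For smooth $E$ and $Y$, the latter is exactly smoothness of $f|_E\colon E\to Y$ along $E_y$, and generic smoothness gives this only over a dense open $U\subseteq Y$. Over $W:=Y\setminus U$, dimension counting does not even rule out $\dim E_y\ge r$: a jump over a proper closed subset of $Y$ is compatible with $\dim E=n-1$, and in that case Step 1 simply does not apply.

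So miracle flatness, $E\cong\PP(V)$, and the ``fibrewise'' proof of $\cI_Y\mathcal{O}_X=\mathcal{O}_X(-E)$ are justified only over $U$. An equality of ideals on $f^{-1}(U)$ does not propagate across $f^{-1}(W)$ merely because the latter has codimension $\ge 2$.

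The missing idea is an extension step over $W$. The paper first proves $f^{-1}(V)\cong\Bl_UV$ for $V:=Z\setminus W$, $j\colon V\hookrightarrow Z$ (Propositions~\ref{Bertini prop} and \ref{general proj bundle}), and then argues as follows:
\begin{enumerate}
\item Since $\mathcal{O}_X(-E)$ is $f$-ample, $X\cong\underline{\Proj}_Z\bigoplus_m f_*\mathcal{O}_X(-mE)$.
\item Irreducibility of $E$ gives $\codim_X f^{-1}(W)\ge 2$, hence $f_*\mathcal{O}_X(-mE)=j_*\cI^m_{U/V}$.
\item Since $\depth(\cI^m_{Y/Z})_w\ge 2$ for $w\in W$ (Lemma~\ref{depth comp}, via Rees' theorem), $j_*\cI^m_{U/V}=\cI^m_{Y/Z}$, so $X\cong\Bl_YZ$.
\end{enumerate}

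In Step 1 your two bounds are swapped. Bend-and-break gives the \emph{upper} bound $a+1\le n$. The \emph{lower} bound $a\ge n-1$ must use smoothness of $Z$: a divisor whose centre is a smooth point of an $n$-fold has discrepancy $\ge n-1$. You can get this from your comparison with $\Bl_pZ$, carried out correctly, or from the paper's local computation $z_i=x_1^{a_i}u_i$.

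The asserted deformation-theoretic bound $-K_E\cdot C\ge n$ cannot come from $X$ and $E$ alone. For $n\ge 3$, blowing up the vertex of the cone over a smooth quadric $Q\subset\PP^n$ gives a smooth $X$ whose contracted divisor $E\cong Q$ is covered by lines with $-K_E\cdot\ell=n-1$.

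With that repaired, your conclusion of Step 1 is correct. You use vanishing of $H^1(\PP^{n-1},\mathcal{O}(k))$ for $k\ge 1$ and formal functions to get $\mathfrak{m}_p\mathcal{O}_X=\mathcal{O}_X(-E)$, then the universal property of blowing up and Zariski's Main Theorem. This is more elementary than the paper's route through the weighted blowdown theorem (Theorem~\ref{existence theorem}) and the bottom-up characterization.
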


 \subsection{Contraction to a point}

 To prove Theorem \ref{scheme case} we will use a Bertini style argument to reduce to the case the center is a point.
 Hence, we first prove the case that $Y=p$.

 \begin{Prop}
 \label{contraction to a point case}
    Suppose $f : X \to Z$ is a projective contraction morphism between two smooth connected separated schemes of finite type over $\mathbb{C}$ of dimension $n$ where the exceptional divisor $E$ is smooth and irreducible, and $f$ contracts $E$ to a point $p \in Z.$
    Suppose further that $f : X \setminus E \xrightarrow{\sim} Z \setminus p$.
    Then, $f$ is the blowup of $Z$ at $p$, i.e., $X\simeq \Bl_p Z.$ In particular,
    $E \simeq \PP^{n-1}$, and $\mathcal{O}_X(-E)|_E \simeq \mathcal{O}_E(1)$.
 \end{Prop}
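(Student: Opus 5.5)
We follow the strategy announced in the introduction: set $L := \mathcal{O}_X(-E)|_E$, show that $L$ is ample on $E$ and that $K_E \simeq L^{\otimes(-n)}$, apply Kobayashi--Ochiai \cite{KO73} to obtain $(E,L)\simeq(\PP^{n-1},\mathcal{O}(1))$, and then identify $X$ with $\Bl_p Z$.

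\textbf{Step 1: $-E$ is $f$-ample.} Since $f$ is projective and birational with $E$ the full exceptional locus (and $f$ an isomorphism off $E$), the fibre $f^{-1}(p) = E$ is projective. Because $p$ is a smooth point, hence $\QQ$-factorial, the negativity lemma applies: for any curve $C\subseteq E$ we have $E\cdot C<0$. To see this, take a general hyperplane section $H \ni p$ of $Z$ near $p$; then $f^*H = \tilde H + mE$ with $m\ge 1$ and $f^*H\cdot C=0$. Choosing $H$ with $\tilde H \not\supseteq C$, or arguing via the standard negativity lemma, gives $E\cdot C<0$. Hence $L$ is positive on every curve of $E$. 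Since $f$ is projective, $-E$ is then $f$-ample by Kleiman's criterion relative to $Z$, so $L$ is ample on $E$.

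\textbf{Step 2: $K_E \simeq L^{\otimes(-n)}$.} Both $X$ and $Z$ are smooth, so $K_X = f^*K_Z + aE$ with $a \ge 1$; in fact $a=\operatorname{discrep}$, and $a\le n-1$ by comparing with the blowup of a smooth point. Adjunction gives
\[
K_E = (K_X+E)|_E = (a+1)E|_E = -(a+1)L,
\]
so $E$ is a smooth Fano variety of dimension $n-1$ with index divisible by $a+1$. It remains to prove $a+1 = n$, i.e.\ $a = n-1$. This is where bend-and-break enters. Since $-K_E$ is ample, $E$ is covered by rational curves. Through a general point of $E$, Mori's bend-and-break produces a rational curve $C$ with
\[
0 < -K_E\cdot C \le \dim E + 1 = n.
\]
This gives $(a+1)\,L\cdot C \le n$. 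I expect the reverse inequality to be the main obstacle. My first attempt would use the deformation theory of $C$ inside $X$ versus inside $E$. Curves in $E$ cannot leave $E$, since they are contracted and $f$ is an isomorphism elsewhere. Hence every deformation of $C$ in $X$ stays in $E$, so $\dim_{[C]}\mathrm{Hom}(\PP^1,X)\le \dim_{[C]} \mathrm{Hom}(\PP^1,E)$. The lower bound
\[
\dim \mathrm{Hom}(\PP^1,X) \ge -K_X\cdot C + n = -aE\cdot C + n = aL\cdot C + n
\]
should be compared with the dimension of the family of rational curves of minimal degree on $E$. One needs to extract $a\ge n-1$ from this. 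Alternatively, I would compute $a$ directly from $f_*\mathcal{O}_X = \mathcal{O}_Z$ and the fact that $f^*\mathfrak m_p$ vanishes to order $\ge 1$ along $E$, combined with a Jacobian/volume comparison $f^*(dz_1\wedge\cdots\wedge dz_n)$ vanishing to order $a$, and $\mathfrak m_p^k\mathcal{O}_X\subseteq \mathcal{O}_X(-kE)$. Once $K_E=-nL$ with $L$ ample, Kobayashi--Ochiai forces $(E,L)\simeq(\PP^{n-1},\mathcal{O}(1))$.

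\textbf{Step 3: identify $X$ with $\Bl_p Z$.} From $L=\mathcal{O}(1)$ we get $H^1(E,L^{k})=0$ for $k\ge 0$. The sequences
\[
0\to\mathcal{O}_X(-(k+1)E)\to\mathcal{O}_X(-kE)\to L^k\to 0,
\]
together with the theorem on formal functions, show that $\mathfrak m_p\mathcal{O}_X=\mathcal{O}_X(-E)$. More precisely, $f_*\mathcal{O}_X(-E)=\mathfrak m_p$, and $\mathfrak m_p\mathcal{O}_X\to\mathcal{O}_X(-E)$ is surjective, because $H^0(E,L)$ generates $L$ and $H^0$ lifts by the vanishing of $H^1$. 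So $\mathfrak m_p\mathcal{O}_X$ is invertible, and the universal property of the blowup gives a morphism $X\to \Bl_pZ$ over $Z$. This morphism is birational and projective, and it restricts on $E$ to an isomorphism $E\simeq \PP^{n-1}$ onto the exceptional divisor. It is therefore quasi-finite, and by Zariski's Main Theorem ($\Bl_pZ$ is normal) it is an isomorphism. This also yields $\mathcal{O}_X(-E)|_E\simeq\mathcal{O}_E(1)$.
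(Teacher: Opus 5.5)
\textbf{Gap in Step 2.} You never prove $a\ge n-1$. This is the one place where smoothness of $Z$ at $p$ must enter.

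Your remark that ``$a\le n-1$ by comparing with the blowup of a smooth point'' has the direction reversed. Every divisor whose center is a smooth point has discrepancy \emph{at least} $n-1$, so comparison with $\Bl_pZ$ can only give the lower bound. The upper bound $a+1\le n$ comes from bend-and-break, as you then correctly derive.

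Your first proposed route to the lower bound cannot work. Every deformation of $g\colon\PP^1\to E$ stays in $E$, so the comparison amounts to $aL\cdot C+n\le h^0(\PP^1,g^*T_E)$. But $h^0(\PP^1,g^*T_E)\ge\chi(\PP^1,g^*T_E)=(a+1)L\cdot C+n-1$, so this inequality holds automatically once $L\cdot C\ge 1$ and says nothing about $a$. More conceptually, the argument only sees the pair $(X,E)$. Contract the zero section of the total space of $\mathcal{O}_{\PP^{n-1}}(-k)$, $k\ge 2$, to the vertex of a cone. This gives smooth $X$, $E\simeq\PP^{n-1}$, $L\simeq\mathcal{O}(k)$ and $a+1=n/k<n$. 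So any argument that ignores smoothness of the target must fail.

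\textbf{The fix.} Your second route is the right one and is exactly the paper's argument; it just has to be carried out. Since $f(E)=p$, each $f^*z_i$ lies in $\mathcal{O}_X(-E)$. Near a point of $E=\{x_1=0\}$, write $f^*z_i=x_1w_i$. Then $f^*dz_i=w_i\,dx_1+x_1\,dw_i$. At most one factor of the wedge product can contribute $dx_1$, so $f^*(dz_1\wedge\cdots\wedge dz_n)$ is divisible by $x_1^{n-1}$. As $dz_1\wedge\cdots\wedge dz_n$ generates $\omega_Z$ near $p$, this gives $a\ge n-1$. (The paper writes $z_i=x_1^{a_i}u_i$ and gets order $\ge\sum a_i-1$.) With bend-and-break this yields $a=n-1$, and Kobayashi--Ochiai applies as you say.

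A minor point in Step 1: positivity of $L$ on every curve is not Kleiman's criterion, which requires positivity on the closed cone. Instead cite \cite[Lem.~2.62]{KollarMori1998}, as the paper does. For $A$ $f$-ample, $f^*f_*A=A+cE$ with $c>0$ by the negativity lemma, so $-E$ is $f$-ample.

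\textbf{Step 3 is a different, valid route.} The paper applies the weighted blowdown theorem (Theorem~\ref{existence theorem}) to get a smooth DM stack $\cZ'$ with $X\to\cZ'$ an ordinary point blowup. It then uses Zariski's Main Theorem on coarse spaces and the bottom-up characterization to identify $\cZ'\simeq Z$.

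Your argument is classical and purely scheme-theoretic, and it avoids the stack machinery. It uses $H^1(\PP^{n-1},\mathcal{O}(k))=0$, formal functions to get $\mathfrak m_p\mathcal{O}_X=\mathcal{O}_X(-E)$, the universal property of blowing up, and ZMT. The paper's route has the advantage of being the template it reuses for positive-dimensional centers and the stacky cases.

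For quasi-finiteness of $X\to\Bl_pZ$, you only need that $E\to\PP^{n-1}$ is given by a base-point-free subsystem of $|L|$ with $L$ ample, hence is finite. In fact $\mathfrak m_p/\mathfrak m_p^2\to H^0(E,L)$ is a surjection between $n$-dimensional spaces, hence an isomorphism. This is what justifies your claim that $E$ maps isomorphically onto the exceptional divisor.
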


 \begin{Remark}
    Since blowups are local Proposition \ref{contraction to a point case} extends to the case when exceptional $E = \sum_{i =1}^k E_i$ where $f(E_i) = p_i$ are distinct points, i.e., $X \simeq \Bl_{p_1,\ldots, p_k} Z.$
 \end{Remark}
 
 We may assume without loss of generality that  $Z = \Spec(A)$ is a smooth affine scheme where $K_Z \simeq \mathcal{O}_Z$.
 To prove Proposition \ref{contraction to a point case} we need the following theorem of Kobayashi--Ochiai characterizing projective space.

 \begin{theorem}[{\cite[Cor. 1]{KO73}}]
 \label{KOc}
    Suppose $M$ is a compact complex manifold of dimension $m$ with ample line bundle $L$ on $M$. 
    If $c_1(M) \ge (m+1)c_1(L),$
    then $M$ is biholomorphic to $\PP^m$ with $L \simeq \mathcal{O}_M(1).$
 \end{theorem}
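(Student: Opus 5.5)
Since $L$ is ample, $M$ is projective by Kodaira's embedding theorem, and by GAGA we may work algebraically. I read $c_1(M)\ge (m+1)c_1(L)$ as saying that $-K_M-(m+1)L$ is semipositive, hence nef. The plan is to first pin down the Hilbert polynomial $P(t):=\chi(M,L^{\otimes t})$ completely, and then use the resulting numerical data to show that the complete linear system $|L|$ is an isomorphism onto $\PP^m$.

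\textbf{Step 1: the Hilbert polynomial.} For every integer $t\ge -m$, write
\[
L^{\otimes t}-K_M=(t+m+1)L+\bigl(-K_M-(m+1)L\bigr).
\]
This is ample plus nef, hence ample. Kodaira vanishing then gives $H^i(M,L^{\otimes t})=0$ for all $i>0$ and all $t\ge -m$.
\begin{itemize}
\item For $-m\le t\le -1$, the group $H^0(M,L^{\otimes t})$ also vanishes because $L$ is ample. Hence $P(t)=0$ for $t=-1,\dots,-m$.
\item For $t=0$, we get $P(0)=h^0(\mathcal{O}_M)=1$.
\end{itemize}
Since $P$ has degree $m$ and leading coefficient $L^m/m!$, these $m$ roots and the normalization force
\[
P(t)=\binom{t+m}{m}.
\]
Consequently $L^m=1$ and $h^0(M,L)=P(1)=m+1$.

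\textbf{Step 2: from the numerics to $\PP^m$.} It suffices to show that $|L|$ is base point free. Granting this, $|L|$ defines a morphism $\phi:M\to\PP^m$ with $\phi^*\mathcal{O}(1)=L$. This morphism is finite, since $L$ is ample, and of degree $L^m=1$. A finite birational morphism onto the normal variety $\PP^m$ is an isomorphism by Zariski's Main Theorem, and then $L\simeq\mathcal{O}(1)$.

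\textbf{Step 3: base point freeness, by a ladder argument and induction on $m$.} The case $m=1$ is immediate: $\deg L=1$ and $h^0(L)=2$ force $M\simeq\PP^1$.

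For the inductive step, pick a general $H\in|L|$. The exact sequence
\[
0\to\mathcal{O}_M\to L\to L|_H\to 0,
\]
together with $H^1(\mathcal{O}_M)=0$, shows that $H^0(M,L)\to H^0(H,L|_H)$ is surjective and that $h^0(L|_H)=m$. So $\mathrm{Bs}|L|=\mathrm{Bs}|L|_H|$. If $H$ is smooth, adjunction gives
\[
-K_H=(-K_M-L)|_H\ge m\,L|_H,
\]
so $H$ satisfies the hypotheses in dimension $m-1$. By induction $|L|_H|$ is then free, and hence so is $|L|$.

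\textbf{Main obstacle.} The difficult point is the smoothness of a general member $H$: a priori $|L|$ could have base points, and Bertini then only gives smoothness away from the base locus. I would first try to rule out base points numerically. Intersect $m$ general members of $|L|$. Since $L^m=1$ and the cohomology vanishing propagates down the ladder, the intersection consists of exactly one reduced point. I would then argue that a base point would have to contribute to every such intersection, and that the resulting local intersection multiplicity computation is incompatible with $L^m=1$; this is essentially the argument behind Fujita's theorem that $\Delta$-genus $0$ together with $L^m=1$ forces $(M,L)\simeq(\PP^m,\mathcal{O}(1))$. If that fails, the fallback is the rational-curve route. Mori's bend-and-break produces through a general point a rational curve $C$ with $-K_M\cdot C\le m+1$. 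Then
\[
(m+1)\,L\cdot C\le -K_M\cdot C\le m+1,
\]
so $L\cdot C=1$ and $-K_M\cdot C=m+1$, and the Cho--Miyaoka--Shepherd-Barron characterization of $\PP^m$ finishes.
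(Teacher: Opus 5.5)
The paper gives no proof of this statement; it quotes it from \cite{KO73}. Your Steps 1 and 2 are correct. They are the standard numerical half of the Kobayashi--Ochiai argument: $\chi(L^{\otimes t})=\binom{t+m}{m}$, so $L^m=1$ and $h^0(L)=m+1$, and a free $|L|$ gives a finite birational morphism onto $\PP^m$.

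The genuine gap is base-point-freeness in Step 3, and your numerical repair fails as stated, for two reasons.
\begin{enumerate}
\item If $\dim\mathrm{Bs}|L|\ge 1$, then $m$ general members all contain the base locus. Their intersection is then not finite, so the claim that it is one reduced point is false.
\item Even when the base locus is finite, the intersection number alone gives no contradiction. On $\PP^m$, the hyperplanes through a point $p$ form a linear system with base point $p$, and $m$ general ones meet transversally exactly at $p$, with local multiplicity $1=L^m$.
\end{enumerate}
What must be used is that $|L|$ is complete with $h^0(L)=m+1$, through the surjectivity of restriction maps that your vanishing provides.

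Here is a repair that needs neither Bertini nor the induction. Suppose $B:=\mathrm{Bs}|L|\neq\emptyset$, put $b:=\dim B\le m-1$, and take general $s_1,\dots,s_{m-b}\in H^0(M,L)$.
\begin{enumerate}
\item Set $V:=V(s_1,\dots,s_{m-b})$. Since $|L|$ is free on $M\setminus B$, each cut drops dimension there, and components lying in $B$ have dimension at most $b$. Krull gives the lower bound, so $V$ has pure dimension $b$.
\item Hence $V$ is a Cohen--Macaulay complete intersection with fundamental cycle $c_1(L)^{m-b}\cap[M]$, so $\sum_i\ell_i\,(L^b\cdot V_i)=L^m=1$. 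Ampleness forces a single component of multiplicity one, so $V$ is integral.
\item Since $V\supseteq B$ and $B$ has a $b$-dimensional component, $V$ equals that component. So every section of $L$ vanishes on the reduced scheme $V$, i.e.\ $H^0(M,L)=H^0(M,\mathcal{I}_V\otimes L)$.
\item Twist the Koszul resolution of $\mathcal{I}_V$ by $L$ and use $H^{j-1}(M,L^{\otimes(1-j)})=0$ for $2\le j\le m-b$ (your Step 1). This shows $H^0(M,\mathcal{I}_V\otimes L)$ is spanned by $s_1,\dots,s_{m-b}$, which is absurd since $m-b<m+1=h^0(L)$.
\end{enumerate}
Alternatively, cite Fujita: $\Delta(M,L)=m+L^m-h^0(L)=0$ forces $\mathrm{Bs}|L|=\emptyset$. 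Either way, your Step 2 then finishes directly.

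Your Cho--Miyaoka--Shepherd-Barron fallback is logically valid, but it replaces your argument rather than completing it, using a much deeper theorem. The bend-and-break step is not the input that criterion needs. It needs the lower bound $-K_M\cdot C\ge m+1$ for rational curves through a general point, and here that holds for every curve because $-K_M\cdot C\ge (m+1)L\cdot C\ge m+1$. Also $M$ is Fano, hence uniruled. Once $M\simeq\PP^m$, $L\simeq\mathcal{O}(1)$ follows from $L^m=1$.
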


 We can now give the proof of Proposition \ref{contraction to a point case}.

 \begin{proof}[Proof of Proposition \ref{contraction to a point case}]
    We will first apply Theorem \ref{KOc} to show that $E \simeq \PP^{n-1}$.
    By \cite[Lem. 2.62]{KollarMori1998} the line bundle $L:= \mathcal{O}_{X}(-E)|_E$ is ample on $E$.
    Since $f : X \to Z$ defines an isomorphism away from $E$ and $f(E)=p$ it follows that $K_X \sim f^*K_Z + aE$ for some $a \in \ZZ$, and since  $K_Z \simeq \mathcal{O}_Z$ we see $K_X \sim aE$.
    In fact, the smoothness forces $a \ge n-1$.
    Indeed, by Artin approximation we may compute this using a manifold chart. 
    Letting $(z_1, \ldots, z_n)$ be a chart of $Z$ centered around $p$ and letting $(x_1, \ldots, x_n)$ be a chart of $X$ where $E$ is defined by $x_1 = 0$ we may write $z_i = x_1^{a_i}u_i(x_1, \ldots, x_n)$ where $a_i \ge 1$ and $u_i(x_1, \ldots, x_n)$ is a non-vanishing analytic function for $i = 1, \ldots, n$. 
    From this, we compute that
    \[
    f^*(dz_1 \wedge \cdots \wedge dz_n) = x^{a_1 + \cdots + a_n -1}u(x_1, \ldots, x_n) \,dx_1 \wedge \cdots \wedge dx_n
    \]
    for some regular analytic function $u(x_1, \ldots, x_n).$
    Here, $a \ge a_1 + \cdots + a_n -1 \ge n-1$ as claimed.
    Then, by the adjunction formula $K_E \sim (a+1)H$ where $H = E|_E$.
    Thus, $-K_E$ is ample, i.e., $E$ is Fano. 
    Hence, by bend-and-break (c.f. \cite[Ch. V Thm. 1.6]{MR1440180}) there is a rational curve $C \subset E$ such that $-K_E \cdot C \le n$ from which it follows that $a+1 \le n$.
    Combining everything, $a=n-1$ and
    $
    c_1(E) = nc_1(L)
    $ so by Theorem \ref{KOc} we see $(E, L) \simeq (\PP^{n-1}, \mathcal{O}_{\PP^{n-1}}(1))$.
    
    To conclude $f : X \to Z$ is the smooth blowup at $p$ we will utilize Theorem \ref{existence theorem}.
    Indeed, Theorem \ref{existence theorem} ensures there is a smooth separated Deligne--Mumford stack $\cZ'$ of dimension $n$ and a morphism $f' : X \to \cZ'$ such that $f'$ is a weighted blowup with reduced center $p$ where $E = \Exc(f')$. 
        Since $X\setminus E \simeq \cZ'\setminus p$ the only possible stacky structure can exist at $p$.
    Moreover, since $E \simeq \PP^{n-1}$ is a scheme it follows that $f'$ is an ordinary blowup, i.e., of weight $(1,\ldots, 1).$
    From the universal property in Theorem \ref{existence theorem} we obtain a unique morphism $g : \cZ' \to Z$, and since $\cZ'$ is a smooth, separated Deligne--Mumford stack there is a coarse moduli space $\pi : \cZ' \to Z'$ and there is an induced morphism $h : Z' \to Z$ giving the diagram
    \[
    \begin{tikzcd}
E \arrow[r, hook] \arrow[d]                    & X' \arrow[rdd, "f", bend left] \arrow[d, "f'"] &   \\
p \arrow[r, hook] \arrow[rd, hook, bend right] & \cZ' \arrow[rd, "{\exists ! \,g}"] \arrow[d, "\pi"]   &   \\
                                               & Z' \arrow[r, "h"]                              & Z
\end{tikzcd}
    \]
It is straightforward to see that $h$ is bijective and birational.
The morphism $h : Z' \to Z$ is finite type, separated, and since $f = h \circ (\pi \circ f')$ is a proper morphism it follows that $h$ is proper \cite[\href{https://stacks.math.columbia.edu/tag/03GN}{Tag 03GN}]{stacks-project}.
By Zariski's main theorem $h$ is an isomorphism.
In other words, $Z$ is the coarse moduli space of $\cZ'.$
However, since $\cZ' \setminus p \simeq Z \setminus p$ we see that $\cZ'$ is a scheme in codimension $n-1 \ge 1$.
Thus, by the bottom up characterization (Theorem \ref{Theorem:bottomup}) it follows that $\cZ' \simeq Z.$
Therefore, $X \simeq \Bl_pZ.$
 \end{proof}

 \subsection{Slicing and Bertini}\label{Slicing Section} 

Let $f : X \to Z$ be the contraction morphism from Theorem \ref{scheme case}, i.e., the exceptional $E$ is smooth irreducible and there is a smooth irreducible subscheme $Y \subseteq Z$ such that $f^{-1}(Y) \simeq E$ as sets and $f : X \setminus E \xrightarrow{\sim} Z \setminus Y$.
We will prove that $X$ is the blowup of $Z$ along the smooth center $Y$, i.e., $X \simeq \Bl_YZ.$
Set $c := \operatorname{codim}_Z(Y)$.
Proposition \ref{contraction to a point case} handles the case $c = n$.
Assume now that $c < n$.
First, we will prove that 
\begin{equation}
\label{fiber}
(f^{-1}(y), \mathcal{O}_X(-E)|_{f^{-1}(y)}) \simeq (\PP^{c-1}, \mathcal{O}_{\PP^{c-1}}(1)), \ \ \text{for general $y \in Y.$}
\end{equation}
To do so, we will employ a Bertini style argument to reduce the smooth center $Y$ to a finite collection of reduced points.
In fact, since we are studying the fiber over $y \in Y$ we may replace $Z$ by an open affine neighborhood of $y \in Y \subset Z$ where we may assume that $Z$ is quasiprojective.
Let $\bar{Z}$ denote the projective closure of $Z$ and fix a very ample line bundle $\cL$ on $\bar{Z}$ defining a closed embedding $\bar{Z} \hookrightarrow \PP\HH^0(\bar{Z}, \cL)$. 
Note that the projective closure $\bar{Y}$ of $Y$ defines a closed subscheme of $\bar{Z}.$
We denote the pullback of $\cL$ to $X$ under the composition $X \xrightarrow{f}Z \hookrightarrow \bar{Z}$ by $f^* \cL,$ which is a basepoint-free line bundle on $X$.
The linear systems which we will apply Bertini theorems to will be restrictions and pullbacks of linear systems of appropriate powers of $\cL$ characterized by the following lemma.

\begin{Lemma}
\label{tech cond}
Suppose $f : X \to Z$ is a birational contraction morphism between two smooth separated schemes where $Z$ is quasi-projective. 
Suppose further that there is a smooth closed subscheme $Y \subseteq Z$ where $f$ defines an isomorphism away from $Y$. 
Let $\bar{Z}$ and $\bar{Y}$ be projective schemes containing $Z$ and $Y$ as schematically-dense open subschemes respectively where $\bar{Y} \subseteq \bar{Z}$ is closed.
Let $\cL$ be a very ample line bundle on $\bar{Z}$, and let $f^*\cL$ denote the pullback of the line bundle under the composition $X \xrightarrow{f} Z \hookrightarrow \bar{Z}.$

Then,
there exists $d \gg 0$ such that for $L := \cL^{\otimes d}$ the finite-dimensional vector space $V := \HH^0(\bar{Z}, L)$ satisfies the following properties.
\begin{itemize}
    \item[(a)] The restriction map $\HH^0(\bar{Z}, L) \to \HH^0(\bar{Y}, L|_{\bar{Y}})$ is surjective.
    \item[(b)] If $V_Y := \im(\HH^0(\bar{Z}, L) \to \HH^0(Y, L|_Y))$, then $V_Y$ is naturally identified with $\HH^0(\bar{Y}, L|_{\bar{Y}}).$ 
    In particular, for $d \gg 0$, $\dim V_Y > r:= \dim Y.$
    \item[(c)] The sections of $V := \HH^0(\bar{Z}, L)$ generate $L|_Z$, and the image sections $V_Y$ generate $L|_Y$.
    Consequently, the pulled-back sections generate $f^*L$ on $X$ and generate $f^* L |_E$ on $E$.
\end{itemize}
\end{Lemma}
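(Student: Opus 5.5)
Each of the three properties reduces to Serre vanishing and Serre's global generation theorem on the projective scheme $\bar{Z}$, applied to powers of the very ample line bundle $\cL$. We choose $d$ large enough that (a), (b) and (c) all hold at once; since each condition holds for all sufficiently large $d$, this is possible.

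For (a), consider the exact sequence $0 \to \cI_{\bar{Y}} \to \mathcal{O}_{\bar{Z}} \to \mathcal{O}_{\bar{Y}} \to 0$, where $\cI_{\bar{Y}}$ is the ideal sheaf of the closed subscheme $\bar{Y} \subseteq \bar{Z}$. Twisting by $L = \cL^{\otimes d}$ and taking cohomology, the restriction map $\HH^0(\bar{Z}, L) \to \HH^0(\bar{Y}, L|_{\bar{Y}})$ is surjective as soon as $\HH^1(\bar{Z}, \cI_{\bar{Y}} \otimes \cL^{\otimes d}) = 0$. Since $\cI_{\bar{Y}}$ is coherent and $\cL$ is ample, Serre vanishing gives this for all $d \gg 0$.

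For (b), the restriction to $Y$ factors as $\HH^0(\bar{Z}, L) \to \HH^0(\bar{Y}, L|_{\bar{Y}}) \to \HH^0(Y, L|_Y)$. The first map is surjective by (a). The second map is injective because $Y$ is schematically dense in $\bar{Y}$: a section of the line bundle $L|_{\bar{Y}}$ that vanishes on $Y$ vanishes identically. Hence $V_Y$ is the isomorphic image of $\HH^0(\bar{Y}, L|_{\bar{Y}})$. By asymptotic Riemann--Roch, $\dim \HH^0(\bar{Y}, \cL^{\otimes d}|_{\bar{Y}})$ grows like a polynomial in $d$ of degree $\dim \bar{Y} = r$ with positive leading coefficient. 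It therefore exceeds $r$ for $d \gg 0$. (If $r = 0$, one only needs a nonzero section, which exists because $L|_{\bar{Y}}$ is very ample.)

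For (c), $L = \cL^{\otimes d}$ is very ample for $d \ge 1$, so $V$ generates $L$ on $\bar{Z}$ and hence on $Z$. The sections in $V_Y$ are restrictions of sections of $\HH^0(\bar{Z}, L)$, which already generate $L$ at every point of $Y$. So $V_Y$ generates $L|_Y$; equivalently, $\HH^0(\bar{Y}, L|_{\bar{Y}})$ generates the very ample bundle $L|_{\bar{Y}}$.

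Generation is preserved by pullback. At any $x \in X$, some $s \in V$ has $s(f(x)) \neq 0$, and then $f^*s$ does not vanish at $x$; so the sections $f^*V$ generate $f^*L$. Restricting to $E$, which maps into $Y$, the pulled-back sections generate $f^*L|_E$, and these restrictions factor through $V_Y$.

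There is no real obstacle. The only point needing care is the injectivity in (b), which requires that $Y$ be schematically dense in $\bar{Y}$; this is exactly what the hypothesis provides.
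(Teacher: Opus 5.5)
Your proposal is correct and follows essentially the same route as the paper. For (a) you use Serre vanishing of $\HH^1(\bar{Z}, \cI_{\bar{Y}} \otimes \cL^{\otimes d})$; for (b) you use injectivity of $\HH^0(\bar{Y}, L|_{\bar{Y}}) \to \HH^0(Y, L|_Y)$ from schematic density together with asymptotic Riemann--Roch; and for (c) you use global generation of $L$, restricted to $Z$ and $Y$ and pulled back along $f$. The only cosmetic difference is that you check generation pointwise via non-vanishing sections, whereas the paper uses surjectivity of $V \otimes_\CC \mathcal{O} \to L$ and right exactness of pullback.
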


\begin{proof}
Let $\cI_{\bar{Y} / \bar{Z}} \subseteq \mathcal{O}_{\bar{Z}}$ denote the ideal sheaf of $\bar{Y}$ in $\bar{Z}.$ 
Consider the short exact sequence
\[
0 \to \mathcal{I}_{\bar{Y}/\bar{Z}} \otimes \cL^{\otimes d} \to \mathcal{L}^{\otimes d} \to \mathcal{L}|_{\bar{Y}}^{\otimes d} \to 0
\]
for $d > 0.$
Passing to the long exact sequence in cohomology gives
\[
\HH^0(\bar{Z}, \cL^{\otimes d}) \to \HH^0(\bar{Y}, \mathcal{L}|_{\bar{Y}}^{\otimes d}) \to \HH^1(\bar{Z}, \mathcal{I}_{\bar{Y}/\bar{Z}} \otimes \cL^{\otimes d}) \to \cdots
\]
Since $\bar{Z}$ is projective and $\cI_{\bar{Y}/\bar{Z}}$ is coherent we can choose $d \gg 0$ such that $\HH^1(\bar{Z}, \mathcal{I}_{\bar{Y}/\bar{Z}} \otimes \cL^{\otimes d}) = 0$ (c.f. \cite[\href{https://stacks.math.columbia.edu/tag/0B5T}{Tag 0B5T}]{stacks-project}). 
Thus, the sequence shows $\HH^0(\bar{Z}, \cL^{\otimes d}) \to \HH^0(\bar{Y}, \cL|_{\bar{Y}}^{\otimes d})$ is surjective, which proves (a).

Since $j : Y \hookrightarrow \bar{Y}$ is a dense open subscheme the structure morphism $\mathcal{O}_{\bar{Y}} \to j_*\mathcal{O}_Y$ is injective, and hence, the induced morphism $\cL|_{\bar{Y}}^{\otimes d} \to j_*(\cL|_{Y}^{\otimes d})$ is injective. 
Since taking global sections is left exact the map $\HH^0(\bar{Y}, \cL|_{\bar{Y}}^{\otimes d}) \to \HH^0(Y, \cL|_Y^{\otimes d})$ is injective. 
Then, combining injectivity with the surjectivity from (a) identifies $V_Y := \im(\HH^0(\bar{Z}, \cL^{\otimes d}) \to \HH^0(Y, \cL|_Y^{\otimes d}))$ with $\HH^0(\bar{Y}, \cL|_{\bar{Y}}^{\otimes d}).$
Then, since $\bar{Y}$ is projective of dimension $r := n-c$ it follows from the asymptotic Riemann-Roch theorem \cite[\href{https://stacks.math.columbia.edu/tag/0BJ8}{Tag 0BJ8}]{stacks-project} that 
\[
\dim V_Y = \dim \HH^0(\bar{Y}, \cL|_{\bar{Y}}^{\otimes d}) \sim \kappa d^r > r \ (\text{for some constant }\kappa)
\]
for $d \gg 0.$
This proves (b).

Finally, for $d\gg0$ the line bundle $L := \cL^{d}$ is very ample, hence globally generated, i.e., $V \otimes_\CC \mathcal{O}_{\bar{Z}} \to L$ is surjective where $V := \HH^0(\bar{Z}, L).$
Since surjectivity is a local property restricting to $Z$ gives a surjection $V \otimes_\CC \mathcal{O}_{Z} \to L|_Z,$ and thus, the global sections of $L \to \bar{Z}$ generate $L|_{Z}.$
Further restricting to $Y$ yields a surjection $V \otimes_\CC \mathcal{O}_{Y} \to L|_Y.$
Using the fact this map factors through $V_Y \otimes_\CC \mathcal{O}_{Y}$ it follows that $V_Y$ generates $L|_Y.$
Moreover, since pullback of sheaves is right exact the pulled-back sections generate $f^*L$ and $f^*L|_E.$
\end{proof}

For convenience we will state the variant of Bertini's Theorem we need to prove Theorem \ref{scheme case}.

\begin{theorem}[{\cite[Cor. 5]{MR360616}}]
Let $W$ be an integral algebraic scheme over an algebraically closed field of characteristic zero, and
let $\mathfrak{d}$ be a finite-dimensional linear system on $W$.
Then, a general member of $\mathfrak{d}$ is regular away from the base locus of $\mathfrak{d}$ and the singular locus of $W$.
In particular, if $W$ is smooth and $\mathfrak{d}$ is basepoint-free, then a general member of $\mathfrak{d}$ is smooth. 
\end{theorem}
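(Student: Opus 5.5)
The plan is to reduce to the case of a basepoint-free linear system on a smooth variety, and then to obtain the statement from generic smoothness applied to the universal member of $\cd$.

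\textbf{Step 1: reduction.} Write $\cd = \PP(V)$ for a finite-dimensional space $V \subseteq \HH^0(W, M)$ of sections of a line bundle $M$. Let $B$ be the base locus of $\cd$ and $W_{\mathrm{sing}}$ the singular locus, and set
\[
U := W \setminus (B \cup W_{\mathrm{sing}}).
\]
Since the ground field is perfect, $W_{\mathrm{sing}}$ is closed and $U$ is a smooth variety, because $W$ is integral. On $U$ the sections of $V$ generate $M|_U$. Regularity of a member $D_s = V(s)$ at the points of $U$ is a local property, and over an algebraically closed field regular is the same as smooth. So it suffices to show that for general $[s] \in \PP(V)$ the scheme $D_s \cap U$ is smooth, or empty.

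\textbf{Step 2: incidence correspondence.} Consider the universal divisor
\[
I := \{(x, [s]) : s(x) = 0\} \subseteq U \times \PP(V),
\]
cut out scheme-theoretically by the tautological section of $M \boxtimes \mathcal{O}_{\PP(V)}(1)$. Take the first projection $p \colon I \to U$. Because $V$ generates $M$ at every $x \in U$, evaluation $V \to M(x)$ is a surjection, so the fiber $p^{-1}(x)$ is a hyperplane in $\PP(V)$. Working locally on a trivialization of $M$, one checks that $I$ is a projective subbundle of $U \times \PP(V)$ of relative dimension $\dim V - 2$: it is the projectivization of the kernel bundle of $V \otimes \mathcal{O}_U \to M|_U$. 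Hence $I$ is smooth, being a projective bundle over the smooth scheme $U$. In particular $I$ is a smooth variety, possibly after restricting to its irreducible components.

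\textbf{Step 3: generic smoothness.} Consider the second projection $q \colon I \to \PP(V)$. Its scheme-theoretic fiber over $[s]$ is exactly $D_s \cap U$, because $I$ is defined by the universal equation. The characteristic zero assumption now enters through generic smoothness (Sard's theorem for morphisms of varieties in characteristic zero). There are two cases.
\begin{enumerate}
\item If $q$ is not dominant, the general fiber is empty, and the statement holds trivially.
\item If $q$ is dominant, there is a dense open $\Omega \subseteq \PP(V)$ over which $q$ is smooth. Then every fiber over $\Omega$, namely $D_s \cap U$ for $[s] \in \Omega$, is smooth.
\end{enumerate}
In either case this gives the first assertion. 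The second assertion is the special case $B = \emptyset$ and $W_{\mathrm{sing}} = \emptyset$, so that $U = W$.

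\textbf{Main obstacle.} The step I expect to need the most care is Step 2. There one must verify that $I$ is genuinely smooth, i.e.\ a projective bundle with the correct scheme structure, and not merely smooth on the level of points. One must also verify that the fibers of $q$ coincide scheme-theoretically with the divisors $D_s$. Both points come down to the local computation: where $M$ is trivial, $I$ is given by the single equation $\sum_i \lambda_i s_i(x) = 0$ in the coordinates $\lambda_i$ of $\PP(V)$, and this equation is linear in the $\lambda_i$ with coefficients that do not all vanish at any point of $U$. The remaining subtlety is purely in characteristic: generic smoothness fails in positive characteristic, which is exactly why that hypothesis appears in the statement.
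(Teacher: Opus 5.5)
Your proof is correct. The kernel $K$ of $V\otimes\mathcal{O}_U\to M|_U$ is locally free of rank $\dim V-1$, so $I\simeq\PP_U(K)$ is a smooth irreducible variety. Its scheme-theoretic fibre over $[s]$ is $V(s)\cap U$, and generic smoothness applied to $q$ finishes the argument.

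The paper does not prove this statement; it only quotes Kleiman, whose route is organized differently. Kleiman first proves a transversality theorem for a homogeneous space $X$ of a group $G$: given maps $f\colon Y\to X$ and $Z\to X$ from regular schemes, in characteristic zero the fibre product $(\sigma Y)\times_X Z$ is regular for a general translate $\sigma\in G$. He proves this by applying generic smoothness to the projection $(G\times Y)\times_X Z\to G$, where $G\times Y\to X$ is $(\sigma,y)\mapsto\sigma f(y)$. Bertini then follows by taking
\[
X=\PP^N,\qquad G=\mathrm{PGL}_{N+1},\qquad Y=W\setminus(B\cup W_{\mathrm{sing}}),
\]
with $Y$ mapped to $\PP^N$ by $\mathfrak{d}$, and $Z$ a fixed hyperplane.

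Your incidence scheme $I$ plays the role of $(G\times Y)\times_X Z$, but it parametrizes the members by $\PP(V)$ itself rather than by the group. This lets you bypass the group action and the step of passing from a dense open subset of $\mathrm{PGL}_{N+1}$ to one of $\mathfrak{d}$. What Kleiman's detour buys is generality: arbitrary $Z$, arbitrary homogeneous spaces, and an expected-dimension statement valid in every characteristic.

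Your Step 2 is the same identification $I_1\simeq\PP_Y(K)$ that the paper carries out in the proof of Proposition~\ref{general proj bundle}. Replacing $I$ by the $j$-fold fibre product $I\times_U\cdots\times_U I$, which is an iterated projective bundle and hence smooth, gives the iterated version, Corollary~\ref{bertini cor}, by the same generic-smoothness argument.
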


\begin{Cor}[{\cite[Rem. 6]{MR360616}}]
\label{bertini cor}
Let $W$ be a smooth scheme over an algebraically closed field of characteristic zero, and let $\cd$ be a finite-dimensional basepoint-free linear system on $W$. 
Then for every $1 \le j \le \dim \cd$ there exists a non-empty Zariski-open subset $\Omega_j \subseteq \cd^{\oplus j}$ such that for $([s_1], [s_2], \ldots, [s_j]) \in \Omega_j$ the zero locus $Z(s_1, \ldots, s_j) \subseteq W$ is either empty or smooth of codimension $j$.
\end{Cor}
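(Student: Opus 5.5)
The statement to prove is Corollary~\ref{bertini cor}: iterated Bertini. I will argue by induction on $j$, applying the Bertini theorem just quoted (the version from \cite{MR360616}) at each step to the restriction of $\cd$ to the previously cut-out locus.

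For $j=1$, the quoted theorem applies to each irreducible (hence, by smoothness, connected) component $W_\alpha$ of $W$. There are finitely many components since everything is noetherian, and $\cd|_{W_\alpha}$ is basepoint-free. So for $[s_1]$ in a nonempty open subset of $\cd$, the zero locus $Z(s_1)\cap W_\alpha$ is regular. We also need codimension exactly one. Because $\cd$ is basepoint-free, the sections vanishing identically on a given component $W_\alpha$ form a proper linear subspace, so the general $s_1$ avoids all of them. Then $Z(s_1)\cap W_\alpha$ is a Cartier divisor cut out by a nonzero section on an integral scheme, hence empty or of pure codimension $1$. Intersecting the finitely many open conditions gives $\Omega_1$.

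For the inductive step, suppose $\Omega_{j-1}$ has been constructed, and fix $(s_1,\dots,s_{j-1})\in\Omega_{j-1}$. Then $W' := Z(s_1,\dots,s_{j-1})$ is empty or smooth of codimension $j-1$, and the restricted system $\cd|_{W'}$ is still basepoint-free, since basepoint-freeness is preserved under restriction. Applying the $j=1$ case to $W'$ gives a nonempty open set of $s_j$ for which $Z(s_1,\dots,s_j)$ is empty or smooth of codimension $j$.

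The main obstacle is uniformity: the open set of good $s_j$ depends on $(s_1,\dots,s_{j-1})$, and we need a single Zariski-open $\Omega_j\subseteq\cd^{\oplus j}$. I would handle this by working with incidence and universal families instead of choosing the $s_i$ one at a time. Let $\Sigma := \{(w,s_1,\dots,s_j) : s_i(w)=0\}\subseteq W\times\cd^{\oplus j}$. Basepoint-freeness makes $\Sigma\to W$ a bundle whose fibers are linear subspaces of codimension $j$, so $\Sigma$ is smooth of dimension $\dim W+j\dim\cd-j$. By generic smoothness in characteristic zero, there is a nonempty open $U\subseteq\cd^{\oplus j}$ over which $\Sigma\to\cd^{\oplus j}$ is smooth, which gives the smoothness of the fibers. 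If the image is not dense, the fiber over a general point is empty, which is allowed by the statement. Upper semicontinuity of fiber dimension, after shrinking $U$, forces the nonempty fibers to have dimension $\dim W-j$, i.e.\ codimension $j$. This yields $\Omega_j := U$ directly, and the inductive argument above serves only as the heuristic for why the statement holds. The restriction $j\le\dim\cd$ ensures the construction makes sense, though the argument does not actually need it. The step I expect to need most care is the generic smoothness over a possibly non-dominant map, which I would handle by replacing $\cd^{\oplus j}$ by the closure of the image of each component of $\Sigma$ and removing the lower-dimensional images.
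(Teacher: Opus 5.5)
Your argument is correct. Note, though, that the paper gives no proof of this statement: it is quoted from Kleiman \cite{MR360616}, where Bertini-type statements come from the transversality of a general translate. There, $PGL$ acts transitively on the projective space receiving the morphism $W\to\PP^N$ defined by $\cd$, and one intersects with a general translate of a fixed linear subspace.

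Your route is instead the classical incidence-correspondence proof of Bertini (as in Hartshorne, III.10.9). Basepoint-freeness makes $K:=\ker(V\otimes\mathcal{O}_W\to L)$ locally free of rank $\dim V-1$, so $\Sigma\simeq\PP(K)\times_W\cdots\times_W\PP(K)$ is smooth. Its scheme-theoretic fibre over $([s_1],\dots,[s_j])$ is exactly $Z(s_1,\dots,s_j)$. Generic smoothness, applied to each integral component $\Sigma_\alpha$ over the smooth target $\cd^j$, then produces the open set. This is in fact the same incidence scheme the paper builds later in the proof of Proposition~\ref{general proj bundle}.

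Three small points to tidy. First, the non-dominant case needs no special care: just remove the closures of the images of the non-dominant components, over which the fibres are empty. Second, the dimension count should be done componentwise, since $W$ may have components of different dimensions. It comes from flatness rather than upper semicontinuity, which only gives an inequality. Over the open set, a dominant $\Sigma_\alpha$ is smooth of relative dimension $\dim\Sigma_\alpha-j\dim\cd=\dim W_\alpha-j$, so every non-empty fibre has codimension $j$ in $W_\alpha$. Third, your inductive sketch can simply be dropped.

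What your approach buys is a self-contained argument that produces $\Omega_j$ directly as an open subset of $\cd^j$. Kleiman's approach is more general: it handles base loci and singular $W$, giving regularity away from them, and applies to general translates of arbitrary subvarieties. However, to extract an open subset of $\cd^j$ from it, one must transport the good open subset of $PGL$ through the Grassmannian of codimension-$j$ subspaces.
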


Now, fix $d \gg 0$ such that $L := \cL^{\otimes d}$ satisfies the conditions in Lemma \ref{tech cond} where $\cL \to \bar{Z}$ is a very ample line bundle on $\bar{Z}$. Set $V := \HH^0(\bar{Z}, L)$ and let $\cd := \PP(V)$ be the finite-dimensional linear system associated to $L.$ 
We use the convention $\cd^j := \cd \times \cd \times \cdots \times \cd.$

\begin{Prop}
\label{Bertini prop}
Consider the situation of Theorem \ref{scheme case} where we assume $r:= \dim Y > 0.$
Then, there exists a Zariski-open subset $\Omega \subseteq \cd^r$ of ordered $r$-tuples $([s_1], \ldots, [s_r])$ for which
\[
Z_i := V_Z(s_1, \ldots, s_i), \ \ \ X_i = V_X(f^*s_1, \ldots, f^*s_i) = X \times_{Z} Z_i,
\]
\[
Y_i := V_Y(s_1|_Y, \ldots, s_i|_Y), \ \ \ E_i := V_E(f^*s_1|_E, \ldots, f^*s_i|_E) = E\times_Z Z_i \subseteq X_i,
\]
satisfies the following, for all $1 \le i \le r$
\begin{itemize}
\item[(i)] $X_i, Z_i, Y_i$, and $E_i$ are smooth of dimension $n-i, n-i, r-i, n-1-i$, respectively.
\item[(ii)] $Y_r$ is a finite non-empty collection of reduced points
\item[(iii)] If $f_i := f|_{X_i} : X_i \to Z_i$ is the restriction of $f$, then $f_i(E_i) = Y_i$.
\item[(iv)] $E_i$ is a smooth Cartier divisor on $X_i$ where $\mathcal{O}_{X_i}(-E_i) \simeq \mathcal{O}_X(-E)|_{X_i}$.
\item[(v)] The morphism $f_i : X_i \to Z_i$ is projective, birational, and has connected fibers, i.e., $f_i$ is a contraction morphism with exceptional divisor $\Exc(f_i)= E_i$ where $f_i : X_i \setminus E_i \xrightarrow{\sim} Z_i \setminus Y_i$
\end{itemize}
\end{Prop}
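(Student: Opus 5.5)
We argue by induction on $i$, applying Corollary~\ref{bertini cor} simultaneously to the four smooth schemes $Z$, $X$, $Y$ and $E$. Each carries a basepoint-free finite-dimensional linear system induced by $V$, by Lemma~\ref{tech cond}(c): namely $V|_Z$, $f^*V$ on $X$, $V_Y$ on $Y$, and $f^*V|_E$ on $E$. For each of these we obtain a nonempty Zariski-open subset of $\cd^{i}$ (pulling back along the linear surjections $V\to V_Y$, etc., so that all opens live in $\cd^i$). We take $\Omega$ to be the intersection over $i\le r$ and over the four schemes. This intersection is nonempty and open because $\cd^r$ is irreducible.

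\textbf{Items (i) and (ii).} For $([s_1],\dots,[s_r])\in\Omega$ the zero loci $Z_i,X_i,Y_i,E_i$ are empty or smooth of codimension $i$. For non-emptiness I would argue as follows. Since $V_Y\simeq \HH^0(\bar Y,L|_{\bar Y})$ with $L|_{\bar Y}$ very ample and $\dim \bar Y=r$, the intersection of $r$ general hyperplane sections of $\bar Y$ is a nonempty finite set of reduced points (degree $L^r\cdot\bar Y>0$). Shrinking $\Omega$ further, these points avoid the proper closed subset $\bar Y\setminus Y$, so $Y_r\ne\emptyset$; this is (ii). Nonemptiness of $Y_i$ then forces $Z_i\supseteq Y_i$ to be nonempty. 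Moreover $E_i=f|_E^{-1}(Y_i)$, and $f|_E:E\to Y$ is surjective, so $E_i\neq\emptyset$, and likewise $X_i\supseteq E_i$. This gives the dimensions $n-i$, $n-i$, $r-i$, $n-1-i$ in (i).

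\textbf{Items (iii) and (iv).} Item (iii) is immediate from surjectivity of $E\to Y$ and $E_i=E\times_Z Z_i$. For (iv), $E_i=E\cap X_i$ is a smooth subscheme of codimension one in the smooth $X_i$, hence Cartier. Since both $E$ and $E_i$ are smooth of the expected dimension, $X_i$ meets $E$ transversally, and therefore the scheme-theoretic restriction $E|_{X_i}$ equals $E_i$. This gives $\mathcal{O}_{X_i}(-E_i)\simeq\mathcal{O}_X(-E)|_{X_i}$.

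\textbf{Item (v).} Projectivity is inherited by base change. The isomorphism $X_i\setminus E_i\simeq Z_i\setminus Y_i$ is the base change of $X\setminus E\simeq Z\setminus Y$, which also gives birationality once $X_i$ is shown irreducible. The step I expect to be the main obstacle is exactly this: irreducibility (equivalently connectedness) of $X_i$ and $Z_i$, together with $f_{i*}\mathcal{O}_{X_i}=\mathcal{O}_{Z_i}$. A smooth $Z_i$ could a priori be disconnected, and $E_i$ might be a whole component of $X_i$. Dimension counting rules out the latter, since $\dim E_i<\dim X_i$. For connectedness of $Z_i$ I would first try to replace $Z$ by a small affine neighbourhood of a chosen point $y\in Y_r$ and take the connected component of $Z_i$ through $Y_i$, which suffices for the fiber statement \eqref{fiber}.

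\textbf{Connected fibres.} The fibres of $f_i$ are fibres of $f$, hence connected. Then $f_{i*}\mathcal{O}_{X_i}$ is a finite birational extension of the normal $\mathcal{O}_{Z_i}$ by Stein factorization, hence equal to $\mathcal{O}_{Z_i}$ by Zariski's Main Theorem. Finally, $\Exc(f_i)=E_i$ because $f_i$ is an isomorphism off $E_i$ and contracts $E_i$, which has dimension $n-1-i$, onto $Y_i$, which has dimension $r-i<n-1-i$ since $c\geq 2$.
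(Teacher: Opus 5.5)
Your proposal is correct and follows essentially the same route as the paper. Both apply Corollary~\ref{bertini cor} to the four basepoint-free systems on $Z$, $X$, $Y$, $E$ and intersect the pulled-back opens in the irreducible $\cd^r$. Both reduce non-emptiness to $Y_r\neq\emptyset$ via a general linear section of $\bar{Y}$ avoiding $\bar{Y}\setminus Y$, and get (iii)--(v) from base change plus the fact that fibres of $f_i$ are fibres of $f$.

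The connectedness issue you flag, which the paper passes over silently, is not a real obstacle for the statement as given, so there is no need to pass to a connected component of $Z_i$. Birationality and $f_{i*}\mathcal{O}_{X_i}=\mathcal{O}_{Z_i}$ only require $E_i\subset X_i$ and $Y_i\subset Z_i$ to be nowhere dense, and your own dimension count already gives this. In any case, a general $Z_i$ is irreducible by Bertini's irreducibility theorem, because each intermediate slice $\bar{Z}\cap H_1\cap\cdots\cap H_{i-1}$ has dimension $n-i+1\ge 2$.
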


\begin{proof}
Set $L_X:=f^\ast L|_Z$, $L_E:=L_X|_E$ and $L_W:L|_W$ for $W\in \{Z, Y\}$. For $W\in\{X, Y, Z, E\}$, let $\rho_W:V\rightarrow H^0(W, L_W)$ be the respective restriction/ pullback maps and define $V_W :=\im\rho_W$. The linear systems associated to $V_X, V_Z$, and $V_Y$ are basepoint-free by Lemma \ref{tech cond} (c).
The linear system $V_E$ is also basepoint-free. 
Indeed, for a given $e \in E$ let $y := f(e)$ and choose a section $s \in V_Y$ such that $s(y)$ does not vanish at $y$.
Then, $f^*s|_E \in V_E$ does not vanish at $e$.
We now claim that $\dim V_W > r = \dim Y$ for every $W \in \{X,Z,Y,E\}.$
By Lemma \ref{tech cond} (b), $\dim V_Y > r$.
The restriction map $V_Z \to V_Y$ is surjective since the surjection $V \to V_Y$ factors through $V \to V_Z$, and hence, $\dim V_Z \ge \dim V_Y > r.$ 
Next, the natural map $f^* : V_Z \to V_X$ is injective.
To see this suppose $f^* t = 0$.
Since $f$ induces an isomorphism $X \setminus E \simeq Z \setminus Y$ it follows that $t|_{Z\setminus Y} = 0.$
Hence, it follows that $t = 0$ since $Z \setminus Y \subseteq Z$ is a dense open subset and $Z$ is smooth, which proves injectivity.
Thus, $\dim V_X \ge \dim V_Z > r.$
Similarly, since $f|_E : E \to Y$ is surjective, $f^* : V_Y \to V_E$ is injective, and thus, $\dim V_E > r.$

Now, by applying Corollary \ref{bertini cor} to each basepoint-free linear system $\PP(V_W)$ for $W \in \{X,Z,Y,E\}$ and for each $1\leq i\leq r$ (note $r<\dim V_W$) we obtain non-empty Zariski-open subsets $\Omega_{W, i} \subseteq \PP(V_W)^{i}$ such that each $([s_1], \ldots, [s_i]) \in \Omega_{W, i}$ corresponds to either the empty scheme or a smooth complete intersection of dimension $\dim W - i$.
Next, each surjective restriction morphism $\rho_W : V \to V_W$ induces a map $\pi_W : \cd \setminus \PP(\ker \rho_W) \to \PP(V_W)$. Let $\pr_i:\cd^r\longrightarrow\cd^i$ denote projection to the first $i$ factors. Let $U_{W, i} := \pr_i^{-1}\left((\pi^i_W)^{-1}\Omega_{W, i}\right) \subseteq \cd^r$ denote the preimage of each Zariski-open subset to the linear system $\cd^{r}$ for each $W \in \{X, Z, Y, E\}.$ Here $(\pi_W^i)^{-1}(\Omega_{W,i})$ is regarded as an open subset of $\cd^i$ contained in
$(\cd\setminus\PP(\ker\rho_W))^i$. For $W\in\{X,Y,Z,E\}$, since $\rho_W$ is surjective, every $U_{W,i}$ is a non-empty Zariski-open
dense subset of $\cd^r$. As $\cd^r$ is irreducible, we obtain Zariski-open dense subset
\[
\Omega_{\mathrm{sm}} := \bigcap_{i=1}^{r}U_{X,i} \cap \bigcap_{i=1}^{r}U_{Z, i} \cap \bigcap_{i=1}^{r}U_{Y, i} \cap \bigcap_{i=1}^{r}U_{E, i} \subseteq \cd^{r}
\]
such that by construction, for every $([s_1], \ldots, [s_r]) \in \Omega_{\mathrm{sm}}$, all partial intersections
\[
Z_i := V_Z(s_1, \ldots, s_i), \ \ \ X_i = V_X(f^*s_1, \ldots, f^*s_i),
\]
\[
Y_i := V_Y(s_1|_Y, \ldots, s_i|_Y), \ \ \ E_i := V_E(f^*s_1|_E, \ldots, f^*s_i|_E),
\]
are smooth of the expected dimension for $1 \le i \le r$ whenever they are non-empty.

To satisfy (i) and (ii) we need to ensure the above intersections are non-empty, and it suffices to show $Y_r\neq \emptyset$ since then all the $Z_i,\ Y_i, X_i,$ and $E_i$ are non-empty.
Further, to satisfy (iii) we also need to refine $\Omega_{\mathrm{sm}}$ to ensure the sections do not mutually vanish along the boundary $B := \bar{Y}\setminus Y$.
To that end, consider $B$ with the induced subscheme structure.
Since $Y \subset \bar{Y}$ is dense every irreducible component of $B$ has dimension at most $r-1.$
Consider the incidence scheme
\[
J_B := \{(b, [s_1], \ldots, [s_r]) \subseteq B \times \cd^r \, | \, s_1(b) = \cdots = s_r(b) = 0\}.
\]
For every $b \in B$ the condition $s_j(b)=0$ is a codimension one condition on $\cd$ since $L$ is globally generated on $\bar{Z}.$
Therefore, \[\dim J_B \le \dim B + r(\dim \cd - 1) < r\dim \cd = \dim \cd^r.\]
The projection $\pr : J_B \to \cd^r$ is proper since $J_B \subseteq B \times \cd^r$ is closed and $B$ is projective. 
Hence, $\pr(J_B) \subseteq \cd^r$ is a proper closed subset, and thus, $\Omega_B := \cd^r \setminus \pr(J_B)$ is a Zariski-open dense subset such that for every tuple $([s_1], \ldots, [s_r]) \in \Omega_B$ the intersection $V_{\bar{Z}}(s_1, \ldots, s_r) \cap \bar{Y}$ has no support along the boundary $B$. 
Finally, since $\bar{Y} \subseteq \bar{Z} \hookrightarrow \cd$ is projective and $V_{\bar{Z}}(s_1, \ldots, s_r)$ is a complementary dimensional linear subspace in $\cd$, there exists a dense open subset $\Omega_{\mathrm{non}} \subseteq \cd^r$ such that for all $([s_1], \ldots, [s_r]) \in \Omega_{\mathrm{non}}$ the intersection $\bar{Y} \cap V_{\bar{Z}}(s_1, \ldots, s_r)$ is non-empty.
Combining everything the Zariski-open subset
\[
\Omega := \Omega_{\mathrm{sm}} \cap \Omega_B \cap \Omega_{\mathrm{non}} \subseteq \cd^r
\]
satisfies (i), (ii), and (iii).
Further, $E_i = V_E(f^* s_1 |_E, \ldots, f^* s_i|_E) = X_i \times_X E$ is smooth for every tuple $([s_1], \ldots, [s_r]) \in \Omega$ by construction.

Lastly, we will show that $\Omega$ satisfies (iv) and (v).
To see that $\Omega$ satisfies (iv) notice that $E_i$ is smooth of dimension $n-1-i$ and $X_i$ is smooth of dimension $n-i$, and moreover $E_i$ has pure codimension 1 in $X_i.$
Further, $E_i = E \times_X X_i$ is the pullback of $E$ along the closed immersion $X_i \hookrightarrow X,$ and
hence, $E_i \subseteq X_i$ is a Cartier divisor.
Consequently, $\mathcal{O}_{X_i}(-E_i) \simeq \mathcal{X}(-E)|_{X_i}$ which proves (iv).
Finally, $f_i : X_i \to Z_i$ is projective by virtue of being the pullback of the projective morphism along $f : X \to Z$ along $Z_i \hookrightarrow Z.$ 
Moreover, $f_i$ induces an isomorphism $X_i \setminus E_i \simeq Z_i \setminus Y_i$ since $f$ induces $X\setminus E \simeq Z\setminus Y$ and $E_i = E\times_X X_i,$ i.e., $f_i$ is birational.
Also, for every $z \in Z_i$ the fibers $f^{-1}(z)$ and $f_i^{-1}(z)$ are isomorphic, thus the fibers of $f_i : X_i  \to Z_i$ are connected.
Therefore, $\Omega$ satisfies condition (v).
\end{proof}

\begin{Prop}
\label{general proj bundle}
In the situation of Theorem \ref{scheme case}, there exists a dense open subset $U \subseteq Y$ such that the restriction $f_U : E_U \to U$ of $f$ to $E_U := f^{-1}(U)$ is a projective bundle, i.e., $E_U : =\PP_U(F)$ for some rank $c := \operatorname{codim}_Z(Y)$ vector bundle $F \to U.$
Moreover, \[(E_U, \mathcal{O}_{E_U}(-E_U)) \simeq (\PP_U(F), \mathcal{O}_{\PP_U(F)}(1) \otimes (f|_U)^*M)\] for some line bundle $M$ on $U$.
Consequently, \[\tilde{X} := X \setminus f^{-1}(Y\setminus U) \simeq \Bl_U(Z \setminus(Y\setminus U)).\]
\end{Prop}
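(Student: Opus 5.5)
Fix a general tuple $([s_1],\ldots,[s_r])\in\Omega$ from Proposition~\ref{Bertini prop}, so that $f_r:X_r\to Z_r$ is a projective contraction between smooth $c$-dimensional schemes whose exceptional divisor $E_r$ is smooth, lies over the finite reduced set $Y_r$, and satisfies $f_r:X_r\setminus E_r\xrightarrow{\sim}Z_r\setminus Y_r$. Localizing around each point of $Y_r$ and using the Remark after Proposition~\ref{contraction to a point case}, $X_r\simeq\Bl_{Y_r}Z_r$. Hence for each $y\in Y_r$ we get $(f^{-1}(y),\mathcal{O}_X(-E)|_{f^{-1}(y)})\simeq(\PP^{c-1},\mathcal{O}(1))$, using $\mathcal{O}_{X_r}(-E_r)\simeq\mathcal{O}_X(-E)|_{X_r}$ from Proposition~\ref{Bertini prop}(iv). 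Since $\Omega$ is dense and the points $Y_r$ sweep out a dense subset of $Y$ as the tuple varies (for any $y$ outside the boundary, a general tuple through $y$ remains in $\Omega$ up to a proper closed condition), this gives \eqref{fiber} for $y$ in a dense constructible subset of $Y$. I then pass to a dense open $U_0\subseteq Y$.

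Next, I will shrink $U_0$ to an open $U$ over which $f|_E:E\to Y$ is flat (generic flatness) and smooth (generic smoothness, since $E$ is smooth in characteristic zero). Over $U$ all fibers are then $\PP^{c-1}$, and $\mathcal{H}:=\mathcal{O}_X(-E)|_{E_U}$ restricts to $\mathcal{O}(1)$ on every fiber. By cohomology and base change, $h^i(\PP^{c-1},\mathcal{O}(1))=0$ for $i>0$, so $F^\vee:=(f|_{E_U})_*\mathcal{H}$ is locally free of rank $c$ and commutes with base change. The evaluation map $(f|_{E_U})^*F^\vee\to\mathcal{H}$ is surjective on every fiber and restricts to the complete linear system of $\mathcal{O}(1)$, so the induced $U$-morphism $E_U\to\PP_U(F)$ is a fiberwise isomorphism between flat proper families, hence an isomorphism. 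Under it $\mathcal{H}$ pulls back to $\mathcal{O}(1)$, possibly after twisting by a line bundle $M$ pulled back from $U$, which gives the displayed identification. Alternatively, one can view $E_U\to U$ as a smooth family of $\PP^{c-1}$'s carrying a relative $\mathcal{O}(1)$, which is exactly what kills the Brauer obstruction.

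For the final claim I will apply Theorem~\ref{existence theorem} to $\tilde X\supseteq E_U\to U$. Here $E_U$ is a (weighted, with all weights $1$) projective bundle with positive-dimensional fibers when $c\ge2$, and $\mathcal{O}_{\tilde X}(E_U)|_{E_U}\simeq\mathcal{O}(-1)\otimes\pi^*M^{-1}$. This produces a smooth separated Deligne--Mumford stack $\cZ'$ with $\tilde X\to\cZ'$ an ordinary blowup along $U$; it is ordinary because the fibers are schemes, exactly as in the proof of Proposition~\ref{contraction to a point case}. The pushout property gives $\cZ'\to\tilde Z:=Z\setminus(Y\setminus U)$. I will then repeat the Zariski main theorem argument from Proposition~\ref{contraction to a point case}: the coarse space of $\cZ'$ maps properly, bijectively and birationally to the normal $\tilde Z$, so it is isomorphic to $\tilde Z$. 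Since $\cZ'$ is a scheme away from codimension $\ge2$, Theorem~\ref{Theorem:bottomup} forces $\cZ'\simeq\tilde Z$. If $c=1$, $f$ is finite birational onto a normal scheme and is already an isomorphism.

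The main obstacle is the first step: making rigorous that the points of $Y_r$ for $([s_i])\in\Omega$ cover a dense open subset of $Y$. The issue is that $\Omega$ is only known to be open dense in $\cd^r$, and the restriction of $L$ to $Y$ might not separate conditions well enough. I would handle this with the incidence correspondence $\{(y,[s_\bullet]) : s_i(y)=0\}\subseteq Y\times\cd^r$. This correspondence is irreducible, being a projective-bundle-type fibration over $Y$ because $L$ is globally generated. Its preimage of $\Omega$ is therefore dense, so its image in $Y$ contains a dense open set.
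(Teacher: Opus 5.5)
Your proposal is correct and follows the paper's proof essentially step for step. It uses the incidence correspondence $I\subseteq Y\times\cd^r$ (irreducible as an iterated projective bundle over $Y$) to produce a dense open $U$ over which every fiber is $(\PP^{c-1},\mathcal{O}(1))$ via Proposition~\ref{contraction to a point case}, then cohomology and base change to identify $E_U\simeq\PP_U(F)$, and finally Theorem~\ref{existence theorem} together with Zariski's main theorem and the bottom-up characterization. The only deviations are cosmetic: you get flatness from generic flatness rather than miracle flatness, you build $E_U\to\PP_U(F)$ directly from the evaluation map instead of citing Brauer--Severi results, and you treat the case $c=1$ explicitly, which the paper's ``codimension $c-1\ge 1$'' step tacitly excludes.
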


\begin{proof}
We use the same notation as Lemma \ref{tech cond} and Proposition \ref{Bertini prop}.
Consider the incidence scheme
\[
I := \{(y, [s_1], \ldots, [s_r]) \in Y \times \cd^r \, | \, s_1(y) = \cdots = s_r(y) = 0\}.
\]
Let $q : I \to Y$ and $p : I \to \cd^r$ denote the projection maps. 
By Lemma \ref{tech cond} (c) the evaluation morphism $\mathrm{ev} : V \otimes_\CC \mathcal{O}_Y \to L|_Y$ is surjective.
Hence, we have a short exact sequence
\[
0 \to K \to V \otimes_\CC \mathcal{O}_Y \xrightarrow{\mathrm{ev}} L|_Y \to 0
\]
where $K := \ker(\mathrm{ev})$ is locally free of rank $\dim V - 1.$
Now, consider
\[
I_1 := \{(y, [s]) \in Y \times \cd \, | \,s(y) = 0\}
\]
so that $I \simeq I_1^r = I_1 \times_Y I_1 \times_Y \cdots \times_{Y} I_1.$
We claim that the natural projection $I_1 \to Y$ defines a projective bundle where $I_1 \simeq \PP_Y(K).$
Indeed, for a test scheme $T$, a $T$ point of $Y \times \cd$ corresponds to a morphism $g : T \to Y$ and a line subbundle $\ell \subset V \otimes_\CC \mathcal{O}_T$. 
The data $(g, \ell)$ defines a $T$-point of $I_1$ if and only if $\ell$ defines a line subbundle of $K_T := \ker(V \otimes_\CC \mathcal{O}_T \to g^* L|_Y)$.
Hence, $I_1$ defines the same functor as $\PP_Y(K)$, which shows $I_1 \simeq \PP_Y(K).$ 
Consequently, the incidence scheme can be written as $I \simeq \PP_Y(K) \times_Y \cdots \times_Y \PP_Y(K)$, which shows that $q : I \to Y$ is the $r$-fold product of projective bundles over $Y$.
Thus, $q$ is smooth and surjective.
Moreover, since $Y$ is irreducible it follows that $I$ is irreducible.

Now, we will show that $p : I \to \cd^r$ is a dominant morphism. 
Indeed, let $\Omega \subseteq \cd^r$ be the dense open subset constructed in Proposition \ref{Bertini prop}.
Let $([s_1],\ldots, [s_r]) \in \Omega$ be given.
The scheme $Y_r := V_Y(s_1|_Y, \ldots, s_r|_Y)$ is a finite collection of reduced points by Proposition \ref{Bertini prop} (ii).
Hence, for $y \in Y_r$ we have $(y, [s_1], \ldots, [y_r]) \in I$ and its image under $p$ is $([s_1],\ldots, [s_r])$.
Therefore, $\Omega \subseteq p(I)$, which implies that $p$ is dominant. 
Next, consider the Zariski-open dense subset $I_\Omega := p^{-1}(\Omega) \subseteq I$, and let $U := q(I_\Omega) \subseteq Y$, which is open and dense since $q$ is smooth and $Y$ is irreducible. 
For simplicity of notation set $\tilde{X} := X \setminus f^{-1}(Y \setminus U)$ and $\tilde{Z} := Z \setminus(Y \setminus U).$

To finish the proof of the proposition we will now show that $f_U : E_U \to U$ is a projective bundle over $U$ with the desired properties. 
Indeed, for every $y \in U$ there exists sections $s_1, \ldots, s_r \in V$ depending on $y$ such that $s_1(y) = \cdots = s_r(y) = 0$, i.e., $([s_1], \ldots, [s_r]) \in \Omega$ by definition of $U$. 
Hence, letting 
\[
Z_{r} := V_Z(s_1, \ldots, s_r), \ \ \ X_{r} = V_X(f^*s_1, \ldots, f^*s_r),
\]
\[
Y_{r} := V_Y(s_1|_Y, \ldots, s_r|_Y), \ \ \ E_{r} := V_E(f^*s_1|_E, \ldots, f^*s_r|_E),
\]
we see by Proposition \ref{Bertini prop} that $f_r := f|_{X_{r}} : X_{r} \to Z_{r}$ is a contraction morphism between two smooth connected separated schemes of finite type over $\mathbb{C}$ of dimension $n-r$ with $\Exc(f_r) \simeq E_{r}$ and $X_{r} \setminus E_{r} \simeq Z_{r} \setminus Y_r.$
The scheme $Y_r \subseteq Y$ is a finite collection of reduced points $Y_r = \{y_1, \ldots, y_k\}$ containing the given $y \in U.$
Therefore, by Proposition \ref{contraction to a point case} the map $f_r : X_r \to Z_r$ is the blowup of $Z_r$ at the points $y_1, \ldots, y_k$, and $E_r \simeq E_r^1 \sqcup \cdots \sqcup E_r^k$ where $f(E_r^j) = y_j$ and $E_r^j \simeq \PP^{c-1}$ where $c = \codim_Z(Y)$.
Moreover, $\mathcal{O}_{X_r}(-E^j_r)|_{E^j_r} \simeq \mathcal{O}_{\PP^{c-1}}(1)$ for every $j = 1, \ldots, k.$
In particular, since the fibers of $f$ and $f_r$ are isomorphic we have shown that $f^{-1}(y) \simeq \PP^{c-1}$ and $\mathcal{O}_{X}(-E)|_{f^{-1}(y)} \simeq \mathcal{O}_{\PP^{c-1}}(1).$
By hypothesis, $E \subseteq X$ is a smooth Cartier divisor and $Y \subseteq Z$ is smooth with $f(E) = Y$ set theoretically. 
The open dense subset $U \subseteq Y$ is also smooth from which it follows that $E_U := f^{-1}(U)$ is smooth.
Thus, $f_U := f|_{E_U}$ is flat by miracle flatness since every fiber is isomorphic to $\PP^{c-1}.$
Now, $f_U : E_U \to U$ is a Brauer-Severi scheme by \cite[Cor. 8.3]{BSI}.
Further, the line bundle $\mathcal{O}_{\tilde{X}}(-E_U)|_{E_U}$ satisfies $\mathcal{O}_{\tilde{X}}(-E_U)|_{f^{-1}(y)} \simeq \mathcal{O}_{\PP^{c-1}}(1)$.
Indeed, by cohomology and basechange $F := (f_U)_* \mathcal{O}_{E_U}(-E_U)$ defines a rank $c$ vector bundle on $U$ and $f_U : E_U \to U$ is the projective bundle $\PP_U(F)$ on $U$ (i.e., the Brauer-Severi scheme $f_U : E_U \to U$ is trivial, see \cite[Sec. 0]{BSII}).
Moreover, 
\[
\mathcal{O}_{\tilde{X}}(-E_U)|_{E_U} \simeq \mathcal{O}_{\PP_U(F)}(1) \otimes_{\mathcal{O}_{E_U}} (f_U)^* M
\]
for some line bundle $M$ on $U$.
Thus, by Theorem \ref{existence theorem} there is a smooth separated Deligne--Mumford stack $\cZ'$ of dimension $n$ and a morphism $f' : \tilde{X} \to \cZ'$ such that $f'$ is a weighted blowup with reduced center $U$ where $E_U = \Exc(f')$. 
    Since $\tilde{X}\setminus E_U \simeq \cZ'\setminus U$ the only possible stacky structure can exist at $U$.
    Moreover, since $E_U \simeq \PP_U(F)$ is a scheme it follows that $f'$ is an ordinary blowup, i.e., with weight $(1, \ldots, 1).$
    From the universal property in Theorem \ref{existence theorem} we obtain a unique morphism $g : \cZ' \to \tilde{Z}$, and since $\cZ'$ is a smooth, separated Deligne--Mumford stack there is a coarse moduli space $\pi : \cZ' \to Z'$ and an induced morphism $h : Z' \to \tilde{Z}$ giving the diagram
    \[
    \begin{tikzcd}
E_U \arrow[r, hook] \arrow[d]                         & \tilde{X} \arrow[d, "f'"] \arrow[rdd, "\tilde{f}", bend left] &           \\
U \arrow[r, "\iota'", hook] \arrow[rd, "\iota", hook] & \cZ' \arrow[rd, "{\exists ! \, g}"] \arrow[d, "\pi"]          &           \\
                                                      & Z' \arrow[r, "h"]                                             & \tilde{Z}
\end{tikzcd}
\]
    It is straightforward to see that $h$ is bijective and birational.
The morphism $h : Z' \to \tilde{Z}$ is finite type, separated, and since $\tilde{f} = h \circ (\pi \circ f')$ is a proper morphism it follows that $h$ is proper \cite[\href{https://stacks.math.columbia.edu/tag/03GN}{Tag 03GN}]{stacks-project}.
By Zariski's main theorem $h$ is an isomorphism.
In other words, $\tilde{Z}$ is the coarse moduli space of $\cZ'.$
However, since $\cZ' \setminus U \simeq \tilde{Z}\setminus U$ it follows that $\cZ'$ is a scheme in codimension $c -1 \ge 1$.
Thus, by the bottom up characterization (Theorem \ref{Theorem:bottomup}) we have $\cZ' \simeq \tilde{Z}$. Therefore, $\tilde{X} \simeq\Bl_U \tilde{Z}.$
\end{proof}

Lastly, we need the following commutative algebra lemma.

\begin{Lemma}
\label{depth comp} Let $i:Y\hookrightarrow Z$ be a regular closed immersion of positive codimension between locally Noetherian schemes. Let $w\in Y$ and assume $Z$ is regular near $w$ and
$\dim\mathcal{O}_{Y,w}\ge1$. Then, for every $m\ge 0$,
\[
   \depth_{\mathcal{O}_{Z,w}}
   \bigl(\cI_{Y/Z}^{m}\bigr)_w\ge 2.
\]
Thus, in the situation of Theorem~\ref{scheme case} the ideal sheaf $\cI_{Y/Z}$ of $Y \subseteq Z$ satisfies $\depth_{\mathcal{O}_{Z,w}}(\cI_{Y/Z}^m)_w \ge 2$ for every $w \in W:= Y\setminus U$ where $U \subseteq Y$ is the open dense subset from Proposition~\ref{general proj bundle}.
\end{Lemma}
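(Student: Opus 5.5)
The plan is to compute locally and reduce to the case of a regular sequence in a regular local ring. Set $R := \mathcal{O}_{Z,w}$, which is a regular local ring, and $I := (\cI_{Y/Z})_w$. Since $i$ is a regular immersion, $I$ is generated by a regular sequence $x_1,\ldots,x_c$ with $c\ge 1$. Since $R/I = \mathcal{O}_{Y,w}$ has dimension at least $1$, we have $\dim R = c + \dim R/I \ge c+1 \ge 2$. The case $m=0$ is immediate, because $R$ is Cohen--Macaulay, so $\depth R = \dim R \ge 2$. For $m\ge 1$ we use the short exact sequence
\[
0 \to I^m \to R \to R/I^m \to 0 .
\]
The depth lemma gives $\depth I^m \ge \min\{\depth R,\ \depth R/I^m + 1\}$. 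It therefore suffices to show that $\depth R/I^m \ge 1$.

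To prove that bound, filter $R/I^m$ by the submodules $I^k/I^m$. The successive quotients are $I^k/I^{k+1}$ for $0\le k<m$. Because $x_1,\ldots,x_c$ is a regular sequence, $I^k/I^{k+1} \cong \mathrm{Sym}^k(I/I^2)$ is a free $R/I$-module of finite rank; this is the standard fact that the associated graded ring of an ideal generated by a regular sequence is a polynomial ring over $R/I$. The ring $R/I$ is a quotient of a regular local ring by a regular sequence, so it is Cohen--Macaulay, and hence $\depth R/I = \dim R/I \ge 1$. Each free module $I^k/I^{k+1}$ therefore has depth at least $1$ as an $R$-module. Applying the depth lemma repeatedly along the filtration shows that $R/I^m$, as an extension of modules of depth at least $1$, also has depth at least $1$. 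This gives $\depth I^m \ge 2$.

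For the application to Theorem~\ref{scheme case}: $Z$ and $Y$ are smooth, so $Y\hookrightarrow Z$ is a regular immersion of codimension $c\ge 1$. We are in the case $r=\dim Y>0$, so every point $w\in W = Y\setminus U$ that is closed in $Y$ satisfies $\dim \mathcal{O}_{Y,w} = r \ge 1$. For a non-closed $w\in W$, $\dim\mathcal{O}_{Y,w}\ge 1$ still holds as long as $w$ is not a generic point of $Y$, and $W$ contains no generic point of $Y$ because $U$ is dense and $Y$ is irreducible. So the hypothesis holds at every $w\in W$, and the first part of the lemma applies.

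The step I expect to need the most care is the claim that $I^k/I^{k+1}$ is free over $R/I$. I would cite the standard result on the associated graded ring of an ideal generated by a regular sequence. Once that is in place, the rest is bookkeeping with the depth lemma.
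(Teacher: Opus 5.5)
Your proof is correct and follows essentially the same route as the paper. Both arguments use Rees's theorem that $\mathrm{gr}_I(R)$ is a polynomial ring over the Cohen--Macaulay ring $R/I$, filter $R/I^m$ by the free quotients $I^k/I^{k+1}$, apply the depth lemma to $0\to I^m\to R\to R/I^m\to 0$, and check the hypothesis at $w\in W$ using the density of $U$. The only cosmetic difference is that you bound $\depth_R(R/I^m)\ge 1$, while the paper records the sharper $\depth_R(R/I^m)\ge \dim R/I$; either suffices.
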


\begin{proof}
For brevity set $A := \mathcal{O}_{Z, w}$, $I^m := (\cI_{Y/Z}^m)_w$, $c := \codim_Z(Y),$ and $e := \dim(A/I) = \dim \mathcal{O}_{Y, w}\geq 1$ (by assumption). Since $Z$ is regular near $w$, $A$ is a regular local ring.
Moreover, the closed immersion $Y \hookrightarrow Z$ is regular, so $I$ is generated by an $A$-regular sequence of length $c$ and thus, $A/I$ is a local complete intersection ring and hence Cohen-Macaulay.
Note that for $m=0$ we have $I^0 = A$ and $\depth_A A  = \dim A = e + c \ge 2$ so the statement is true.

Consider now $m \ge 1$. Thus, $\dim A = e + c \ge 2$ since $c\ge 1.$ By Rees' theorem for regular sequences the natural map
\begin{align*}
(A/I)[T_1, \ldots, T_c] &\to \mathrm{gr}_I(A) = \bigoplus_{r \ge 0} I^r/I^{r+1} \\
T_i &\mapsto x_i \mod I^2
\end{align*}
defines an isomorphism \cite[Thm. 1.1.8]{MR1251956}.
Consequently, every graded piece $I^r/I^{r+1}$ is a finite free $A/I$-module. Since the ring $A/I$ is Cohen-Macauley of dimension $e$, each $A$-module $I^r/I^{r+1}$ has depth $e$.
For $m \ge 1$ the $A$-module $A/I^m$ has a finite filtration
\[
A/I^m \supset I/I^m \supset \cdots \supset I^{m-1}/I^m \supset 0
\]
whose successive quotients are $I^r/I^{r+1}$ for $0 \le r \le m-1.$
By a repeated application of the depth lemma \cite[\href{https://stacks.math.columbia.edu/tag/00LX}{Tag 00LX}]{stacks-project} it follows that $\depth_A(A/I^m) \ge e.$
Finally, applying depth lemma to
\[
0 \to I_m \to A \to A/I^m \to 0
\]
gives
\[
\depth_A(I^m) \ge \min\{\depth_A A, \depth_A(A/I^m) + 1\} \ge \min\{e+c, e+1\} \ge 2.
\]

In the situation of Theorem~\ref{scheme case}, {the open subset $U\subseteq Y$ given by Proposition~\ref{general proj bundle}} is dense in $Y$ and thus, the closed subset $W = Y\setminus U \subseteq Y$ does not contain the generic point of $Y$, implying $e:=\dim\mathcal{O}_{Y,w} \ge 1$. Thus, all the hypotheses of the Lemma are satisfied, implying $\depth_{\mathcal{O}_{Z,w}}(\cI_{Y/Z}^m)_w \ge 2$ for every $w \in W:= Y\setminus U$.
\end{proof}

We can now give the proof of Theorem \ref{scheme case}.

\begin{proof}[Proof of Theorem \ref{scheme case}]
Let $U \subseteq Y$ be the open dense subset from Proposition \ref{general proj bundle}, and let $W := Y\setminus U$, let $V := Z \setminus W$ and let $j : V \hookrightarrow Z$ denote the inclusion. 
Proposition \ref{general proj bundle} states that $X_V := f^{-1}(V) \simeq \Bl_U V.$
We claim that this implies that $X \simeq \Bl_Y Z$ such that the restriction of the isomorphism to $X_V \subseteq X$ is $X_V \simeq \Bl_U V.$
Let $f_V :  X_V \to V$ be the restriction of $f : X \to Z$ and let $E_V := E \times_X X_V.$
The conormal bundle $\mathcal{O}_X(-E)$ is $f$-ample (c.f. \cite[Lem. 2.62]{KollarMori1998}), and hence, we can write
$
X \simeq \underline{\Proj}_Z \bigoplus_{m \ge 0} f_* \mathcal{O}_{X}(-mE).
$
Let $\mathcal{I}_{Y/Z} \subseteq \mathcal{O}_Z$ denote the ideal sheaf of $Y$ in $Z$. 
To prove $X \simeq \Bl_Y Z = \underline{\Proj}_Z \bigoplus_{m \ge 0} \cI_{Y/Z}^m$ it is enough to show 
\begin{align}
\label{hartog}
f_*\mathcal{O}_X(-mE) \simeq \mathcal{I}_{Y/Z}^m
\end{align}
for every $m \ge 0.$
Since $X_V \simeq \Bl_U V$ with exceptional divisor $E_V$ we have
\begin{align}
\label{push forward ideal}
(f_V)_* \mathcal{O}_{X_V}(-mE_V) \simeq \mathcal{I}_{U/V}^m
\end{align}
where $\mathcal{I}_{U/V} \subseteq \mathcal{O}_V$ is the ideal sheaf of $U$ in $V$.
We will extend the isomorphism (\ref{push forward ideal}) over $W$ using Hartog's theorem to obtain (\ref{hartog}). 

First, $j_* \cI_{U/V}^m = \cI_{Y/Z}^m$ for every $m \ge 0$. 
Indeed, since $\cI_{U/V}^m = \cI^m_{Y/Z}|_V$ it follows from \cite[\href{https://stacks.math.columbia.edu/tag/0E9I}{Tag 0E9I}]{stacks-project}
that $j_* \cI_{U/V}^m = \cI_{Y/Z}^m$ provided $\depth_{\mathcal{O}_{Z,w}}(\cI_{Y/Z}^m)_w \ge 2$ for every $w \in W,$ which is true by Lemma~\ref{depth comp}. 

Next, set $\cF_m := f_*\mathcal{O}_X(-mE)$ where $m \ge 0.$
We will now show that $\cF_m \simeq j_*(\cI^m_{U/V})$ to finish the proof.
To do so, we will first show $i_*\mathcal{O}_{X_V}(-mE_V) \simeq \mathcal{O}_X(-mE)$ for every $m \ge 0$ where $i : X_V \hookrightarrow X$ is the open immersion.
Indeed, since $\mathcal{O}_X(-mE)$ is invertible and $\mathcal{O}_X(-mE)|_{X_V} \simeq \mathcal{O}_{X_V}(-mE_V)$ it is enough to show $X_V$ is big.
Recall that $f^{-1}(Y) = E$ set-theoretically, and $\codim_X(f^{-1}(Y)) = 1$.
Further, by Proposition \ref{general proj bundle} we have $f^{-1}(U) \simeq \PP_U(F)$ where $F\to U$ is a vector bundle of rank $c := \codim_Z(Y)$.
Moreover, $\dim f^{-1}(U) = \dim E$ and $f^{-1}(U) \subseteq f^{-1}(Y)$ is a dense open subset where $\codim_{f^{-1}(Y)} f^{-1}(W) \ge 1.$ 
Hence, \[\codim_X f^{-1}(W) \ge 2\] and thus \[i_*\mathcal{O}_{X_V}(-mE_V) \simeq \mathcal{O}_X(-mE)\text{ since }f^{-1}(W) = X \setminus X_V.\]
Finally, pushing forward $i_*\mathcal{O}_{X_V}(-mE_V) \simeq \mathcal{O}_X(-mE)$ along $f$ and using $f \circ i = j \circ f_V$ gives
\[
\cF_m = f_*\mathcal{O}_X(-mE) \simeq f_*i_* \mathcal{O}_{X_V}(-m E_V) 
= j_*(f_V)_* \mathcal{O}_{X_V}(-mE_V).
\]
By (\ref{push forward ideal}) we have $(f_V)_* \mathcal{O}_{X_V}(-mE_V) \simeq \mathcal{I}^m_{U/V}$, hence $\cF_m \simeq j_* \mathcal{I}^m_{U/V}$. 
Combining with $j_* \mathcal{I}^m_{U/V} \simeq \mathcal{I}^m_{Y/Z}$ gives $\cF_m \simeq \cI^m_{Y/Z}$ for every $m \ge 0$ which completes the proof.
\end{proof}

\section{Representable contraction case}\label{section:representable}

 In this section, we confirm Question~\ref{conje} for the case when the contraction $f:\cX\rightarrow\cZ$ is representable.

 \begin{theorem}
 \label{main representable}
    Suppose $\cX$ and $\cZ$ are smooth separated Deligne--Mumford stacks of dimension $n$, and there is a representable contraction $f : \cX \to \cZ$ where the exceptional $\cE$ is a smooth irreducible Cartier divisor. 
    Moreover, suppose there is a smooth closed irreducible Deligne--Mumford substack $\cY \subseteq \cZ$ such that $(f^{-1}\cY) \simeq \cE$ set-theoretically, and $f : \cX \setminus \cE \xrightarrow{\sim} \cZ \setminus \cY.$    
    Then $f$ is an ordinary blowup of $\cZ$ along smooth center $\cY$. In particular, $f|_{\cE} : \cE \to \cY$ is a projective bundle, and $\mathcal{O}_{\cX}(\cE)|_{\cE} \simeq \mathcal{O}_{\cE}(-1) \otimes f^*\cL$ for some line bundle $\cL$ on $\cY$. 
 \end{theorem}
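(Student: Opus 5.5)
The plan is to reduce to Theorem~\ref{scheme case} by base change along an étale atlas of $\cZ$, and then to descend the resulting blowup structure. Choose a smooth surjective étale morphism $Z \to \cZ$ from a scheme $Z$; we may take $Z$ to be a disjoint union of smooth connected affine schemes of finite type over $\CC$. Since $f$ is representable, $X := \cX \times_\cZ Z$ is an algebraic space, smooth over $\CC$, with an étale surjection $X \to \cX$. Put $Y := \cY \times_\cZ Z$ and $E := \cE \times_\cX X$. Then $Y$ is a smooth closed subscheme of $Z$ and $E$ is a smooth Cartier divisor on $X$. We also have $f_Z^{-1}(Y) = E$ set-theoretically and $X\setminus E \simeq Z\setminus Y$.

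The base change $f_Z : X \to Z$ is proper, because on coarse spaces the contraction is projective and proper morphisms are stable under base change. Since $f_Z$ is representable and proper, $f_*\mathcal{O}_\cX = \mathcal{O}_\cZ$ pulls back to $(f_Z)_*\mathcal{O}_X = \mathcal{O}_Z$ by flat base change, so $f_Z$ has connected fibers. The reduction steps of Section~\ref{overview} then let us split $E$ and $Y$ into connected (hence irreducible) components and work one component at a time.

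The next task is to ensure $X$ is a scheme and $f_Z$ is projective, so that Theorem~\ref{scheme case} applies. I would use relative ampleness. The coarse morphism $X_\cX \to Z_\cZ$ is projective, so some line bundle on $\cX$ is ample relative to the coarse map. Because $f$ is representable, that line bundle is $f$-ample in the sense of Definition~\ref{Def:relative ample}; alternatively one can take $\mathcal{O}_\cX(-\cE)$ and use the negativity argument of \cite[Lem.~2.62]{KollarMori1998} étale-locally. By Lemma~\ref{Lem:etalelocaltarget} its pullback is $f_Z$-ample, and since $f_Z$ is a representable proper morphism to a scheme, $X$ is a scheme projective over $Z$. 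This step is where I expect the main obstacle: checking that the projectivity assumption on coarse spaces in the definition of contraction really gives an $f$-ample line bundle on $\cX$ itself. My first attempt would be to invoke Proposition~\ref{Prop:amplephi}: representability gives the uniformizing property for free, so $\varphi_\cL$ is an isomorphism onto a relative $\sProj$.

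With this in hand, Theorem~\ref{scheme case} gives $X \simeq \Bl_Y Z$, and the proof of that theorem yields the stronger statement $(f_Z)_*\mathcal{O}_X(-mE) \simeq \cI_{Y/Z}^m$ for all $m \ge 0$. Both sides are compatible with flat base change, since $\cI_{\cY/\cZ}^m$ pulls back to $\cI^m_{Y/Z}$ along étale maps. Hence the isomorphisms $f_*\mathcal{O}_\cX(-m\cE) \simeq \cI^m_{\cY/\cZ}$ of subsheaves of $\mathcal{O}_\cZ$ hold étale-locally. Being equalities of subsheaves, they satisfy the cocycle condition automatically and therefore descend to $\cZ$.

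Since $\mathcal{O}_\cX(-\cE)$ is $f$-ample and uniformizing (the latter by representability), Proposition~\ref{Prop:amplephi}(ii) gives
\[
\cX \simeq \sProj_\cZ \bigoplus_{m\ge0} f_*\mathcal{O}_\cX(-m\cE) \simeq \sProj_\cZ \bigoplus_{m \ge 0}\cI^m_{\cY/\cZ} = \Bl_\cY \cZ .
\]
Finally, the projective bundle structure of $\cE \to \cY$ and the identity $\mathcal{O}_\cX(\cE)|_\cE \simeq \mathcal{O}_\cE(-1)\otimes f^*\cL$ follow from the standard description of the exceptional divisor of a smooth blowup, namely $\cE \simeq \PP(N_{\cY/\cZ})$. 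Indeed, here one may even take $\cL$ trivial.
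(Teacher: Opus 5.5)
Your overall strategy is the paper's: pass to an \'etale atlas, apply Theorem~\ref{scheme case} on each piece, descend $f_*\mathcal{O}_\cX(-m\cE)=\cI^m_{\cY/\cZ}$, and identify $\cX$ with the Proj via Proposition~\ref{Prop:amplephi}. Your two local variations are harmless. These are splitting by connected components of $Y$ instead of using the paper's neighbourhoods $U_y$, and descending the equality of subsheaves of $\mathcal{O}_\cZ$ directly rather than going through $f^{-1}\cI_{\cY/\cZ}\cdot\mathcal{O}_\cX=\mathcal{O}_\cX(-\cE)$ and the adjunction map.

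However, the step you flag as the main obstacle is genuinely left open. It is exactly what you need to invoke Theorem~\ref{scheme case}, which requires $X=\cX\times_\cZ Z$ to be a \emph{scheme} and $f_Z$ to be \emph{projective}. Neither tool you propose supplies this. Proposition~\ref{Prop:amplephi} takes $f$-ampleness as a hypothesis, so using it to produce an $f$-ample bundle is circular. Likewise, \cite[Lem.~2.62]{KollarMori1998} concerns projective birational morphisms of schemes, so it presupposes what you want to prove for $f_Z$. The sentence ``because $f$ is representable, that line bundle is $f$-ample'' is true, but it is the entire content of the step rather than a justification of it.

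The missing idea is the paper's Lemma~\ref{projective represent}. Write $\pi_\cX\colon\cX\to X_\cX$ and $\pi_\cZ\colon\cZ\to Z_\cZ$ for the coarse spaces and $\bar f$ for the projective coarse map. Factor $f$ as $\cX\xrightarrow{\gamma}\cP:=X_\cX\times_{Z_\cZ}\cZ\xrightarrow{\pr_\cZ}\cZ$ with $\gamma=(\pi_\cX,f)$, and show that $\gamma$ is finite:
\begin{enumerate}
\item $\gamma$ is representable, because the kernel of $\Aut_\cX(x)\to\Aut_\cP(\gamma(x))$ lies in the kernel of $\Aut_\cX(x)\to\Aut_\cZ(f(x))$, which is trivial.
\item $\gamma$ is proper, because $\pr_{X_\cX}\circ\gamma=\pi_\cX$ is proper and $\pr_{X_\cX}$ is separated.
\item $\gamma$ is quasi-finite, because $\pi_\cX$ is quasi-finite and $\gamma$ is representable.
\end{enumerate}
Hence $f=\pr_\cZ\circ\gamma$ is projective, and the pullback along $\gamma$ of a $\bar f$-ample bundle is $f$-ample. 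After base change to the atlas, $X$ is a scheme projective over $Z$, so Theorem~\ref{scheme case} applies.

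The $f$-ampleness of $\mathcal{O}_\cX(-\cE)$ used in your final display also depends on this step. Once Theorem~\ref{scheme case} gives $X\simeq\Bl_YZ$ locally, you have $\mathcal{O}_X(-E)\simeq\mathcal{O}(1)$ there, and Lemma~\ref{Lem:etalelocaltarget} gives $f$-ampleness on $\cX$, as at the end of Proposition~\ref{ideal equality}. With this repair your argument goes through and coincides with the paper's.
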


 We first record the following lemma, which is well-known to experts.

 \begin{Lemma}\label{projective represent}
   Let $f:\cX\rightarrow \cZ$ be a representable morphism of separated Deligne-Mumford stacks with respective coarse spaces $\pi_\cX:\cX\rightarrow X$ and $\pi_\cZ:\cZ\rightarrow Z$. If the induced map $\overline{f}:X\rightarrow Z$ is projective, then so is $f:\cX\rightarrow \cZ$. Furthermore, $f$ is representable by schemes.
 \end{Lemma}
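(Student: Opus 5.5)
We prove Lemma~\ref{projective represent} by first getting representability by schemes, then producing an $f$-ample line bundle on $\cX$, and finally using Proposition~\ref{Prop:amplephi} to realize $\cX$ as a closed substack of a projective bundle over $\cZ$.

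\textbf{Representability by schemes.} Fix a scheme $T$ and a morphism $T\to\cZ$, and set $\cX_T:=\cX\times_\cZ T$. Since $f$ is representable, $\cX_T$ is an algebraic space. Consider the commutative square formed by $\cX\to\cZ$, $\cX\to X$, $\cZ\to Z$ and $\overline f:X\to Z$. Together with the universal property of coarse spaces, it gives a natural morphism
\[
\cX\longrightarrow \cZ\times_Z X .
\]
This morphism is representable (because $f$ is), quasi-finite and proper, since the coarse space maps are proper quasi-finite and $\cZ\times_Z X\to\cZ$ is separated. Base change along $T\to\cZ$ then gives $\cX_T\to T\times_Z X$, a proper quasi-finite morphism of algebraic spaces. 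It is therefore finite. The target $T\times_Z X$ is a scheme, and is projective over $T$ because $\overline f$ is projective. An algebraic space finite over a scheme is a scheme, since a finite morphism is affine. Hence $\cX_T$ is a scheme and $f$ is representable by schemes.

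\textbf{Projectivity.} Let $\cM:=\pi_\cX^*\overline{\mathcal{O}}(1)$, where $\overline{\mathcal{O}}(1)$ is an $\overline f$-ample line bundle on $X$. We check that $\cM$ is $f$-ample in the sense of Definition~\ref{Def:relative ample}. By Lemma~\ref{Lem:etalelocaltarget} this is étale local on $\cZ$, so we may take the base change to an étale atlas $T\to\cZ$. There $\cM$ becomes the pullback of a relatively ample line bundle along the finite morphism $\cX_T\to T\times_Z X$. The pullback of a relatively ample line bundle along a finite morphism is relatively ample, so $\cM|_{\cX_T}$ is ample over $T$.

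Since $f$ is representable, $\cM$ is automatically uniformizing: the map $(f,\cM):\cX\to\cZ\times\mathcal{B}\GG_m$ is representable because its composition with the projection to $\cZ$ is. The morphism $f$ is proper, since $\overline f$ is proper and the coarse maps are proper. Proposition~\ref{Prop:amplephi}(ii) then gives
\[
\cX\simeq\sProj_\cZ\Big(\bigoplus_{n\ge0} f_*\cM^{\otimes n}\Big).
\]
Because $\cM$ is the pullback of an honest relatively ample line bundle, we may pass to a Veronese subring generated in degree one. Then this stacky proj is the ordinary relative $\Proj$; by Lemma~\ref{stack proj stab} the $\GG_m$-action is free once the ring is generated in degree $1$. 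This exhibits $\cX$ as a closed substack of $\PP(f_*\cM^{\otimes N})$ for $N$ sufficiently divisible.

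\textbf{Main obstacle.} The delicate point is to choose the correct notion of projective morphism for stacks. To get an actual closed immersion into $\PP(\mathcal V)$ with $\mathcal V$ coherent on $\cZ$, one needs finite generation of $\bigoplus_n f_*\cM^{\otimes n}$. This holds because $f$ is proper and the pushforwards are coherent. One also needs surjectivity of $f^*f_*\cM^{\otimes N}\to\cM^{\otimes N}$, which is Proposition~\ref{Prop:amplephi}(i). I expect checking these compatibilities étale locally, using Lemma~\ref{Lem:etalelocaltarget} and the scheme-case results, to be where care is needed. The remaining steps are formal.
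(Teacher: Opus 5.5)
Your proof is correct. It rests on the same key input as the paper, namely finiteness of $\gamma=(\pi_\cX,f)\colon\cX\to\cZ\times_Z X$, but reaches both conclusions by a different route.

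The paper writes $f=\pr_\cZ\circ\gamma$, with $\gamma$ finite and $\pr_\cZ$ projective as a base change of $\overline f$. It then appeals to ``a composite of projective maps is projective'' and deduces representability by schemes only afterwards, from projectivity.

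You instead get representability by schemes directly from finiteness: an algebraic space finite over the scheme $T\times_Z X$ is a scheme. You then prove projectivity by exhibiting the explicit $f$-ample, uniformizing line bundle $\mathcal{M}=\pi_\cX^*\overline{\mathcal{O}}(1)$ and applying Proposition~\ref{Prop:amplephi}.

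The paper's route is shorter and more formal. However, it leaves implicit what ``projective'' means for a morphism of stacks, and why finite morphisms and composites qualify. Your route makes this explicit and produces a concrete embedding into $\PP(f_*\mathcal{M}^{\otimes N})$.

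There are three small points to tighten.
\begin{enumerate}
\item Passing to a Veronese subring does not preserve $\sProj$ in general. What identifies $\sProj_\cZ\bigl(\bigoplus_n f_*\mathcal{M}^{\otimes n}\bigr)$ with the ordinary $\Proj_\cZ$ is the representability of $f$: a stack representable over $\cZ$ coincides with its relative coarse space (\cite[Prop.~1.6.1(iii), Prop.~1.6.2]{QR}). Only after that step does Veronese invariance of the ordinary $\Proj$ apply. Being pulled back from the coarse space is not the reason.
\item Coherence of each $f_*\mathcal{M}^{\otimes n}$ does not by itself give finite generation of the section ring. That comes from relative ampleness. Alternatively, you can avoid it by checking, étale locally on the quasi-compact $\cZ$, that a single $\mathcal{M}^{\otimes N}$ is relatively very ample.
\item To deduce properness of $\gamma$ from properness of $\pi_\cX$, the separated map to use is $\cZ\times_Z X\to X$, not the projection to $\cZ$. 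The projection to $\cZ$ also works, but only if you first use that $f$ itself is proper.
\end{enumerate}
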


\begin{proof}
    Let $\cP:=X\times_Z \cZ$ and consider the map $\gamma:=(\pi_\cX, f):\cX\rightarrow\cP$ . We claim that $\gamma$ is finite. First note that the kernel of the induced map of $\gamma$ on stabilizers $\Aut_\cX(x)\rightarrow \Aut_\cP(\gamma(x))$ is contained in the kernel of the induced map of $f$ on stabilizers $\Aut_\cX(s)\rightarrow \Aut_\cZ(f(x))$, which is trivial by representability of $f$. Thus, $\gamma: \cX\rightarrow \cP$ is representable as well. The projection $\pr_X:\cP\rightarrow X$ is proper as it is the base change of the proper coarse moduli space morphism $\pi_\cZ:\cZ\rightarrow Z$. Furthermore, $\pr_X\circ\gamma=\pi_\cX$, where $\pi_\cX$ is proper and $\pr_X$ is proper and hence separated. Thus, it follows that $\gamma:\cX\rightarrow \cP$ is proper. Since the composite $\pr_X\circ\gamma=\pi_\cX$ is locally quasi-finite as $\pi_\cX$ is coarse space map of a separated Deligne-Mumford stack, it follows that $\gamma$ is locally quasi-finite by representability and \cite[\href{https://stacks.math.columbia.edu/tag/03XN}{Tag 03XN}]{stacks-project}. Thus, $\gamma:\cX\rightarrow \cP$ is representable, proper and locally quasi-finite, which implies that it is finite by \cite[\href{https://stacks.math.columbia.edu/tag/0A4X}{Tag 0A4X}]{stacks-project}, since finiteness of morphism is fpqc local on the base \cite[\href{https://stacks.math.columbia.edu/tag/0426}{Tag 0426}]{stacks-project}. The projection $\pr_\cZ:\cP\rightarrow \cZ$ is projective being the base change of the projective map $\overline{f}:X\rightarrow Z$. Hence $f:\cX\xrightarrow{\gamma}\cP\xrightarrow{\pr_\cZ}\cZ$ is projective, being the composition of two projective maps.
    
    Finally, since $f:\cX\rightarrow \cZ$ is projective and representable, for any map from a scheme $U\rightarrow \cZ$, the base-change $\cX\times_\cZ U$ is an algebraic space, projective over the scheme $U$, and thus, is a scheme itself. This proves that $f:\cX\rightarrow \cZ$ is representable by schemes.
\end{proof}

\begin{Prop}\label{ideal equality}
    Suppose $\cX$ and $\cZ$ are smooth separated Deligne--Mumford stacks of dimension $n$, and there is a representable contraction $f : \cX \to \cZ$ where the exceptional $\cE$ is a smooth irreducible Cartier divisor. 
    Moreover, suppose there is a smooth closed irreducible Deligne--Mumford substack $\cY \subseteq \cZ$ such that $(f^{-1}\cY) \simeq \cE$ set-theoretically, and $f : \cX \setminus \cE \xrightarrow{\sim} \cZ \setminus \cY.$ Then $f^{-1}\cI_{\cY/\cZ}.\mathcal{O}_\cX\simeq\mathcal{O}_\cX(-\cE)$, as ideal sheaves on $\cX$. Here $f^{-1}\cI_{\cY/\cZ}\mathcal{O}_\cX:=\im(f^\ast\cI_{\cY/\cZ}\rightarrow \mathcal{O}_\cX)$. Furthermore, $\mathcal{O}_\cX(-\cE)$ is relatively ample with respect to $f:\cX\rightarrow \cZ$.
\end{Prop}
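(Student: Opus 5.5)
Both assertions are étale local on $\cZ$, so the plan is to reduce to the scheme case, Theorem~\ref{scheme case}. For the ideal equality, the subsheaf $f^{-1}\cI_{\cY/\cZ}\cdot\mathcal{O}_\cX\subseteq\mathcal{O}_\cX$ is formed compatibly with flat base change on $\cZ$. Equality of two ideal sheaves can be checked after a faithfully flat cover. For relative ampleness, Lemma~\ref{Lem:etalelocaltarget} says it is enough to check it after a surjective étale representable base change $\cZ'\to\cZ$.

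Choose an étale atlas $Z'\to\cZ$ by a scheme. Shrinking, take each component of $Z'$ affine, smooth and connected. Put $X':=\cX\times_\cZ Z'$. By Lemma~\ref{projective represent}, $f$ is projective and representable by schemes, so $X'$ is a scheme and $f':X'\to Z'$ is projective. The remaining hypotheses of Theorem~\ref{scheme case} are étale local and should transfer as follows:
\begin{enumerate}
\item $X'$ is smooth and separated of dimension $n$.
\item $E':=\cE\times_\cZ Z'$ is a smooth Cartier divisor, and $Y':=\cY\times_\cZ Z'$ is a smooth closed subscheme.
\item $f'^{-1}(Y')=E'$ set-theoretically, and $f'$ is an isomorphism off $E'$.
\item $f'_*\mathcal{O}_{X'}=\mathcal{O}_{Z'}$, by flat base change.
\end{enumerate}
The connected-fibres condition follows from (iv) together with Zariski's main theorem / Stein factorization.

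One point needs care. $E'$ and $Y'$ may become reducible, or disconnected, after base change, while Theorem~\ref{scheme case} is stated for irreducible $E$ and $Y$. Since both are smooth, their irreducible components are disjoint. I would therefore apply the reduction of Section~\ref{overview} (removing the other components, one component at a time) to reach the irreducible case. The connected-fibre property should force components of $E'$ to correspond bijectively to components of $Y'$.

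With this in hand, Theorem~\ref{scheme case} gives $X'\simeq\Bl_{Y'}Z'$ compatibly with $f'$. For a blowup along a smooth center, $\cI_{Y'/Z'}\cdot\mathcal{O}_{X'}=\mathcal{O}_{X'}(-E')$, and $\mathcal{O}_{X'}(-E')=\mathcal{O}_{X'}(1)$ is relatively ample. One subtlety: the identification $X'\simeq \Bl_{Y'}Z'$ must send $E'$ to the exceptional divisor. This holds because both equal the reduced preimage of $Y'$.

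The ideal equality is then checked locally. The two ideal sheaves $f^{-1}\cI_{\cY/\cZ}\cdot\mathcal{O}_\cX$ and $\mathcal{O}_\cX(-\cE)$ pull back to the corresponding sheaves on $X'$, which agree there. Since $X'\to\cX$ is faithfully flat, they agree on $\cX$. For relative ampleness, $f$ is representable, so the relative coarse space is $\cX$ itself, and Definition~\ref{Def:relative ample} reduces to ordinary relative ampleness. Lemma~\ref{Lem:etalelocaltarget} then concludes.

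The main obstacle I expect is the hypothesis transfer, chiefly the irreducibility and connected-fibre bookkeeping after the étale base change. If that bookkeeping fails, I would instead argue directly, without Theorem~\ref{scheme case}, that $\mathcal{O}(-E')$ is $f'$-ample via \cite[Lem.~2.62]{KollarMori1998}, and obtain the ideal equality by pulling back \eqref{hartog} in each connected piece.
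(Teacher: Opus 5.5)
Your proposal is correct and follows essentially the same route as the paper. Both arguments pass to an étale atlas, which is a scheme by Lemma~\ref{projective represent}, and transfer the hypotheses by flat base change. Both then localize so that $\cY_U$, and hence $\cE_U$ via connectedness of fibers, becomes irreducible, and apply Theorem~\ref{scheme case} to obtain $f^{-1}\cI\cdot\mathcal{O}=\mathcal{O}(-\cE)\simeq\mathcal{O}(1)$ on the blowup. Finally both descend, using faithful flatness for the ideal equality and Lemma~\ref{Lem:etalelocaltarget} for relative ampleness. The only difference is cosmetic: the paper localizes around each closed point $y\in\cY_U$ (together with the chart $U\setminus\cY_U$), whereas you delete the other components of $\cY_U$.
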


\begin{proof}
 We fix a smooth surjective \'etale presentation $u:U\rightarrow \cZ$ and let $\cX_U:=\cX\times_\cZ U$, $\cY_U:=\cY\times_\cZ U$ and $\cE_U:=\cE\times_\cZ U$ with induced morphism $f_U: \cX_U\rightarrow U$. By Lemma~\ref{projective represent}, $\cX_U$ is a smooth scheme and thus, $\cX_U\rightarrow \cX$ is a smooth surjective \'etale presentation of $\cX$. Since an isomorphism of ideal sheaves is \'etale local, it suffices to prove $f^{-1}\cI_{\cY/\cZ}.\mathcal{O}_\cX\simeq\mathcal{O}_\cX(-\cE)$ after pulling back to the chart $\cX_U$.
 
 Being base change of closed immersions, the maps $\cY_U\hookrightarrow U$ and $\cE_U\hookrightarrow \cX_U$ are closed immersions, whereby $\cY_U$ and $\cE_U$ are smooth schemes as well and furthermore, $\cE_U\subseteq \cX_U$ is an effective Cartier divisor. Since the restriction $f:\cX\setminus\cE\xrightarrow{\sim} \cZ\setminus \cY$ is an isomorphism, it follows that so is $f_U: \cX_U\setminus \cE_U\xrightarrow{\sim} U\setminus \cY_U$. Since $\cE=f^{-1}(\cY)$ set-theoretically, it follows that $\cE_U=f_U^{-1}(\cY_U)$ set-theoretically as well. Furthermore, since $f$ is projective and $f_\ast\mathcal{O}_\cX\simeq\mathcal{O}_\cZ$ and $u:U\rightarrow \cZ$ is \'etale, and therefore flat, we have $(f_U)_\ast\mathcal{O}_{\cX_U}\simeq u^\ast(f_\ast\mathcal{O}_\cX)\simeq \mathcal{O}_U$ by flat base change for algebraic stacks \cite[\href{https://stacks.math.columbia.edu/tag/0765}{Tag 0765}]{stacks-project}. Since $f_U:\cX_U\rightarrow U$ is projective as well, it follows that $f_U$ has connected geometric fibers. Thus, $f_U:\cX_U\rightarrow U$ satisfies all the hypotheses of Theorem~\ref{scheme case}, except possibly the irreducibility of $\cE_U$ and $\cY_U$.

  To fix this, we refine the chart $u:U\rightarrow \cZ$. First note that restricting to the open $V:=U\setminus \cY_U\subseteq U$, we have $\cX_V:=\cX\times _\cZ V=\cX_U\setminus \cE_U$, whereby $f_V:\cX_V\xrightarrow{\sim} V$ is an isomorphism. Hence, since $\cY_V=\cE_V=\emptyset$,
  \begin{align*}(f^{-1}\cI_{\cY/\cZ}.\mathcal{O}_\cX)|_{\cX_V}\simeq f_V^{-1}\cI_{\cY_V/V}.\mathcal{O}_{\cX_V}=\mathcal{O}_{X_V}\text{ and, }\mathcal{O}_\cX(-\cE)|_{\cX_V}\simeq\mathcal{O}_{\cX_V}(-\cE_V)=\mathcal{O}_{\cX_V}.
 \end{align*}
 We therefore obtain $\left(f^{-1}\cI_{\cY/\cZ}.\mathcal{O}_\cX\right)|_{\cX_V}\simeq\mathcal{O}_\cX(-\cE)|_{\cX_V}$. Now we can restrict to opens in $U$, which intersect $\cY_U$. For any closed point $y\in \cY_U$, choose a connected open neighborhood $y\in U_y\subseteq U$ such that $\cY_{U_y}:=U_y\cap \cY_U$ is irreducible. One can do this by first restricting $U$ to the connected component $U_1$ containing $y\in Y_U$, and then removing the irreducible components of $Y_U$ which intersect $U_1$ but do not contain $y$, thereby obtaining a smaller open $y \in U_2\subseteq U_1$ and then taking connected component $U_y$ of $U_2$ containing $y$. This also implies the irreducibility of $\cE_{U_y}:=\cE\times_\cZ U_y$ as follows. Assume $\cE_{U_y}=\bigsqcup_{i=1}^{m}\cE_i$ is a decomposition into irreducible components. Since $f_{U_y}:\cX_{U_y}\rightarrow U_y$ is proper, has connected fibers and $f^{-1}(\cY_{U_y})=\cE_{U_y}$ set-theoretically, we obtain a decomposition $\cY_{U_y}=\bigsqcup_{i=1}^{m}f_{U_y}(\cE_i)$ into disjoint closed subsets, {since there can be at most one irreducible component of $\cE_{U_y}$ lying over a given $f_{U_y}(\cE_i)$ for any $1\leq i\leq m$}. By connectedness of $\cY_{U_i}$, it follows that $f_{U_y}(\cE_i)$ are empty for all but one $i$ and thus, so are $\cE_i$'s. This proves the irreducibility of $\cE_{U_y}$.

Thus, restricting $f:\cX\rightarrow \cZ$ to $U_y$, we obtain a projective birational contraction $f_y: \cX_{U_y}\rightarrow U_y$ such that the smooth irreducible exceptional Cartier divisor $\cE_{U_y}:=\cE\times_\cZ U_y$ is set-theoretically equal to the inverse image $f_y^{-1}(\cY_{U_y})$, where $\cY_{U_y}\subseteq U_y$ is a smooth closed irreducible subscheme and $\cX_{U_y}\setminus \cE_{U_y}\xrightarrow{\sim} U_y\setminus \cY_{U_y}$. By Theorem~\ref{scheme case}, it follows that $\cX_{U_y}\cong \Bl_{\cY_{U_y}}U_y$ is a blowup along the smooth center $\cY_{U_y}\hookrightarrow U_y$ and thus, $f_{U_y}^{-1}\cI_{\cY_{U_y}/U_y}.\mathcal{O}_{\cX_{U_y}}\simeq\mathcal{O}_{\cX_{U_y}}(-\cE_{U_y})$, as ideal sheaves on $\cX_{U_y}$ by \cite[\href{https://stacks.math.columbia.edu/tag/02OS}{Tag 02OS}]{stacks-project}. By compatibility with base change, this is equivalent to $f^{-1}\cI_{\cY/\cZ}.\mathcal{O}_\cX|_{\cX_{U_y}}\simeq\mathcal{O}_\cX(-\cE)|_{\cX_{U_y}}$, as ideal sheaves on $\cX_{U_y}$ for all $y\in \cY_U$. Combining these isomorphisms for all $y\in \cY_U$ and $V=U\setminus \cY_U$, we obtain $f^{-1}\cI_{\cY/\cZ}.\mathcal{O}_\cX|_{\cX_{U}}\simeq\mathcal{O}_\cX(-\cE)|_{\cX_{U}}$, as ideal sheaves on $\cX_{U}$. Since $\cX_U\rightarrow \cX$ is a smooth surjective e\'tale presentation, we obtain the desired isomorphism $f^{-1}\cI_{\cY/\cZ}.\mathcal{O}_\cX\simeq\mathcal{O}_\cX(-\cE)$, as ideal sheaves on $\cX$.

To show that $\mathcal{O}_\cX(-\cE)$ is relatively ample with respect to $f:\cX\rightarrow \cZ$, it suffices to do so at the level of \'etale local schematic presentations of the target $\cZ$ by Lemma~\ref{Lem:etalelocaltarget}. In particular, we can check $f$-ampleness of $\mathcal{O}_\cX(-\cE)$ by restricting over the etale local charts $U_y\rightarrow \cZ$ for all $y\in \cY_U$ and $V:=U\setminus \cY_U\rightarrow \cZ$ considered above, since these together produce a smooth surjective \'etale schematic chart of $\cZ$. Since $f_V:\cX_V\xrightarrow{\sim} V$ is an isomorphism, $\mathcal{O}_\cX(-\cE)|_{\cX_V}\simeq \mathcal{O}_{\cX_V}$ is $f_V$-ample. Finally for any $y\in \cY_U$, if $f_y: \cX_{U_y}\rightarrow U_y$ is the restriction of $f$ over $U_y$, then from the paragraph above we have $\cX_{U_y}\cong \Bl_{\cY_{U_y}}U_y$, whereby $\mathcal{O}_{\cX}(-\cE)|_{\cX_{U_y}}\simeq \mathcal{O}_{\cX_{U_y}}(1)$ is $f_y$-ample by \cite[\href{https://stacks.math.columbia.edu/tag/02OS}{Tag 02OS}]{stacks-project}. This completes the proof of $\mathcal{O}_\cX(-\cE)$ being $f$-relatively ample.
\end{proof}

\begin{Cor}\label{cor:pushforward}
In the situation of Proposition~\ref{ideal equality} we have the isomorphism $\cI^m_{\cY/\cZ}\simeq f_\ast\mathcal{O}_\cX(-m\cE)$ of ideal sheaves on $\cZ$ for all $m\geq 0$.
\end{Cor}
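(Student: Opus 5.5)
The plan is to check the claimed isomorphism étale-locally on $\cZ$, and to reduce it to the scheme case established in the proof of Theorem~\ref{scheme case}. Both sheaves in question are subsheaves of $\mathcal{O}_\cZ$. Indeed, $f_\ast\mathcal{O}_\cX(-m\cE)\subseteq f_\ast\mathcal{O}_\cX\simeq\mathcal{O}_\cZ$, since $f$ is a contraction. For the reverse comparison, Proposition~\ref{ideal equality} gives $f^{-1}\cI_{\cY/\cZ}.\mathcal{O}_\cX=\mathcal{O}_\cX(-\cE)$, hence $f^{-1}\cI^m_{\cY/\cZ}.\mathcal{O}_\cX=\mathcal{O}_\cX(-m\cE)$. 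Adjunction then produces a natural map
\[
\cI^m_{\cY/\cZ}\to f_\ast f^{-1}\cI^m_{\cY/\cZ}.\mathcal{O}_\cX=f_\ast\mathcal{O}_\cX(-m\cE),
\]
which is compatible with the inclusions into $\mathcal{O}_\cZ$. It therefore suffices to show that this map is an isomorphism, and this may be checked after pullback along a smooth surjective étale presentation of $\cZ$.

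First I fix the chart used in the proof of Proposition~\ref{ideal equality}, namely a presentation $u:U\to\cZ$ covered by the opens $V=U\setminus\cY_U$ and $U_y$ for $y\in\cY_U$. By Lemma~\ref{projective represent}, $\cX_U$ is a scheme and $f_U$ is projective. By flat base change \cite[\href{https://stacks.math.columbia.edu/tag/0765}{Tag 0765}]{stacks-project}, the pushforward commutes with pullback along the étale map $u$. Thus $u^\ast f_\ast\mathcal{O}_\cX(-m\cE)\simeq (f_U)_\ast\mathcal{O}_{\cX_U}(-m\cE_U)$, using $\mathcal{O}_\cX(-\cE)|_{\cX_U}\simeq\mathcal{O}_{\cX_U}(-\cE_U)$. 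Since $u$ is flat, we also have $u^\ast\cI^m_{\cY/\cZ}=\cI^m_{\cY_U/U}$. It then suffices to verify the statement on each open of the cover.

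On $V$ the map $f_V$ is an isomorphism and $\cY_V=\emptyset$, so both sides equal $\mathcal{O}_V$. On $U_y$, the proof of Proposition~\ref{ideal equality} exhibits $f_y:\cX_{U_y}\to U_y$ as satisfying the hypotheses of Theorem~\ref{scheme case}. The proof of that theorem established exactly \eqref{hartog}, that is, $(f_y)_\ast\mathcal{O}_{\cX_{U_y}}(-m\cE_{U_y})\simeq\cI^m_{\cY_{U_y}/U_y}$, with both sides viewed as subsheaves of $\mathcal{O}_{U_y}$. Alternatively, since $\cX_{U_y}\simeq\Bl_{\cY_{U_y}}U_y$ is a blowup along a regularly embedded center, this is the standard computation $\pi_\ast\mathcal{O}(m)=\cI^m$ for blowups along smooth centers, which follows from normality of the Rees algebra $\bigoplus_m\cI^m$. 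These local isomorphisms are equalities of subsheaves of $\mathcal{O}$, so they agree on overlaps and glue. By étale descent for quasi-coherent subsheaves, the pulled-back map is an isomorphism on $U$, and hence the original map is an isomorphism on $\cZ$.

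The main obstacle is ensuring that the local isomorphisms from Theorem~\ref{scheme case} coincide with the pullback of the single global adjunction map, rather than being abstract isomorphisms. I would handle this by working throughout with subsheaves of $\mathcal{O}_\cZ$ and $\mathcal{O}_U$. Both the base change identification and the scheme-case identity \eqref{hartog} are equalities inside $\mathcal{O}_U$, because $\mathcal{O}_\cX(-m\cE)\subseteq\mathcal{O}_\cX$ and pushforward is left exact. The compatibility is then automatic.
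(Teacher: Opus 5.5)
Your proposal is correct and follows essentially the same route as the paper. Both construct $\alpha_m\colon \cI^m_{\cY/\cZ}\to f_\ast\mathcal{O}_\cX(-m\cE)$ by adjunction from Proposition~\ref{ideal equality}, then verify it is an isomorphism on the \'etale charts $V=U\setminus\cY_U$ and $U_y$ using flat base change and the scheme-case formula \eqref{hartog}. Your treatment of both sides as subsheaves of $\mathcal{O}$ usefully makes explicit that the global adjunction map agrees with the local blowup identification, which the paper leaves implicit.
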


\begin{proof}
For $m=0$, the desired isomorphism is $f_\ast\mathcal{O}_\cX\simeq\mathcal{O}_\cZ$, which holds by contraction definition. By Proposition~\ref{ideal equality}, we have the isomorphism $f^{-1}\cI_{\cY/\cZ}.\mathcal{O}_\cX\simeq\mathcal{O}_\cX(-\cE)$, as ideal sheaves on $\cX$. Taking the $m$-th power for all $m\geq 2$ yields, $f^{-1}\cI^m_{\cY/\cZ}.\mathcal{O}_\cX\simeq(f^{-1}\cI_{\cY/\cZ}.\mathcal{O}_\cX)^m\simeq \mathcal{O}_\cX(-m\cE)$, since extension of ideals commutes with taking powers. By definition of $f^{-1}\cI^m_{\cY/\cZ}.\mathcal{O}_\cX$, we have a canonical surjective map $f^\ast\cI^m_{\cY/\cZ}\twoheadrightarrow f^{-1}\cI^m_{\cY/\cZ}.\mathcal{O}_\cX\simeq \mathcal{O}_\cX(-m\cE)$. By adjunction $f^\ast\dashv f_\ast$, we obtain a canonical map $\alpha_m: \cI^m_{\cY/\cZ}\rightarrow f_\ast\mathcal{O}_\cX(-m\cE)$ of ideal sheaves on $\cZ$. We claim that $\alpha_m$ is an isomorphism for all $m\geq 1$. Since isomorphism is an \'etale local property, we will check this on charts. In particular, we will take \'etale local charts as in proof of Proposition~\ref{ideal equality}: the fixed smooth surjective \'etale presentation $u:U\rightarrow \cZ$ and its refinements by opens $V=U\setminus \cY_U$ and open neighborhoods $y\in U_y\subseteq U$ for each closed point $y\in \cY_U$. Over $V:=U\setminus\cY_U$, the morphism $f_V:\cX_V\rightarrow V$ is an isomorphism since $\cY_V=\cE_V=\emptyset$ and hence by functoriality of $\alpha_m$,
$$\alpha_m|_V:\cI_{\cY/\cZ}^m|_V=\mathcal O_V\xrightarrow{\sim}(f_V)_*\mathcal O_{\cX_V}\simeq (f_\ast\mathcal{O}_\cX(-m\cE))|_V.$$ 
Thus $\alpha_m|_V$ is an isomorphism. For every $y\in\cY_U$, the proof of Proposition~\ref{ideal equality} yields the isomorphism $\cX_{U_y}\simeq \Bl_{\cY_{U_y}}U_y$ and $f_{U_y}:\cX_{U_y}\rightarrow U_y$ is the blowdown morphism. By flat base change,
\begin{equation}\label{Eq:restrictedsheaves}
(f_\ast\mathcal{O}_\cX(-m\cE))|_{U_y} \simeq
(f_{U_y})_*\mathcal O_{\cX_{U_y}}(-m\cE_{U_y}), \ \text{and} \ 
\cI_{\cY/\cZ}^m|_{U_y} \simeq \cI_{\cY_{U_y}/U_y}^{m}.
\end{equation}
The standard pushforward formula \eqref{hartog} for the blowup along a smooth center to $f_{U_y}:\cX_{U_y}\rightarrow U_y$ yields
\[
 \alpha_m|_{U_y}:\cI_{\cY/\cZ}^m|_{U_y} \simeq \cI_{\cY_{U_y}/U_y}^{m}\xrightarrow{\sim}
(f_{U_y})_*\mathcal O_{\cX_{U_y}}(-m\cE_{U_y})\simeq (f_\ast\mathcal{O}_\cX(-m\cE))|_{U_y}.
\]
Thus, $\alpha_m|_{U_y}$ is an isomorphism for every open $U_y\subseteq U$ for $y\in \cY_U$. Since $V$ together with the opens $U_y$ cover $U$, they form a surjective \'etale cover of $\cZ$, whereby it follows that $\alpha_m:\cI^m_{\cY/\cZ}\rightarrow f_\ast\mathcal{O}_\cX(-m\cE)$ is an isomorphism of ideal sheaves
on $\cZ$.
\end{proof}

\begin{Lemma}\label{lemma:stackyproj}
In the situation of Proposition~\ref{ideal equality}, we have that $\cX$ is isomorphic to $\Proj_\cZ\left(\bigoplus_{n\geq0}f_\ast\mathcal{O}_\cX(-n\cE)\right)$, which is the ordinary Proj of the graded algebra $\bigoplus_{n\geq0}f_\ast\mathcal{O}_\cX(-n\cE)$.
\end{Lemma}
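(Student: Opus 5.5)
The plan is to apply Proposition~\ref{Prop:amplephi} to the line bundle $\cL:=\mathcal{O}_\cX(-\cE)$ relative to $f:\cX\to\cZ$. This gives $\cX\simeq\sProj_\cZ\bigl(\bigoplus_{n\ge0}f_\ast\mathcal{O}_\cX(-n\cE)\bigr)$. We then upgrade the stacky Proj to the ordinary Proj by showing that the graded algebra is generated in degree one.

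First, Proposition~\ref{ideal equality} already gives that $\cL$ is $f$-ample. Next I check that $\cL$ is uniformizing, i.e.\ that $(f,\cL):\cX\to\cZ\times\mathcal{B}\GG_m$ is representable. Since $f$ itself is representable, the induced map on stabilizers $\Aut_\cX(x)\to\Aut_\cZ(f(x))$ is injective. Hence so is $\Aut_\cX(x)\to\Aut_\cZ(f(x))\times\GG_m$, and $(f,\cL)$ is representable.

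The morphism $f$ is proper: it is projective by Lemma~\ref{projective represent}, being representable with projective coarse map by the contraction convention. Proposition~\ref{Prop:amplephi}(ii) then shows that
\[
\varphi_\cL:\cX\to\sProj_\cZ\Bigl(\bigoplus_{n\ge0}f_\ast\mathcal{O}_\cX(-n\cE)\Bigr)
\]
is an isomorphism. One subtlety is that part (i) only produces $\varphi_\cL$ after replacing $\cL$ by a sufficiently divisible power. To handle this, I would verify directly that $f^\ast f_\ast\cL\to\cL$ is already surjective. This is étale local on $\cZ$, and on the charts $V$ and $U_y$ used in the proof of Proposition~\ref{ideal equality} it is the standard surjectivity $f^\ast\cI\twoheadrightarrow\mathcal{O}(1)$ for a blowup.

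Finally, Corollary~\ref{cor:pushforward} identifies $\bigoplus_{n\ge0}f_\ast\mathcal{O}_\cX(-n\cE)$ with the Rees algebra $\bigoplus_{n\ge0}\cI^n_{\cY/\cZ}$, which is generated in degree $1$. By Lemma~\ref{stack proj stab} and the discussion after it, the $\GG_m$-action on $\underline{\Spec}_\cZ(R)\setminus V(R_+)$ is therefore free, so $\sProj_\cZ(R)\simeq\Proj_\cZ(R)$. This identification is compatible with étale base change on $\cZ$, so it may be checked on a scheme atlas where the scheme-level statement applies.

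I expect the main obstacle to be the careful application of Proposition~\ref{Prop:amplephi}, namely the passage from powers $\cL^{\otimes N}$ to $\cL$ itself. I would resolve this through the degree-one generation above: the Veronese subalgebra $R^{(N)}$ has the same $\Proj$ as $R$, and $R$ is generated in degree one, so the Proj is unchanged.
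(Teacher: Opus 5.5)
Your proposal is correct. Its first half matches the paper's argument exactly: $f$-ampleness of $\cL=\mathcal{O}_\cX(-\cE)$ comes from Proposition~\ref{ideal equality}, the uniformizing property from representability of $f$, and Proposition~\ref{Prop:amplephi}(ii) then gives $\cX\simeq\sProj_\cZ(R)$ with $R=\bigoplus_{n\ge0}f_\ast\mathcal{O}_\cX(-n\cE)$.

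You diverge in the passage from $\sProj$ to $\Proj$. The paper never examines generators of $R$. It uses that the relative coarse space of $\sProj_\cZ(R)$ over $\cZ$ is $\Proj_\cZ(R)$ \cite[Prop.~1.6.1(iii)]{QR}, and then uses representability of $f$ with \cite[Prop.~1.6.2]{QR} to identify $\cX$ with that coarse space. You instead invoke Corollary~\ref{cor:pushforward} to identify $R$ with the Rees algebra of $\cI_{\cY/\cZ}$. This is legitimate: the corollary is proved before the lemma, from Proposition~\ref{ideal equality} and the \'etale-local Theorem~\ref{scheme case}, so there is no circularity. The Rees algebra is generated in degree one, so the $\GG_m$-action is free and $\sProj_\cZ(R)=\Proj_\cZ(R)$.

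The two routes buy different things. The paper's is more intrinsic: it shows that a proper representable morphism with a relatively ample line bundle is the ordinary Proj of its section ring, whatever that ring looks like. Yours leans on the heavier \'etale-local blowup input. In exchange, it already lands on $\Proj_\cZ\bigl(\bigoplus_n\cI^n_{\cY/\cZ}\bigr)=\Bl_\cY\cZ$, so it essentially absorbs the proof of Theorem~\ref{main representable}.

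The ``sufficiently divisible'' issue you flag is not a real obstacle. Proposition~\ref{Prop:amplephi}(i) already yields $\varphi_\cL$ into $\sProj_\cZ$ of the full section ring from surjectivity in a single degree $N$. Your direct check that $f^\ast f_\ast\cL\to\cL$ is surjective is correct but unnecessary. It also follows globally from $f^{-1}\cI_{\cY/\cZ}\cdot\mathcal{O}_\cX=\mathcal{O}_\cX(-\cE)$, since $f^\ast\cI_{\cY/\cZ}\to\cL$ factors through $f^\ast f_\ast\cL$.

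Your Veronese fallback works only because the regraded $R^{(N)}$ is again generated in degree one. In general, $\sProj$ is not invariant under passing to Veronese subrings, only the ordinary $\Proj$ is.
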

\begin{proof}
     By Proposition~\ref{ideal equality} the line bundle $\cF:=\mathcal{O}_\cX(-\cE)$ on $\cX$ is relatively ample with respect to $f:\cX\rightarrow \cZ$, and the induced map $(f, \cF):\cX\rightarrow \cZ\times \mathcal{B}\GG_m$ is representable since $f$ is representable, and $\cF$ is uniformizing. Thus, by Proposition~\ref{Prop:amplephi} the morphism to the stacky proj $\varphi_\cF: \cX\rightarrow \sProj_\cZ\left(\bigoplus_{n\geq0}f_\ast\mathcal{O}_\cX(-n\cE)\right)$ as in \eqref{Eq:phi}, gives an isomorphism of stacks over $\cZ$. The relative coarse space of the stack $\sProj_\cZ\left(\bigoplus_{n\geq0}f_\ast\mathcal{O}_\cX(-n\cE)\right)$ over $\cZ$ is the ordinary proj construction $\Proj_\cZ\left(\bigoplus_{n\geq0}f_\ast\mathcal{O}_\cX(-n\cE)\right)$ by \cite[Prop.~1.6.1(iii)]{QR}. Since $f:\cX\rightarrow \cZ$ is representable,  by \cite[Prop.~1.6.2]{QR} it follows that $\cX\cong \Proj_\cZ\left(\bigoplus_{n\geq0}f_\ast\mathcal{O}_\cX(-n\cE)\right)$.
\end{proof}

We can now give the proof of the main theorem of this section.

\begin{proof}[Proof of Theorem~\ref{main representable}]
    Suppose $\cX$ and $\cZ$ are smooth separated Deligne--Mumford stacks of dimension $n$, and there is a representable contraction $f : \cX \to \cZ$ where the exceptional $\cE$ is a smooth irreducible Cartier divisor.  Moreover, suppose there is a smooth closed irreducible Deligne--Mumford substack $\cY \subseteq \cZ$ such that $(f^{-1}\cY) \simeq \cE$ set-theoretically, and $f : \cX \setminus \cE \xrightarrow{\sim} \cZ \setminus \cY.$ By Corollary~\ref{cor:pushforward} we have the isomorphism $\cI_{\cY/\cZ}^n\simeq f_\ast\mathcal{O}_\cX(-n\cE)$ of ideal sheaves on $\cZ$, for all $n\geq 0$. Thus, we have an isomorphism $\bigoplus_{n\geq 0}\cI^n_{\cY/\cZ}\simeq \bigoplus_{n\geq 0}f_\ast\mathcal{O}_\cX(-n\cE)$ of sheaves of graded $\mathcal{O}_\cZ$-algebras. By Lemma~\ref{lemma:stackyproj}, functoriality of the ordinary Proj construction, and definition of blowup along an ideal sheaf, we therefore obtain the following isomorphism of stacks over $\cZ$
    \[\cX\cong\Proj_\cZ\left(\bigoplus_{n\geq 0}f_\ast\mathcal{O}_\cX(-n\cE)\right)\cong \Proj_\cZ\left(\bigoplus_{n\geq 0}\cI^n_{\cY/\cZ}\right)\cong \Bl_\cY \cZ.\] 
    It is then a consequence of ordinary blowup that $f|_{\cE} : \cE \to \cY$ is isomorphic to the projective bundle $\PP(\cI_{\cY/\cZ}/\cI_{\cY/\cZ}^2)\rightarrow \cY$, and $\mathcal{O}_{\cX}(\cE)|_{\cE} \simeq \mathcal{O}_{\cE}(-1) \otimes f^*\cL$ for some line bundle $\cL$ on $\cY$.
\end{proof}

 \section{Regular Deligne--Mumford Surface Case}\label{section:DMsurface}

 In this section, we confirm Question~\ref{conje} when $\cX$ and $\cZ$ are smooth separated Deligne--Mumford surfaces.

 \begin{theorem}
 \label{main body}
    Suppose $\cX$ and $\cZ$ are smooth separated Deligne--Mumford surfaces, and there is a contraction $f : \cX \to \cZ$ where the exceptional $\cC$ is a smooth irreducible stacky curve. 
    Moreover, suppose $f(\cC) = p \in |\cZ|$ and $f : \cX \setminus \cC \xrightarrow{\sim} \cZ \setminus \cY$ is an isomorphism where $\cY := \mathcal{B}\Aut_{\cZ}(p)$. 
    Then, $f$ is a regular weighted blowup of $\cZ$ with reduced center $\cY$. In particular, $f|_\cC : \cC \to \cY$ is a weighted projective bundle, and $\mathcal{O}_{\cX}(\cC)|_{\cC} \simeq \mathcal{O}_{\cC}(-1) \otimes f^*\cL$ for some line bundle $\cL$ on $\cY$.
 \end{theorem}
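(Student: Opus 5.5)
The plan follows the strategy outlined in the introduction. Everything is reduced to verifying the hypotheses of Theorem~\ref{existence theorem} for the curve $\cC\subseteq\cX$, and then the blowdown is identified with $f$.

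\textbf{Step 1: $\cC$ is algebraically simply connected.} I would work étale-locally on $\cZ$ near $p$. Choose an étale neighbourhood $[V/G]\to\cZ$ of $p$, with $V$ a smooth affine surface, $G=\Aut_\cZ(p)$, and $p$ lifting to a fixed point $v\in V$. Base changing $f$ gives a contraction $\cX_V\to V$ contracting $\cC_V$ to the point $v$, where $\cC_V\to\cC$ is a $G$-torsor. Now take a finite étale cover $\cC'\to\cC_V$. I would first extend it to a finite étale cover of a formal (or henselian) neighbourhood of $\cC_V$, using topological invariance of the étale site for the thickenings and then algebraization. Then, by Proposition~\ref{rel norm in open subscheme}, its normalization over $V$ gives a finite cover of a smooth surface germ that is étale away from $v$. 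By purity of the branch locus (Zariski--Nagata) on the regular germ $(V,v)$, this cover is étale, hence trivial over the strictly henselian germ. So $\cC'\to\cC_V$ is trivial and $\cC_V$ is simply connected. This part should work in any dimension, which is what Theorem~\ref{contracted} is meant to record. Finally, $\cC=[\cC_V/G]$ with $\cC_V$ connected and simply connected.

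\textbf{Step 2: classify $\cC$.} Next I would show that a smooth stacky curve with projective coarse space and trivial $\pi_1^{\mathrm{alg}}$ is $\cP(a,b)$. Using the bottom-up description (root stacks over the coarse curve $C$, together with a possible gerbe $\mathcal{B}\mu_d$), simple connectedness forces the following. First, $C\simeq\PP^1$, since otherwise $\pi_1(C)$ would be nontrivial. Second, there are at most two stacky points, since $\PP^1$ with orbifold points of orders $e_1,\dots,e_k$ and $k\ge 3$, or with $k=2$ and $e_1\ne e_2$ in the non-gerby case, has nontrivial finite quotients. Third, any gerbe structure must be one compatible with the $\cP(a,b)$ presentation, where generic stabilizer $\mu_{\gcd(a,b)}$ is allowed. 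I then identify the result with $[(\AA^2\setminus0)/\GG_m]$ for suitable weights.

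\textbf{Step 3: normal bundle degree, and the expected obstacle.} With $\cC\simeq\cP(a,b)$ and $\Pic\cP(a,b)\simeq\ZZ$ generated by $\mathcal{O}(1)$, write $\mathcal{O}_\cX(\cC)|_\cC\simeq\mathcal{O}_\cC(-m)$. Negativity $m>0$ follows from $f$-ampleness of $\mathcal{O}_\cX(-\cC)$, checked on the coarse contraction via the negativity lemma / Grauert. I expect showing $m=1$ to be the main obstacle. My first attempt would be a canonical-class computation analogous to Proposition~\ref{contraction to a point case}. Write $K_\cX=f^*K_\cZ+\alpha\cC$ and use adjunction, $K_\cC=(K_\cX+\cC)|_\cC=-(a+b)\mathcal{O}(1)$, so that $(\alpha+1)m=a+b$. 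Then bound the discrepancy $\alpha$ from below by the local computation $z_i=x^{a_i}u_i$ on an étale chart of $\cZ$ at $p$ lifted to $\cX$. This should give $\alpha+1\ge a+b$ with equality forcing the weights. Failing that, I would try the following: if $m>1$, take the $m$-th root presentation to produce a cyclic cover of the germ of $\cZ$ at $p$ that is étale away from $p$ but nontrivial, contradicting Step 1 via purity.

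\textbf{Step 4: conclude.} With $\mathcal{O}_\cX(\cC)|_\cC\simeq\mathcal{O}_\cC(-1)$, Theorem~\ref{existence theorem} gives a regular weighted blowdown $\cX\to\cZ'$ with center $\cY'$. By the pushout property there is a map $g:\cZ'\to\cZ$. Since $f_*\mathcal{O}_\cX=\mathcal{O}_\cZ$, the induced map of coarse spaces is proper, bijective and birational, hence an isomorphism by Zariski's Main Theorem. Moreover $g$ is an isomorphism away from $p$. The final point is to match the stabilizers at $p$: both $\cZ$ and $\cZ'$ are smooth and agree in codimension one, so the uniqueness statement in the Remark after Theorem~\ref{Theorem:bottomup} ([SWB2024, Thm.~3.1]) gives $\cZ'\simeq\cZ$. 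Hence $f$ is the weighted blowup with reduced center $\cY$.
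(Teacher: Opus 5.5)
There is a genuine gap at the hand-off from Step~1 to Steps~2--4. Your Step~1 argument shows that $\cC_V$ is algebraically simply connected. But $\cC_V\to\cC$ is a \emph{connected} $G$-torsor with $G=\Aut_\cZ(p)$; it is connected because $\cX_V\to V$ is proper with $f_*\mathcal{O}=\mathcal{O}$. Hence $\pi_1^{\text{\'et}}(\cC)\cong G$, and $\cC$ is not simply connected as soon as $p$ is a stacky point.

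Steps~2--4 are nonetheless applied to $\cC$ itself, and their claims are false at that generality. Take $\cZ=[\AA^2/\mu_2]$ with $\mu_2$ acting by $(x,y)\mapsto(x,-y)$, and $\cX=[\Bl_0\AA^2/\mu_2]$. Every hypothesis holds, yet $\cC=[\PP^1/\mu_2]$ is the curve of Example~\ref{P1/d}, which is not a $\cP(a,b)$. With the action $(x,y)\mapsto(-x,-y)$ one gets $\cC\simeq\PP^1\times\mathcal{B}\mu_2$ and $\Pic\cC\simeq\ZZ\oplus\ZZ/2$. So the normal bundle is determined by its degree only up to a character of $\mu_2$; this is exactly what the twist by $f^*\cL$ in the statement records. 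Step~4 breaks too: Theorem~\ref{existence theorem} applied to $\cC\simeq\cP(a,b)$ over a point produces a $\cZ'$ whose point over $p$ has trivial stabilizer, and this cannot be $\cZ$ when $G\neq 1$.

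The repair is to stay on the chart. Run Steps~2--4 for the contraction $\cX_V\to V$ onto the smooth scheme $V$, where $\cC_V\simeq\cP(a,b)$ is legitimate. Then descend along $V\to[V/G]\to\cZ$, using that being a weighted blowup is smooth-local on the target \cite[Prop.~2.18]{SWB2024}. This reduction to a scheme target is the first move of the paper's proof.

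On $m=1$: the discrepancy route is not yet an argument. The chart computation only gives $\alpha\ge c_1+c_2-1$, with $c_i$ the vanishing order of $f^*z_i$ along $\cC$, and nothing ties the $c_i$ to the weights $a,b$. Already for a genuine $(a,b)$-blowup with $a<b$, general coordinates have $c_1=c_2=a$.

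Your fallback is the right idea, but it needs an actual construction, since the root stack alone gives no cover of the germ of $Z$. The paper supplies it with Obinna's Theorem~\ref{-m case}. First, $\cX(\sqrt[m]{\cC})$ is a weighted blowup of some $\cZ_m$ at $\mathcal{B}\mu_m$. Zariski's Main Theorem then identifies the coarse space of $\cZ_m$ with $Z$. Since $\cZ_m$ is a scheme in codimension one, Theorem~\ref{Theorem:bottomup} gives $\cZ_m\simeq Z$, so $m=1$. Equivalently, a chart $[W/\mu_m]$ of $\cZ_m$ at $p$ would give a connected cover $W\to Z$ that is \'etale off $p$, contradicting purity.

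A direct alternative, in the scheme-target setting with $\hat\cX=\cX\times_Z\Spec\hat A$:
\begin{itemize}
\item $H^1$ and $H^2$ of $\mathcal{O}_\cC(km)$ vanish for $k\ge 1$, so $\Pic\hat\cX\cong\Pic\cC\cong\ZZ$ and $\mathcal{O}(-\cC)\simeq M^{\otimes m}$ for some line bundle $M$.
\item Since $\hat A$ is factorial, $M$ is trivial on $\hat\cX\setminus\cC\simeq\Spec\hat A\setminus p$, hence $M\simeq\mathcal{O}(k\cC)$ for some $k$.
\item Restricting $\mathcal{O}((mk+1)\cC)\simeq\mathcal{O}$ to $\cC$ gives $mk=-1$, so $m=1$.
\end{itemize}

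Finally, the two-point criterion in Step~2 is misstated. $\PP^1$ with orbifold points of orders $e_1,e_2$ has $\pi_1\cong\ZZ/\gcd(e_1,e_2)$, so the case to exclude is $\gcd(e_1,e_2)>1$, not $e_1\neq e_2$; for example, $\cP(2,3)$ is simply connected.
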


By virtue of \cite[Prop.~2.18]{SWB2024}, a morphism of algebraic stacks being a weighted blowup is smooth local on the target, whereby  we may assume without loss of generality that $\cZ$ is a smooth irreducible variety.
Further, we can take $\cZ$ to be a smooth affine surface $\Spec \, A$.
To emphasize that the target is now a scheme we write $f :\cX \to Z.$

 \subsection{Contracted Cartier Divisors are simply-connected}
   \label{contracted cartier divisor}

  In this section, we prove that if $\cE := \Exc(\pi)$ is a smooth irreducible divisor where $\pi : \cX \to Z$ is a contraction from a smooth, separated Deligne--Mumford stack to a smooth irreducible variety where $\dim(\cX) = \dim(Z) > 1$ and $\pi(\cE) = p \in Z$, then $\cE$ is algebraically simply-connected.  
  To do so, we need the following results, which are known as topological invariance of the fundamental group.
  In what proceeds, we use the notation $\text{F\'Et}_X$  as defined in \cite[\href{https://stacks.math.columbia.edu/tag/0BL6}{Tag 0BL6}]{stacks-project}.

 \begin{Lemma}[{\cite[\href{https://stacks.math.columbia.edu/tag/0BQB}{Tag 0BQB}]{stacks-project}}]
  \label{topological inv}
   Let $X \subset X'$ be a thickening of schemes. The functor
   \[
   \text{F\'Et}_{X'} \to \text{F\'Et}_X, \ \ \ U' \mapsto U' \times_{X'} X
   \]
   is an equivalence of categories. 
 \end{Lemma}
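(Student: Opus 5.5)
The plan is to deduce the statement from the topological invariance of the étale site, and then check that finiteness can be detected after base change to $X$. Since the thickening $X \subset X'$ is a homeomorphism on underlying spaces, the first step is to invoke the equivalence $\mathrm{\acute{E}t}_{X'} \to \mathrm{\acute{E}t}_X$, $U' \mapsto U' \times_{X'} X$, between the categories of schemes étale over $X'$ and over $X$ (\cite[\href{https://stacks.math.columbia.edu/tag/04DY}{Tag 04DY}]{stacks-project}). If one wanted to prove this step directly rather than cite it, I would argue in two parts. For existence, I would locally present an étale $U \to X$ as a standard étale algebra $(A/I)[t]_g/(\bar h)$ and lift $h$ and $g$ to $A$; the lift is still standard étale because invertibility of $h'$ can be checked modulo the nil ideal $I$. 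Uniqueness and full faithfulness would come from the infinitesimal lifting property of étale morphisms, applied to reduce to the case of a first-order thickening, with the local lifts glued using uniqueness.

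Once this equivalence is in hand, the functor $\text{F\'Et}_{X'} \to \text{F\'Et}_X$ is its restriction to full subcategories. It therefore suffices to show essential surjectivity onto $\text{F\'Et}_X$: if $U' \to X'$ is étale and $U := U' \times_{X'} X \to X$ is finite, then $U' \to X'$ is finite. For this I would write finite as integral plus locally of finite type, or equivalently as proper plus affine, or as proper plus quasi-finite. Local finite type and quasi-finiteness are automatic for étale maps. Quasi-compactness and universal closedness are purely topological, and $U \to U'$ is a homeomorphism that persists after any base change, because it is again a thickening. For separatedness, the diagonal of the étale map $U' \to X'$ is an open immersion, and an open immersion is a closed immersion exactly when its image is closed. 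That condition is topological, so it can be read off from $U$. Hence $U' \to X'$ is proper and quasi-finite, and so it is finite.

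I expect the main obstacle to be this last transfer of properness across the thickening. One must make sure that universal closedness is tested against all base changes $T \to X'$, using that $U'_T \to T$ and $U_{T \times_{X'} X} \to T \times_{X'} X$ differ only by thickenings, and so have the same underlying maps of topological spaces. Alternatively, I would use that $U'$ is affine over $X'$ if and only if $U$ is affine over $X$ (Chevalley's theorem for thickenings, \cite[\href{https://stacks.math.columbia.edu/tag/06AD}{Tag 06AD}]{stacks-project}), and then conclude finiteness from properness plus affineness.
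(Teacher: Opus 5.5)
Your proposal is correct and takes essentially the same route as the cited result: the paper gives no proof of its own but quotes Tag 0BQB of the Stacks Project, whose proof also invokes topological invariance of the \'etale site and then reduces to showing that an \'etale $U' \to X'$ is finite if and only if $U' \times_{X'} X \to X$ is. Your check of that last step is sound: you read off quasi-compactness and universal closedness through base-changed thickenings, get separatedness from the diagonal being an open immersion with closed image, and conclude with proper plus locally quasi-finite implies finite.
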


 Topological invariance of the fundamental group easily extends to separated Deligne--Mumford stacks by descent. 
 Before we give the proof of Theorem~\ref{main} we will recall the {\em frame bundle} associated to a finite \'etale morphism of smooth Deligne--Mumford stacks.

\begin{Def}
 \label{frame bundle}
Let $\cY \to \cZ$ be a degree $d$ finite \'etale morphism of smooth separated Deligne--Mumford stacks. 
Then, the {\em frame bundle} $\underline{\mathrm{Isom}}_{\cZ}(\cZ^{\sqcup \, d}, \cY) \to \cZ$ defined as follows: suppose $T \to \cZ$ is an \'etale morphism where $T$ is a complex scheme, then
\[
\underline{\mathrm{Isom}}_{\cZ}(\cZ^{\sqcup \, d}, \cY)(T) := \underline{\mathrm{Isom}}_T(T^{\sqcup \, d}, \cY \times_{\cZ} T)
\]
where $\underline{\mathrm{Isom}}_T(T^{\sqcup \, d}, \cY \times_{\cZ} T)$ is the usual Isom scheme (see \cite[Exp. XIV, Sec. 1, pg. 164]{SGA2}).
The frame bundle naturally defines a principal $S_d$-bundle induced by the canonical left action of $S_d$ on $\cZ^{\sqcup \, d}.$
 \end{Def}

 Before stating the theorem we will recall {\em Tannaka duality}, which we need for the proof.

 \begin{theorem}[{\cite[Thm. 1.1]{MR4009673}}]
 \label{tannaka}
    Let $\cX$ be a noetheian algebraic stack with affine stabilizers.
    For every locally excellent algebraic stack $\cY$ the functor
    \[
    \Hom(\cY, \cX) \to \Hom_{r\otimes, \simeq}(\mathsf{Coh}(\cX), \mathsf{Coh}(\cY))  =: \Hom(\mathsf{Coh}(\cX), \mathsf{Coh}(\cY))
    \]
    is an equivalence, which we call Tannaka duality.
 \end{theorem}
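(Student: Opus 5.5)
The plan is to follow the strategy of Hall--Rydh \cite{MR4009673}: treat increasingly general targets $\cX$, and reduce an arbitrary source $\cY$ to local and then Artinian sources. Throughout, a morphism $g:\cY\to\cX$ is sent to the right exact symmetric monoidal functor $g^*:\mathsf{Coh}(\cX)\to\mathsf{Coh}(\cY)$. We must show that this assignment is fully faithful on $2$-morphisms and essentially surjective.

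\textbf{Step 1: affine and quasi-affine targets.} First take $\cX=\Spec A$ affine with $A$ noetherian. A tensor functor $F$ sends the unit $\mathcal{O}_{\cX}$ to $\mathcal{O}_{\cY}$, so it induces a ring map $A=\mathrm{End}(\mathcal{O}_{\cX})\to\Gamma(\cY,\mathcal{O}_{\cY})$, that is, a morphism $g:\cY\to\Spec A$. Every coherent $A$-module has a finite presentation $A^m\to A^n\to M\to 0$, and both $F$ and $g^*$ are right exact. Hence $F\simeq g^*$ on all of $\mathsf{Coh}(\cX)$, and this identification is unique. This settles the affine case. For quasi-affine $\cX$ I would embed $\cX$ into $\Spec\Gamma(\cX,\mathcal{O}_{\cX})$. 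The image of $\cY$ then lands in the open $\cX$, because the ideal cutting out the complement is sent by $F$ to an ideal that generates the unit.

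\textbf{Step 2: quotient stacks.} Next, let $\cX=[\Spec B/\mathrm{GL}_n]$; more generally, let $\cX$ have the resolution property with quasi-affine diagonal. Given $F$, apply it to the vector bundle $\mathcal{V}$ corresponding to the standard representation. Because $F$ is monoidal, it preserves duals of vector bundles and hence sends $\mathcal{V}$ to a vector bundle on $\cY$. This vector bundle gives a $\mathrm{GL}_n$-torsor $\cY'\to\cY$. Pulling $F$ back to $\cY'$ gives a tensor functor from $\mathsf{Coh}(\Spec B)$, equivalently from $\mathrm{GL}_n$-equivariant data, to $\mathsf{Coh}(\cY')$. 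Step 1 then provides an equivariant map $\cY'\to\Spec B$, which descends to $\cY\to\cX$.

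\textbf{Step 3: reduction to local sources and algebraization.} For general $\cX$ with affine stabilizers, full faithfulness should follow by descent, once we know that $2$-morphisms are detected on a smooth cover of $\cY$. Essential surjectivity is the hard part. Since morphisms and tensor functors both glue in the smooth topology on $\cY$, we may first reduce to $\cY=\Spec R$ with $R$ a local excellent ring. For $R$ Artinian, the functor $F$ singles out a point of $\cX$, namely the support of the kernel ideals. Near that point, $\cX$ is locally a quotient stack $[\Spec B/\mathrm{GL}_n]$, so Step 2 applies after restricting $F$ to that neighborhood. We then pass from the Artinian quotients $R/\mathfrak{m}^k$ to $\hat R$ by a Grothendieck-existence, or coherent-completeness, argument: the compatible system of maps $\Spec R/\mathfrak{m}^k\to\cX$ becomes one map $\Spec\hat R\to\cX$. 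Finally we return to $R$ itself. Excellence of $R$ allows Artin approximation, and combining this with the fact that $F$ is already defined over $R$ (a fpqc-descent argument along $R\to\hat R$) should show that the map descends to $\Spec R$.

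\textbf{Main obstacle.} I expect the last step to be the main obstacle. Both the passage from the completion back to $R$ and the control of the locus of $\cX$ where $F$ is concentrated require care. Here I would use the local quotient structure of noetherian stacks with affine stabilizers, applied near closed points, and invoke Step 2 at each of them.
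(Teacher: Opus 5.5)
The paper does not prove this statement. It is quoted from Hall--Rydh \cite{MR4009673} and used only with target $BS_d$.

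Your Steps~1 and~2 are the standard easy cases: affine and quasi-affine targets, and $[U/\mathrm{GL}_n]$ with $U$ quasi-affine. As sketches they are acceptable, with one correction: the tensor functor on $\mathsf{Coh}(\Spec B)$ should come from applying the ind-extension of $F$ to the algebra of the affine morphism $\Spec B\to\cX$, not from ``pulling $F$ back to $\cY'$''. These two steps already cover the paper's application, since $BS_d\simeq[(\mathrm{GL}_d/S_d)/\mathrm{GL}_d]$ with $\mathrm{GL}_d/S_d$ affine.

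For the theorem as stated, all the difficulty sits in Step~3, and there the argument has a genuine gap. Its load-bearing claim is that near the point singled out by $F$, the stack $\cX$ is a quotient $[\Spec B/\mathrm{GL}_n]$. No such local structure result is available for a general noetherian stack with affine stabilizers: nothing provides an open neighbourhood of each point with the resolution property. \'Etale slices cannot substitute for it, even setting aside that they need linearly reductive stabilizers. Before a morphism $\cY\to\cX$ exists, $F$ can only be restricted to $W$ when $W\to\cX$ is an immersion or an affine morphism, and a slice $[\Spec B/G_x]\to\cX$ is in general neither. Similarly, descent along a cover of $\cY$ only reduces the source; it does not give full faithfulness for a target without a quotient presentation.

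The algebraization steps also fail as written.
\begin{enumerate}
\item Turning compatible maps $\Spec R/\mathfrak{m}^k\to\cX$ into a map $\Spec\hat R\to\cX$ is formal effectivity of $\cX$. Coherent completeness of $\hat R$ only gives a tensor functor on $\mathsf{Coh}(\Spec\hat R)$, and converting that into a morphism is exactly the theorem for $\Spec\hat R$. For stacks with affine stabilizers, effectivity is normally deduced \emph{from} Tannaka duality; Step~3 of the proof of Theorem~\ref{contracted} does precisely this for $BS_d$. So the step is circular unless all these maps land in one fixed open substack with the resolution property, which brings you back to the missing local structure.
\item Fpqc descent from $\hat R$ to $R$ would require identifying the two pullbacks of your map to $\Spec(\hat R\otimes_R\hat R)$. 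There $F$ only supplies an isomorphism of tensor functors. Since $\hat R\otimes_R\hat R$ is in general not noetherian, the coherent statement, even its full faithfulness, gives nothing there.
\item Artin approximation would need $\cX$ to be locally of finite presentation, which is not assumed. The initial reduction from affine to local sources has the same problem: the localizations of $\Spec A$ do not form a covering, so returning from $\Spec A_{\mathfrak p}$ to a neighbourhood needs a spreading-out argument you have not given.
\end{enumerate}

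What is missing is an argument that uses neither a local quotient presentation of $\cX$ nor a presupposed effectivity statement. Supplying that is the actual content of \cite{MR4009673}.
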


 We will now state the key theorem for this section.

 \begin{theorem}
   \label{contracted}
    Suppose $\cE \hookrightarrow \cX$ is a smooth irreducible Cartier divisor, where $\cX$ is a smooth, separated Deligne--Mumford stack of dimension $n > 1$.
    Suppose also that there exists a contraction $\pi : \cX \to Z$ to a smooth irreducible variety of dimension $n$ such that $\pi(\cE) = p \in Z$ and $\pi : \cX \setminus \cE \xrightarrow{\sim} Z \setminus p$ is an isomorphism. 
    Then, $\cE$ is algebraically simply-connected, i.e., $\pi_1^{\text{\'et}}(\cE) \simeq 1$.  
 \end{theorem}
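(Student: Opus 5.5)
The plan is to show that every finite \'etale cover $\cF \to \cE$ of degree $d$ is trivial, by extending it first to the formal neighbourhood of $\cE$ and then to $\cX$ near $\cE$. The cover then descends to a finite \'etale cover of a punctured neighbourhood of $p$ in $Z$, which purity forces to be trivial.

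\textbf{Step 1 (reductions).} Since $\pi$ is an isomorphism away from $\cE$, we may replace $Z$ by $\Spec \hat{\mathcal{O}}_{Z,p}$ and $\cX$ by $\hat{\cX} := \cX \times_Z \Spec \hat{\mathcal{O}}_{Z,p}$. This is flat base change, so $\pi$ remains a proper contraction, and $\hat{\cX}\setminus \cE \simeq \Spec\hat{\mathcal{O}}_{Z,p}\setminus\{p\}$. To work with morphisms rather than covers, I would pass to the frame bundle of Definition~\ref{frame bundle}. The degree $d$ cover $\cF \to \cE$ corresponds to a principal $S_d$-bundle, that is, a morphism $\cE \to \mathcal{B}S_d$, and conversely $\cF$ is recovered from it.

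\textbf{Step 2 (thickenings and algebraization).} Let $\cE_m \subseteq \hat{\cX}$ be the $m$-th infinitesimal neighbourhood of $\cE$. Lemma~\ref{topological inv}, extended to separated Deligne--Mumford stacks by \'etale descent, gives compatible finite \'etale covers $\cF_m \to \cE_m$, equivalently compatible morphisms $\cE_m \to \mathcal{B}S_d$.

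I expect this to be the main obstacle: producing from this compatible system an honest morphism $\hat{\cX} \to \mathcal{B}S_d$. I would first try Tannaka duality (Theorem~\ref{tannaka}), which turns the compatible maps into compatible tensor functors $\mathsf{Coh}(\mathcal{B}S_d)\to\mathsf{Coh}(\cE_m)$. Combined with Grothendieck's existence theorem for the proper morphism $\hat{\cX} \to \Spec\hat{\mathcal{O}}_{Z,p}$ of stacks (valid for tame/Deligne--Mumford stacks, e.g.\ Olsson), this gives $\mathsf{Coh}(\hat{\cX}) \simeq \varprojlim \mathsf{Coh}(\cE_m)$. Hence we obtain a tensor functor $\mathsf{Coh}(\mathcal{B}S_d)\to \mathsf{Coh}(\hat{\cX})$, and therefore a morphism $\hat{\cX}\to\mathcal{B}S_d$, i.e.\ a finite \'etale cover $\hat{\cF}\to\hat{\cX}$ restricting to $\cF$. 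The point to check is that the algebraized object is \'etale everywhere, not only along $\cE$. This follows because the \'etale locus is open and every closed point of $\hat{\cX}$ maps to the closed point $p$, hence lies in $\cE$. Coming from $\mathcal{B}S_d$, the cover is in any case automatically finite \'etale.

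\textbf{Step 3 (purity and triviality).} Restricting to $U := \hat{\cX}\setminus\cE \simeq \Spec\hat{\mathcal{O}}_{Z,p}\setminus\{p\}$ gives a finite \'etale cover of the punctured spectrum of a regular local ring of dimension $n\ge 2$. By Zariski--Nagata purity of the branch locus, it extends to a finite \'etale cover of $\Spec\hat{\mathcal{O}}_{Z,p}$. That cover is trivial, since a complete local ring with algebraically closed residue field $\CC$ is strictly henselian. Thus $\hat{\cF}|_U \simeq U^{\sqcup d}$.

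Now compare normalizations. By Proposition~\ref{rel norm in open subscheme}, the normalization of $\hat{\cX}$ in $\hat{\cF}|_U \to \hat{\cX}$ is $\hat{\cF}$; here $\hat{\cF}$ is smooth, hence normal, and $U$ is dense in it. On the other hand, by Proposition~\ref{norm in open subscheme} and Remark~\ref{rel norm}, the normalization of $\hat{\cX}$ in $U^{\sqcup d}$ is $\hat{\cX}^{\sqcup d}$. Hence $\hat{\cF}\simeq\hat{\cX}^{\sqcup d}$, and restricting to $\cE$ gives $\cF\simeq \cE^{\sqcup d}$. Since $\cE$ is connected, every finite \'etale cover of $\cE$ is trivial, so $\pi_1^{\text{\'et}}(\cE)\simeq 1$.
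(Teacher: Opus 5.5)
Your proposal is correct and follows the paper's proof essentially step for step. You extend the cover along thickenings of $\cE$ by topological invariance, encode it as compatible maps to $\mathcal{B}S_d$ via frame bundles, and algebraize to $\hat{\cX}$ using Tannaka duality together with Grothendieck existence (Olsson). You then trivialize over $\hat{\cX}\setminus\cE\simeq\Spec\hat{A}\setminus p$ by Zariski--Nagata purity and strict henselianity, and conclude by uniqueness of relative normalization. The only cosmetic difference is that you use infinitesimal neighbourhoods of $\cE$ instead of the $\mathfrak{m}$-adic thickenings $\cX\times_{\Spec A}\Spec A/\mathfrak{m}^{k+1}$; this is harmless because $\cE$ is the reduced preimage of $p$, so the two inverse systems are cofinal.
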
   
 
 \begin{proof}
  Let $\gamma : \cE' \to \cE$ be a finite \'etale morphism. 
  Assume that $\gamma$ is non-trivial.
  We will proceed by way of contradiction to show $\pi_1^{\text{\'et}}(\cE) \simeq 1.$
  The proof consists of several steps.
  Note that by taking a chart around $p \in Z$ we may assume without loss of generality that $Z = \Spec \, A$ is a smooth affine scheme where $p \in Z$ corresponds to a maximal ideal $\mathfrak{m} \subseteq A.$ 
   \step 
   First, we extend $\gamma$ to a finite \'etale morphism along a sequence of thickenings of the stack $\mathcal{E}.$
   More specifically,  set $\cE_{n} := \cX \times_{\Spec \, A} \Spec \, A/\mathfrak{m}^{n+1}$ for each $n \ge 0$. 
   Then, the natural inclusion $\cE_n \hookrightarrow \cE_{n+1}$ is a thickening of stacks for every $n \ge 0$ whose underlying reduced stack is $\mathcal{E}$.
   Hence, by topological invariance of the fundamental group we can uniquely extend $\gamma : \cE' \to \cE$ to a finite \'etale morphism $\gamma_0 : \cE'_0 \to \cE_0$ where $\cE' \simeq \cE \times_{\cE_0} \cE'_0$.
   By successive application of the topological invariance of the fundamental group we obtain a finite \'etale morphism $\gamma_n : \cE'_n \to \cE_n$ for each $n \ge 0$ such that $\cE'_n \simeq \cE_n \times_{\cE_{n+1}} \cE'_{n+1}$
  \[
  \begin{tikzcd}
  \cE' \arrow[d, "\gamma"] \arrow[r, hook] & \cE'_0 \arrow[d, "\gamma_0"] \arrow[r, hook] & \cE'_{1} \arrow[d, "\gamma_{1}"] \arrow[r, hook] & \cE'_{2} \arrow[d, "\gamma_{2}"] \arrow[r, hook] & \cdots \arrow[r, hook] & \cE'_{n} \arrow[d, "\gamma_{n}"] \arrow[r, hook] & \cE'_{n+1} \arrow[d, "\gamma_{n+1}"] \arrow[r, hook] & \cdots \\
  \cE \arrow[r, hook]  & \cE_0 \arrow[r, hook]                             & \cE_{1} \arrow[r, hook]                                   & \cE_{2} \arrow[r, hook]                                   & \cdots \arrow[r, hook] & \cE_{n} \arrow[r, hook]                                   & \cE_{n+1} \arrow[r, hook]                                     & \cdots
  \end{tikzcd}
 \]
   Thus, we have our desired sequence of finite \'etale morphisms $\{\gamma_n\}_{n \ge 0}$ along thickenings of the stack $\cE$. \step
   Next, we use the sequence of finite \'etale morphisms $\{\gamma_n\}_{n \ge 0}$ to obtain a sequence of morphisms $\{\cE_n \to BS_d\}_{n \ge 0}.$
   To each $\gamma_n : \cE'_n \to \cE_n$ we associate the  {frame bundle} (see Definition~\ref{frame bundle})
   \[
   P_n := \underline{\operatorname{Isom}}_{\cE_n}(\cE_n^{\sqcup \,d}, \cE'_n) \to \cE_n.
   \] 
   which is a principal $S_d$-bundle.
   Hence, the frame bundle gives a morphism $\cE_n \to BS_d$, giving the desired sequence. \step
   Next, we extend the sequence $\{\cE_n \to BS_d\}_{n \ge 0}$ to a principal $S_d$-bundle over 
   \[
   \hat{\cX} := \cX \times_{\Spec \, A} \, \Spec \, \hat{A}
   \]
   where $\hat{A}$ is the completion with respect to $\mathfrak{m}$, i.e., $\hat{A} = \lim\limits_n A/\mathfrak{m}^n.$
   To extend $\{\cE_n \to BS_d\}_{n \ge 0}$ to $\hat{\cX} \to BS_d$ we will use {Tannaka duality}. Indeed, since a formally smooth Deligne--Mumford stack is locally excellent, and a Deligne--Mumford stack has affine stabilizers we have
   \[
    \Hom(\hat{\cX}, BS_d) \simeq \Hom(\mathsf{Coh}(BS_d), \mathsf{Coh}(\hat{\cX})).
   \]
   However, by \cite[Thm. 1.4]{MR2183251} the pair $(\cE_0, \hat{\cX})$ is coherently complete, i.e., $\lim\limits_n \mathsf{Coh}(\cE_n) \simeq \mathsf{Coh}(\hat{\cX}).$
   Hence, since $\Hom(A, -)$ preserves inverse limits
   \[
    \Hom(\mathsf{Coh}(BS_d), \mathsf{Coh}(\hat{\cX}))
    \simeq
    \Hom(\mathsf{Coh}(BS_d), \lim_n\mathsf{Coh}({\cE_n}))
    \simeq
    \lim_n \Hom(\mathsf{Coh}(BS_d), \mathsf{Coh}({\cE_n})).
   \]
   By another application of Tannaka duality
   \[
    \Hom(\mathsf{Coh}(BS_d), \mathsf{Coh}({\cE_n})) \simeq \Hom(\cE_n, BS_d)
   \]
   and by combining the isomorphisms, we see that
   \begin{equation}
   \label{td}
    \Hom(\hat{\cX}, BS_d)
    \simeq
    \lim_n \Hom(\cE_n, BS_d)
   \end{equation}
   Therefore, by (\ref{td}) the system $\{\cE_n \to BS_d\}_{n \ge 0}$ extends to define $\hat{\cX} \to BS_d$, which is equivalent to a principal $S_d$-bundle 
   $P \to \hat{X}$. 
   Then, by considering the associated $d$-sheeted cover $\hat{\cX}' := P \times^{S_d} \{1, \ldots, d\}$ we obtain a degree $d$ finite \'etale morphism
   $g: \hat{\cX}' \to \hat{\cX},$
   where by construction $g |_{\cE'} = \gamma.$ 
   \step Now, we will show that the induced restriction morphism $g : \hat{\cX}' \setminus \cE' \to \hat{\cX} \setminus \cE$ is a trivial cover, and hence,
   \[
    \hat{\cX}' \setminus \cE' \simeq (\hat{\cX} \setminus \cE)^{\sqcup \, d}.
   \]
   Indeed, the restriction morphism $g : \hat{\cX}' \setminus \cE' \to \hat{\cX} \setminus \cE$ is a finite \'etale morphism, and by basechange $\hat{\cX} \setminus \cE \simeq (\Spec \,\hat{A}) \setminus p$.
   Thus, the composition morphism $\pi \circ g : \hat{\cX}' \setminus \cE' \to (\Spec \, \hat{A}) \setminus p$ is a finite \'etale morphism.
   Now, the complete local ring $\hat{A}$ is regular of $\dim \Spec \, \hat{A} \ge 2$, and the point $p \in \, \Spec \,\hat{A}$ corresponds to the maximal ideal, so it follows from the Zariski-Nagata purity theorem \cite[Exp. X, Thm. 3.4]{SGA2} that there is an equivalence of categories 
   \[
    \text{F\'Et}((\Spec \, \hat{A}) \setminus p) \simeq \text{F\'Et}(\Spec \, \hat{A}).
   \]
   However, since $\hat{A}$ is strictly henselian with residue field $\CC$,
   \[
    \text{F{\'E}t}(\Spec \, \hat{A}) \simeq \text{F\'Et}(\Spec \, \CC).
   \]
   Since $\CC$ is algebraically closed, $\pi_1^{\text{\'et}}(\Spec \, \CC) =1$, and thus, every finite \'etale morphism is a trivial cover.
   In other words, $\pi \circ g : \hat{\cX}' \setminus \cE' \to (\Spec \, \hat{A}) \setminus p$ is a trivial cover, which implies $g : \hat{\cX}' \setminus \cE' \to \hat{\cX} \setminus \cE$ is a trivial cover, i.e.,
   \[
    \hat{\cX}' \setminus \cE' \simeq (\hat{\cX} \setminus \cE)^{\sqcup \, d}.
   \]
   as desired.
   
   \step Lastly, we will show $g : \hat{\cX}' \to \hat{\cX}$ is trivial, which implies $\gamma : \cE' \to \cE$ is trivial, achieving the desired contradiction.
   
   To prove $g : \hat{\cX}' \to \hat{\cX}$ is a trivial cover we use the notion of relative normalization defined in Definition \ref{rel norm}. First, let $U := \hat{\cX}' \setminus \mathcal{E}'$ denote the open dense subscheme in $\hat{\cX}'.$ 
   The morphism $g|_U : U \to \hat{\cX}$ induced by $g$ is \'etale and has image $g(U) \simeq \hat{\cX} \setminus \cE,$ which is an open dense subscheme of $\hat{\cX}.$ 
   By Proposition \ref{rel norm in open subscheme} the normalization of $\hat{\cX}$ in $g|_U : U \to \hat{\cX}$ is $g : \hat{\cX}' \to \hat{\cX},$ i.e., $\hat{\cX}_{g|_U}^\nu \simeq \hat{\cX}'.$
   
   On the other hand, by Remark \ref{reduced not int} the normalization of $\hat{\cX}$ in $g|_U : U \to \hat{\cX}$ is precisely the disjoint union of $d$-copies of the normalization of $\hat{\cX}$ in $\hat{\cX}\setminus \cE.$
   Since $\hat{\cX} \setminus \cE \subseteq \hat{\cX}$ is an open dense subscheme and $\hat{\cX}$ is normal it follows from Proposition \ref{norm in open subscheme} that the normalization of $\hat{\cX}$ in $\hat{\cX}\setminus \cE$ is $\hat{\cX}.$ 
   Hence, the normalization of $\hat{\cX}$ in $g|_U:U \to \hat{\cX}$ is the trivial cover $ \chi : \hat{\cX}^{\,\sqcup \, d} \to \hat{\cX},$ i.e., $\hat{\cX}^\nu_{g|_U} \simeq \hat{\cX}^{\,\sqcup \, d}.$
   
   Since relative normalization is unique up to unique isomorphism it follows that  $\hat{\cX}' \simeq \hat{\cX}^{\,\sqcup \, d}$ and $g = \gamma$ is a trivial cover.
 \end{proof}

 Theorem \ref{contracted} is the key result needed to prove Theorem \ref{main body}. Indeed, we immediately obtain the following corollary.

\begin{Cor}
Suppose $f : \cX \to \cZ$ is the contraction morphism from Theorem \ref{main body}, which contracts a smooth stacky curve $\cC.$ 
Then, $\cC$ is algebraically simply-connected, i.e., $\pi^{\text{\'et}}_1(\cC) \simeq 1.$
\end{Cor}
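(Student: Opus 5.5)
The plan is to deduce the corollary directly from Theorem~\ref{contracted} with $n=2$, working in the normalized setting fixed at the start of this section. Recall that, since being a weighted blowup is smooth local on the target by \cite[Prop.~2.18]{SWB2024}, the paper replaces $\cZ$ by a smooth affine surface $Z=\Spec A$ and writes $f:\cX\to Z$. Concretely, we take a smooth (indeed \'etale) chart $u:U\to\cZ$ through $p$, with $U$ a smooth affine surface, and shrink $U$ so that a single point $q$ lies over $p$. We then replace $f$ by its base change $f_U:\cX_U:=\cX\times_{\cZ}U\to U$ and $\cC$ by $\cC_U:=\cC\times_{\cZ}U$. The corollary is a statement about the exceptional curve of this normalized contraction; that is the curve the rest of the section works with when proving Theorem~\ref{main body}.

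First, we check that $f_U$ is again a contraction in the sense of the Conventions. Flat base change gives $(f_U)_*\mathcal{O}_{\cX_U}\simeq u^*f_*\mathcal{O}_{\cX}\simeq\mathcal{O}_U$, exactly as in the proof of Proposition~\ref{ideal equality}. Projectivity of the map on coarse spaces is preserved, since coarse spaces commute with flat base change. The map $f_U$ is an isomorphism $\cX_U\setminus\cC_U\xrightarrow{\sim}U\setminus q$, because $f$ is an isomorphism off $\cC$ and $u^{-1}(p)=\{q\}$ set-theoretically.

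Next, we verify the remaining hypotheses of Theorem~\ref{contracted} for $\pi:=f_U$ and the divisor $\cC_U$:
\begin{enumerate}
\item $\cC_U\hookrightarrow\cX_U$ is a Cartier divisor, being the base change of the Cartier divisor $\cC\subseteq\cX$.
\item $\cC_U$ is smooth, since $\cC_U\to\cC$ is \'etale and $\cC$ is smooth.
\item $\cC_U$ is irreducible. Set-theoretically $\cC_U=f_U^{-1}(q)$. Because $(f_U)_*\mathcal{O}_{\cX_U}=\mathcal{O}_U$ and $f_U$ is proper, Zariski's connectedness gives that this fiber is connected. A smooth connected Deligne--Mumford stack is irreducible.
\item $\dim\cX_U=\dim U=2>1$, and $f_U(\cC_U)=q$.
\end{enumerate}
Theorem~\ref{contracted} then gives $\pi_1^{\text{\'et}}(\cC_U)\simeq1$. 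This is the curve denoted $\cC$ after the reduction, which is the claim.

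I expect the only real subtlety to be bookkeeping about which $\cC$ is meant, rather than any new geometric input. If one instead kept a genuinely stacky target $\cZ$, then $\cC_U\to\cC$ is a torsor under $\Aut_{\cZ}(p)$, and $\pi_1^{\text{\'et}}(\cC)$ can be a nontrivial quotient of that group. For example, for the blowup of $[\AA^2/\mu_n]$ with $\mu_n$ acting by scalars, the exceptional divisor is $\PP^1\times B\mu_n$. So the statement must be read, as the paper does, after reducing to a scheme target. The irreducibility check in step (3) is the one point requiring an argument, and connectedness of fibers handles it.
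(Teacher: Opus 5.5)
Your proof is correct and is the paper's argument: after the section's reduction to a smooth affine scheme target, the corollary is an immediate application of Theorem~\ref{contracted} with $n=2$. Your check of its hypotheses (notably irreducibility of $\cC_U$ via Zariski connectedness; for the coarse map, only properness of $f_U$ is really needed, and that is stable under base change) fills in what the paper leaves implicit. Your caveat is also right: for a genuinely stacky target the literal statement fails, as your $\PP^1\times B\mu_n$ example shows, so it must be read in the reduced setting where the paper states it.
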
  

 We utilize this in Section \ref{sc stacky curves} to prove $\cC \simeq \cP(a,b)$ by classifying all algebraically simply-connected smooth stacky curves.

 \subsection{Simply-connected Stacky Curves}
 \label{sc stacky curves}

 In this section, we prove that the only algebraically simply-connected smooth stacky curves are weighted projective curves $\cP(a,b)$ where $a,b > 0$.
 Note, we are only considering smooth stacky curves whose coarse moduli space is projective.
 
 \begin{theorem} \label{esc stacky curves}
    Suppose $\cC$ is an algebraically simply-connected smooth stacky curve.
    Then, $\cC \simeq \mathcal{P}(a, b)$ is a weighted projective curve for some $a,b > 0.$
 \end{theorem}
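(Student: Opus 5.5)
We reduce to the case of trivial generic stabilizer, classify via the bottom-up description, and then use the orbifold fundamental group. A smooth stacky curve $\cC$ over $\CC$ has a generic stabilizer $G$ and is a $G$-gerbe over its rigidification $\cC^{\mathrm{rig}}$, which has trivial generic stabilizer. The first step is to show that algebraic simple connectivity forces $G$ to be abelian. More precisely, it should force the gerbe to be banded by a cyclic group $\mu_k$, arising as a $k$-th root of a line bundle on $\cC^{\mathrm{rig}}$. The reasoning is as follows. The homotopy exact sequence for the gerbe, $\pi_1(BG)=G \to \pi_1^{\text{\'et}}(\cC) \to \pi_1^{\text{\'et}}(\cC^{\mathrm{rig}}) \to 1$, shows that $\cC^{\mathrm{rig}}$ is algebraically simply connected. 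Moreover, any surjection from $G$ onto a nontrivial quotient would produce a nontrivial finite \'etale cover of $\cC$ unless the gerbe is ``maximally nontrivial''. We expect this step to be the main obstacle. Our first attempt would be to use that $\HH^2$ of a genus-$0$ orbifold curve with coefficients in a finite group is controlled by its Picard group. From this we would hope to show that every gerbe over $\cC^{\mathrm{rig}}$ with finite band is a root gerbe $\sqrt[k]{\cL/\cC^{\mathrm{rig}}}$, so that $G=\mu_k$ is forced.

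Next we treat the trivial-generic-stabilizer case, using Theorem~\ref{Theorem:bottomup} together with the Remark following it. Such a curve is a root stack $C(\sqrt[e_1]{p_1},\dots,\sqrt[e_m]{p_m})$ over its smooth projective coarse space $C$. Its \'etale fundamental group is the profinite completion of the orbifold group
\[
\langle a_1,b_1,\dots,a_g,b_g,c_1,\dots,c_m \mid \textstyle\prod [a_i,b_i]\prod c_j=1,\ c_j^{e_j}=1\rangle .
\]
If $g\ge 1$, this group surjects onto $\pi_1(C)^{\wedge}\neq 1$, contradicting simple connectivity, so $C\simeq\PP^1$. For $g=0$, the classical theory of spherical triangle groups shows that the group is finite and nontrivial for $m=3$ with $(e_j)$ a platonic triple, and infinite with nontrivial finite quotients for $m\ge 3$ otherwise. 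In all these cases we would exhibit an explicit surjection onto a finite group, for instance onto $\ZZ/\mathrm{lcm}$-type quotients or via residual finiteness of Fuchsian groups. For $m=2$ the group is $\ZZ/\gcd(e_1,e_2)$, so simple connectivity forces $m\le 2$ with $\gcd(e_1,e_2)=1$. In that case the curve is $\PP^1(\sqrt[a]{0},\sqrt[b]{\infty})\simeq\cP(a,b)$ with $\gcd(a,b)=1$, as one checks directly from the quotient $[(\AA^2\setminus 0)/\GG_m]$.

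Finally we reinstate the gerbe. From the first step, $\cC$ is a $\mu_k$-root gerbe over $\cP(a,b)$ with $\gcd(a,b)=1$. The Picard group of $\cP(a,b)$ is $\ZZ\cdot\mathcal{O}(1)$, so the gerbe is $\sqrt[k]{\mathcal{O}(\ell)/\cP(a,b)}$. It is simply connected only if $\gcd(k,\ell)=1$; otherwise a nontrivial quotient of $\mu_k$ splits off and yields a connected \'etale cover. We then identify $\sqrt[k]{\mathcal{O}(\ell)/\cP(a,b)}$, with $\gcd(k,\ell)=1$, with $\cP(a',b')$, where $a'=ka$ and $b'=kb$ after adjusting by $\ell$. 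Concretely, one writes both sides as quotients of $\AA^2\setminus 0$ by $\GG_m$, the root gerbe being $[(\AA^2\setminus 0)/\GG_m]$ with weights rescaled through the $k$-th power map twisted by $\ell$. This gives $\cC\simeq\cP(a',b')$ with $\gcd(a',b')=k$, completing the classification.
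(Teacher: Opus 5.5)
The gap is in your first step, which you leave as a hope. The statement you hope for is also false. Not every gerbe with finite band over $\cC^{\mathrm{rig}}$ is a root gerbe: the trivial gerbe $\cP(a,b)\times BG$ exists for every finite $G$, and it is a root gerbe only when $G$ is cyclic. An $\HH^2$/Picard computation only classifies gerbes with a fixed abelian band. So it cannot show that $G$ is abelian. Even for abelian non-cyclic $G$ (say $\mu_2\times\mu_2$) it gives no constraint, because it never uses that $\cC$ itself is simply connected. Likewise, the exact sequence you write stops at $G\to\pi_1^{\text{\'et}}(\cC)$, so it only yields simple connectivity of $\cC^{\mathrm{rig}}$.

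The missing idea is to extend that sequence one term to the left. First do your second step, to get $\cC^{\mathrm{rig}}\simeq\cP(a,b)$ with $\gcd(a,b)=1$; note that step one already relies on this structure of $\cC^{\mathrm{rig}}$. Then use the topological sequence of the gerbe:
\[
\pi_2(\cP(a,b))\to\pi_1(BG)=G\to\pi_1(\cC)\to\pi_1(\cP(a,b))=1 .
\]
The $\CC^*$-fibration $\CC^2\setminus 0\to\cP(a,b)$ gives $\pi_2(\cP(a,b))\cong\pi_1(\CC^*)\cong\ZZ$. Hence $\pi_1(\cC)\cong G/\langle g\rangle$ for a single element $g$. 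This group is finite, so it equals its own profinite completion, and Theorem~\ref{profinite} turns $\pi_1^{\text{\'et}}(\cC)=1$ into $G=\langle g\rangle$, i.e. $G$ is cyclic. This is exactly what the paper extracts from Behrend--Noohi's classification, Proposition~\ref{bijection}.

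Once you know $G\simeq\mu_k$, your $\HH^2$ argument does apply: the $k$-torsion of the Brauer group of $\cP(a,b)$ vanishes, so the gerbe is $\sqrt[k]{\mathcal{O}(\ell)/\cP(a,b)}$. Your last paragraph is then correct, and more explicit than the paper's uniqueness argument: for $\gcd(k,\ell)=1$, the map $u\mapsto(u^k,u^\ell)$ identifies $\GG_m$ with $\{(t,s):s^k=t^\ell\}$, giving $\cP(ka,kb)$.

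Your trivial-generic-stabilizer step matches the paper. One caution for $m\ge 3$: abelian quotients do not suffice, since the $(2,3,5)$ and $(2,3,7)$ groups are perfect. There you genuinely need finiteness of the spherical groups or residual finiteness.
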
 

 Since contracted smooth divisors are algebraically simply connected by Theorem \ref{contracted} we immediately obtain the following corollary.

 \begin{Cor}
   \label{contracted curve is weighted projective}
   Let $\cC$ be a contracted smooth stacky curve as in the statement of Theorem \ref{main body}. 
   Then, $\cC \simeq \cP(a,b)$ for some $a,b > 0.$
 \end{Cor}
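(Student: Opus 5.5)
The plan is to combine the two preceding results: Theorem~\ref{contracted} shows that $\cC$ is algebraically simply-connected, and Theorem~\ref{esc stacky curves} then identifies $\cC$ with a weighted projective curve. Almost all of the work is checking that the situation of Theorem~\ref{main body} satisfies the hypotheses of Theorem~\ref{contracted}.

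First, we place ourselves in the standing reduction made after Theorem~\ref{main body}. By \cite[Prop.~2.18]{SWB2024}, being a weighted blowup is smooth local on the target, so we may take $\cZ = Z = \Spec\, A$ to be a smooth irreducible affine surface with $f(\cC) = p$ a closed point. We then check the hypotheses of Theorem~\ref{contracted} with $n = 2 > 1$ and $\pi := f$:
\begin{enumerate}
\item $\cX$ is a smooth, separated Deligne--Mumford stack of dimension $2$.
\item $\cC \hookrightarrow \cX$ is a smooth irreducible divisor. It is Cartier because $\cX$ is smooth, so any reduced divisor is locally principal on an \'etale atlas.
\item $f : \cX \to Z$ is a contraction, $f(\cC) = p$, and $f$ restricts to an isomorphism $\cX \setminus \cC \simeq Z \setminus p$.
\end{enumerate}
Theorem~\ref{contracted} (equivalently, the corollary following it) then gives $\pi_1^{\text{\'et}}(\cC) \simeq 1$.

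Second, $\cC$ is a smooth stacky curve in the sense of our conventions: it is smooth and proper of dimension $1$, and its coarse space is projective. Properness holds because $\cC$ is the set-theoretic fiber of the proper morphism $f$ over $p$. The coarse space of $\cC$ is a proper curve, since the induced map on coarse spaces is projective, and a proper curve is projective. Since $\cC$ is algebraically simply-connected, Theorem~\ref{esc stacky curves} gives $\cC \simeq \cP(a,b)$ for some $a, b > 0$.

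The step I expect to be the main obstacle is the compatibility of the reduction to a scheme target with the identity of $\cC$. If $\cZ$ is only étale-locally $[V/G]$ near $p$, base-changing to $V$ replaces $\cC$ by $\cC \times_{\cZ} V$, a finite étale $G$-cover of $\cC$, rather than $\cC$ itself. I would handle this by stating the conclusion for the exceptional curve in the reduced situation, where $Z$ is a scheme; that is exactly the setting in which Theorem~\ref{main body} is being proved. I would also note that the reduction does not lose information for the final statement, because being a weighted blowup descends along the smooth cover.
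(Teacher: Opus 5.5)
Your proposal is correct and follows the paper's route. In the reduced setting with scheme target $Z=\Spec\,A$, Theorem~\ref{contracted} gives $\pi_1^{\text{\'et}}(\cC)\simeq 1$, and Theorem~\ref{esc stacky curves} then gives $\cC\simeq\cP(a,b)$.

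Your caution about the reduction is necessary, not just a convenience: for a stacky target the conclusion can fail. For example, the ordinary blowup of $[\AA^2/\mu_2]$ at the origin has exceptional curve $\PP^1\times B\mu_2$, which has $\pi_1^{\text{\'et}}\simeq\mu_2$ and so is not any $\cP(a,b)$. So stating the corollary only for the curve in the scheme-target setting is exactly right.
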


 To prove Theorem \ref{esc stacky curves} there are two cases to consider, namely when $\cC$ has trivial generic stabilizer, and when $\cC$ has a non-trivial generic stabilizer. 
 It turns out we can reduce to the case $\cC$ has a trivial generic stabilizer. 
 To see this, we need to compare the algebraic fundamental group to the topological fundamental group.
 We first note that the coarse moduli space $C$ of $\cC$ is necessarily $\PP^1$.
 
 \begin{Lemma}
 \label{P1 is cms}
  Suppose $\cC$ is an algebraically simply-connected smooth stacky curve. 
  Then, $C \simeq \PP^1$ where $C$ is the coarse moduli space of $\cC.$
 \end{Lemma}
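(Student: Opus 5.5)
The plan is to show that the coarse space map induces a surjection on étale fundamental groups, so that the coarse space is itself algebraically simply connected, and then invoke the classification of smooth projective curves. Let $\pi : \cC \to C$ be the coarse moduli space map. Since $\cC$ is a smooth stacky curve, $C$ is a normal, hence smooth, projective connected curve over $\CC$. Connectedness of $C$ follows from that of $\cC$: a non-trivial disconnected finite étale cover such as $\cC \sqcup \cC \to \cC$ is excluded by the hypothesis only after fixing connectedness, and we assume $\cC$ connected as in the reduction steps.

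The key step is as follows. Let $h : C' \to C$ be a connected finite étale cover of degree $d$. Form the fiber product $\cC' := \cC \times_C C'$. The projection $\cC' \to \cC$ is finite étale of degree $d$, being a base change of $h$, so by hypothesis it is a trivial cover $\cC^{\sqcup d} \to \cC$. On the other hand, coarse moduli spaces of tame stacks commute with flat base change \cite[Prop.~3.4]{AOV11}, and $C' \to C$ is étale. Hence the coarse space of $\cC'$ is $C'$. The coarse space of $\cC^{\sqcup d}$ is $C^{\sqcup d}$, so $C' \simeq C^{\sqcup d}$ over $C$. Since $C'$ is connected, this forces $d = 1$. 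Therefore $\pi_1^{\text{\'et}}(C) = 1$.

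Finally, by the comparison theorem (Riemann existence), $\pi_1^{\text{\'et}}(C)$ is the profinite completion of the topological fundamental group of a compact Riemann surface of genus $g$. For $g \ge 1$ this group is residually finite and non-trivial; for instance, it surjects onto $\ZZ^{2g}$, which has non-trivial finite quotients. Hence $g = 0$ and $C \simeq \PP^1$.

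The main point requiring care is the compatibility of the coarse space with the étale base change $C' \to C$, together with the identification of the coarse space of a disjoint union. The former is exactly the flat base change property of coarse spaces for tame (characteristic zero) Deligne--Mumford stacks. If one prefers to avoid citing it, an alternative is to use the universal property directly: $\cC' \to C'$ is a proper quasi-finite morphism inducing bijections on geometric points. Because $C'$ is normal and the map $\cC' \to C'$ has connected fibers, the connected components of $\cC'$ map onto connected components of $C'$, which again yields $d = 1$.
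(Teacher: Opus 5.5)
Your proposal is correct and follows essentially the same route as the paper: you pull a finite étale cover of $C$ back to $\cC$, use simple connectivity to trivialize it, and invoke compatibility of coarse spaces with flat (étale) base change to get $C' \simeq C^{\sqcup d}$. You then exclude $g \ge 1$ because $\pi_1^{\text{\'et}}(C)$ surjects onto $\hat{\ZZ}^{2g}$, which is the paper's $\pi_1^{\text{\'et}}(C)^{\mathrm{ab}} \simeq \hat{\ZZ}^{2g}$; working with connected covers instead of a contradiction argument, and your optional universal-homeomorphism alternative, are only cosmetic differences.
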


 \begin{proof}
  Suppose $X \to C$ is a non-trivial finite \'etale morphism of degree $d > 1.$ 
  Consider the fiber product $\cX := \cC \times_C X.$ Since $\pi_1^{\text{\'et}}(\cC) \simeq 1$ the induced finite \'etale morphism $\cX \to \cC$ is trivial.
  That is, $\cX \simeq \cC^{\sqcup \,d}$ and the morphism to $\cC$ is the canonical projection.
  Observe that the projection morphism $\pi_{\cX} : \cX \to X$ is a coarse moduli space by virtue of basechanging over a flat morphism \cite[Thm. 11.1.2 (iii)]{MR3495343}.
  Since $\pi_{\cC} : \cC \to C$ is the coarse moduli space and $\cX \simeq \cC^{\sqcup \, d}$ it is clear that $X \simeq C^{\sqcup \, d}$ and $X \to C$ is the standard projection.
  In other words $X \to C$ is trivial, which is a contradiction. Therefore, $\pi_1^{\text{\'et}}(C) = 1$, and since $C$ is defined over $\CC$ it is well-known that $\pi_1^{\text{\'et}}(C)^{\mathrm{ab}} \simeq (\hat{\ZZ})^{2g}$ where $g$ is the genus of $C$. Thus, $g=0$ and $C \simeq \PP^1.$
 \end{proof}

Behrend and Noohi proved the analogous result of Theorem \ref{esc stacky curves} for smooth analytic Deligne--Mumford curves, which we state here.

\begin{theorem}[{\cite[Thm. 1.1]{MR2279100}}]
The simply-connected smooth analytic proper Deligne--Mumford curves are precisely $\mathcal{P}(a,b)$ where $a,b \ge 1.$
\end{theorem}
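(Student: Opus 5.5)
The plan is to reduce to the case of trivial generic stabilizer via rigidification, classify that case by uniformization of orbifold Riemann surfaces, and then treat the gerbe over it using the homotopy exact sequence.

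Let $\cC$ be a simply-connected smooth analytic proper Deligne--Mumford curve, with coarse space a compact Riemann surface $C$ and generic stabilizer a finite group $G$. Rigidifying along the generic stabilizer gives a $G$-gerbe $\cC \to \cC^{\mathrm{rig}}$ over an orbifold curve $\cC^{\mathrm{rig}}$ with trivial generic stabilizer. Its coarse space is still $C$, and it has finitely many orbifold points of orders $n_1,\ldots,n_k$.

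\textbf{Trivial generic stabilizer.} The first step is to settle the case $G=1$. Here $\pi_1(\cC)$ is the orbifold fundamental group, with its standard presentation: generators $a_i,b_i$ for $i\le g$ and $c_j$ for $j\le k$, subject to $\prod[a_i,b_i]\prod c_j=1$ and $c_j^{n_j}=1$.
\begin{itemize}
\item Triviality of this group first forces $g=0$, since the abelianization surjects onto $\ZZ^{2g}$. So $C\simeq \PP^1$, as in Lemma~\ref{P1 is cms}.
\item For $k\ge 3$ the orbifold is good: it is uniformized by $\PP^1$, $\CC$ or the disc with a nontrivial deck group. So its fundamental group is nontrivial.
\item For $k\le 2$ the group is $\ZZ/\gcd(n_1,n_2)$, which is trivial exactly when $n_1,n_2$ are coprime.
\item Finally, identify $\PP^1$ with two coprime orbifold points of orders $a,b$ with $\cP(a,b)$. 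Both are root stacks of $\PP^1$ at $0$ and $\infty$ (compare Proposition~\ref{Bottom up} and Theorem~\ref{Theorem:bottomup}).
\end{itemize}

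\textbf{Nontrivial generic stabilizer.} Next I treat $G\neq 1$. The gerbe $\cC\to\cC^{\mathrm{rig}}$ gives a long exact sequence of homotopy groups
\[
\pi_2(\cC^{\mathrm{rig}}) \to G \to \pi_1(\cC) \to \pi_1(\cC^{\mathrm{rig}}) \to 1 .
\]
Simple connectivity of $\cC$ then has two consequences.
\begin{itemize}
\item $\pi_1(\cC^{\mathrm{rig}})=1$, so by the previous step $\cC^{\mathrm{rig}}\simeq \cP(a',b')$ with $a',b'$ coprime.
\item The boundary map $\pi_2(\cC^{\mathrm{rig}})\to G$ is surjective.
\end{itemize}
The universal cover of $\cP(a',b')$ is itself, and $\pi_2(\cP(a',b'))\simeq\ZZ$, computed from the $\GGm$-torsor $\CC^2\setminus 0\to\cP(a',b')$. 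Hence $G$ is cyclic, say $G\simeq\mu_d$, and the gerbe is banded, hence abelian.

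\textbf{Classifying the gerbe.} The gerbe is then classified by a class in $\HH^2(\cP(a',b'),\mu_d)$. The coefficient sequence $\HH^1(\cP(a',b'),\GGm)\to \HH^2(\cP(a',b'),\mu_d)$ should be surjective, since the Brauer group vanishes here. So every such gerbe is the gerbe of $d$-th roots of some line bundle $\mathcal{O}(m)$ on $\cP(a',b')$. The boundary map $\pi_2\to\mu_d$ sends the generator to $m \bmod d$, so simple connectivity forces $\gcd(m,d)=1$. I then want to identify the gerbe of $d$-th roots of $\mathcal{O}(m)$, with $\gcd(m,d)=1$, with $\cP(da',db')$. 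The comparison map is $[(\CC^2\setminus 0)/\GGm]\to\cP(a',b')$, where $\GGm$ acts with weights $(da',db')$. Its fibres are $B\mu_d$, and its class is the root gerbe of $\mathcal{O}(1)$. For a general unit $m$ one changes the band by the automorphism $\zeta\mapsto\zeta^m$ of $\mu_d$.

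\textbf{Main obstacle.} I expect this last step to be the hardest. It requires making the homotopy exact sequence for gerbes over analytic orbifolds rigorous, computing the boundary map $\pi_2\to\mu_d$, and verifying that different classes $m$ with $\gcd(m,d)=1$ all give stacks isomorphic to $\cP(da',db')$. The converse, that each $\cP(a,b)$ is simply connected, follows from the same exact sequence, because $\CC^2\setminus 0$ is simply connected.
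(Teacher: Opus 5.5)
Your proposal is correct. The paper does not prove this statement itself; it is quoted from Behrend--Noohi. The natural comparison is therefore with the paper's proof of the algebraic analogue, Theorem~\ref{esc stacky curves}, which has the same skeleton as yours: rigidify along the generic stabilizer, show the coarse space is $\PP^1$, split the orbifold van Kampen presentation by the number of orbifold points, and classify the gerbes over $\cP(a',b')$.

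\textbf{Where the routes differ.}
For three or more orbifold points, the paper must exhibit a proper normal subgroup of finite index, because it needs the \emph{profinite completion} to be nontrivial. You only need $\pi_1\neq 1$, so goodness plus uniformization suffices; this argument would not transfer to the algebraic statement. The paper shows the rigidification is simply connected by pulling back finite \'etale covers, whereas you read this off from the surjection $\pi_1(\cC)\to\pi_1(\cC^{\mathrm{rig}})$.

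The main difference is the gerbe step. The paper invokes Behrend--Noohi's classification (Proposition~\ref{bijection}) of all $H$-gerbes over $\cP(a,b)$ by $\operatorname{Out}(H)\backslash Z(H)$, with $\pi_1\cong H/\langle a\rangle$. It then uses that $\Aut(H)$ acts transitively on generators of a cyclic group. You instead force $G$ to be cyclic before classifying anything, via $\ZZ\cong\pi_2(\cP(a',b'))\twoheadrightarrow G$, so non-abelian bands never arise. You then classify $\mu_d$-gerbes by Kummer theory. In effect you reprove exactly the special case of Proposition~\ref{bijection} that is needed. The price is reliance on Noohi's long exact sequence for gerbes of topological stacks, plus a small cohomology computation.

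\textbf{What still needs justifying.}
That computation is the one claim you leave unjustified (``the Brauer group vanishes here''), but it is easy. Since $\cP(a',b')$ is simply connected with $\pi_2\cong\ZZ$, Hurewicz and universal coefficients give $\HH^2(\cP(a',b'),\ZZ/d)\cong\ZZ/d$, generated by the reduction of $c_1(\mathcal{O}(1))$. Hence $\Pic(\cP(a',b'))\to\HH^2(\cP(a',b'),\mu_d)$ is onto. Alternatively, use the exponential sequence together with $\HH^2(\mathcal{O})=0$ and $\HH^3(\cP(a',b'),\ZZ)=0$.

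Bandedness should be justified by the band being a local system with fibre $\ZZ/d$ on a simply connected base, not merely by $G$ being abelian.

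The re-banding step can also be made explicit. Let $um+vd=1$ and let $L$ denote the pullback of $\mathcal{O}(1)$. Then $(M,\ M^{d}\simeq L^{m})\mapsto(M^{u}\otimes L^{v},\ (M^{u}\otimes L^{v})^{d}\simeq L)$, with inverse $N\mapsto N^{m}$, identifies the root gerbe of $\mathcal{O}(m)$ with that of $\mathcal{O}(1)$ over $\cP(a',b')$. The latter is $\cP(da',db')$.
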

 
Recall that for a scheme $X$ of finite type over $\CC$ there is a naturally associated complex analytic space $X^{\mathrm{an}}$ by GAGA. 
It is well known that the algebraic fundamental group for $X$ is the profinite completion of the topological fundamental group for $X^{\mathrm{an}},$ i.e., $\pi_1^{\text{\'et}}(X) \cong \hat{\pi_1(X^{\mathrm{an}})}$. 
Fortunately, there is an analogous result for algebraic stacks.

\begin{theorem}[{\cite[Cor. 20.5]{Noohi2005}}] \label{profinite}
Let $\cX$ be a connected algebraic stack of finite type over $\CC$. 
Then the algebraic fundamental group of $\cX$ is isomorphic to the profinite completion of the topological fundamental group of the underlying analytic stack $\cX^{\mathrm{an}}$.
\end{theorem}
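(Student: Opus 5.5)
This is a result of Noohi, cited without proof; here is how I would derive it. The plan is to deduce it from a Riemann existence theorem for stacks, combined with formal properties of Galois categories. Recall that $\pi_1^{\text{\'et}}(\cX)$ is the automorphism group of the fiber functor on $\text{F\'Et}_{\cX}$, once we fix a geometric point $x$ of $\cX$. On the analytic side, the finite covering spaces of $\cX^{\mathrm{an}}$ form a Galois category equivalent to finite $\pi_1(\cX^{\mathrm{an}},x)$-sets. The fundamental group of that Galois category is the profinite completion $\hat{\pi_1(\cX^{\mathrm{an}})}$. So it suffices to prove that analytification
\[
\text{F\'Et}_{\cX} \to \{\text{finite covering spaces of } \cX^{\mathrm{an}}\}, \qquad \cX' \mapsto (\cX')^{\mathrm{an}},
\]
is an equivalence of categories compatible with fiber functors at $x$.

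To prove the equivalence I would descend from the classical case. Choose a smooth atlas $U \to \cX$ by a scheme of finite type over $\CC$, and write $R := U \times_{\cX} U$ and $R_2 := U \times_{\cX} U \times_{\cX} U$. If $R$ or $R_2$ is only an algebraic space, replace it by a further \'etale atlas by schemes, which costs one more layer of descent. Finite \'etale morphisms are fpqc-local on the target, so $\text{F\'Et}_{\cX}$ is equivalent to finite \'etale covers of $U$ with descent data relative to $R \rightrightarrows U$, satisfying the cocycle condition on $R_2$. Analytification sends this smooth groupoid to a presentation of $\cX^{\mathrm{an}}$ by complex manifolds. Covering spaces over a topological stack likewise satisfy descent along surjective local submersions, so finite covering spaces of $\cX^{\mathrm{an}}$ are covering spaces of $U^{\mathrm{an}}$ with descent data over $R^{\mathrm{an}}$. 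The SGA1 Riemann existence theorem (Exp.~XII) gives the equivalence $\text{F\'Et}_W \simeq \{\text{finite covering spaces of } W^{\mathrm{an}}\}$ for $W \in \{U, R, R_2\}$. Since it is full and faithful on $R$ and $R_2$, descent data and cocycle conditions match, and the equivalence on $U$ upgrades to an equivalence for $\cX$. Compatibility with fiber functors holds because everything is computed on the fiber over a point of $U$ lying above $x$.

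The step I expect to be the main obstacle is the analytic side. One needs a working covering space theory for the analytic stack $\cX^{\mathrm{an}}$: it must be connected and locally path-connected and semi-locally simply connected in the appropriate stacky sense, so that finite covering spaces correspond to finite $\pi_1(\cX^{\mathrm{an}})$-sets. One also needs covering spaces to satisfy descent along the analytified smooth atlas. I would handle this by representing $\cX^{\mathrm{an}}$ by the topological groupoid $R^{\mathrm{an}} \rightrightarrows U^{\mathrm{an}}$ and using Noohi's foundations for topological stacks. In particular, $\pi_1$ is defined via homotopy classes of paths in the groupoid, and coverings of the stack are equivariant coverings of $U^{\mathrm{an}}$, which is a manifold and hence locally contractible. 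Connectedness of $\cX$ ensures that $\cX^{\mathrm{an}}$ is connected, so both Galois categories are connected and the identification of their fundamental groups gives $\pi_1^{\text{\'et}}(\cX) \simeq \hat{\pi_1(\cX^{\mathrm{an}})}$.
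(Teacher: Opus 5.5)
The paper gives no proof of this statement: it is quoted from Noohi and used as a black box. Your sketch is therefore a reconstruction rather than a competing argument, and as an outline it is correct. It follows the standard route to this comparison:
\begin{itemize}
\item apply Riemann existence (SGA1, Exp.~XII) on $U$, $R$, $R_2$;
\item promote it to $\cX$ by descent, using the equivalence on $U$, full faithfulness on $R$, and faithfulness on $R_2$;
\item use the Galois-category fact that finite $G$-sets have fundamental group $\hat{G}$.
\end{itemize}
You assert that $\cX^{\mathrm{an}}$ is connected. This follows the same way from the scheme case, by descending open-closed subsets of $U^{\mathrm{an}}$.

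What the citation buys the authors is the statement in full generality. That includes Artin stacks whose analytic charts have positive-dimensional fibres. There the genuinely delicate input is exactly the step you flag: a covering theory for topological stacks in which finite coverings of $\cX^{\mathrm{an}}$ correspond to finite $\pi_1(\cX^{\mathrm{an}})$-sets.

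What your argument makes visible is twofold. First, the algebraic side needs nothing beyond SGA1 and descent. Second, the paper only uses the result for smooth separated Deligne--Mumford curves, in Theorem~\ref{esc stacky curves}. In that setting you may take $U \to \cX$ étale, so $R^{\mathrm{an}} \rightrightarrows U^{\mathrm{an}}$ is a proper étale Lie groupoid. The covering theory you need is then the classical orbifold one, which is also the setting of the orbifold van Kampen theorem behind \eqref{tfg}.
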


\begin{Notation}
We use $\pi_1^{\text{\'et}}(\cX)$ to denote the algebraic fundamental group, and $\pi_1(\cX) := \pi_1(\cX^{\mathrm{an}})$ to denote the topological fundamental group of the underlying topological stack so that
$\pi_1^{\text{\'et}}(\cX) \cong \hat{\pi_1(\cX)}.$
\end{Notation}

With the relationship between the topological and algebraic fundamental groups we can show that a given weighted projective curve is algebraically simply-connected.

\begin{Prop}
  \label{weighted projective curve}
  Every weighted projective curve $\cP(a,b)$ is algebraically simply-connected. 
  That is, $\pi_1^{\text{\'et}}(\cP(a,b)) \simeq 1.$ 
\end{Prop}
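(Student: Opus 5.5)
By Theorem~\ref{profinite}, $\pi_1^{\text{\'et}}(\cP(a,b))$ is the profinite completion of the topological fundamental group $\pi_1(\cP(a,b)^{\mathrm{an}})$. It therefore suffices to show that this topological fundamental group is trivial. I will compute it from the quotient presentation $\cP(a,b) = [(\AA^2\setminus 0)/\GGm]$. Analytically this is the quotient stack $[(\CC^2\setminus 0)/\CC^*]$. For a quotient stack $[M/G]$ with $G$ a connected Lie group acting on a connected manifold $M$, the homotopy type is that of the Borel construction $M\times_G EG$, which yields a long exact sequence
\[
\pi_1(G)\to \pi_1(M)\to \pi_1([M/G])\to \pi_0(G).
\]
Here $M=\CC^2\setminus 0$ deformation retracts onto $S^3$, so $\pi_1(M)=1$. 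Since $G=\CC^*$ is connected, $\pi_0(G)=1$. Hence $\pi_1(\cP(a,b)^{\mathrm{an}})=1$, and its profinite completion is trivial.

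To keep the argument purely algebraic and avoid appealing to the topology of analytic stacks, I would also give a direct argument with finite \'etale covers. Let $\cC'\to\cP(a,b)$ be a connected finite \'etale cover of degree $d$. Pull it back along the $\GGm$-torsor $\AA^2\setminus 0\to \cP(a,b)$. The result is a finite \'etale cover of $\AA^2\setminus 0$. By Zariski--Nagata purity (as used in the proof of Theorem~\ref{contracted}), it extends to a finite \'etale cover of $\AA^2$. Since $\pi_1^{\text{\'et}}(\AA^2)=1$ in characteristic zero, this cover is trivial, so $\cC'\times_{\cP(a,b)}(\AA^2\setminus 0)\simeq (\AA^2\setminus 0)^{\sqcup d}$.

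The $\GGm$-equivariant structure on this pullback then amounts to an action of $\GGm$ permuting the $d$ components. Because $\GGm$ is connected, it must preserve each component. Descending, we get $\cC'\simeq \cP(a,b)^{\sqcup d}$, and connectedness of $\cC'$ forces $d=1$.

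The main obstacle is justifying the descent step rigorously. One must show that a $\GGm$-equivariant finite \'etale cover of $\AA^2\setminus 0$ which is trivial as a plain cover is trivial equivariantly. This follows because the action map $\GGm\times(\AA^2\setminus0)^{\sqcup d}\to(\AA^2\setminus0)^{\sqcup d}$ induces a morphism from the connected scheme $\GGm\times(\AA^2\setminus 0)$ to the finite set of components, and this morphism is constant on each component. I expect the topological route via Theorem~\ref{profinite} to be the shorter write-up, with the algebraic argument serving as confirmation.
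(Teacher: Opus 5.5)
Your proof is correct, and its main line is the paper's: reduce via Theorem~\ref{profinite} to the topological fundamental group of $[(\CC^2\setminus 0)/\CC^*]$, then use the homotopy long exact sequence of the $\CC^*$-fibration $\CC^2\setminus 0\to\cP(a,b)$ together with $\pi_1(\CC^2\setminus 0)=1$ and $\pi_0(\CC^*)=1$. Your supplementary algebraic argument is also valid: pull back along the $\GGm$-torsor $\AA^2\setminus 0\to\cP(a,b)$, trivialize the cover by Zariski--Nagata purity and $\pi_1^{\text{\'et}}(\AA^2)=1$, and note that the connected group $\GGm$ preserves each sheet. It avoids the stacky comparison theorem but is not needed.
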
  

\begin{proof}
  By Theorem \ref{profinite} it is enough to show $\pi_1(\cP(a,b)) \simeq 1$ where we view $\cP(a,b) \simeq [(\CC^2 \setminus 0)/\CC^*]$ as an analytic stack.
  The canonical projection $\CC^2 \setminus 0 \to \cP(a,b)$ defines a $\CC^*$-fibration and by considering the associated homotopy long exact sequence it follows that $\pi_1(\cP(a,b)) \simeq 1.$
\end{proof}  
 
\subsection*{Non-trivial Generic Stabilizer}

Let $\cC$ be an algebraically simply-connected smooth stacky curve with non-trivial generic stabilizer $H$.
By \cite[Thm. A.1 and Ex. A.3]{MR2427954} the {\em rigidification} $\hat{\cC}  := \cC \sslash H$ defines a smooth stacky curve with trivial generic stabilizer. 
Further, the projection morphism $\cC \to \hat{\cC}$ defines a {\em $H$-gerbe}, which we now recall the definition relevant for our purposes.

 \begin{Def}[$G$-gerbe]
Let $G$ be a finite group scheme, and suppose $\cY \to \cZ$ is a morphism between two smooth separated Deligne--Mumford stacks. 
Then, equipping $\cZ$ with the \'etale topology, the morphism $\cY \to \cZ$ is a {\em $G$-gerbe} if there exists an \'etale covering $\{Z_i \to \cZ\}$ such that there is an isomorphism $\cY \times_{\cZ} Z_i \simeq BG$ over $\cZ/Z_i$ for every $i$.
\end{Def}

Now, we will show that $\pi_1^{\text{\'et}}(\cC) \simeq 1$ forces $\pi_1^{\text{\'et}}(\hat{\cC}) \simeq 1$.

\begin{Prop}
  Let $\cC$ be a smooth stacky curve as above. 
  Consider the rigidification $\cC \to \hat{\cC}$ by the generic stabilizer of $\cC$.
  Then, $\pi_1^{\text{\'et}}(\cC) \simeq 1$ forces $\pi_1^{\text{\'et}}(\hat{\cC}) \simeq 1.$
\end{Prop}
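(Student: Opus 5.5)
The plan is to show that the rigidification map $\rho : \cC \to \hat{\cC}$ induces a surjection $\pi_1^{\text{\'et}}(\cC) \to \pi_1^{\text{\'et}}(\hat{\cC})$, so that triviality of the source forces triviality of the target. Concretely, I would argue directly with finite \'etale covers, which avoids needing any homotopy exact sequence for gerbes in the profinite setting.

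First, let $\hat{\cC}' \to \hat{\cC}$ be a connected finite \'etale cover of degree $d$. Pull it back along $\rho$ to obtain $\cC' := \cC \times_{\hat{\cC}} \hat{\cC}' \to \cC$, which is finite \'etale of degree $d$ by base change. Since $\pi_1^{\text{\'et}}(\cC) \simeq 1$, this cover is trivial, i.e.\ $\cC' \simeq \cC^{\sqcup d}$. It remains to deduce $d = 1$, which amounts to showing that $\cC'$ is connected. The projection $\cC' \to \hat{\cC}'$ is again an $H$-gerbe, being a base change of the $H$-gerbe $\rho$ (the definition is \'etale local on the target and stable under base change). A gerbe is surjective with connected fibers: \'etale locally it is $BH \times Z_i \to Z_i$, and $BH$ is connected. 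Hence the preimage of a connected stack is connected. Concretely, if $\cC' = \cU \sqcup \cV$ with both nonempty open and closed, then their images in $\hat{\cC}'$ are open (gerbes are flat and locally of finite presentation, hence open), they cover $\hat{\cC}'$, and they are disjoint because each fiber is connected. This contradicts connectedness of $\hat{\cC}'$. Thus $\cC'$ is connected, so $d=1$ and every connected finite \'etale cover of $\hat{\cC}$ is trivial, giving $\pi_1^{\text{\'et}}(\hat{\cC}) \simeq 1$.

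As an alternative, I would invoke Theorem~\ref{profinite} together with the long exact homotopy sequence of the analytic $H$-gerbe $\cC^{\mathrm{an}} \to \hat{\cC}^{\mathrm{an}}$. This is a fibration with fiber $BH$, and the tail $\pi_1(\cC) \to \pi_1(\hat{\cC}) \to \pi_0(BH) = 1$ shows surjectivity on topological $\pi_1$. Profinite completion preserves surjections, which gives the result again.

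The main obstacle is making rigorous that the fibers of a gerbe are connected and that the disjoint open decomposition descends. I would handle this by working on an \'etale cover $\{Z_i \to \hat{\cC}'\}$ trivializing the gerbe. There $\cC' \times_{\hat{\cC}'} Z_i \simeq BH \times Z_i$, so its open and closed substacks correspond bijectively to those of $Z_i$, since $BH \times Z_i \to Z_i$ is a homeomorphism on underlying topological spaces. Consequently the idempotents of $\mathcal{O}_{\cC'}$ descend to idempotents of $\mathcal{O}_{\hat{\cC}'}$, and connectedness of $\hat{\cC}'$ yields the contradiction.
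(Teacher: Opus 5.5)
Your argument is correct, but it reaches the conclusion by a different mechanism than the paper's. Both proofs start the same way: pull the cover of $\hat{\cC}$ back along the gerbe $\cC \to \hat{\cC}$, and use $\pi_1^{\text{\'et}}(\cC)\simeq 1$ to split the pullback as $\cC^{\sqcup d}$.

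The paper then works with an arbitrary, not necessarily connected, cover $\cY \to \hat{\cC}$ and passes to coarse moduli spaces. Since a gerbe does not change the coarse space, $\cY$ has coarse space $\hat{C}^{\sqcup d}$, so $\cY$ splits into $d$ pieces $\cY_i$. Each $\cY_i$ is finite \'etale over $\hat{\cC}$ and induces an isomorphism on coarse spaces. The paper then uses that $\hat{\cC}$ has trivial generic stabilizer, hence is generically a scheme, to see that each $\cY_i \to \hat{\cC}$ has generic degree $1$ and is therefore an isomorphism.

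You instead restrict to a connected cover and use the purely topological fact that a gerbe induces a homeomorphism $|\cC'| \to |\hat{\cC}'|$: it is flat, locally of finite presentation, surjective, and has one-point fibers. So connected components correspond, and $d=1$ follows immediately.

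Your route is more general. It shows that $\pi_1^{\text{\'et}}$ of any gerbe surjects onto $\pi_1^{\text{\'et}}$ of its base, with no need for coarse moduli spaces or for the base to have trivial generic stabilizer. The paper's route stays within the coarse-space tools already in use in that section.

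Your analytic alternative, via Theorem~\ref{profinite} and the homotopy sequence of $\cC^{\mathrm{an}} \to \hat{\cC}^{\mathrm{an}}$, also works. It needs the extra input that the analytic gerbe is a fibration with fiber $BH$, so the \'etale-cover argument is the cleaner of your two.
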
  

\begin{proof}
Let $f : \cY \to \hat{\cC}$ be a finite \'etale morphism and consider the fiber product $\cX := \cC \times_{\hat{\cC}} \cY$.
Then, the projection morphism $\cX \to \cC$ is a finite \'etale morphism, and hence $\cX \simeq \cC^{\sqcup \, d}.$ 
Moreover, since $\cC \to \hat{\cC}$ is a gerbe we see that $\cX \to \cY$ is a gerbe. 
Since gerbes have isomorphic coarse moduli spaces it follows that $\cY$ has coarse moduli space $Y \simeq \hat{C}^{\sqcup \, d}$ and the induced morphism $\bar{f} : Y \to C$ is a trivial finite \'etale morphism.
Hence, we may write $\cY \simeq \cY_1 \sqcup \cdots \sqcup \cY_d$ where each $\cY_i$ is a smooth connected stacky curve with coarse moduli space $Y_i \simeq \hat{C}.$
Thus, the induced morphism $\cY_i \to \hat{C}$ is finite \'etale for each $i$ where the induced coarse moduli space morphism $Y_i \xrightarrow{\simeq} \hat{C}$ is an isomorphism.
Since $\hat{C}$ contains a dense open subscheme it follows that $\cY_i \to \hat{C}$ is generically of degree $1$. However, since degree is locally constant and $\hat{\cC}$ is connected, $\cY_i \to \hat{\cC}$ is degree $1$, i.e., an isomorphism. Therefore, $\cY_i \simeq \hat{\cC}$ for each $i$, and hence, $\cY \simeq \hat{\cC}^{\sqcup \, d}$ where $\cY \to \cC$ is a trivial cover. 
In other words, $\hat{\cC}$ is algebraically simply-connected, i.e., $\pi_1^{\text{\'et}}(\hat{\cC}) = 1.$ 
\end{proof}  

Therefore, $\hat{\cC}$ is an algebraically simply-connected smooth stacky curve with trivial generic stabilizer. 
Assuming $\hat{\cC} \simeq \cP(a,b)$ for some $a, b > 0$ with $\gcd(a,b) = 1$ we can show that $\cC$ is a weighted projective curve. 
We will use the following bijection.

\begin{Prop}[{\cite[Prop. 6.2]{MR2279100}}]
  \label{bijection}
Let $m, n$ be relatively prime positive integers. 
Let $H$ be a finite group and denote the center by $A := Z(H)$. 
Then, there is a natural bijection
\[
\{\text{Algebraic stacks $\cC$ that are $H$-gerbes over $\cP(a,b)$}\}_{/_{\mathrm{iso}}} \simeq {_{\operatorname{Out}(H)}\backslash \, A}.
\]
The fundamental group of the stack $\cC_a$ corresponding to an element $a \in H$ is (non-canonically) isomorphic to $H/a$, i.e., $\pi_1(\cC_a) \cong H/a$.
\end{Prop}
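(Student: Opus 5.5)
The plan is to argue topologically, working with the analytic stacks underlying all algebraic stacks in sight, as in the proof of Proposition~\ref{weighted projective curve}. Throughout, the roles of $a,b$ in the statement are played by the coprime weights $m,n$, and ``$H/a$'' means $H/\langle a\rangle$.

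\textbf{Homotopy of the base.} Since $\cP(m,n)\simeq[(\CC^2\setminus 0)/\CC^*]$, the fibration $\CC^*\to\CC^2\setminus 0\to\cP(m,n)$ gives
\[
\pi_1(\cP(m,n))=1,\qquad \pi_2(\cP(m,n))\cong\pi_1(\CC^*)\cong\ZZ .
\]
By Hurewicz and universal coefficients this yields $H^2(\cP(m,n),A)\cong\Hom(\ZZ,A)\cong A$ for any abelian group $A$.

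\textbf{Classification of gerbes.} I would use Giraud's nonabelian cohomology. An $H$-gerbe $\cC\to\cP(m,n)$ has a band, which is a locally constant sheaf of outer automorphism classes of $H$, i.e.\ a homomorphism $\pi_1(\cP(m,n))\to\operatorname{Out}(H)$. Because $\cP(m,n)$ is simply connected, the band is trivial. Gerbes with trivial band $H$ form a torsor under $H^2(\cP(m,n),Z(H))$, and this torsor is nonempty because the trivial gerbe $\cP(m,n)\times BH$ is in it. The only remaining freedom is changing the identification of the band by an element of $\operatorname{Out}(H)$, which acts on $H^2(\cP(m,n),A)\cong A$ through its action on $A=Z(H)$. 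Consequently, isomorphism classes of stacks that are $H$-gerbes over $\cP(m,n)$ are in bijection with $\operatorname{Out}(H)\backslash A$.

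The main obstacle is this last step. One has to check that an isomorphism of the total stacks as abstract stacks, not only over $\cP(m,n)$ as banded gerbes, is accounted for exactly by $\operatorname{Out}(H)$. This requires controlling automorphisms of $\cP(m,n)$, which act trivially on $H^2$ up to sign, and the sign is absorbed by inversion on $A$. My first attempt would be to reduce to banded isomorphisms over the rigidification, since the rigidification map is canonical by \cite{MR2427954}.

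\textbf{Explicit representatives and $\pi_1$.} To make the bijection concrete and compute fundamental groups, I would realize each class directly. For $a\in A$ of order $k$, let
\[
G_a:=(\CC^*\times H)/\langle(\zeta_k,a^{-1})\rangle ,
\]
acting on $\CC^2\setminus 0$ through the composite $G_a\to\CC^*$, $(t,h)\mapsto t^k$, followed by the $(m,n)$-weighted action. Set $\cC_a:=[(\CC^2\setminus 0)/G_a]$. Dividing by the central copy of $H$ exhibits $\cC_a$ as an $H$-gerbe over $\cP(m,n)$. One then checks that its class in $\Hom(\pi_2,A)=A$ sends the generator to $a$, by computing the boundary map $\pi_2(\cP)\to\pi_1(BH)$ via the loop in $\CC^*$ lifted to $G_a$.

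Finally, apply the long exact sequence of the fibration $G_a\to\CC^2\setminus 0\to\cC_a$. Since $\pi_1(\CC^2\setminus 0)=\pi_0(\CC^2\setminus 0)=1$, it gives
\[
\pi_1(\cC_a)\cong\pi_0(G_a)=G_a/G_a^{0}\cong H/\langle a\rangle,
\]
which is the asserted formula.
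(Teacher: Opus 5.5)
The paper gives no proof of this proposition; it is quoted from Behrend--Noohi. Your outline is a sound self-contained proof along the standard lines, and I checked its computations:
\begin{itemize}
\item the band is trivial because $\pi_1(\cP(m,n))=1$;
\item Giraud's theory gives a torsor under $H^2(\cP(m,n),A)\cong\Hom(\pi_2,A)\cong A$, based at $\cP(m,n)\times BH$, with $\mathrm{Out}(H)$ acting through its action on $A$;
\item for $\cC_a=[(\CC^2\setminus 0)/G_a]$, the lift $s\mapsto[(e^{2\pi i s/k},1)]$ of the generating loop ends at $[(\zeta_k,1)]=[(1,a)]$;
\item $\pi_1(\cC_a)\cong\pi_0(G_a)\cong H/\langle a\rangle$.
\end{itemize}
If ``isomorphism'' means equivalence of gerbes over $\cP(m,n)$, your second paragraph already finishes the classification.

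There is, however, a wrong step in your ``main obstacle'' paragraph, which treats isomorphisms of the total stacks. A sign is \emph{not} ``absorbed by inversion on $A$.'' Inversion on $A=Z(H)$ is in general not the restriction of an automorphism of $H$. For instance, if $\Aut(H)$ is a $p$-group with $p$ odd, no automorphism can invert a central element of order $p$. So an automorphism of $\cP(m,n)$ acting by $-1$ on $H^2$ would change the answer to the quotient of $A$ by $\mathrm{Out}(H)$ together with inversion.

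What actually saves you is that no sign occurs. An automorphism $\sigma$ of $\cP(m,n)$ induces a holomorphic, hence orientation-preserving, automorphism of the coarse space $\PP^1$. Moreover, $H_2(\cP(m,n);\ZZ)\cong\ZZ$ injects into $H_2(\cP(m,n);\QQ)\cong H_2(\PP^1;\QQ)$. Hence $\sigma$ acts by $+1$ on $H_2$, and therefore trivially on $H^2(\cP(m,n);A)$. Equivalently, $\sigma^*$ fixes the positive generator $\mathcal{O}(1)$ of $\Pic(\cP(m,n))\cong\ZZ$, and $c_1(\mathcal{O}(1))$ generates $H^2(\cP(m,n);\ZZ)$. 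Combined with your remark that $\cC\to\cP(m,n)$ is the rigidification by the generic stabilizer, and so is intrinsic to $\cC$, this handles abstract isomorphisms.

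Two smaller points.
\begin{itemize}
\item The image of $H$ in $G_a$ is normal, not central, unless $H$ is abelian; it is the image of $\CC^*$ that is central. Only normality and the triviality of the $H$-action are needed for the gerbe claim.
\item The statement concerns algebraic stacks, so you should say how your analytic classification transfers. One way is to run Giraud's argument in the \'etale topology. There $H^1$ with $\mathrm{Out}(H)$ coefficients vanishes since $\pi_1^{\text{\'et}}(\cP(m,n))=1$, and $H^2_{\text{\'et}}(\cP(m,n),A)\cong A$ by the comparison theorem.
\item This also gives an algebraic check of the class of $\cC_a$. It is the pushforward along $\mu_k\to A$, $\zeta_k\mapsto a$, of the root gerbe $\cP(km,kn)\to\cP(m,n)$ of $k$-th roots of $\mathcal{O}(1)$. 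The class of that root gerbe is the Kummer image of $\mathcal{O}(1)$, a generator of $H^2_{\text{\'et}}(\cP(m,n),\mu_k)\cong\ZZ/k$.
\end{itemize}
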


To see that $\cC$ defines a weighted projective curve, recall that $\cC \to \cP(a,b)$ defines a $H$-gerbe where $\pi_1^{\text{\'et}}(\cC) \simeq 1$. 
By Proposition \ref{bijection}, $\pi_1(\cC) \simeq H/a$ for some $a \in H.$
Since the profinite completion of a finite group is isomorphic to itself it follows that $H \simeq \mu_d$ is cyclic with $a$ a generator.
Further, since $\Aut(H)$ acts transitively on generators on $H$ it follows that $\cC$ is the unique algebraically simply-connected $\mu_d$-gerbe over $\cP(a,b).$
Now, notice that $\mathcal{P}(ad, bd)$ defines a $\mu_d$-gerbe over $\cP(a,b)$, and it follows from Proposition \ref{weighted projective curve} that $\cC \simeq \mathcal{P}(ad, bd)$, as desired.
Therefore, it is enough to show that the only algebraically simply-connected smooth stacky curves with generic trivial stabilizer are $\cP(a,b)$ with $\gcd(a,b) = 1.$

\subsection*{Trivial Generic Stabilizer}

Let $\cC$ be a smooth stacky curve with trivial generic stabilizer and let $C$ denote the coarse moduli space. 
To compute the algebraically simply-connected $\cC$ we will utilise Geraschenko and Satriano's bottom up characterization \cite[Thm. 1]{MR3719470}.
The bottom up characterization explicitly states how to recover a smooth Deligne--Mumford stack with trivial generic stabilizer from its coarse moduli space using a sequence of root stacks and canonical coverings. 
For stacky curves the algorithm, which we state below, is simpler than that for higher dimensions since we do not need canonical coverings. 

\begin{Prop}
   \label{Bottom up}
Let $\pi : \cC \to C$ denote the coarse moduli space, and let $D = \sum_i a_i p_i$ be the ramification divisor in $C$.
That is, $p_i \in C$ denotes the images of the stacky points and $a_i = |G_{p_i}|.$ Then, we can recover $\cC$ by a sequence of root stacks along the coarse moduli space
\[
\cC \simeq C(\sqrt{D}) := C(\sqrt[\leftroot{-2}\uproot{2}a_1]p_1)(\sqrt[\leftroot{-2}\uproot{2}a_2]p_2) \cdots (\sqrt[\leftroot{-2}\uproot{2}a_{m}]p_m).
\]
\end{Prop}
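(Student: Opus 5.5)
We will deduce Proposition~\ref{Bottom up} from the bottom up characterization, Theorem~\ref{Theorem:bottomup}. The point is that in dimension one both canonical-stack steps in Theorem~\ref{Theorem:bottomup} become trivial.

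First we check that the coarse space $C$ is smooth. Étale locally near a point $x$ with stabilizer $G_x$, the stack $\cC$ is $[\Spec\,\CC[t]/G_x]$. Here $G_x$ acts on the tangent line through a character. By the trivial generic stabilizer hypothesis, $G_x$ embeds into $\GG_m$, so $G_x\simeq\mu_{a}$ with $a=|G_x|$ and it acts faithfully by $t\mapsto\zeta t$. Hence $C$ is étale locally $\Spec\,\CC[t^{a}]$, which is smooth. A smooth variety has trivial canonical stack, since the canonical stack only alters the locus of quotient singularities that are not smooth (cf.\ \cite[Prop. 2.8]{MR1005008}). Therefore $C^{\mathrm{can}}=C$.

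Next we identify the ramification data. The map $\cC\to C$ is ramified exactly over the finitely many points $p_i$ with nontrivial stabilizer. By the local computation above, the uniformizer $s=t^{a_i}$ of $C$ at $p_i$ pulls back to the $a_i$-th power of the uniformizer $t$ of $\cC$. So the ramification divisor is $D=\sum_i a_ip_i$ with $a_i=|G_{p_i}|$. Since the $p_i$ are disjoint, the iterated root stack $C(\sqrt[a_1]{p_1})\cdots(\sqrt[a_m]{p_m})$ agrees with $C(\sqrt{(D,\boldsymbol e)})$. Each root factor modifies only a neighborhood of its own point, so the factors commute.

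We then show that the final canonical stack operation is also trivial. The root stack of a smooth curve along a reduced point is smooth: locally it is $[\Spec\,\CC[t]/\mu_{a}]$ with $t^{a}=s$. A smooth Deligne--Mumford stack with trivial generic stabilizer and no stabilizers in codimension $\geq2$ (a curve has no such points) is its own canonical stack. Theorem~\ref{Theorem:bottomup} then gives $\cC\simeq C(\sqrt{D})$.

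The only point needing care is the identification of the ramification index $e_i$ in Geraschenko--Satriano's data with $|G_{p_i}|$. This is exactly the local cyclicity computation above, and I expect it to be the main, though mild, obstacle.

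Alternatively, one can avoid Theorem~\ref{Theorem:bottomup} by a direct argument. Using the universal property of root stacks, $\pi^*p_i=a_i\cdot\cG_i$ with $\cG_i$ the reduced residual gerbe, so $\cC\to C$ factors through $C(\sqrt{D})$. This factorization is an isomorphism on coarse spaces and on stabilizers, and it is étale by the local model, hence an isomorphism.
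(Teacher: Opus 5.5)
Your proposal is correct and follows the paper's route: the paper also treats this as the one-dimensional case of the Geraschenko--Satriano bottom-up theorem, justified only by the preceding Remark that the coarse space of a smooth stacky curve is smooth, so no canonical stacks are needed. You additionally supply the details the paper leaves implicit (smoothness of $C$ via the cyclic local model, the identification $e_i=|G_{p_i}|$, and triviality of the outer canonical stack), with one small correction: the outer step is trivial simply because the root stack is already smooth, and the ``no stabilizers in codimension $\geq 2$'' condition you invoke plays no role.
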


\begin{example}
   Consider the weighted projective curve $\cP(a,b)$ where $a,b > 1$ and $\gcd(a,b) = 1.$
   The coarse moduli space is $\PP(a,b) \simeq \PP^1$ and there are two stacky points at $0$ and $\infty$ with stabilizer groups $\mu_b$ and $\mu_a$ respectively.
   By Proposition \ref{Bottom up}
   \[
   \cP(a,b) \simeq \PP^1(\sqrt[\leftroot{-2}\uproot{2}b]0)(\sqrt[\leftroot{-2}\uproot{2}a]\infty).
   \]
\end{example}   

\begin{example}
   \label{P1/d}
Consider the $\mu_d$ action on $\PP^1$ defined by $(x,y) \mapsto (\zeta_d x, y)$ where $\zeta_d := \exp(2\pi i /d).$ 
Then, the quotient stack $[\PP^1/\mu_d]$ has coarse moduli space $\PP^1$ with two stacky points at $0$ and $\infty$ each with stabilizer group $\mu_d$.
By Proposition \ref{Bottom up}
\[
[\PP^1/\mu_d] \simeq \PP^1(\sqrt[\leftroot{-2}\uproot{2}d]0)(\sqrt[\leftroot{-2}\uproot{2}d]\infty).
\]
Observe also, that the projection $\PP^1 \to [\PP^1 / \mu_d]$ is a principal $\mu_d$-bundle. Hence, the projection defines a finite \'etale morphism, which is not an isomorphism.
Thus, $[\PP^1/\mu_d]$ is an example of a non-algebraically simply-connected smooth stacky curve. 
\end{example}   

We can now prove Theorem \ref{esc stacky curves} using Proposition \ref{Bottom up} and by computing the topological fundamental group and applying Theorem \ref{profinite}.

\begin{proof}[Proof of Theorem \ref{esc stacky curves}]
Let $\cC$ be an algebraically simply-connected smooth stacky curve where we may assume $\cC$ has trivial generic stabilizer. 
By Lemma \ref{P1 is cms} the coarse moduli space is $\PP^1.$
Note, if $\cC$ has no stacky points then $\cC \simeq \PP^1$.
Hence, we may as well assume $\cC$ has at least one stacky point.
Then, by Proposition \ref{Bottom up} we can write $\cC$ as a composition of root stacks
\[
\cC \simeq \PP^1(\sqrt[\leftroot{-2}\uproot{2}a_1]p_1)
(\sqrt[\leftroot{-2}\uproot{2}a_2]p_2) \cdots (\sqrt[\leftroot{-2}\uproot{2}a_{m}]p_m).
\]
Now, we will compute the topological fundamental group of the underlying orbifold $\cC^{\mathrm{an}}$. 
By the orbifold analogue of Seifert-Van Kampen theorem \cite[Thm. 4.7.1]{MR2962023} one can compute
\begin{equation} \label{tfg}
\pi_1(\cC^{\mathrm{an}}) = \left \langle \gamma_1, \ldots \gamma_m \, | \, \gamma_1^{a _1}= 1, \ldots, \gamma_m^{a_m} = 1, \prod_{i=1}^m \gamma_i = 1 \right \rangle.
\end{equation}
Using Theorem \ref{profinite} we can compute the \'etale fundamental group by taking the profinite completion. 
We proceed in three cases according to the number of stacky points.
\case {\em $1$ stacky point.} Suppose $\cC$ has $1$ stacky point. By Proposition \ref{Bottom up} it follows that $\cC \simeq \mathcal{P}(1, b)$ where $b > 1,$ which is algebraically simply-connected by Proposition \ref{weighted projective curve}.

\case {\em $2$ stacky points.} Suppose $\mathcal{C}$ has $2$ stacky points
with stabilizer groups $\mu_a$ and $\mu_b$, i.e., $\cC \simeq \PP^1(\sqrt[\leftroot{-2}\uproot{2}a]0)(\sqrt[\leftroot{-2}\uproot{2}b]\infty)$. We will consider two cases: $\gcd(a,b) = 1$, and $\gcd(a,b) = d > 1.$ For the first case, we see $\cC \simeq \cP(a,b)$ by Proposition \ref{Bottom up}, which again is algebraically simply-connected by Proposition \ref{weighted projective curve}.
If instead $\gcd(a,b) = d > 1$, then by (\ref{tfg})
\[
\pi_1(\cC^{\textrm{an}}) \simeq \langle \alpha, \beta \, | \, \alpha^{a} = 1, \beta^b = 1, \alpha\beta = 1 \rangle.
\]
Since $\gcd(a,b) = d$ it is straightforward to see that $\pi_1(\cC^{\textrm{an}}) \simeq \mu_d$, and since the profinite completion of a finite group is the group itself, $\pi_1^{\text{\'et}}(\cC) \simeq \mu_d$. Therefore, $\cC$ is not algebraically simply-connected, a contradiction. 

\case {\em $3$ or more stacky points.} Suppose $\cC$ has three or more stacky points. By (\ref{tfg})
\[
\pi_1(\cC^{\mathrm{an}}) = \left \langle \gamma_1, \ldots \gamma_m \, | \, \gamma_1^{a _1}= 1, \ldots, \gamma_m^{a_m} = 1, \prod_{i=1}^m \gamma_i = 1 \right \rangle
\]
where $m \ge 3.$

This group contains a proper normal finite index subgroup, see \cite{MR53937}.
Hence, the profinite completion is non-trivial, i.e., $\pi_1^{\text{\'et}}(\cC)$ is non-trivial.
Therefore, $\cC$ is not algebraically simply-connected, a contradiction.
\end{proof}

\subsection{Negativity of Contracted Smooth Stacky Curve}
\label{neg}
 
In this section, we will show $\mathcal{O}_\cX(\mathcal{C})|_{\cC} \simeq \mathcal{O}_\cC(-1)$ 
by first proving $\mathcal{C}$ is negative, i.e., $\mathcal{O}_\cX(\mathcal{C})|_{\cC} \simeq \mathcal{O}_\cC(-m)$ for some $m>0$, and then arguing $m=1.$ 
To prove negativity we will utilize the coarse moduli spaces $X$ and $C$ of $\cX$ and $\cC$ respectively and apply classical intersection theory. 
We again use $f : \cX \to Z$ to denote the contraction where we recall that $Z \simeq \Spec \, A$ is a smooth affine surface.   

\begin{Prop}
\label{-m}
The contracted smooth stacky curve $\cC$ is negative, i.e., $\mathcal{O}_{\cX}(\cC)|_{\cC} \simeq \mathcal{O}_{\cC}(-m)$ for some $m > 0.$
\end{Prop}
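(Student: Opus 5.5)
By Corollary~\ref{contracted curve is weighted projective} we already know $\cC \simeq \cP(a,b)$ for some $a,b>0$. The plan therefore has two parts: first identify $\Pic(\cC)$, then show that the degree of $\mathcal{O}_\cX(\cC)|_\cC$ is negative.

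For the first part we use that $\Pic(\cP(a,b)) \simeq \ZZ$, generated by $\mathcal{O}_\cC(1)$. This follows because $\Pic([(\AA^2\setminus 0)/\GG_m])$ is the group of $\GG_m$-characters: $\AA^2\setminus 0$ has trivial Picard group and only constant units, since the complement of the origin has codimension $2$. Consequently the line bundle $\mathcal{O}_\cX(\cC)|_\cC$ is isomorphic to $\mathcal{O}_\cC(k)$ for a unique $k \in \ZZ$, and it suffices to prove $k<0$. Because $\mathcal{O}_\cC(1)$ has positive rational degree on $\cC$, namely $1/(ab)$, the sign of $k$ equals the sign of $\deg_\cC \mathcal{O}_\cX(\cC)|_\cC$.

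For the second part we pass to coarse spaces. Let $\pi:\cX\to X$ and $\cC\to C\simeq\PP^1$ be the coarse moduli maps. Since $\cX$ is a smooth Deligne--Mumford surface, $X$ is a normal surface with quotient singularities, hence $\QQ$-factorial. Mumford's $\QQ$-valued intersection theory is therefore available on $X$. Choose $N>0$ such that $\mathcal{O}_\cX(N\cC) \simeq \pi^*\mathcal{O}_X(D)$ for a Cartier divisor $D$ supported on $C$, so that $D=eC$ as $\QQ$-Cartier divisors for some $e>0$. Then $\deg_\cC \mathcal{O}_\cX(N\cC)|_\cC$ equals a positive multiple of $e\,(C\cdot C)_X$; the multiple accounts for $\pi|_\cC$ being of degree $1/|\text{generic stabilizer}|$ on a curve. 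The induced morphism $\bar f : X \to Z$ is proper and birational, contracts exactly the irreducible curve $C$ to the smooth point $p$, and is an isomorphism elsewhere. Grauert's criterion, or equivalently the negativity lemma or the Hodge index theorem applied to the exceptional locus of a proper birational morphism of normal surfaces, then gives $(C\cdot C)_X<0$.

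To make this concrete, pick a function $g\in A$ vanishing at $p$ that does not vanish identically near $p$. Then $\bar f^*\mathrm{div}(g)=\tilde D + \lambda C$ with $\lambda>0$, where $\tilde D$ is the strict transform. Since $\tilde D$ meets $C$ nontrivially, $\tilde D\cdot C>0$. Intersecting with $C$ and using $\bar f^*\mathrm{div}(g)\cdot C = 0$ (by the projection formula, since $C$ is contracted) gives $\lambda C^2 = -\tilde D\cdot C <0$. This yields $k<0$, so $m:=-k>0$.

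The main obstacle is the compatibility of intersection numbers between the stack and its coarse space. One has to verify that $\deg_\cC(\pi^*L|_\cC) = \deg_C(L|_C)/|H|$ for a line bundle $L$ on $X$, where $H$ is the generic stabilizer of $\cC$, and that the positive rational factor relating $\cC$ to $\pi^*C$ cannot flip the sign. If needed, one can sidestep these issues by working on the canonical stack or by computing directly on the stack with the pulled-back function $f^*g$. Indeed $\mathrm{div}(f^*g) = \tilde\cD + \lambda\cC$ with $\lambda>0$ and $\tilde\cD$ effective meeting $\cC$ in positive degree, while $\mathcal{O}_\cX(\mathrm{div} f^*g)$ is trivial. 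This gives $\lambda\deg\mathcal{O}_\cX(\cC)|_\cC = -\deg\mathcal{O}(\tilde\cD)|_\cC<0$ directly, and this is the route I would try first.
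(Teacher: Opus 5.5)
Your proposal is correct. The route you say you would try first is genuinely different from the paper's.

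\textbf{The paper's route.} After $\Pic(\cC)\simeq\ZZ$, the paper identifies the sign of $k$ with the sign of $\cC\cdot\cC$. It then uses Vistoli's ring isomorphism $\pi_*\colon A_*(\cX)_\QQ\to A_*(X)_\QQ$, together with $\pi_*[\cC]=\frac{1}{d}[C]$ where $d=\gcd(a,b)$, to get $\cC\cdot\cC=(C^2)_X/d^2$. Finally it shows $C^2<0$ by passing to a resolution $\tilde X\to X$ and applying the relative Hodge index theorem to the effective exceptional divisor $p^*C$ of $\tilde X\to Z$.

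\textbf{Your two routes.} Your coarse-space argument has the same shape. It stays on the $\QQ$-factorial surface $X$ with Mumford's pairing, and replaces the resolution and Hodge index theorem by the elementary identity $\lambda C^2=-\tilde D\cdot C$. Your direct stack argument dispenses with coarse spaces, Chow groups of stacks and resolutions altogether. The compatibility issue you flag as the main obstacle (the factor $1/d$ between $\cC$ and $C$) is exactly what the paper needs Vistoli's machinery for, and in your version it never arises.

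\textbf{Two points to make explicit when you write it up.}

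First, the claim that $\tilde{\mathcal{D}}$ meets $\cC$ is where properness of $f$ enters. By Krull's principal ideal theorem, every component of $V(g)\subseteq Z$ through $p$ is a curve, and at least one exists. The closure of its strict transform in $\cX$ has closed image under $f$ containing $V(g)\setminus p$, hence containing $p$. So it meets $f^{-1}(p)=\cC$, and $\tilde{\mathcal{D}}$ is nonzero and meets $\cC$.

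Second, you do not need a degree function on $\cC$ at all. The bundle $\mathcal{O}_\cX(\tilde{\mathcal{D}})|_\cC\simeq\mathcal{O}_\cC(k')$ carries a nonzero section vanishing somewhere, since $\cC\not\subseteq\tilde{\mathcal{D}}$. We have $\HH^0(\cP(a,b),\mathcal{O}(j))=\CC[x,y]_j$, which is $0$ for $j<0$ and $\CC$ (nonvanishing constants) for $j=0$. This forces $k'>0$. Then $\mathcal{O}_\cX(\lambda\cC+\tilde{\mathcal{D}})\simeq\mathcal{O}_\cX$ gives $\lambda k+k'=0$, hence $k=-k'/\lambda<0$.
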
   
\begin{proof}
Recall that $\cC \simeq \cP(a,b)$ for some $a,b \ge 1$ by Corollary \ref{contracted curve is weighted projective}. Hence, the statement $\Pic(\cC) \simeq \ZZ$ and $\mathcal{O}_{\cX}(\cC)|_{\cC} \simeq \mathcal{O}_{\cC}(k)$ for some $k \in \ZZ.$
Now, $\mathrm{sgn}(k) = \mathrm{sgn}(\cC \cdot \cC)$ where the intersection number is precisely $\cC \cdot \cC = \deg_{\cX}([\cC] \cdot [\cC])$ where we view $[\cC] \in A_*(\cX)_{\QQ}$ in the Chow ring. To compute the intersection product we may descend to the coarse space. Indeed, by \cite[Prop. 6.1]{MR1005008} the pushforward morphism $\pi_* : A_*(\cX)_\QQ \to A_*(X)_{\QQ}$ defines an isomorphism of rings where $\pi : \cX \to X$ denotes the coarse moduli space morphism. We also denote the coarse moduli space of $\cC$ by $C$. Let $d := \gcd(a,b)$ so that $\deg(\cC/C) = 1/d$, and hence $\pi_*[\cC] = \frac{1}{d} C$ (see \cite[Def. 3.6. (iii)]{MR1005008}). The pullback $\pi^* = (\pi_*)^{-1}$ is the inverse morphism, and hence, $\pi^*[C] = d[\cC]$. Then, by the projection formula it follows that
\[
\pi_*([\cC] \cdot [\cC]) = \pi_* \left ( [\cC] \cdot \frac{1}{d}\pi^*[C] \right ) = \frac{1}{d^2} \, [C]\cdot[C].
\]
By taking degrees we see that $\cC \cdot \cC = (C^2)_X /d^2.$
Thus it is enough to show that $C^2 < 0.$
To do so, let $p : \tilde{X} \to X$ be a resolution of singularities. By the projection formula
\[
C^2 = (p_*p^*C) \cdot C = (p^*C)^2.
\]
Moreover, $p^*C$ is effective and exceptional for the projective birational morphism $\bar{f} \circ p : \tilde{X} \to Z$ where $\bar{f} : X \to Z$ is the induced morphism on the coarse spaces. Therefore, by the relative Hodge index theorem, $C^2 = (p^*C)^2 < 0$.
In other words, $\cC \cdot \cC < 0$, and $\mathcal{O}_{\cX}(\cC)|_{\cC} \simeq \mathcal{O}_{\cC}(-m)$ for some $m > 0.$
\end{proof}   

To finish the proof we require the following result of Stephen Obinna.

\begin{theorem}[{\cite[Thm. A.1]{SWB2024}}]
   \label{-m case}
Suppose $\cX$ is a smooth and separated Deligne--Mumford stack, and $\cE \hookrightarrow \cX$ is a Cartier divisor such that $\cE \simeq \mathcal{P}(a_1, \ldots, a_r).$ 
Let $\mathcal{N} = \mathcal{O}_{\cX}(\cE)|_{\cE}$, and set $\cX_m := \cX(\sqrt[\leftroot{-2}\uproot{2}m]\cE)$ and $\cE_m := \cC(\sqrt[\leftroot{-2}\uproot{2}m]{\mathcal{N}}).$
Then, there exists a smooth and separated Deligne--Mumford stack $\cZ_m$ completing the diagram below, and such that $\cX_m$ can be recovered as a weighted blowup of $\cZ_m$ at $B\mu_m$ with exceptional divisor $\cE_m$.
\[
\begin{tikzcd}
\cE_m \arrow[rr, hook] \arrow[rd] \arrow[dd] &                                 & \cX_m \arrow[rd] \arrow[dd, dotted] &     \\
                                             & \cE \arrow[rr, hook] \arrow[dd] &                                     & \cX \\
B\mu_m \arrow[rd] \arrow[rr, dotted, hook]   &                                 & \cZ_m                               &     \\
                                             & p                               &                                     &    
\end{tikzcd}
\]
\end{theorem}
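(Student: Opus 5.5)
The plan is to reduce the statement to the existence criterion, Theorem~\ref{existence theorem}. Concretely, I would show three things: that $\cE_m\subseteq\cX_m$ is a smooth Cartier divisor, that it is a weighted projective bundle $\cE_m\to B\mu_m$ (necessarily with positive dimensional fibers, since $r\ge 2$), and that $\mathcal{O}_{\cX_m}(\cE_m)|_{\cE_m}\simeq\mathcal{O}_{\cE_m}(-1)\otimes(\text{pullback of a line bundle on }B\mu_m)$.

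Once these hold, Theorem~\ref{existence theorem} produces a smooth separated Deligne--Mumford stack $\cZ_m$, a closed embedding $B\mu_m\hookrightarrow\cZ_m$, and a weighted blowup $\cX_m\to\cZ_m$ with exceptional divisor $\cE_m$, all sitting in a pushout square. This gives the dotted arrows. The map $B\mu_m\to p$ is then just the structure map, and the front face of the diagram is the pushout.

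\textbf{Step 1: local structure of the root stack.} I first recall the standard facts about $\cX_m=\cX(\sqrt[m]{\cE})$. Since $\cE$ is a smooth Cartier divisor in the smooth stack $\cX$, the root stack is again smooth and separated. The reduced preimage of $\cE$ in $\cX_m$ is the universal $m$-th root divisor, which is Cartier; call it $\cE_m$. This divisor satisfies two identities:
\[
\mathcal{O}_{\cX_m}(m\cE_m)\simeq \mathcal{O}_{\cX}(\cE)|_{\cX_m},
\qquad
\cE_m\simeq \cE(\sqrt[m]{\mathcal N}).
\]
The second identity exhibits $\cE_m$ as a $\mu_m$-gerbe over $\cE$. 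Its tautological root line bundle $\mathcal{M}$, with $\mathcal{M}^{\otimes m}\simeq\mathcal N|_{\cE_m}$, is exactly $\mathcal{O}_{\cX_m}(\cE_m)|_{\cE_m}$. All of this is étale local on $\cX$ and reduces to the model $[\AA^1/\mu_m]\to\AA^1$, so it is routine.

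\textbf{Step 2: explicit quotient presentation of $\cE_m$ (the main step).} I would write everything as a quotient by a two-dimensional torus. Write $\cE=[(\AA^r\setminus 0)/\GG_m]$ with weights $(a_1,\dots,a_r)$, and $\mathcal N\simeq\mathcal{O}(k)$; here $\Pic\cP(a_1,\dots,a_r)\simeq\ZZ$. The total space of $\mathcal N$ minus its zero section is $[(\AA^r\setminus0)\times\GG_m/\GG_m]$, with weight $k$ on the last coordinate. By the description of the root stack as $[\mathcal N^\times/\GG_m]$, with $\GG_m$ acting through the $m$-th power, we get
\[
\cE_m\simeq\bigl[(\AA^r\setminus 0)\times\GG_m\,/\,\GG_m^2\bigr],
\qquad
(s,u)\cdot(x,w)=(s^{a_1}x_1,\dots,s^{a_r}x_r,\;s^{k}u^{m}w).
\]
Slicing at $w=1$ gives $\cE_m\simeq[(\AA^r\setminus0)/H]$, where $H=\{(s,u):s^ku^m=1\}$ is a rank-one diagonalizable group acting on $\AA^r\setminus0$ through the characters $s^{a_i}$.

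Choosing a splitting $H\simeq\GG_m\times F$ with $F$ finite, the projection to $BF$ makes $\cE_m$ a weighted projective bundle $\cP(\tilde a_1,\dots,\tilde a_r)\to BF$. In the case relevant to us, $F\simeq\mu_m$; this is the point where I must track $\gcd$'s and confirm the hypothesis $\cE\simeq\cP(a_1,\dots,a_r)$ suffices. The $\mathcal{O}(1)$ of this bundle is the character of $H$ given by the projection to the identity component. The tautological root $\mathcal M$ is the character $u$ (or its inverse, depending on conventions). So the normal-bundle condition becomes the character-theoretic statement that $u^{\pm1}$ restricted to $H$ equals (generator of $X^*(H^\circ)$)$^{-1}$ times a character pulled back from $F$.

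\textbf{Expected obstacle.} I expect Step 2 to be the main obstacle: choosing the splitting of $H$ so that $u$ becomes the $\mathcal{O}(-1)$ of the weighted projective bundle up to a twist from the center. This is exactly where taking the $m$-th root "normalizes" the degree of the normal bundle, turning $\mathcal{O}(-m)$ into $\mathcal{O}(-1)$ in the intended application.

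First I will do the computation for $\mathcal N\simeq\mathcal{O}(-m)$ with $\gcd(a_1,\dots,a_r)=1$. In that case $H=\{(s,u):u^m=s^m\}\simeq\GG_m\times\mu_m$ via $(s,\zeta)\mapsto(s,s\zeta)$, so $u=s\zeta$, which is visibly $\mathcal{O}(1)\otimes\chi_\zeta$. Then I will check that the general case differs only by relabeling weights and the finite part.

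Having verified the three hypotheses, I apply Theorem~\ref{existence theorem} and conclude as described in the first paragraph.
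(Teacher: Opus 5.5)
The paper does not prove this statement; it quotes it from \cite[Thm.~A.1]{SWB2024}, so there is no internal argument to compare with. Judged on its own, your reduction to Theorem~\ref{existence theorem} is sound in the case $\mathcal{N}\simeq\mathcal{O}_{\cE}(-m)$. Step~1 is correct. In Step~2, the splitting $(s,\zeta)\mapsto(s,s\zeta)$ of $H=\{u^m=s^m\}$ works for arbitrary weights, and $\mu_m$ acts trivially on $\AA^r\setminus 0$. Hence $\cE_m\simeq\cP(a_1,\ldots,a_r)\times B\mu_m$, and no $\gcd$ bookkeeping is needed. The sign you hedge on is forced without fixing conventions. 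For $r\ge 2$ one has $\Pic(\cE_m)=X^*(H)\simeq\ZZ\oplus\ZZ/m$, and $\mathcal{M}^{\otimes m}\simeq\mathcal{N}|_{\cE_m}\simeq\mathcal{O}(-m)$ forces $\mathcal{M}\simeq\mathcal{O}(-1)\otimes\chi$ for a character $\chi$ of $\mu_m$. In your normalization, where $s$ corresponds to $\mathcal{O}(1)$ and $H$ is cut out by $s^ku^m=1$, this means $\mathcal{M}$ is $u^{-1}$, not $u$.

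What does not work is your closing plan to check that ``the general case differs only by relabeling weights and the finite part''. The statement as printed omits the hypothesis $\mathcal{N}\simeq\mathcal{O}_{\cE}(-m)$, which is exactly what Proposition~\ref{-1 curve} feeds in. Without it the conclusion is false, so that step cannot be carried out.

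Your own computation shows this. Take $\mathcal{N}\simeq\mathcal{O}(-k)$ with $k>0$ and set $d=\gcd(k,m)$. Then $X^*(H)\simeq\ZZ^2/\ZZ(-k,m)\simeq\ZZ\oplus\ZZ/d$. So $\cE_m$ is a $\cP(\tfrac{m}{d}a_1,\ldots,\tfrac{m}{d}a_r)$-bundle over $B\mu_d$, on which $\mathcal{M}$ has fiber degree $-k/d$. You get base $B\mu_m$ together with normal bundle $\mathcal{O}(-1)$ up to a twist only when $k=m$. For $k\le 0$ the normal bundle is not even negative.

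A concrete counterexample: take $\cX=\Bl_0\AA^2$ and $m=2$. Then $\cE_2\simeq\cP(2,2)$, and $\cX_2$ is the $(2,2)$-weighted blowup of $\AA^2$ at a reduced point. Every $\mu_2$-torsor on $\cP(2,2)$ is trivial by Proposition~\ref{weighted projective curve}, so any morphism $\cE_2\to B\mu_2$ has disconnected fibers. Hence $\cX_2$ is not a weighted blowup at $B\mu_2$ with exceptional divisor $\cE_2$.

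So add $\mathcal{N}\simeq\mathcal{O}_{\cE}(-m)$ as a hypothesis, and drop the ``general case''. Also add $r\ge 2$, which you use both for positive-dimensional fibers and for $\Pic(\cE_m)=X^*(H)$. With these hypotheses, your argument proves the statement.
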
  

\begin{Prop}
   \label{-1 curve}
The contracted stacky curve $\cC$ is a $(-1)$-curve, i.e., $\mathcal{O}_{\cX}(\cC)|_{\cC} \simeq \mathcal{O}_{\cC}(-1).$ 
\end{Prop}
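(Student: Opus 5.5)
We know $\cC\simeq\cP(a,b)$ by Corollary~\ref{contracted curve is weighted projective}, and Proposition~\ref{-m} gives $\mathcal{N}:=\mathcal{O}_{\cX}(\cC)|_{\cC}\simeq\mathcal{O}_{\cC}(-m)$ with $m>0$. The plan is to realize $\cX$ as a quotient of a weighted blowup via Obinna's Theorem~\ref{-m case}, and then show that the target $Z$, being a smooth \emph{scheme}, forces $m=1$.

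First we apply Theorem~\ref{-m case} with $\cE=\cC$. This gives a smooth separated Deligne--Mumford stack $\cZ_m$ and a weighted blowup $\cX_m:=\cX(\sqrt[m]{\cC})\to\cZ_m$ at $B\mu_m$, with exceptional divisor $\cE_m=\cC(\sqrt[m]{\mathcal{N}})$. Since $\mathcal{N}\simeq\mathcal{O}(-m)$, the root stack $\cE_m$ should be $\cP(ma,mb)$, a $\mu_m$-gerbe over $\cP(a,b)$. The root stack $\cX_m\to\cX$ is an isomorphism away from $\cC$, so $\cZ_m\setminus B\mu_m\simeq Z\setminus p$.

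Next we compare with $Z$. By the pushout property in Theorem~\ref{existence theorem}, the composite $\cX_m\to\cX\to Z$ contracts $\cE_m$ to $p$, so it factors through a morphism $\cZ_m\to Z$. As in the proof of Proposition~\ref{contraction to a point case}, Zariski's main theorem identifies the coarse space of $\cZ_m$ with $Z$, because the induced map is proper, bijective and birational onto the normal $Z$. Thus $\cZ_m$ is a smooth DM stack with smooth coarse space $Z$, trivial generic stabilizer, and the only stacky point lying over the codimension-two point $p$, where the stabilizer is $\mu_m$. The bottom up characterization (Theorem~\ref{Theorem:bottomup}), or Chevalley--Shephard--Todd purity, should then force $\cZ_m\simeq Z$: a smooth DM stack that is a scheme in codimension one and has smooth coarse space is its coarse space. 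Hence the stabilizer $\mu_m$ at $p$ is trivial, so $m=1$.

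The main obstacle is justifying the last step. We need $B\mu_m\hookrightarrow\cZ_m$ to be the full residual gerbe at $p$. If it is, then the residual gerbe $B\mu_m$ equals the residual gerbe $B\{1\}=p$, so $m=1$ at once. The root construction should make $\mu_m$ act faithfully on the normal directions at the point: in the local model $[\AA^2/\mu_m]$, the weighted blowup of the origin has exceptional divisor $\cP(ma,mb)$, with $\mu_m$ acting by weights $(a,b)$ up to the gerbe. So I would verify carefully, through the local description of weighted blowups (Example~\ref{2,3 weighted blowup}), that a weighted blowup of a smooth stack at $B\mu_m$ with $\mu_m$ acting trivially on the tangent space cannot occur. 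The alternative case would give a nontrivial generic-gerbe structure, contradicting that $\cZ_m\setminus B\mu_m$ is a scheme.

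Once $\cZ_m\simeq Z$, I would double-check consistency by a degree or self-intersection comparison. A weighted blowup of a smooth scheme point has $\mathcal{O}(\cE)|_{\cE}\simeq\mathcal{O}(-1)$. On the other hand, $\mathcal{O}_{\cX_m}(\cE_m)|_{\cE_m}$ is an $m$-th root of the pullback of $\mathcal{O}(-m)$, which also confirms $m=1$.
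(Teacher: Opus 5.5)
Your argument follows the paper's proof essentially step for step: Obinna's Theorem~\ref{-m case} applied to the root stack $\cX_m$, the pushout property producing $\cZ_m\to Z$, Zariski's main theorem on coarse spaces, and the bottom-up characterization forcing $\cZ_m\simeq Z$, hence $m=1$. The step you flag as the main obstacle is immediate: the closed immersion $B\mu_m\hookrightarrow\cZ_m\simeq Z$ is a monomorphism, hence representable, so $\mu_m$ must be trivial. Separately, $\cE_m$ is the trivial gerbe $\cP(a,b)\times B\mu_m$ rather than $\cP(ma,mb)$ (since $\mathcal{O}(-m)=\mathcal{O}(-1)^{\otimes m}$), but your argument never uses that remark.
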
 

\begin{proof}
   By Proposition \ref{-m} we have $\mathcal{O}_{\cX}(\cC)|_{\cC} \simeq \mathcal{O}_{\cC}(-m)$ where $m > 0$. By Theorem \ref{-m case} taking the $m$-th root stack $\cX_m$ of $\cX$ along $\cC$ there exists a smooth separated Deligne--Mumford stack $\cZ_m$ such that $\cX_m$ can be recovered as the weighted blowup of $\cZ_m$ at a point $B\mu_m.$
   Moreover, by Theorem \ref{existence theorem}, there exists a unique morphism $g : \cZ_m \to Z$ completing the following diagram.
   \[
\begin{tikzcd}
\cC_m \arrow[rr, hook] \arrow[rd] \arrow[dd] &                                 & \cX_m \arrow[dd] \arrow[rd]                             &                     \\
                                             & \cC \arrow[rr, hook] \arrow[dd] &                                                         & \cX \arrow[dd, "f"] \\
B\mu_m \arrow[rr, hook] \arrow[rd]           &                                 & \cZ_m \arrow[rd, "{\exists ! \, g}"] \arrow[dd] &                     \\
                                             & p \arrow[rr, hook]              &                                                         & Z                   \\
                                             &                                 & Z_m \arrow[ru, "\bar{g}"]                               &                    
\end{tikzcd}
   \]
Note that the morphism $g$ is bijective and induces $\cZ_m \setminus B\mu_m \simeq Z \setminus p$. Passing to the coarse moduli space $Z_m$ of $\cZ_m$ defines a morphism $\bar{g} : Z_m \to Z.$ 
By a diagram chase observe that $\bar{g}$ is bijective and defines an isomorphism away from $p.$
Moreover, the morphism $\bar{g} : Z_m \to Z$ is proper \cite[\href{https://stacks.math.columbia.edu/tag/03GN}{Tag 03GN}]{stacks-project}.
Since $Z$ is smooth, it follows from Zariski's main theorem that $\bar{g}$ is an isomorphism.
Thus, $g : \cZ_m \to Z$ is the coarse moduli space morphism, which is an isomorphism away from $B\mu_m$ and $p$, and hence, $\cZ_m$ is a scheme in codimension $1$. 
By the bottom up characterization it follows that $\cZ_m \simeq Z$. 
Therefore, $\cZ_m$ is a scheme, and $B\mu_m \hookrightarrow \cZ_m$ has trivial stabilizer, i.e., $m = 1$.
\end{proof}

We can now give the proof of the main theorem.

\begin{proof}[Proof of Theorem \ref{main body}]
Suppose $f : \cX \to Z$ is a contraction, which contracts a smooth stacky curve $\cC \hookrightarrow \cX$ to a point $p \in \cZ$ such that $f$ restricts to an isomorphism $\cX \setminus \cC \simeq Z \setminus p.$ By Corollary \ref{contracted curve is weighted projective} and Proposition \ref{-1 curve} we see $\cC \simeq \cP(a,b)$ for some $a,b > 0$ and $\mathcal{O}_\cX(\cC) |_{\mathcal{C}} \simeq \mathcal{O}_{\cC}(-1)$. 
Thus, by Theorem \ref{existence theorem} there exists a smooth Deligne--Mumford stack $\cZ'$ and a morphism $\sigma : \cX \to \cZ'$ such that $\sigma$ is a weighted blowup of $\cZ'$ at a point.
By the universal property of a weighted blowup there is a natural morphism $\cZ' \to Z$ and arguing similar to the proof of Proposition \ref{-1 curve} using Zariski's main theorem, it follows that $\cZ' \simeq Z$ and $f : \cX \to Z$ is a weighted blowup of $\cZ$ at a point $p$ whose exceptional curve is $\cC$.
\end{proof}   

\section{An application to Hassett Moduli stack of weighted stable curves}
\label{section:applications}

In this section we apply the results obtained in the preceding sections to the reduction morphisms of Hassett moduli stacks of stable weighted curves. In \cite[Rem.~4.6]{Hassett03}, Hassett states that these reduction morphisms are smooth blowups under certain conditions on the weights; we report a proof using Theorem~\ref{main representable}. We briefly recall the definition of Hassett moduli stacks and the associated reduction morphisms. {We refer the reader to \cite{Hassett03} for further details}.

\begin{Def}\label{Def: nodal family}
For integers $g, n\geq 0$, a family of nodal curves of genus $g$ with $n$ marked points over a noetherian scheme $B$ is a flat proper morphism $\pi:C \rightarrow B$ admitting $n$ sections $s_1,s_2,\ldots,s_n$, whose geometric fibers are nodal connected curves of arithmetic genus $g$.
\end{Def}

\begin{Def}\label{Def: weighted stable nodal family}
\begin{enumerate}
    \item Define input data to be a tuple $(g,\cA):=(g,a_1,\dots,a_n)$ consisting of an integer $g\ge 0$ and the {\em weight data} $(a_1,\dots,a_n)\in \QQ^n$ satisfying $0<a_j\leq 1$ for $j=1,\ldots,n$ and $2g-2+a_1+a_2+\dots +a_n>0$.
    \item A family of nodal curves with marked points $(C,s_1,\ldots,s_n)\xrightarrow{\pi}B$ is stable of type $(g,\cA)$ if
    \begin{itemize}
        \item the sections $s_1,\ldots,s_n$ lie in the smooth locus of $\pi$,
        \item for any $\{i_1,\dots, i_r\}\subseteq [n]$ such that $\bigcap_{j=1}^{r}s_{i_j}(B)\neq\emptyset$, we have $a_{i_1}+\dots +a_{i_r}\leq 1$ and,
        \item $K_{\pi}+a_1s_1+a_2s_2+\dots +a_ns_n$ is $\pi$-ample.
    \end{itemize}
\end{enumerate}
\end{Def}

Given a choice of input data $(g,\cA)$, Hassett \cite{Hassett03} constructed the connected, smooth, proper Deligne-Mumford stack $\overline{\mathcal{M}}_{g,\cA}$ over $\ZZ$, which represents the moduli problem of pointed stable curves of type $(g,\cA)$. Furthermore, $\overline{\mathcal{M}}_{g,\cA}$ admits a projective coarse moduli space $\overline{M}_{g,\cA}$ over $\mathbb{Z}$. We refer the reader to \cite{Hassett03} for further details on construction and properties of these stacks.

Fix integers $g,n\geq 0$ and tuples $\cA=(a_1,\dots, a_n)$ and $\mathcal{B}=(b_1,\dots, b_n)$ satisfying conditions of Definition~\ref{Def: weighted stable nodal family}(1) and such that $b_j\leq a_j$ for all $1\leq j\leq n$. Then by \cite[Thm.~4.1]{Hassett03}, there exists a natural birational reduction morphism $\rho_{\mathcal{B},\cA}:\overline{\mathcal{M}}_{g,\cA}\rightarrow \overline{\mathcal{M}}_{g,\mathcal{B}}$ defined on objects by mapping $(C,s_1,\dots, s_n)\in \overline{\mathcal{M}}_{g,\cA}$ to a curve obtained by successively collapsing components of $C$ on which $K_C+b_1s_1+\dots+b_ns_n$ is not ample.

\begin{Prop}\label{Prop:reductionrepresent}
    The reduction morphisms $\rho_{\mathcal{B},\cA}:\overline{\mathcal{M}}_{g,\cA}\rightarrow \overline{\mathcal{M}}_{g,\mathcal{B}}$ are representable.
\end{Prop}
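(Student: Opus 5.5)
We will show representability by checking that $\rho_{\mathcal{B},\cA}$ is injective on automorphism groups of geometric points. For a morphism of Deligne--Mumford stacks, representability by algebraic spaces holds exactly when, for every geometric point $x=(C,s_1,\dots,s_n)$ of $\overline{\mathcal{M}}_{g,\cA}$ over an algebraically closed field $k$, the map
\[
\Aut_{\overline{\mathcal{M}}_{g,\cA}}(x)\to \Aut_{\overline{\mathcal{M}}_{g,\mathcal{B}}}(\rho_{\mathcal{B},\cA}(x))
\]
is injective. We will use this criterion, since the automorphism groups of both stacks are finite reduced group schemes in characteristic zero and are unramified more generally.

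Write $(C',s'_1,\dots,s'_n)=\rho_{\mathcal{B},\cA}(C,s_1,\dots,s_n)$. By Hassett's construction there is a contraction $c: C\to C'$ with $c\circ s_i=s'_i$. It is the morphism defined by sections of a power of $\omega_C(\sum b_i s_i)$, possibly through successive steps. It collapses exactly the components on which $K_C+\sum b_is_i$ fails to be ample, and it is an isomorphism over a dense open subset of $C'$. The construction is canonical. Hence each automorphism $\phi$ of $(C,s_\bullet)$ descends to a unique automorphism $\phi'$ of $(C',s'_\bullet)$ with $c\circ\phi=\phi'\circ c$, and $\phi\mapsto\phi'$ is the induced map on stabilizers. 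Equivalently, the reduction morphism is defined functorially on families, so this is just its action on isotropy.

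Now suppose $\phi'=\id$. Then $c\circ\phi=c$. Let $U\subseteq C'$ be the dense open subset over which $c$ is an isomorphism. On $c^{-1}(U)$ we get $\phi=c^{-1}\circ c\circ\phi=\id$. Since $c^{-1}(U)$ is dense in the reduced separated curve $C$, it follows that $\phi=\id$. For this density we must check that $c$ contracts no irreducible component of $C$ that fails to meet $c^{-1}(U)$ densely, i.e.\ that every component of $C$ is either mapped birationally onto a component of $C'$ or contracted to a point.

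This density is the main obstacle. The contracted components are exactly those on which $\phi$ is not yet known to be trivial. The statement is compatible with this: Hassett's reduction contracts only rational components, which after collapsing carry at most two special points, and these special points are nodes or markings $s_i$. We will argue that $\phi$ restricts to the identity on each contracted component $R\simeq\PP^1$. First, $\phi$ preserves $R$, because it fixes $c^{-1}(U)$ pointwise and the adjacent non-contracted components are fixed. Next, $\phi|_R$ fixes every special point of $R$, since these are either marked points (fixed by definition) or nodes attaching $R$ to components where $\phi$ is already the identity; we proceed inductively along the trees of contracted components. If $R$ has at least three special points, $\phi|_R=\id$. The remaining case is a contracted $\PP^1$ with two special points, where a $\GG_m$ of automorphisms fixing them could survive. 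This case would violate $\cA$-stability of $(C,s_\bullet)$, since $K_C+\sum a_is_i$ must be ample on every component, and a component with only two special points has degree $\le 0$ there. Ruling it out with care, including chains of contracted components and points where several sections coincide in $C'$ but not in $C$, is the technical heart of the argument. Once it is done, $\phi=\id$, so the stabilizer map is injective and $\rho_{\mathcal{B},\cA}$ is representable.
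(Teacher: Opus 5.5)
Your proposal takes the same route as the paper: reduce to injectivity on geometric stabilizers (the paper cites \cite[Lem.~4.4.3]{AV02}), use functoriality of the reduction to get $c\circ\phi=c$, and then show $\phi=\id_C$ by the standard rigidity argument used for $\overline{M}_{0,n}$, which the paper only declares standard and you largely spell out. The one step to tighten is the induction on a contracted tree: nodes pointing away from the attachment point are fixed not because $\phi$ is already the identity beyond them, but because the branches there carry disjoint nonempty sets of markings (every leaf has $a$-weight $>1$); moreover each contracted $\PP^1$ has at least three distinct special points because coincident markings carry total $a$-weight $\le 1$.
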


\begin{proof}
    By \cite[Lem.~4.4.3]{AV02}, it suffices to prove that the induced group homomorphism $\Aut(\xi)\rightarrow \Aut(\rho_{\mathcal{B},\cA}(\xi))$ is injective for any geometric point $\xi\in \overline{\mathcal{M}}_{g,\cA}(k)$ for any algebraically closed field $k$. Thus, using the notations from above, let $q: C\rightarrow C_{\mathcal{B}}$ be the reduction of a geometric point $\xi:=(C,s_1,\dots, s_n)\in \overline{\mathcal{M}}_{g,\cA}(k)$, and let $\alpha\in\Aut_k(C;s_1,\ldots,s_n)$ induce the identity automorphism of the $\mathcal{B}$-weighted stable curve $\rho_{\mathcal{B},\cA}(\xi):=(C_{\mathcal{B}}, q(s_1),\dots,q(s_n))$. By functoriality of the log-canonical reduction and therefore of $\rho_{\mathcal{B},\cA}$, we have $q\circ\alpha=q$. Therefore injectivity of $\Aut(\xi)\rightarrow \Aut(\rho_{\mathcal{B},\cA}(\xi))$ is equivalent to showing $\alpha=\id_C$.{ The proof that $\alpha=id_C$ is now standard, as it is the same argument used to prove that $\overline{M}_{0,n}$ is a scheme.}
    \end{proof}

For any input data $(g,\cA)$ and labeled subset $I=\{i_1,\dots, i_r\}\subset [n]$ where $n=|\cA|$, let $\cA'_I:=(a_{i_1},\dots, a_{i_r}, 1)$. Furthermore, we use the notation  $a_I:=\sum_{i\in I} a_i$, $b_I:=\sum_{i\in I}b_i$. By \cite[Prop.~4.5]{Hassett03}, the exceptional divisors of $\rho_{\mathcal{B},\cA}:\overline{\mathcal{M}}_{g,\cA}\rightarrow \overline{\mathcal{M}}_{g,\mathcal{B}}$ are exactly the boundary divisors $D_{I,J}:=\overline{\mathcal{M}}_{0, \cA'_I}\times\overline{\mathcal{M}}_{g, \cA'_J}$ where $(I,J)$ are partitions of $[n]$ indexed by
\begin{equation}\label{Equation:Indexset}
\mathfrak E_g(\cA,\mathcal{B}):=
 \left\{\begin{array}{l}(I,J): \ I:=\{i_1,\dots, i_r\}\subseteq [n],\ J:=\{j_1,\dots, j_{n-r}\}=[n]\setminus I \\
  \text{such that }|I|\geq3,\ a_I>1,
 \ b_I\leq1,\ \  2g-1+a_{J}>0.
 \end{array}
\right\}
\end{equation}
\begin{Remark}
    In particular, for every $(I,J)\in \mathfrak{E}_g(\cA, \mathcal{B})$, the divisor $D_{I,J}$ is smooth and irreducible as it is isomorphic to $\overline{\mathcal{M}}_{0, \cA'_I}\times\overline{\mathcal{M}}_{g, \cA'_J}$.
\end{Remark}

The conditions $a_I>1$ and $2g-1+a_J>0$ are required to make $(0, \cA'_I)$ and $(g, \cA'_J)$ valid input data (see Definition~\ref{Def: weighted stable nodal family}(1)). Note that $2g-1+a_J>0$ is automatic when $g\geq 1$. The conditions $b_I\leq 1$ and $|I|\geq 3$ are given by \cite[Prop.~4.5]{Hassett03}. The clutching morphism $\overline{\mathcal{M}}_{0, \cA'_I}\times\overline{\mathcal{M}}_{g, \cA'_J}\rightarrow \overline{\mathcal{M}}_{g, \cA}$ is an isomorphism onto its image {since one of the factors has arithmetic genus $0$} (a similar proof as \cite[Cor.~3.9]{Knudsen83} in the case of unweighted marked stable curves), whereby the boundary divisor $D_{I,J}$ is a smooth prime Weil divisor on a smooth Deligne-Mumford stack, and thus a smooth effective Cartier divisor (alternatively, each $D_{I,J}$ is an irreducible component of $\overline{\mathcal{M}}_{g, \cA}\setminus \mathcal{M}_{g, \cA}$, which is a stack-theoretic normal crossings divisor by \cite[Thm.~1.1]{Ulirsch15}). We now determine the exact conditions under which two exceptional prime divisors $D_{I,J}$ and $D_{I'.J'}$ intersect. This is an analogue of \cite[Fact 4]{Keel92}.

\begin{Lemma}\label{Lemma:Exceptionalintersect}
    Let $I, I'\subseteq [n]$ and $J=[n]\setminus I$ (respectively, $J'=[n]\setminus I'$) such that $(I,J),\ (I',J')\in \mathfrak E_g(\cA,\mathcal{B})$. Then $D_{I,J}\cap D_{I', J'}\neq \emptyset$ if and only if one of the following holds: i) $I\subseteq I'$ or $I'\subseteq I$ or ii) $I\cap I'=\emptyset$.
\end{Lemma}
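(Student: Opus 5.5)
We will argue with the stable curves that the boundary strata parametrize, which is how Keel proves the analogous \cite[Fact 4]{Keel92}. A point of $D_{I,J}$ is a $\cA$-stable curve $C = C_I \cup C_J$ glued at one node. Here $C_I$ is a genus $0$ component-tree carrying the markings in $I$ together with the node, and $C_J$ has genus $g$ and carries the markings in $J$ together with the node. Hence a point lies in $D_{I,J}\cap D_{I',J'}$ exactly when the curve has two separating nodes, possibly equal, whose genus-$0$ sides carry the marking sets $I$ and $I'$.

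\emph{Necessity.} Take $C$ in the intersection. If the two separating nodes coincide, then $I=I'$: the genus-$0$ side of a separating node is determined, because the other side has genus $g$. When $g=0$ we need care, since both sides have genus $0$. In that case we use the labelling convention that $(I,J)$ has $I$ on the side not containing the distinguished structure, or we simply note that the pair $\{I,J\}$ is determined, so that $I'=I$ or $I'=J$. The condition $b_I\le 1$ combined with $b_J$ should exclude $I'=J$; failing that, we treat it as the complement case. If the nodes are distinct, cutting at both nodes splits $C$ into three connected pieces. The genus-$0$ side of each node is a union of some of these pieces, and each marking lies on exactly one piece. A short case analysis on where the genus-$g$ piece sits then shows that either one genus-$0$ side contains the other, giving $I\subseteq I'$ or $I'\subseteq I$, or the two sides are disjoint, giving $I\cap I'=\emptyset$.

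\emph{Sufficiency.} In each case we construct an explicit curve lying in both divisors.
\begin{itemize}
\item If $I\subsetneq I'$, glue a genus-$0$ curve with markings $I$ and one node, a genus-$0$ curve with markings $I'\setminus I$ and two nodes, and a genus-$g$ curve with markings $J'$ and one node. For the middle piece we must check stability, namely $a_{I'\setminus I}+2>2$; this holds since all $a_i>0$ and $I'\setminus I\neq\emptyset$. The outer pieces are stable by the conditions defining $\mathfrak E_g(\cA,\mathcal B)$.
\item If $I\cap I'=\emptyset$, attach two genus-$0$ tails, one carrying $I$ and one carrying $I'$, to a genus-$g$ curve carrying $J\cap J'$ and two nodes. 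That curve needs $2g-2+a_{J\cap J'}+2>0$, which holds automatically when $g\ge1$. When $g=0$ it needs $J\cap J'\neq\emptyset$, or else we replace the curve by a single node joining the two tails; in the latter case $I'=J$.
\item If $I=I'$, the statement is trivial.
\end{itemize}
In every case the weights $a$ on the genus-$0$ tails satisfy the requirement that coinciding sections have total weight at most $1$, because the markings are distinct points.

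The main obstacle we expect is the bookkeeping in genus $0$, where the genus-$g$ side does not single out a canonical side of each node. We plan to handle this by tracking which side $J$ lies on, together with the condition $a_I>1$. The clutching map being an isomorphism onto its image guarantees that $D_{I,J}$ is exactly the locus described above, so no further stack-theoretic input is needed.
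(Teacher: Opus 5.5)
You follow the same route as the paper: separating nodes, cutting the curve into three pieces, and explicit chain and comb curves for sufficiency. The necessity argument, however, has a genuine gap when $g=0$. Suppose the nodes are distinct and cut $C$ into a chain $C_1\cup C_2\cup C_3$. The side of $q_I$ carrying $I$ is either $C_1$ or $C_2\cup C_3$, and the side of $q_{I'}$ carrying $I'$ is either $C_3$ or $C_1\cup C_2$.

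The mixed configuration $I=I_2\cup I_3$, $I'=I_1\cup I_2$ gives $I\cup I'=[n]$ and $I\cap I'=I_2$. These sets are in general neither nested nor disjoint. For $g\ge 1$ your genus bookkeeping rules this case out, since it forces $g_1=g_2=g_3=0$. For $g=0$ there is no genus-$g$ piece, and your proposed remedy (tracking $J$ together with $a_I>1$) cannot work, because every $\cA$-condition in $\mathfrak E_0(\cA,\mathcal{B})$ is compatible with this configuration. For example, take $n=6$, all $a_i=1$, and $I_1=\{1,2\}$, $I_2=\{3,4\}$, $I_3=\{5,6\}$. The chain with two markings on each component is $\cA$-stable, and $|I|=|I'|=4$, $a_I=a_{I'}=4>1$, $a_J=a_{J'}=2>1$. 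Yet $I$ and $I'$ are neither nested nor disjoint.

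The missing input is the $\mathcal{B}$-weights. Membership in $\mathfrak E_g(\cA,\mathcal{B})$ gives $b_I\le 1$ and $b_{I'}\le 1$. Since $(0,\mathcal{B})$ is valid input data, $b_{[n]}>2$. Hence $2\ge b_I+b_{I'}=b_{[n]}+b_{I\cap I'}>2$, a contradiction; this is exactly how the paper excludes the fourth case.

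Equivalently, in genus $0$ at most one orientation of a split $\{I,J\}$ can lie in $\mathfrak E_0(\cA,\mathcal{B})$, because $b_I+b_J=b_{[n]}>2$. So the side with $b$-weight $>1$ can play the role your genus-$g$ side plays when $g\ge 1$, and with that replacement your case analysis goes through. The same inequality shows that the $g=0$, $J\cap J'=\emptyset$ edge case in your sufficiency argument (where $I'=J$) never arises, though your treatment of it is harmless.
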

\begin{proof}
We first show the necessity of the conditions. Suppose $D_{I,J}\cap D_{I', J'}\neq \emptyset$ and let $\xi:=(C, s_1,\dots, s_n)$ be a geometric point of the intersection. By definition of $D_{I,J}$, there is a separating node $q_I\in C$ such that one of the two connected components obtained by partially normalizing $C$ at $q_I$ is a connected genus $0$ subcurve $C_I$ carrying exactly the markings
with labels in $I$. Symmetrically, there is a separating
node $q_{I'}\in C$, partial normalization at which yields a connected genus $0$ subcurve $C_{I'}$ with markings labelled exactly in $I'$. If $q_I=q_{I'}$, then either $C_I=C_{I'}$, implying $I=I'$, or the normalization at the common node is $C_I\sqcup C_{I'}$, implying $I\cap I'=\emptyset$ (furthermore, this can happen only if $g=0$). Either case leads to conditions (i) or (ii) in the statement of the lemma.

Thus, assume that $q_I\neq q_{I'}$. Normalizing $C$ at both the nodes, yields exactly three connected components. Indeed, normalizing at $q_I$ produces two
connected components, and the distinct separating node $q_{I'}$ lies on one of them, normalizing at which further separates that component into two. Let the three
components be $C_1,C_2,C_3$ so that after regluing, $q_I$ is the node joining $C_1$ with $C_2$ and $q_{I'}$ is the node joining $C_2$ with $C_3$. For $j=1,2,3$, let $I_j:=\{i:\ s_i\in C_j\}$ and $g_j:=\rho_a(C_j)$ be the arithmetic genus of $C_j$. The sets $I_1,I_2,I_3$ are pairwise disjoint, with union $I_1\sqcup I_2\sqcup I_3=[n]$, and $g_1+g_2+g_3=g$ since both normalized nodes are separating. Thus, we have the following four possibilities of expressing $I, I'$ in terms of $I_1, I_2, I_3$:
\begin{align*}
     I=I_1,\ I'=I_3
 &\quad\Longrightarrow\quad I\cap I'=\emptyset,\\
 I=I_1,\ I'=I_1\cup I_2
 &\quad\Longrightarrow\quad I\subseteq I',\\
 I=I_2\cup I_3,\ I'=I_3
 &\quad\Longrightarrow\quad I'\subseteq I,\\
 I=I_2\cup I_3,\ I'=I_1\cup I_2
 &\quad\Longrightarrow\quad I\cup I'=[n].
\end{align*}
The first three possibilities lead to conditions (i) or (ii) in the statement of the lemma. We claim that the fourth possibility cannot hold. This is because in the fourth case, we must have $g_1+g_2=0$ and $g_2+g_3=0$ and hence $g=0$. Furthermore, $I\cup I'=[n]$ yields $b_I+b_{I'}=b_{[n]}+b_{I\cap I'}$ on the weights $(b_1,\dots, b_n)$ of the reduced curve $\rho_{\mathcal{B}, \cA}(C)$. By \eqref{Equation:Indexset}, we also have $b_I, b_{I'}\leq 1$, whereas $(g, \mathcal{B})$, with $g=0$, being a valid input data implies $b_{[n]}>2$ by Definition~\ref{Def: weighted stable nodal family}(1). We therefore obtain
\[2\geq b_I+b_{I'}=b_{[n]}+b_{I\cap I'}>2,\]
which is a contradiction. This proves the necessity of either of the conditions (i) or (ii) in the lemma. To prove sufficiency, we now assume either of these conditions and construct a geometric point of $D_{I,J}\cap D_{I',J'}$, thereby proving it to be nonempty.
\begin{enumerate}
    \item [(i)] $I\subseteq I'$ or $I'\subseteq I$: Firstly, if $I=I'$, then $D_{I,J}=D_{I',J'}$. Thus we may assume that the containments are proper. Furthermore, it suffices to only consider $I\subset I'$, as the other case is symmetric. Take three irreducible curves and join them in a chain $C:=C_1\ \cup\ C_2\ \cup\ C_3$, where $C_1\simeq\PP^1$ has $|I|$ distinct marked points labeled by $I$, the middle component $C_2\simeq\PP^1$ has $|I'|-|I|$ distinct marked points labeled by $I'\setminus I$, and $C_3$ is a smooth curve of genus $g$ with $n-|I'|$ distinct marked points labeled by $[n]\setminus I'$. Let the total $n$ marked points be $s_1,\dots, s_n$ and $q_i$ be the number of nodes on $C_i$. To check $\cA$-stability of the curve, it suffices that for $i=1,2,3$
    \begin{equation}\label{Equation:ampledegree2}
    \deg\left(\omega_C^{\otimes m}\otimes\mathcal{O}_C\left(\sum_{i=1}^{n}ma_is_i\right)\right)\Bigg|_{C_i}=2\rho_a(C_i)-2+q_i+\sum_{j:s_j\in C_i}a_j>0.
    \end{equation}
    We have $\rho_a(C_1)=\rho_a(C_2)=0$, while $\rho_a(C_3)=g$ and $q_1=q_3=1$ and $q_2=2$. Since $a_i$'s are all positive and $(I,J), (I',J')\in \mathfrak E_g(\cA,\mathcal{B})$ (where $J=[n]\setminus I$ and $J'=[n]\setminus I'$), it follows that \eqref{Equation:ampledegree2} holds for $i=1,2,3$. Thus, $(C,s_1,\dots, s_n)\in \overline{\mathcal{M}}_{g,\cA}$ and by construction it follows that it lies in $D_{I,J}\cap D_{I',J'}$.
    \item[(ii)] $I\cap I'=\emptyset$: Take two irreducible curves $C_1$ and $C_2$ isomorphic to $\PP^1$ such that $C_1$ has $|I|$ marked points labeled by $I$ and $C_2$ has $|I'|$ marked points labeled by $I'$. Take an irreducible smooth curve $C_0$ of genus $g$ with the rest of the marked points labelled by $[n]\setminus (I\cup I')$ and two distinct points $x_1,x_2\in C_0$ and join $C_1$ and $C_2$ to $C_0$ at the nodes $x_1$ and $x_2$ respectively. By similar calculations as above, one can see that the resulting curve is $\cA$-stable and is clearly a geometric point of $D_{I,J}\cap D_{I',J'}$. 
\end{enumerate}
This completes the proof of the lemma.
\end{proof}

Let $E_g(\cA, \mathcal{B}):=\{I\subseteq [n]\mid (I, [n]\setminus I)\in \mathfrak{E}_g(\cA, \mathcal{B})\}$, where $\mathfrak{E}_g(\cA, \mathcal{B})$ is defined by \eqref{Equation:Indexset}. Then $E_g(\cA, \mathcal{B})$ is a subset of the Boolean lattice of subsets of $[n]$ ordered by inclusion. Recall that an antichain in a Boolean lattice is a subset such that no two distinct elements of which can be compared by the partial order. We say that a subset of a lattice is pairwise intersecting, if any two elements of the subset have a non-zero meet (where zero is the bottom element of the lattice).

\begin{theorem}\label{Theorem:reductionblowup2}
    Let $\rho_{\mathcal{B},\cA}:\overline{\mathcal{M}}_{g,\cA}\rightarrow \overline{\mathcal{M}}_{g,\mathcal{B}}$ be the reduction morphism with respect to weight data $\cA, \mathcal{B}$ such that $E_g(\cA, \mathcal{B})$ is a pairwise intersecting antichain in the Boolean lattice of subsets of $[n]$. Then $\rho_{\mathcal{B}, \cA}$ is an ordinary blowup of $\overline{\mathcal{M}}_{g,\mathcal{B}}$ along a smooth center.
\end{theorem}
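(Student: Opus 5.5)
The plan is to apply Theorem~\ref{main representable} (equivalently Theorem~\ref{rep theorem}) to $f:=\rho_{\mathcal{B},\cA}$, with $\cX=\overline{\mathcal{M}}_{g,\cA}$, $\cZ=\overline{\mathcal{M}}_{g,\mathcal{B}}$ and $\cE=\bigcup_{I\in E_g(\cA,\mathcal{B})} D_{I,[n]\setminus I}$. The hypotheses to verify are the following:
\begin{enumerate}
\item $f$ is a representable contraction between smooth separated Deligne--Mumford stacks;
\item $\cE$ is a smooth Cartier divisor;
\item there is a smooth closed substack $\cY\subseteq\cZ$ with $f^{-1}(\cY)=\cE$ set-theoretically;
\item $f$ is an isomorphism $\cX\setminus\cE\to\cZ\setminus\cY$.
\end{enumerate}

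For (1), representability is Proposition~\ref{Prop:reductionrepresent}. Smoothness and properness of the Hassett stacks, projectivity of the coarse spaces, and birationality of $\rho_{\mathcal{B},\cA}$ come from \cite{Hassett03}. The map on coarse spaces is a morphism of projective varieties, hence projective. For $f_*\mathcal{O}_\cX\simeq\mathcal{O}_\cZ$, I would use that $f$ is proper and birational with normal (smooth) target, so Zariski's main theorem applies after passing to an \'etale atlas. Here representability (Lemma~\ref{projective represent}) makes the base change a scheme.

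For (2), each $D_{I,J}$ is a smooth irreducible Cartier divisor by the preceding Remark. The antichain and pairwise-intersecting assumptions are exactly what Lemma~\ref{Lemma:Exceptionalintersect} needs. For distinct $I,I'\in E_g(\cA,\mathcal{B})$ we have neither $I\subseteq I'$ nor $I'\subseteq I$, and $I\cap I'\neq\emptyset$, so $D_{I,J}\cap D_{I',J'}=\emptyset$. Hence $\cE$ is a disjoint union of smooth Cartier divisors and is therefore itself a smooth Cartier divisor.

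For (3) and (4), \cite[Prop.~4.5]{Hassett03} identifies the exceptional locus of $\rho_{\mathcal{B},\cA}$ with $\cE$. I would take $\cY:=f(\cE)$ with its reduced structure. Concretely, $\rho(D_{I,J})$ is the locus in $\overline{\mathcal{M}}_{g,\mathcal{B}}$ where the sections indexed by $I$ coincide. That locus is the image of the diagonal-type map $\overline{\mathcal{M}}_{g,\mathcal{B}_J\cup\{b_I\}}\to\overline{\mathcal{M}}_{g,\mathcal{B}}$, which is a closed embedding with smooth source. Thus each component $\cY_I$ is smooth.

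These components are pairwise disjoint. By the preceding step the $D_{I,J}$ are disjoint, and $f$ has connected fibers, so the fiber over any point of $\cY_I\cap\cY_{I'}$ would have to meet both $D_{I,J}$ and $D_{I',J'}$; this fiber would be connected and contained in $\cE$, yet $\cE$ is a disjoint union of closed sets. Hence $\cY$ is smooth.

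Set-theoretically $f^{-1}(\cY)=\cE$: a point outside $\cE$ has no contracted component, so its image does not have the $I$-markings coinciding. This is because $b_I\le1$ is the only way for the coincidence to occur, and it arises from contracting a rational tail carrying exactly $I$. Then $f$ restricted to $\cX\setminus\cE$ is a proper, birational, quasi-finite representable map onto the normal stack $\cZ\setminus\cY$, hence an isomorphism by Zariski's main theorem.

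Theorem~\ref{rep theorem} then gives $\cX\simeq\Bl_\cY\cZ$ with exceptional divisor $\cE=\cE_g(\cA,\mathcal{B})$. Since the paper's reduction steps work one component at a time, the reducible $\cE$ is handled componentwise.

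The main obstacle is (3): establishing cleanly that $\rho(D_{I,J})$ is a smooth closed substack and that $f^{-1}(\rho(D_{I,J}))$ equals $D_{I,J}$ set-theoretically. This requires unwinding Hassett's description of the reduction (the coincident-points loci). I would first try to identify $\rho|_{D_{I,J}}$ with the projection $\overline{\mathcal{M}}_{0,\cA'_I}\times\overline{\mathcal{M}}_{g,\cA'_J}\to\overline{\mathcal{M}}_{g,\cA'_J}$, composed with the reduction on the second factor to $\overline{\mathcal{M}}_{g,\mathcal{B}_J\cup\{b_I\}}$ and then the coincidence embedding.
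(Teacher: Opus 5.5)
Your proposal is correct and is essentially the paper's proof. Both verify the hypotheses of Theorem~\ref{rep theorem} using Proposition~\ref{Prop:reductionrepresent}, Lemma~\ref{Lemma:Exceptionalintersect} (the antichain and pairwise-intersecting conditions make the $D_{I,[n]\setminus I}$ disjoint) and \cite[Prop.~4.5]{Hassett03}. Both take as center the union of the coincidence loci $\overline{\mathcal{M}}_{g,\mathcal{B}'_{[n]\setminus I}}$. The paper cites \cite[Thm.~4.8]{AlexeevGuy_08} for the smoothness of these loci and only asserts that they are pairwise disjoint; your connected-fibres argument actually justifies that disjointness.

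One justification in your step (3) is false, though: a point outside $\cE$ can have a contracted component. For example, take $g=1$, $\cA=(1,1,1)$ and $\mathcal{B}=(\epsilon,\epsilon,\epsilon)$ with $0<\epsilon\le 1/3$. Then $E_1(\cA,\mathcal{B})=\{[3]\}$. Yet a rational tail carrying exactly two markings $i,j$ is contracted, since its $\mathcal{B}$-degree is $-1+2\epsilon\le 0$. Such curves lie in the non-exceptional divisor $D_{\{i,j\},[3]\setminus\{i,j\}}$.

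The conclusions you draw from that claim still hold, for the following reasons.
\begin{enumerate}
\item Outside $\cE$, the only contracted subcurves are single components with one node and exactly two markings. These have no moduli, so $f$ is quasi-finite off $\cE$ and your Zariski main theorem step goes through.
\item Suppose the $I$-markings coincide in the image. Since $a_I>1$, they cannot already coincide upstairs, so the fibre over that point is a contracted rational tree carrying some $K\supseteq I$ with $|K|\ge 3$, $a_K>1$ and $b_K\le 1$. For $g=0$, the bound $b_{[n]}>2$ gives $2g-1+a_{[n]\setminus K}>0$. Hence $K\in E_g(\cA,\mathcal{B})$ and the point lies in $\cE$. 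In fact $K=I$ by the antichain property.
\end{enumerate}
Alternatively, you can take both facts directly from \cite[Prop.~4.5]{Hassett03}, as the paper does.
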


\begin{proof}
    By Lemma~\ref{Lemma:Exceptionalintersect}, if $E_g(\cA, \mathcal{B})$ is a pairwise intersecting antichain in the Boolean lattice of subsets of $[n]$, then $\cE_g(\cA, \mathcal{B}):=\bigcup_{I\in E_g(\cA, \mathcal{B})}D_{I, [n]\setminus I}$ is a disjoint union of smooth effective Cartier divisors and is hence, a smooth effective Cartier divisor itself. By \cite[Prop.~4.5]{Hassett03}, $\cE_g(\cA, \mathcal{B})$ is the exceptional divisor of the birational morphism $\rho_{\mathcal{B}, \cA}$. Thus, along with Proposition~\ref{Prop:reductionrepresent}, we obtain that $\rho_{\mathcal{B},\cA}:\overline{\mathcal{M}}_{g,\cA}\rightarrow \overline{\mathcal{M}}_{g,\mathcal{B}}$ is a representable contraction of smooth proper Deligne-Mumford stacks, whose exceptional $\cE_g(\cA, \mathcal{B})$ is a smooth effective Cartier divisor. For any $I\subseteq [n]$, let
    \[\mathcal{B}'_{[n]\setminus I}:=(b_{j_1},\dots, b_{j_{n-r}}, b_I) \text{ with } [n]\setminus I=\{j_1,\dots, j_{n-r}\} \text{ and } b_I:=\sum_{i\in I}b_i.\] 
    Then $\cY:=\bigcup_{I\in E_g(\cA, \mathcal{B})}\overline{\mathcal{M}}_{g, \mathcal{B}'_{[n]\setminus I}}$ is a smooth closed Deligne-Mumford substack of $\overline{\mathcal{M}}_{g,\mathcal{B}}$ since each $\overline{\mathcal{M}}_{g, \mathcal{B}'_{[n]\setminus I}}$ is a smooth complete intersection closed substack of codimension $|I|-1$ in $\overline{\mathcal{M}}_{g,\mathcal{B}}$ by \cite[Thm.~4.8]{AlexeevGuy_08} and for any $I\neq I'\in E_g(\cA, \mathcal{B})$, we have $\overline{\mathcal{M}}_{g, \mathcal{B}'_{[n]\setminus I}}\cap \overline{\mathcal{M}}_{g, \mathcal{B}'_{[n]\setminus I'}}=\emptyset$. By \cite[Prop.~4.5]{Hassett03}, we have $\rho_{\mathcal{B},\cA}^{-1}(\cY)=\cE_g(\cA, \mathcal{B})$ set-theoretically and $\rho_{\mathcal{B},\cA}: \overline{\mathcal{M}}_{g,\cA}\setminus \cE_g(\cA, \mathcal{B})\xrightarrow{\sim} \overline{\mathcal{M}}_{g,\mathcal{B}}\setminus \cY$ is an isomorphism. The proof is then complete by Theorem~\ref{rep theorem}.
\end{proof}

\bibliographystyle{amsalpha} 
\bibliography{ref}
\end{document}